\newtheorem{theorem}{Theorem}[section]
\newtheorem{lemma}[theorem]{Lemma}
\newtheorem{proposition}[theorem]{Proposition}
\newtheorem{corollary}[theorem]{Corollary}
\theoremstyle{definition}
\newtheorem{definition}[theorem]{Definition}
\newtheorem*{remark}{Remark}
\newtheorem{example}[theorem]{Example}
\DeclareMathOperator{\Ext}{Ext}
\DeclareMathOperator{\Hom}{Hom}
\DeclareMathOperator{\Tor}{Tor}
\DeclareMathOperator{\cok}{cok}
\DeclareMathOperator{\im}{Im}
\newcommand{\cat}[1]{\mathcal{#1}}           
\newcommand{\tensor}{\otimes}
\newcommand{\class}[1]{\mathcal{#1}}   
\newcommand{\Z}{\mathbb{Z}}
\newcommand{\mathcolon}{\colon\,} 
\newcommand{\ch}{\textnormal{Ch}(R)}
\newcommand{\cha}[1]{\textnormal{Ch}(\mathcal{#1})}
\newcommand{\tilclass}[1]{\widetilde{\class{#1}}}
\newcommand{\dgclass}[1]{dg\widetilde{\class{#1}}}
\newcommand{\rightperp}[1]{#1^{\perp}}
\newcommand{\leftperp}[1]{{}^\perp #1}
\numberwithin{equation}{section} 
\begin{document}

\title[Canonical resolutions]{Canonical resolutions in hereditary abelian model categories}

\author{James Gillespie}
\address{Ramapo College of New Jersey \\
         School of Theoretical and Applied Science \\
         505 Ramapo Valley Road \\
         Mahwah, NJ 07430}
\email[Jim Gillespie]{jgillesp@ramapo.edu}
\urladdr{http://pages.ramapo.edu/~jgillesp/}

\date{\today}

\begin{abstract}
Each object of any abelian model category has a canonical resolution as described in this article. When the model structure is hereditary we show how morphism sets in the associated homotopy category may be realized as cohomology groups computed from these resolutions. We also give an alternative description of the morphism sets in terms of Yoneda Ext groups. 
\end{abstract}

\maketitle

\section{Introduction}\label{sec-intro}

In this paper we use techniques from classical homological algebra, and the more recent theory of cotorsion pairs, to define and study the natural idea of a canonical resolution. These are certain doubly infinite resolutions that exist in abelian model categories and we show how under natural assumptions they yield long exact cohomology sequences reminiscent of Tate cohomology sequences. 

To explain, let $\cat{A}$ be an abelian category. An abelian model structure on $\cat{A}$, a notion defined by Hovey in~\cite{hovey}, consists of a triple $\mathfrak{M} = (\class{Q},\class{W},\class{R})$ of classes of objects in $\cat{A}$ satisfying some homological conditions. One is that the class $\class{Q}$ of \emph{cofibrant} objects and the class $\class{R}_{\class{W}} := \class{W}\cap\class{R}$ of \emph{trivially fibrant} objects are required to be orthogonal with respect to Yoneda's functor $\Ext^1_{\cat{A}}(-,-)$. Similarly, the class $\class{Q}_{\class{W}} := \class{Q}\cap\class{W}$ of \emph{trivially cofibrant} objects is orthogonal to the class $\class{R}$ of \emph{fibrant} objects. This is expressed more precisely and succinctly  in Section~\ref{sec-prelims} by saying that  $(\class{Q},\class{R}_{\class{W}})$ and $(\class{Q}_{\class{W}},\class{R})$ are cotorsion pairs. Another important homological assumption on the triple $\mathfrak{M}$ is that these two cotorsion pairs are \emph{complete}. This too is defined precisely in Section~\ref{sec-prelims} but we point out now that it implies the existence of certain resolutions and coresolutions for each object in $\cat{A}$. In particular, given any object $A \in \class{A}$, we may construct resolutions as follows:



\begin{enumerate}[label=(\roman*)]
\item Using completeness of the cotorsion pair  $(\class{Q}_{\class{W}},\class{R})$ we may construct an exact chain complex $$\class{Q}^A_{\circ} \twoheadrightarrow A \ \ \equiv \ \ \ \cdots \xrightarrow{} Q_2 \xrightarrow{d_2} Q_1 \xrightarrow{d_1} Q_0 \xrightarrow{\epsilon} A \xrightarrow{} 0 $$ so that each $Q_i \in \class{Q}_{\class{W}}$ and each kernel is in $\class{R}$.

\item Using completeness of the cotorsion pair $(\class{Q},\class{R}_{\class{W}})$ we may construct an exact chain complex $$A \hookrightarrow \class{R}_A^{\circ} \ \ \equiv \ \ \ 0 \xrightarrow{} A \xrightarrow{\eta} R_{-1} \xrightarrow{d_{-1}} R_{-2} \xrightarrow{d_{-2}} R_{-3} \xrightarrow{} \cdots $$ so that each $R_i \in \class{R}_{\class{W}}$ and each kernel is in $\class{Q}$.
\end{enumerate}
We may paste these together by setting $d_0 = \eta\epsilon$. Then we have $\ker{d_0} = \ker{\epsilon}\in \class{R}$, and,  $\cok{d_0} = \cok{\eta}\in \class{Q}$. Thus we obtain the following full resolution of $A$:
 $$W_A \ \ \equiv \ \ \ \cdots \xrightarrow{} Q_2 \xrightarrow{d_2} Q_1 \xrightarrow{d_1} Q_0\xrightarrow{d_0} R_{-1} \xrightarrow{d_{-1}} R_{-2} \xrightarrow{d_{-2}} R_{-3} \xrightarrow{} \cdots $$
 
 Using the notation and language of ``cochain complexes'' we could also denote this construction by 
 $$W^A \ \ \equiv \ \ \ \cdots \xrightarrow{} Q^{-3} \xrightarrow{} Q^{-2} \xrightarrow{d^{-2}} Q^{-1}\xrightarrow{d^{-1}} R^{0} \xrightarrow{d^0} R^{1} \xrightarrow{d^1} R^{2} \xrightarrow{} \cdots $$ where we have $\ker{d^{-1}} = \ker{\epsilon}\in \class{R}$, and,  $\cok{d^{-1}} = \cok{\eta}\in \class{Q}$. 
\begin{definition}
We call $W_A$ an  \textbf{$\mathfrak{M}$-resolution} of $A$. By a \textbf{canonical resolution} of $A$ we mean an $\mathfrak{M}$-resolution $W_{QA}$ of any cofibrant replacement $QA$ of $A$.
In the same way, we call $W^A$ an  \textbf{$\mathfrak{M}$-coresolution} of $A$ and by a \textbf{canonical coresolution} of $A$ we mean an $\mathfrak{M}$-coresolution $W^{RA}$ of any fibrant replacement $RA$ of $A$.
\end{definition}

Note that if $A$ is already cofibrant, then any $\mathfrak{M}$-resolution of $A$ is already a canonical resolution of $A$. The quintessential example of a canonical resolution is the complete projective resolution (i.e. totally acyclic complex of projective $R$-modules) associated to a Gorenstein projective approximation of an $R$-module $M$ over a suitable ring $R$. See Example~\ref{example-GorenProj} at the end of Section~\ref{sec-Ho-Ext}.

A concept which has proved to be of fundamental importance in the theory of abelian model structures is the \emph{hereditary} condition. This condition has been ever-present in the examples arising in applications; see~\cite{gillespie-hereditary-abelian-models}. It just means that $\textnormal{Ext}^i_{\cat{A}}(Q,R) = 0$ for all $i \geq 1$ whenever $Q$ is cofibrant (resp. trivially cofibrant) and $R$ is trivially fibrant (resp. fibrant).  The hereditary hypothesis is needed to guarantee that canonical resolutions possess the properties of Lemma~\ref{lemma-can-resolutions-cofibrant}. These properties will be crucial to us, so we  will also assume the hereditary condition throughout the entire paper. 

As we will see, both $\mathfrak{M}$-resolutions and canonical resolutions are unique up to chain homotopy. We use this in Sections \ref{sec-augmentations-homotopies} - \ref{sec-Ho-Ext} to introduce and study bifunctors denoted by $\Ext^n_{\textnormal{Ho}(\mathfrak{M})}(A,B)$, one for each integer value of $n$. To do so, we start by defining in Section~\ref{sec-canonical-resolutions}, see Definition~\ref{def-lExt}, a bifunctor $$\ell\Ext^n_{\mathfrak{M}}(A,B) := H^n[\Hom_{\cat{A}}(W_A,B)],$$ where $W_A$ may be any $\mathfrak{M}$-resolution of $A$. 
On the other hand, in Section~\ref{sec-cochain-complexes}, see Definition~\ref{def-rExt}, we consider the bifunctor $$r\Ext^n_{\mathfrak{M}}(A,B) := H^n[\Hom_{\cat{A}}(A, W^B)],$$ where $W^B$ is some $\mathfrak{M}$-coresolution of $B$. Theorem~\ref{them-lExt} and Theorem~\ref{them-rExt} summarize the main properties of these functors.
We see that they share nice properties when $A$ is cofibrant and $B$ is fibrant and in fact it is because we have a canonical isomorphism $\ell\Ext^n_{\mathfrak{M}}(A,B) \cong r\Ext^n_{\mathfrak{M}}(A,B)$ in this case. This is reminiscent of how the left and right homotopy relations coincide when the source is cofibrant and the target is fibrant. Because of the canonical isomorphism, when $A$ is cofibrant and $B$ is fibrant, we will simply denote this group by $\Ext^n_{\mathfrak{M}}(A,B)$, with the realization that it may be computed by either a canonical resolution of $A$, or, a canonical coresolution of $B$. 

We then get to the main point, and in Definition~\ref{def-Ho-Ext} we set, for each integer $n$,
$$\Ext^n_{\textnormal{Ho}(\mathfrak{M})}(A,B) := \Ext^n_{\mathfrak{M}}(RQA,RQB),$$
where $RQA$ and $RQB$ represent bifibrant (that is, cofibrant-fibrant) replacements.
The main properties of the functors $\Ext^n_{\textnormal{Ho}(\mathfrak{M})}$ are listed in Theorem~\ref{them-Ho-Ext}. It states first of all that there is a canonical isomorphism 
$$\Ext^n_{\textnormal{Ho}(\mathfrak{M})}(A,B) \cong \Ext^n_{\mathfrak{M}}(QA,RB)$$
showing that $\Ext^n_{\textnormal{Ho}(\mathfrak{M})}(A,B)$ may be computed by using any canonical resolution of $A$, or, any canonical coresolution of $B$. This result is reminiscent  of how morphism sets in $\textnormal{Ho}(\mathfrak{M})$ may be defined via  $\textnormal{Ho}(\mathfrak{M})(A,B) := \Hom_{\cat{A}}(RQA,RQB)/\sim$ (where $\sim$ represents the homotopy relation) but that there are canonical isomorphisms $\textnormal{Ho}(\mathfrak{M})(A,B) \cong \Hom_{\cat{A}}(QA,RB)/\sim$.

Second, Theorem~\ref{them-Ho-Ext} says that short exact sequences in $\cat{A}$ are sent to long exact sequences of $\Ext^n_{\textnormal{Ho}(\mathfrak{M})}$ groups. That is, suppose we have a short exact sequence $0 \xrightarrow{} A  \xrightarrow{} B  \xrightarrow{} C  \xrightarrow{} 0$ in $\cat{A}$. Then for any object $X$ we get a long exact cohomology sequence of abelian groups
$$\cdots  \xrightarrow{} \Ext^{n-1}_{\textnormal{Ho}(\mathfrak{M})}(X,C) \xrightarrow{}   \Ext^{n}_{\textnormal{Ho}(\mathfrak{M})}(X,A) \xrightarrow{}  \Ext^{n}_{\textnormal{Ho}(\mathfrak{M})}(X,B)$$  $$\xrightarrow{}  \Ext^{n}_{\textnormal{Ho}(\mathfrak{M})}(X,C)  \xrightarrow{}  \Ext^{n+1}_{\textnormal{Ho}(\mathfrak{M})}(X,A)  \xrightarrow{} \cdots$$
And, similarly there is a contravariant version for the other variable. 

Third, we see in Theorem~\ref{them-Ho-Ext} that the functors $\Ext^n_{\textnormal{Ho}(\mathfrak{M})}(A,B)$ aren't just defined on $\cat{A}$, but descend to functors on $\textnormal{Ho}(\mathfrak{M})$. So they must be something fundamental. Corollary~\ref{cor-Ho-Ext} illuminates this, showing that we have natural isomorphisms:
$$\textnormal{Ho}(\mathfrak{M})(\Omega^n A,B) \cong \Ext^n_{\textnormal{Ho}(\mathfrak{M})}(A,B) \cong \textnormal{Ho}(\mathfrak{M})(A,\Sigma^n B)$$ where $\Omega$  is the loop functor and $\Sigma$ is the suspension functor that we also give easy characterizations of in Appendix~\ref{sec-suspension and loop}.
Thus we have shown that morphism sets in $\textnormal{Ho}(\mathfrak{M})$ may be realized as cohomology groups, computed via canonical (co)resolutions in $\cat{A}$. 

In Section~\ref{sec-Yoneda} we turn to study how $\Ext^n_{\textnormal{Ho}(\mathfrak{M})}$ is related to $\Ext^n_{\cat{A}}$, the usual Yoneda Ext functor. Recall that the latter is defined, regardless of whether or not projective or injective resolutions exist, to be the group of all (equivalence classes of) $n$-fold exact sequences. We show in Theorem~\ref{them-Ho-Yoneda-Ext}/Corollary~\ref{cor-Ext} that there is, for each integer $n \geq 1$, a canonical isomorphism $$\Ext^n_{\textnormal{Ho}(\mathfrak{M})}(A,B) \cong \Ext^n_{\cat{A}}(QA,RB).$$ In particular, for positive integers $n$ we have
$$\textnormal{Ho}(\mathfrak{M})(\Omega^n A,B) \cong \Ext^n_{\cat{A}}(QA,RB) \cong \textnormal{Ho}(\mathfrak{M})(A,\Sigma^n B).$$
We deduce from this, Corollary~\ref{cor-Ho-sets}, which gives natural isomorphisms describing the morphism sets in the homotopy category:
$$\Ext^1_{\cat{A}}(\Sigma QA,RB)   \cong \textnormal{Ho}(\mathfrak{M})(A,B)  \cong \Ext^1_{\cat{A}}(QA,\Omega RB).$$ The interesting thing is that this description of  $\textnormal{Ho}(\mathfrak{M})(A,B)$ is in terms of short exact sequences in $\cat{A}$ and nothing more. Indeed a cofibrant replacement $QA$ is nothing more than any object fitting into a short exact sequence 
$$0 \xrightarrow{} F \xrightarrow{} QA \xrightarrow{} A \xrightarrow{} 0$$  where $QA \in \class{Q}$ and $F \in \class{R}_{\class{W}}$. The dual construction describes how to obtain a fibrant replacement $RB$. Similarly, Appendix~\ref{sec-suspension and loop} shows that any suspension $\Sigma A$ is nothing more than any object fitting into a short exact sequence $$0 \xrightarrow{} A \xrightarrow{} W \xrightarrow{} \Sigma A \xrightarrow{} 0$$ where $W \in \class{W}$. The dual defines $\Omega A$. And, of course, the Yoneda  $\Ext^1_{\cat{A}}$ groups are equivalence classes of short exact sequences, so we have described $\textnormal{Ho}(\mathfrak{M})(A,B) $ completely in terms of short exact sequences in $\cat{A}$. 
 
ln Section~\ref{section-Tor} we consider the case of when $\cat{A}$ comes with a tensor product and develop a similar theory of bifunctors denoted by $\Tor_n^{\textnormal{Ho}(\mathfrak{M})}$.
Finally, Section~\ref{sec-becker} considers how the canonical resolutions defined in this paper relate to some model structures recently constructed by Hanno Becker in~\cite{becker-realization-functor}. There, Becker assumes that $\mathfrak{M} = (\class{Q},\class{W},\class{R})$ is a cofibrantly generated and hereditary abelian model structure on a Grothendieck category $\cat{A}$, and shows that $\mathfrak{M}$ will lift to several Quillen equivalent model structures on $\cha{A}$. Although we don't use the cofibrantly generated and Grothendieck hypothesis for what we do here, we show that our canonical resolutions are exactly cofibrant replacements in one of Becker's model structures on $\cha{A}$, whenever it may exist.




\subsection{A Remark on Generalities and Exact Categories}\label{sec-exact-cats}
It is stronger than what is needed to insist that $\cat{A}$ be an abelian category. All the results in this paper hold when $\cat{A}$ is just an exact category in the sense of~\cite{quillen-algebraic K-theory}, along with a compatible Hovey triple $\mathfrak{M} = (\class{Q},\class{W},\class{R})$ that is hereditary.
The interested reader with knowledge of exact categories will have no trouble making the minor changes needed to translate the results in this paper to that setting. Indeed the paper has been deliberately written and all proofs presented and proofread to be sure that such translations are immediate. Occasionally we will provide references, usually to~\cite{buhler-exact categories} or~\cite{gillespie-exact model structures}, to support this. For the uninitiated, the best way to learn about exact categories is to read~\cite{buhler-exact categories}.  However, the author has decided to keep the prose directly in terms of abelian categories. First, in most applications of the theory, the ground category tends to be abelian. Second, everybody knows what  an abelian category is, but exact categories, while fundamentally just as easy, are much less known. 

The following is a simple guide that can be used to translate abelian terminology into the language of exact categories.   

\begin{itemize}
\item Replace the word ``abelian'' with ``exact''. One typically assumes the underlying additive category is also  weakly idempotent complete. See~\cite{buhler-exact categories} and~\cite{gillespie-exact model structures} for terminology. However, this assumption is not even necessary to obtain the results in this paper. Indeed any model structure on an exact category will yield a Hovey triple $\mathfrak{M} = (\class{Q},\class{W},\class{R})$ by~\cite[Theorem~3.3]{gillespie-exact model structures} and this is all that is needed in this paper. The interested reader can find much more on exact model structures in~\cite{stovicek-exact models}.

\item Replace ``monomorphism'' (resp. ``epimorphism'') with ``admissible monomorphism'' or ``inflation'' (resp. ``admissible epimorphism'' or ``deflation''). Interpret ``short exact sequence'' to be a member of the exact structure, that is, an ``admissible short exact sequence'' or a ``conflation''. 

\item The construction of the Yoneda Ext functor $\Ext^n_{\cat{A}}$ will carry through for exact categories $\cat{A}$. While the author is not aware of a source that does this in the language of exact categories, he finds the exposition in~\cite[Chapter VII]{mitchell} to be quite well suited for adapting to exact categories. 

\item Finally, we can define chain complexes, and also exact (or acyclic) chain complexes, in any exact category. This can also be found in~\cite{buhler-exact categories}. 
\end{itemize}

Actually, the Appendix of this paper has been written in the language of exact categories. So the Appendix also serves as an example of how the above translations may be made.

\section{Preliminary notation and terminology}\label{sec-prelims}

Throughout this paper we will consider an abelian category denoted by $\cat{A}$, which the reader may optionally relax to be an exact category as described in Section~\ref{sec-exact-cats}. Typically $\cat{A}$ will possess an abelian model structure $\mathfrak{M} = (\class{Q},\class{W},\class{R})$ that is assumed hereditary. We now give precise definitions of these concepts which will also set the notation used throughout the paper. 

First, given a class of objects $\class{C}$ in $\cat{A}$, the right orthogonal  $\rightperp{\class{C}}$ is defined to be the class of all objects $X$ such that $\textnormal{Ext}^1_{\cat{A}}(C,X) = 0$ for all $C \in \class{C}$. Similarly, we define the left orthogonal $\leftperp{\class{C}}$. A pair of classes $\mathfrak{C} = (\mathcal{X},\mathcal{Y})$ is called a \textbf{cotorsion pair} if $\class{Y} = \rightperp{\class{X}}$ and $\class{X} = \leftperp{\class{Y}}$. We say the cotorsion pair is \textbf{hereditary} if $\textnormal{Ext}^i_{\cat{A}}(X,Y) = 0$ for all $X \in \class{X}$, $Y \in \class{Y}$, and $i \geq 1$. The main practical consequence of the hereditary condition is that the class $\class{X}$ will then be closed under taking kernels of epimorphisms between objects in $\class{X}$; and the dual holds for $\class{Y}$.  In practice, the cotorsion pairs we encounter are typically hereditary. 

We say a cotorsion pair $\mathfrak{C}$ \textbf{has enough projectives} if for each $A \in \cat{A}$ there exists a short exact sequence $0 \rightarrow{} Y \rightarrow{} X \rightarrow{} A \rightarrow{} 0$ with $X \in \class{X}$ and $Y \in \class{Y}$. The epimorphism $X \rightarrow{} A$ is sometimes called a \textbf{special $\class{X}$-precover}. On the other hand, we say $\mathfrak{C}$ \textbf{has enough injectives} if it satisfies the dual, and the monomorphism in that case is sometimes called a \textbf{special $\class{Y}$-preenvelope}. A cotorsion pair $\mathfrak{C}$ is called \textbf{complete} if it has both enough projectives and enough injectives, or in other words, each object has a special $\class{X}$-precover and a special $\class{Y}$-preenvelope. Standard references  for cotorsion pairs include~\cite{enochs-jenda-book} and~\cite{trlifaj-book}.  

The connection between cotorsion pairs on $\cat{A}$ and abelian model structures on $\cat{A}$ can be found in~\cite{hovey}. It is enough to define an \textbf{abelian model structure} on $\cat{A}$ to be a triple $\mathfrak{M} = (\class{Q}, \class{W}, \class{R})$ of classes of objects such that $\class{W}$ is a \textbf{thick} class (meaning it satisfies the \emph{2 out of 3 property} on short exact sequences) such that both  
$(\class{Q}, \class{W}\cap\class{R})$  and $(\class{Q}\cap\class{W}, \class{R})$ are complete cotorsion pairs. We will set $\class{Q}_{\class{W}} := \class{Q} \cap \class{W}$ and  $\class{R}_{\class{W}} := \class{W} \cap \class{R}$. Then $\class{Q}$ is the class of \textbf{cofibrant} objects, $\class{R}$ the class of \textbf{fibrant} objects,  $\class{Q}_{\class{W}}$ the class of \textbf{trivially cofibrant} objects and  $\class{R}_{\class{W}}$ the class of \textbf{trivially fibrant} objects. We also set $\omega := \class{Q}\cap\class{W}\cap\class{R}$ and call this the \textbf{core} of the model structure. A model structure $\mathfrak{M}$ is called \textbf{hereditary} if both of the associated cotorsion pairs $(\class{Q}, \class{R}_{\class{W}})$  and $(\class{Q}_{\class{W}}, \class{R})$ are hereditary. Hereditary abelian  model structures are particularly easy to construct by the result in~\cite{gillespie-hovey triples}. They are also of particular importance, for they are the ones that have appeared in virtually all the applications. See~\cite{gillespie-hereditary-abelian-models} for a recent survey. 

As with any model structure on a suitable category, abelian model structures have homotopy relations. Due to~\cite[Proposition~4.4]{gillespie-exact model structures} they have particularly easy characterizations in the abelian case as follows: Let $f, g : A \xrightarrow{} B$ be two morphisms in $\cat{A}$. We say $f$ and $g$ are \textbf{left homotopic}, written $f \sim^{\ell} g$ if $g-f$ factors through an object in $\class{R}_{\class{W}}$. It is easy to see that $\sim^{\ell}$ is an equivalence relation on $\Hom_{\cat{A}}(A,B)$ called the \textbf{left homotopy relation}.  On the other hand, $f$ and $g$ are \textbf{right homotopic}, written $f \sim^{r} g$, if $g-f$ factors through an object in $\class{Q}_{\class{W}}$. The relation $\sim^{r}$ is called the \textbf{right homotopy relation}.  We say $f$ and $g$ are \textbf{homotopic} or \textbf{fully homotopic}, written $f \sim^{\omega} g$ or simply $f \sim g$, if $g-f$ factors through an object of the core $\omega$.
With all this in mind, we introduce some new terminology that we will use in this paper. First, we will set $\mathfrak{Q} = (\class{Q},\class{R}_{\class{W}})$ and call it the \textbf{left cotorsion pair} and set $\mathfrak{R} = (\class{Q}_{\class{W}},\class{R})$ and call it the \textbf{right cotorsion pair}. This terminology helps us to remember the following: Morphisms $A \xrightarrow{} B$ are \emph{right homotopic} if and only if their difference factors through $\class{Q}_{\class{W}}$, the trivial class appearing in the right cotorsion pair. Moreover, when $B$ (the object on the right) is fibrant, then right homotopy coincides with the full homotopy relation $\sim^{\omega}$. This helps us to distinguish these statements from the dual statements that hold for the left cotorsion pair and left homotopy. The above statements are all due to the characterizations in~\cite[Proposition~4.4]{gillespie-exact model structures} which have proofs that can be readily followed from first principles. 

To each homotopy relation we may associate a stable category. Most importantly, we will write $[f]_{\omega}$, or simply $[f]$ when this is clear, for the homotopy class of $f$. Then $\underline{\Hom}_{\,\omega}(A,B)$ denotes the additive group of all such equivalence classes of morphisms from $A$ to $B$. The reader can verify that there is an additive category, that we denote by $\textnormal{St}_{\omega}(\cat{A})$, whose objects are the same as $\cat{A}$ but whose morphism sets are the groups $\underline{\Hom}_{\,\omega}(A,B)$. We call $\textnormal{St}_{\omega}(\cat{A})$ the \textbf{stable category of $\mathfrak{M}$} and there is a canonical functor we denote by $\gamma_{\omega} : \cat{A} \xrightarrow{} \textnormal{St}_{\omega}(\cat{A})$. We can also discuss the  \textbf{left stable category of $\mathfrak{M}$}, denoted by $\gamma_{\ell} : \cat{A} \xrightarrow{} \textnormal{St}_{\ell}(\cat{A})$, by using the left homotopy classes $[f]_{\ell}$ and the groups $\underline{\Hom}_{\,\ell}(A,B)$. On the other hand, using right homotopy we define $[f]_{r}$ , and $\underline{\Hom}_{\,r}(A,B)$, to obtain $\gamma_{r} : \cat{A} \xrightarrow{} \textnormal{St}_{r}(\cat{A})$, the  \textbf{right stable category of $\mathfrak{M}$} . 

In the setting of an abelian model structure $\mathfrak{M} = (\class{Q},\class{W},\class{R})$, a special $\class{Q}$-precover of an object $A$ is precisely a \textbf{cofibrant replacement} of $A$, and usually denoted by $QA$. That is, a cofibrant replacement of $A$ is constructed by taking any short exact sequence 
$$0 \xrightarrow{} R \xrightarrow{} QA \xrightarrow{} A \xrightarrow{} 0$$ with $QA \in \class{Q}$ and $R \in \class{R}_{\class{W}}$.  We may also refer to such a short exact sequence as a \textbf{cofibrant replacement sequence} for $A$. Similar language applies to fibrant replacements.

Finally, we denote the homotopy category of $\mathfrak{M}$ by $\textnormal{Ho}(\mathfrak{M})$. Following one standard approach, for example see~\cite{dwyer-spalinski} and also comments in~\cite[Section 4]{gillespie-exact model structures}, the morphism sets in $\textnormal{Ho}(\mathfrak{M})$ are defined by setting $$\textnormal{Ho}(\mathfrak{M})(A,B) := \underline{\Hom}_{\,\omega}(RQA,RQB).$$ Different choices of bifibrant replacements, for either $A$ or $B$, are canonically isomorphic, so we are comfortable defining the morphisms sets this way. They are naturally additive abelian groups making $\textnormal{Ho}(\mathfrak{M})$ an additive category with $\class{W}$ the class of zero objects. In Appendix~\ref{sec-suspension and loop} we give a very easy and useful definition of the \emph{loop} functor  $\textnormal{Ho}(\mathfrak{M}) \xrightarrow{\Omega} \textnormal{Ho}(\mathfrak{M})$ and the \emph{suspension} functor $\textnormal{Ho}(\mathfrak{M}) \xrightarrow{\Sigma} \textnormal{Ho}(\mathfrak{M})$, assuming $\mathfrak{M}$ is hereditary.

\section{Homotopy lemmas for augmentations and full resolutions}\label{sec-augmentations-homotopies}

Throughout this section, $\cat{A}$ denotes an abelian category and Ch$(\cat{A})$ the associated category of chain complexes. Again, $\cat{A}$ may even be an exact category as discussed in Section~\ref{sec-exact-cats}.

Let $A \in \cat{A}$. By an \textbf{augmentation of $A$}, denoted $X \xrightarrow{\epsilon} A$, we mean a chain complex $$ \cdots \xrightarrow{} X_2 \xrightarrow{d_2} X_1 \xrightarrow{d_1} X_0 \xrightarrow{\epsilon} A \xrightarrow{} 0.$$ We often will write $X_{\geq0} \xrightarrow{\epsilon} A$ to specify that the domain of $\epsilon$ is in degree 0 and in this way we may also write $X_{\geq i} \xrightarrow{\epsilon} A$ to specify a different degree. If $X \xrightarrow{\epsilon} A$ is an exact (acyclic) complex we call it a \textbf{resolution of $A$}. On the other hand, a \textbf{co-augmentation of $A$}, denoted $A \xrightarrow{\eta} X$ or more specifically $A \xrightarrow{\eta} X_{\leq -1}$, will be a chain complex   $$0 \xrightarrow{} A \xrightarrow{\eta} X_{-1} \xrightarrow{d_{-1}} X_{-2} \xrightarrow{d_{-2}} \cdots ,$$
and we call it a \textbf{coresolution} when it is an exact complex. 
By a \textbf{full augmentation of $A$}, we mean a chain complex $X = X_{\geq 0} \xrightarrow{\eta \epsilon} X_{\leq -1}$ obtained by splicing together (via the composition $d_0 = \eta \epsilon$) an augmentation $X_{\geq 0} \xrightarrow{\epsilon} A$ and a co-augmentation $A \xrightarrow{\eta} X_{\leq -1}$. We call it a \textbf{full resolution of $A$} if $X_{\geq 0} \xrightarrow{\eta\epsilon} X_{\leq-1}$ is an exact complex; this happens if and only if $X_{\geq 0} \xrightarrow{\epsilon} A$ is a resolution and $A \xrightarrow{\eta} X_{\leq-1}$ is a coresolution.

\begin{remark}
The above uses the notation of chain complexes as opposed to \emph{cochain} complexes. We will need to use both, and we slightly modify the language for cochain complexes as explained at the beginning of Section~\ref{sec-cochain-complexes}.
\end{remark}

Given a class of objects $\class{X}$, containing 0, and two morphisms $f, g \in \cat{A}$, we write $f \sim^{\class{X}} g$ to mean $g-f$ factors through some object of $\class{X}$.

\begin{lemma}\label{lemma-augmentation-homotopies}
Let $\class{X}$ be a class of objects in $\cat{A}$, containing 0. Assume $X_{\geq0} \xrightarrow{\epsilon_A} A$ is an augmentation with each $X_n \in \class{X}$ while $Y_{\geq0} \xrightarrow{\epsilon_B} B$ is a resolution with $\ker{\epsilon_B} \in \rightperp{\class{X}}$ and each $\ker{d_n} \in \rightperp{\class{X}}$ for all $n \geq 1$. Then any morphism $f : A \xrightarrow{} B$ extends to a chain map $(X_{\geq0} \xrightarrow{\epsilon_A} A) \xrightarrow{\{f_n\}\cup\{f\}} (Y_{\geq0} \xrightarrow{\epsilon_B} B)$. Moreover, if $f \sim^{\class{X}} g$, then any such extension $\{f_n\}\cup\{f\}$, of $f$, is chain homotopic to any such extension $\{g_n\}\cup\{g\}$, of $g$. In particular, any extension of $f$ is unique up to chain homotopy. 
\end{lemma}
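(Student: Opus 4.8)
The plan is to run the classical comparison argument for resolutions, with the vanishing of $\Ext^1_{\cat{A}}$ between objects of $\class{X}$ and objects of $\rightperp{\class{X}}$ playing the role usually played by projectivity. The basic tool I would isolate first is the following lifting principle: if $W \in \class{X}$, if $0 \to N \to Y \xrightarrow{p} M \to 0$ is a short exact sequence with $N \in \rightperp{\class{X}}$, and if $g \colon W \to M$ is any morphism, then $g$ lifts along $p$. Indeed, pulling the sequence back along $g$ produces a short exact sequence $0 \to N \to P \to W \to 0$ which is split because it represents an element of $\Ext^1_{\cat{A}}(W,N) = 0$; a splitting yields a section $W \to P$, and composing with $P \to Y$ gives the desired lift. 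Since $Y_{\geq0} \xrightarrow{\epsilon_B} B$ is exact, it breaks into short exact sequences $0 \to \ker\epsilon_B \to Y_0 \xrightarrow{\epsilon_B} B \to 0$ and $0 \to \ker d_n \to Y_n \xrightarrow{d_n} \ker d_{n-1} \to 0$ for $n \geq 1$ (with the convention $\ker d_0 := \ker\epsilon_B$), and by hypothesis every kernel occurring here lies in $\rightperp{\class{X}}$, so the lifting principle applies at every stage.

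For existence, I would build the $f_n$ by induction on $n$. In degree $0$, the composite $f\epsilon_A \colon X_0 \to B$ lifts along $\epsilon_B \colon Y_0 \to B$ by the lifting principle (using $X_0 \in \class{X}$ and $\ker\epsilon_B \in \rightperp{\class{X}}$), giving $f_0$ with $\epsilon_B f_0 = f\epsilon_A$. For the inductive step, given $f_0,\dots,f_{n-1}$ compatible with the differentials, the morphism $f_{n-1}d_n \colon X_n \to Y_{n-1}$ is annihilated by $d_{n-1}$ (resp. by $\epsilon_B$ when $n=1$), because $\{f_i\}$ is a partial chain map and $X_{\geq0}\xrightarrow{\epsilon_A}A$ is itself a complex; hence $f_{n-1}d_n$ factors through $\ker d_{n-1} \hookrightarrow Y_{n-1}$. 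Lifting this factored map along the epimorphism $d_n \colon Y_n \twoheadrightarrow \ker d_{n-1}$ (again by the lifting principle, now with $\ker d_n \in \rightperp{\class{X}}$) produces $f_n$ with $d_n f_n = f_{n-1}d_n$.

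For the homotopy statement, suppose $g - f = \beta\alpha$ with $\alpha \colon A \to Z$, $\beta \colon Z \to B$ and $Z \in \class{X}$, and let $\{f_n\}\cup\{f\}$ and $\{g_n\}\cup\{g\}$ be two extensions. Writing $h_n := g_n - f_n$ and $h := g - f$, the family $\{h_n\}\cup\{h\}$ is a chain map of augmented complexes, and I must produce contracting maps $s_{-1} \colon A \to Y_0$ and $s_n \colon X_n \to Y_{n+1}$ ($n \ge 0$) satisfying $h = \epsilon_B s_{-1}$, then $h_0 = d_1 s_0 + s_{-1}\epsilon_A$, and $h_n = d_{n+1}s_n + s_{n-1}d_n$ for $n \ge 1$. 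First, lift $\beta$ along $\epsilon_B$ to some $\tilde\beta \colon Z \to Y_0$ by the lifting principle and set $s_{-1} := \tilde\beta\alpha$, so that $\epsilon_B s_{-1} = \beta\alpha = h$. Then proceed by induction on $n \geq 0$: at each stage the morphism $h_n - s_{n-1}d_n$ (read as $h_0 - s_{-1}\epsilon_A$ when $n=0$) is annihilated by $d_n$ (resp. by $\epsilon_B$ when $n=0$), using the relation already verified one level down together with $d^2 = 0$ in both complexes, so it factors through $\ker d_n \hookrightarrow Y_n$; lifting along $d_{n+1}\colon Y_{n+1}\twoheadrightarrow\ker d_n$ by the lifting principle yields $s_n$. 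Taking $g = f$, so that $h = 0$ factors through $0 \in \class{X}$, gives the final ``in particular'' clause.

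I expect the main obstacle to be purely bookkeeping: threading the augmentation term ($A$ sitting in ``degree $-1$'') correctly through both inductions, so that the extra bottom map $s_{-1}$ and the special relation $h_0 = d_1 s_0 + s_{-1}\epsilon_A$ are handled properly, and checking at each step that the relevant composite genuinely lands in the intended kernel before the lifting principle is invoked. No genuinely hard point arises, since exactness of the target resolution together with the orthogonality hypotheses on its kernels are precisely what force every obstruction group to vanish.
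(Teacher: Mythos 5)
Your proposal is correct and follows essentially the same route as the paper: the classical comparison argument for existence, and a null homotopy for an extension of $h=\beta\alpha$ built by first lifting $\beta$ over $\epsilon_B$ using $\Ext^1_{\cat{A}}(Z,\ker\epsilon_B)=0$ (your $s_{-1}=\tilde\beta\alpha$ is the paper's $s\alpha$), then inducting with the kernels in $\rightperp{\class{X}}$; your explicit pullback-and-split ``lifting principle'' is just the same $\Ext^1$-vanishing the paper invokes, phrased so it also works in exact categories.
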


\begin{proof}
The reader can easily verify that the classical argument concerning projective resolutions holds in the exact same way, to construct an extension $\{f_n\}$ of $f$, (and that $\{f_n\}\cup\{f\}$ is unique up to chain homotopy in the special case that $f \sim^{\class{X}} f$).

More generally, assume a morphism $h : A \xrightarrow{} B$ factors through some $W \in \class{X}$, so $h = \beta\alpha$ where $h : A \xrightarrow{\alpha} W \xrightarrow{\beta} B$.  Let $\{h_n\}$ be an extension of $h$. We will now show $\{h_n\}\cup\{h\} \sim^{\class{X}} 0$. First, since $\Ext^1_{\cat{A}}(W,\ker{\epsilon_B}) = 0$, there exists $s : W \xrightarrow{} Y_0$ such that $\epsilon_B s = \beta$.
$$\begin{tikzcd}
\vdots \arrow[d] & &   \vdots \arrow[d] \\
X_1 \arrow[rr,"h_1"] \arrow[d, "d_1" '] &  & Y_1 \arrow[d, "d_1"]  \\
X_0 \arrow[rru,dashed, "s_1"] \arrow[rr,"h_0"] \arrow[d, "\epsilon_A" '] &  & Y_0 \arrow[d, twoheadrightarrow, "\epsilon_B"]  \\
A \arrow[r, "\alpha" ']  \arrow{d} & W \arrow[ru, dashed, "s"]\arrow[r, "\beta" '] & B \arrow{d} \\
0 & & 0
\end{tikzcd}$$
The differential $d_1 : Y_1 \xrightarrow{} Y_0$ factors as $d_1 = k_1e_1$ where $k_1 : \ker{\epsilon_B} \xrightarrow{} Y_0$. Since $\epsilon_B(h_0 - s\alpha\epsilon_A) = 0$, the universal property of the kernel $\ker{\epsilon_B}$ shows that $h_0 - s\alpha\epsilon_A$ factors through the inclusion $k_1 : \ker{\epsilon_B} \xrightarrow{} Y_0$. So now since $\Ext^1_{\cat{A}}(X_0,\ker{d_1}) = 0$, there exists $s_1 : X_0 \xrightarrow{} Y_1$ (a lift over $e_1$) such that $d_1s_1 = h_0 - s\alpha\epsilon_A$. Thus $h_0 = s\alpha\epsilon_A + d_1s_1$. This completes the first step of constructing a null homotopy which one then completes with an induction argument. So, in general, if $f \sim^{\class{X}} g$, we set $h = g-f$ and let $\{h_n\}_{n\geq0} = \{g_n\}_{n\geq0} - \{f_n\}_{n\geq0}$ where $\{f_n\}$ is an extension of $f$ and $\{g_n\}$ of $g$. The above argument provides morphisms $\{s_n : X_{n-1} \xrightarrow{} Y_n\}_{n\geq1}$ satisfying:
\begin{equation}\label{equations-homotopy1}
\begin{aligned} 
h_n &=  d_{n+1}s_{n+1} + s_nd_n \text{ , for all } n \geq 1. \\ 
h_0 &=  d_1s_1 + (s\alpha)\epsilon_A \\
h   &=\epsilon_B(s\alpha) + 0
\end{aligned}
\end{equation}
So here we see $\{h_n\}\cup\{h\}$ is null homotopic by using $s\alpha : A \xrightarrow{} Y_0$ in the homotopy.
\end{proof}

\begin{lemma}[Dual of Lemma~\ref{lemma-augmentation-homotopies}]\label{lemma-dual-augmentation-homotopies}
Let $\class{Y}$ be a class of objects in $\cat{A}$, containing 0. Assume $B \xrightarrow{\eta_B} Y_{\leq{-1}}$ is a co-augmentation with each $Y_n \in \class{Y}$ while $A \xrightarrow{\eta_A} X_{\leq{-1}}$ is a coresolution with $\cok{\eta_A} \in \leftperp{\class{Y}}$ and each $\cok{d_n} \in \leftperp{\class{Y}}$ for all $n \leq -1$. Then any morphism $f : A \xrightarrow{} B$ extends to a chain map $(A \xrightarrow{\eta_A} X_{\leq{-1}}) \xrightarrow{\{f\} \cup \{f_n\}} (B \xrightarrow{\eta_B} Y_{\leq{-1}})$. Moreover, if $f \sim^{\class{Y}} g$, then any such extension $\{f\} \cup \{f_n\}$, of $f$, is chain homotopic to any such extension $\{g\} \cup \{g_n\}$, of $g$. In particular, any extension of $f$ is unique up to chain homotopy. 
\end{lemma}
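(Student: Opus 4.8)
The plan is to deduce this lemma entirely from Lemma~\ref{lemma-augmentation-homotopies} by passing to the opposite category. First I would record the relevant dictionary: if $\cat{A}$ is abelian (resp.\ exact, as in Section~\ref{sec-exact-cats}) then so is $\cat{A}^{\mathrm{op}}$, and under the canonical anti-equivalence $\cat{A} \to \cat{A}^{\mathrm{op}}$ a co-augmentation $B \xrightarrow{\eta_B} Y_{\leq -1}$ in $\cat{A}$ becomes, after the reindexing $n \mapsto -n$, an augmentation in $\cat{A}^{\mathrm{op}}$; likewise a coresolution becomes a resolution, cokernels of $\cat{A}$-morphisms become kernels of the corresponding $\cat{A}^{\mathrm{op}}$-morphisms, and $\Ext^1_{\cat{A}}(-,-)$ corresponds to $\Ext^1_{\cat{A}^{\mathrm{op}}}(-,-)$ with its two arguments interchanged. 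In particular $\leftperp{\class{Y}}$ as computed in $\cat{A}$ is precisely $\rightperp{\class{Y}}$ as computed in $\cat{A}^{\mathrm{op}}$, and the hypothesis ``$\class{Y}$ contains $0$'' is self-dual. With this dictionary the hypotheses of the present lemma translate verbatim into those of Lemma~\ref{lemma-augmentation-homotopies} applied in $\cat{A}^{\mathrm{op}}$ with the class $\class{X}$ there taken to be $\class{Y}$, and a morphism $f \colon A \to B$ of $\cat{A}$ is the same datum as a morphism $f \colon B \to A$ of $\cat{A}^{\mathrm{op}}$.

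Next I would simply invoke Lemma~\ref{lemma-augmentation-homotopies} in $\cat{A}^{\mathrm{op}}$. It produces an extension of $f$ to a chain map between the two augmentations there, it asserts that any two extensions of $\sim^{\class{Y}}$-homotopic maps are chain homotopic, and it records uniqueness up to chain homotopy as the special case $g = f$. Translating back along the anti-equivalence (reindexing $n \leftrightarrow -n$ and reversing all arrows), a chain map of augmentations in $\cat{A}^{\mathrm{op}}$ is exactly a chain map $(A \xrightarrow{\eta_A} X_{\leq -1}) \to (B \xrightarrow{\eta_B} Y_{\leq -1})$ of co-augmentations in $\cat{A}$, and a chain homotopy in $\cat{A}^{\mathrm{op}}$ becomes a chain homotopy in $\cat{A}$ (the homotopy operators merely run in the opposite direction, now from the $X$-coresolution to the $Y$-co-augmentation). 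This yields precisely the three assertions of the lemma.

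The only point requiring care is the bookkeeping: keeping track of the reindexing $n \leftrightarrow -n$ together with the standard sign and direction conventions for chain maps and chain homotopies under $(-)^{\mathrm{op}}$. There is no genuine mathematical obstacle here, which is why I would present the opposite-category reduction as the main argument. For a reader who prefers to stay inside $\cat{A}$, I would add a remark that one may instead transcribe the proof of Lemma~\ref{lemma-augmentation-homotopies} with every arrow reversed: construct $\{f_n\}$ by the classical coresolution argument, then for $h = g - f$ factored as $A \xrightarrow{\alpha} W \xrightarrow{\beta} B$ with $W \in \class{Y}$ use $\Ext^1_{\cat{A}}(\cok{\eta_A}, W) = 0$ to extend $\alpha \colon A \to W$ along $\eta_A$ to a map $X_{-1} \to W$, and then lift step by step using $\Ext^1_{\cat{A}}(\cok{d_n}, Y_n) = 0$, arriving at the evident dual of the homotopy identities~\eqref{equations-homotopy1}.
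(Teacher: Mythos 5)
Your proposal is correct and amounts to the same argument as the paper's: the paper likewise proves this lemma by duality with Lemma~\ref{lemma-augmentation-homotopies}, merely transcribing the construction in chain notation so that the morphism $t : X_{-1} \to W$ (obtained from $\Ext^1_{\cat{A}}(\cok{\eta_A},W)=0$) and the homotopy identities~(\ref{equations-homotopy2}) are explicitly on record for later use in Lemma~\ref{lemma-full-augmentation-homotopies}. Your closing remark about transcribing the dual proof inside $\cat{A}$ is exactly what the paper does, so nothing is missing.
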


\begin{proof}
This is the dual of Lemma~\ref{lemma-augmentation-homotopies} and  generalizes the classical result concerning injective coresolutions. We are however using \emph{chain} complex (as opposed to \emph{cochain} complex) notation and will wish to reference this proof in our work ahead. In particular, in the case that $A \xrightarrow{h} B$ factors through some $W \in \class{Y}$, as $h = \beta\alpha$, we use $\Ext^1_{\cat{A}}(\cok{\eta_A}, W) = 0$ to construct a morphism $t : X_{-1} \xrightarrow{} W$ such that $\alpha = t\eta_A$.
$$\begin{tikzcd}
0 \arrow[d] & &  0 \arrow[d] \\
A \arrow[r, "\alpha"]  \arrow[d, rightarrowtail, "\eta_A" '] & W \arrow[r, "\beta"] & B \arrow[d, "\eta_B"] \\
X_{-1} \arrow[ru,dashed, "t"] \arrow[rr,"h_{-1}"] \arrow[d, "d_{-1}" '] &  & Y_{-1} \arrow[d, "d_{-1}"]  \\
X_{-2} \arrow[rr, "h_{-2}" '] \arrow[rru, dashed, "s_{-1}"] \arrow{d} & & Y_{-2} \arrow{d} \\
\vdots & & \vdots
\end{tikzcd}$$
Letting $\{h\}\cup\{h_n\}_{n\leq-1}$ be any extension of $h$ we go on to show it is null homotopic by constructing morphisms $\{s_n : X_{n-1} \xrightarrow{} Y_n\}_{n\leq-1}$ satisfying:
\begin{equation}\label{equations-homotopy2}
\begin{aligned} 
h   &= 0 + (\beta t)\eta_A\\
h_{-1} &=  \eta_B(\beta t) + s_{-1}d_{-1} \\
h_n &=  d_{n+1}s_{n+1} + s_nd_n \text{ , for all } n \leq -2. \\ 
\end{aligned}
\end{equation}
\end{proof}

\begin{lemma}\label{lemma-full-augmentation-homotopies}
Let $\class{X}$ and $\class{Y}$ be  classes of objects in $\cat{A}$, each  containing 0. Let $\omega = \class{X} \cap \class{Y}$. Assume we have objects $A$ and $B$ along with full augmentations $X_A$ and $Y_B$ as follows:
\begin{itemize}
\item The augmentation $X_{\geq0} \xrightarrow{\epsilon_A} A$ has each $X_n \in \class{X}$ while $A \xrightarrow{\eta_A} X_{\leq{-1}}$ is a coresolution with $\cok{\eta_A} \in \leftperp{\class{Y}}$ and each $\cok{d_n} \in \leftperp{\class{Y}}$ for all $n \leq -1$.
\item  The co-augmentation $B \xrightarrow{\eta_B} Y_{\leq{-1}}$ has each $Y_n \in \class{Y}$ while $Y_{\geq0} \xrightarrow{\epsilon_B} B$ is a resolution with $\ker{\epsilon_B} \in \rightperp{\class{X}}$ and each $\ker{d_n} \in \rightperp{\class{X}}$ for all $n \geq 1$.
\end{itemize}
Then any morphism $f : A \xrightarrow{} B$ extends to a chain map $\{f_n\}_{n\in\Z}  : X_A \xrightarrow{} Y_B$ in the sense that $\epsilon_Bf_0 = f\epsilon_A$ and $\eta_Bf = f_{-1}\eta_A$. Moreover, if $f \sim^{\omega} g$, then any such extension $\{f_n\}$ of $f$ is chain homotopic to any such extension $\{g_n\}$ of $g$. In particular, any extension $\{f_n\}$ of $f$ is unique up to chain homotopy. 
\end{lemma}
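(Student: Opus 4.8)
The plan is to construct the chain map, and then the null-homotopy, separately on the positive half (degrees $\geq 0$) and the negative half (degrees $\leq -1$), and then to glue the two pieces at the splicing spot, where $X_A$ carries the differential $\eta_A\epsilon_A$ and $Y_B$ carries the differential $\eta_B\epsilon_B$. For existence I would observe that the hypotheses on $X_{\geq0}\xrightarrow{\epsilon_A}A$ and $Y_{\geq0}\xrightarrow{\epsilon_B}B$ are exactly those of Lemma~\ref{lemma-augmentation-homotopies} for the class $\class{X}$, so $f$ extends to a chain map $\{f_n\}_{n\geq0}\cup\{f\}$ with $\epsilon_Bf_0=f\epsilon_A$; dually, the hypotheses on $A\xrightarrow{\eta_A}X_{\leq-1}$ and $B\xrightarrow{\eta_B}Y_{\leq-1}$ are exactly those of Lemma~\ref{lemma-dual-augmentation-homotopies} for the class $\class{Y}$, so $f$ extends to a chain map $\{f\}\cup\{f_n\}_{n\leq-1}$ with $\eta_Bf=f_{-1}\eta_A$. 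Splicing these families gives $\{f_n\}_{n\in\Z}$, and the only chain-map identity not already verified is the one in degree $0$, which holds automatically: $(\eta_B\epsilon_B)f_0=\eta_B(f\epsilon_A)=(\eta_Bf)\epsilon_A=(f_{-1}\eta_A)\epsilon_A=f_{-1}(\eta_A\epsilon_A)$.

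For the homotopy statement I would assume $f\sim^{\omega}g$, say $g-f=\beta\alpha$ with $A\xrightarrow{\alpha}W\xrightarrow{\beta}B$ and $W\in\omega=\class{X}\cap\class{Y}$, and put $h=g-f$, $h_n=g_n-f_n$. Since $W\in\class{X}$, the argument in the proof of Lemma~\ref{lemma-augmentation-homotopies} handling a morphism that factors through some $W\in\class{X}$ produces a map $s\colon W\to Y_0$ with $\epsilon_Bs=\beta$ together with homotopy maps $\{s_n\colon X_{n-1}\to Y_n\}_{n\geq1}$ satisfying~\eqref{equations-homotopy1}; dually, since $W\in\class{Y}$, the corresponding argument in the proof of Lemma~\ref{lemma-dual-augmentation-homotopies} produces a map $t\colon X_{-1}\to W$ with $t\eta_A=\alpha$ together with homotopy maps $\{s_n\colon X_{n-1}\to Y_n\}_{n\leq-1}$ satisfying~\eqref{equations-homotopy2}. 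These two partial homotopies interact only through the single bridging map
$$s_0:=st\colon X_{-1}\xrightarrow{}W\xrightarrow{}Y_0,$$
which satisfies $s_0\eta_A=s\alpha$ and $\epsilon_Bs_0=\beta t$. Feeding the first identity into the equation for $h_0$ in~\eqref{equations-homotopy1} and the second into the equation for $h_{-1}$ in~\eqref{equations-homotopy2} rewrites them as $h_0=d_1s_1+s_0(\eta_A\epsilon_A)$ and $h_{-1}=(\eta_B\epsilon_B)s_0+s_{-1}d_{-1}$; together with the remaining equations of~\eqref{equations-homotopy1} and~\eqref{equations-homotopy2} this says precisely that $\{s_n\}_{n\in\Z}$ is a chain homotopy from $\{f_n\}$ to $\{g_n\}$. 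The final claim is the special case $f=g$, where $h=0$ factors through $0\in\omega$.

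I expect the only genuinely non-routine point to be the construction of the bridging map $s_0$, and in particular the recognition that it is here that one must use $W\in\omega=\class{X}\cap\class{Y}$ rather than merely $W\in\class{X}$ or $W\in\class{Y}$: membership $W\in\class{X}$ is what supplies $s$ (via $\Ext^1_{\cat{A}}(W,\ker\epsilon_B)=0$, as $\ker\epsilon_B\in\rightperp{\class{X}}$), while membership $W\in\class{Y}$ is what supplies $t$ (via $\Ext^1_{\cat{A}}(\cok\eta_A,W)=0$, as $\cok\eta_A\in\leftperp{\class{Y}}$). Everything else is bookkeeping with the indices and an appeal to the two preceding lemmas.
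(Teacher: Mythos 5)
Your proposal is correct and follows essentially the same route as the paper's proof: splice the extensions provided by Lemma~\ref{lemma-augmentation-homotopies} and Lemma~\ref{lemma-dual-augmentation-homotopies}, then glue the two partial null-homotopies from equations~(\ref{equations-homotopy1}) and~(\ref{equations-homotopy2}) via the bridging map $s_0 = st$, using $W \in \class{X}$ for $s$ and $W \in \class{Y}$ for $t$, exactly as the paper does.
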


\begin{proof}
Since $X_{\geq0} \xrightarrow{\epsilon_A} A$ and $Y_{\geq0} \xrightarrow{\epsilon_B} B$ satisfy the hypotheses of Lemma~\ref{lemma-augmentation-homotopies}, while $B \xrightarrow{\eta_B} Y_{\leq{-1}}$ and $A \xrightarrow{\eta_A} X_{\leq{-1}}$ satisfy the hypotheses of Lemma~\ref{lemma-dual-augmentation-homotopies}, those lemmas do provide a full chain map $\{f_n\}_{n\in\Z} : X_A \xrightarrow{} Y_B$ extending $f$ in the sense that $\epsilon_Bf_0 = f\epsilon_A$ and $\eta_Bf = f_{-1}\eta_A$. To complete the proof, it is enough to show that $h \sim^{\omega} 0 \Longrightarrow \{h_n\}_{n\in\Z} \sim 0$ (null homotopic). So suppose $h = \beta\alpha$ where $h : A \xrightarrow{\alpha}W\xrightarrow{\beta} B$ and $W \in \omega$. Then as in the proof of Lemma~\ref{lemma-augmentation-homotopies}, we obtain a morphism $s : W \xrightarrow{} Y_0$ and a collection of morphisms $\{s_n : X_{n-1} \xrightarrow{} Y_n\}_{n\geq1}$ collectively satisfying the equations in~(\ref{equations-homotopy1}):
\begin{equation*}
\begin{aligned} 
h_n &=  d_{n+1}s_{n+1} + s_nd_n \text{ , for all } n \geq 1. \\ 
h_0 &=  d_1s_1 + (s\alpha)\epsilon_A \\
h   &=\epsilon_B(s\alpha) + 0
\end{aligned}
\end{equation*}
On the other hand, the proof of Lemma~\ref{lemma-dual-augmentation-homotopies} provides morphisms $t : X_{-1} \xrightarrow{} W$ and $\{s_n : X_{n-1} \xrightarrow{} Y_n\}_{n\leq-1}$ collectively satisfying the equations in~(\ref{equations-homotopy2}):
\begin{equation*}
\begin{aligned} 
h   &= 0 + (\beta t)\eta_A\\
h_{-1} &=  \eta_B(\beta t) + s_{-1}d_{-1} \\
h_n &=  d_{n+1}s_{n+1} + s_nd_n \text{ , for all } n \leq -2. \\ 
\end{aligned}
\end{equation*}
Pasting the corresponding diagrams in those proofs together we get:
$$\begin{tikzcd}
\vdots \arrow[d] &&&& \vdots \arrow[d] \\
X_1  \arrow[dd, "d_1" '] \arrow[rrrr, "h_1"]&  &   &  & Y_1 \arrow[dd, "d_1"] \\
&&&&\\
X_0  \arrow[rrrruu, dashed, "s_1"]  \arrow[rrrr, "h_0"] \arrow[rd, "\epsilon_A"] \arrow[dd, "d_0" ']&  &   &  & Y_0 \arrow[ld, two heads, "\epsilon_B"] \arrow[dd, "d_0"]\\
&  A \arrow[r, "\alpha"] \arrow[ld, rightarrowtail, "\eta_A" '] & W \arrow[rru, dashed, "s"] \arrow[r, "\beta" ']& B \arrow[rd, "\eta_B" ']& \\
X_{-1}  \arrow[rru, dashed, "t" '] \arrow[dd, "d_{-1}" '] \arrow[rrrr, "h_{-1}"]&  &   &  & Y_{-1} \arrow[dd, "d_{-1}"]\\
&&&&\\
X_{-2}  \arrow[rrrruu, dashed, "s_{-1}"] \arrow[rrrr, "h_{-2}"] \arrow[d]&  &   &  & Y_{-2} \arrow[d] \\
\vdots &&&& \vdots \\
\end{tikzcd}$$
We now define $s_0$ to be the composition $s_0 : X_{-1} \xrightarrow{t} W \xrightarrow{s} Y_0$ and we claim that 
$$\{s_n : X_{n-1} \xrightarrow{} Y_n\}_{n\geq1} \cup \{s_0 : X_{-1} \xrightarrow{} Y_0\} \cup \{s_n : X_{n-1} \xrightarrow{} Y_n\}_{n\leq-1}$$ provides the desired null homotopy.
For this it is left to show $$h_0 = d_1s_1 + s_0d_0 \ \ \text{  and  } \ \ h_{-1} = d_0s_0 + s_{-1}d_{-1}.$$
But following the diagram we see $$d_1s_1 + s_0d_0 = d_1s_1 + (st)(\eta_A\epsilon_A) = d_1s_1+s\alpha\epsilon_A = h_0.$$
Similarly, $d_0s_0 + s_{-1}d_{-1} = (\eta_B\epsilon_B)(st) + s_{-1}d_{-1} = \eta_B\beta t + s_{-1}d_{-1} = h_{-1}$.
\end{proof}

\subsection{Example concerning the projective and injective stable categories}
Here we give a nice application of the previous lemma which also alludes to some ideas that will be expanded upon in the rest of the paper. 
Let $\cat{A}$ denote any exact category. Let $\class{I}$ denote the class of all injective objects and $\textnormal{St}_{\class{I}}(\cat{A})$ denote the \textbf{injective stable category} of $\cat{A}$. We recall its definition as follows. First, write $f \sim^{\class{I}} g$ to mean that the morphism $g - f$ factors through an injective object. This is an equivalence relation and we let  $\underline{\Hom}_{\,\class{I}}(A,B)$ denote $\Hom_{\cat{A}}(A,B)$ modulo this relation. Given a morphism $f : A \xrightarrow{} B$ we let $[f]_{\class{I}} \in \underline{\Hom}_{\,\class{I}}(A,B)$ denote the equivalence class of $f : A \xrightarrow{} B$.  So now $\textnormal{St}_{\class{I}}(\cat{A})$ is the category whose objects are the same as those in $\cat{A}$ but its morphism sets are the $\underline{\Hom}_{\,\class{I}}(A,B)$ instead of $\Hom_{\cat{A}}(A,B)$. $\textnormal{St}_{\class{I}}(\cat{A})$ is an additive category and we get an additive functor $\gamma_{\class{I}} : \cat{A} \xrightarrow{} \textnormal{St}_{\class{I}}(\cat{A})$ by $f \mapsto [f]_{\class{I}}$. 

Assume $\cat{A}$ has enough injectives. It is well-known that there is an additive functor $\Sigma : \textnormal{St}_{\class{I}}(\cat{A}) \xrightarrow{} \textnormal{St}_{\class{I}}(\cat{A})$, defined on objects by taking  $\Sigma A$ to be an object fitting into a short exact sequence 
$$0\xrightarrow{} A \xrightarrow{} I \xrightarrow{} \Sigma A \xrightarrow{} 0$$ with $I \in \class{I}$. $\Sigma A$ is well-defined up to a canonical isomorphism in $\textnormal{St}_{\class{I}}(\cat{A})$. Similarly morphisms $f :A \xrightarrow{} B$ induce morphisms of such short exact sequences and we get that any morphism $[f]_{\class{I}} : A \xrightarrow{} B$ in $\textnormal{St}_{\class{I}}(\cat{A})$ induces a canonical  morphism $\Sigma([f]_{\class{I}}) : \Sigma A \xrightarrow{} \Sigma B$.  Hence we get a functor $\Sigma$ that is a well-defined up to a canonical isomorphism.  

Now assume $\cat{A}$ has both enough projectives and and enough injectives. 
Let $$\mathbb{X}_A \equiv (P_{\geq 0} \xrightarrow{\epsilon_A} A \xrightarrow{\eta_A} I_{\leq -1})$$ be a full resolution obtained by pasting a projective resolution of $A$ together with an injective coresolution of $A$. Using Lemma~\ref{lemma-full-augmentation-homotopies}, with $\class{X} = \class{P}$ and $\class{Y} = \class{I}$, one argues that  such a full resolution of $A$ is unique up to a canonical chain homotopy equivalence. Similarly, one argues that the following definition provides a well defined functor, contravariant in $A$ and covariant in $B$.

\begin{definition}\label{definition-left}
$\ell\Ext^n_{\cat{A}}(A,B) = H^n[\Hom_{\cat{A}}(\mathbb{X}_A,B)]$ for all $n \in \Z$.
\end{definition} 

\begin{lemma}\label{lemma-stable-Ext}
Assume $\cat{A}$ has enough projectives and injectives. Then for $n \geq 1$ we have 
$$\ell\Ext^n_{\cat{A}}(A,B) =  \Ext^n_{\cat{A}}(A,B)$$ and for $n \geq 0$ we have 
$$\ell\Ext^{-n}_{\cat{A}}(A,B) =   \underline{\Hom}_{\,\class{I}}(\Sigma^{n}A, B).$$  Moreover, for each $A$, the functor $\ell\Ext^n_{\cat{A}}(A,-)$ descends via $\gamma_{\class{I}} : \cat{A} \xrightarrow{} \textnormal{St}_{\class{I}}(\cat{A})$ to a functor $\ell\Ext^n_{\cat{A}}(A,-) : \textnormal{St}_{\class{I}}(\cat{A}) \xrightarrow{} \textbf{Ab}$, where $\textbf{Ab}$ is abelian groups.
\end{lemma}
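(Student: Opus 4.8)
The plan is to compute the cohomology of the complex $\Hom_{\cat{A}}(\mathbb{X}_A,B)$ in two ranges separately, using the fact that $\mathbb{X}_A$ is built by splicing a projective resolution $P_{\geq 0}\xrightarrow{\epsilon_A}A$ with an injective coresolution $A\xrightarrow{\eta_A}I_{\leq -1}$ via $d_0 = \eta_A\epsilon_A$. In positive cohomological degrees $n\geq 1$, the relevant part of $\Hom_{\cat{A}}(\mathbb{X}_A,B)$ involves only the terms $\Hom_{\cat{A}}(P_i,B)$ for $i\geq 0$, so one checks directly that $H^n$ of this truncated complex agrees with the classical computation of $\Ext^n_{\cat{A}}(A,B)$ from the projective resolution; the only subtlety is to confirm that the splicing differential $d_0 = \eta_A\epsilon_A$ does not disturb the cocycle/coboundary computation in degrees $\geq 1$ (it only affects degree $0$ and degree $-1$), which follows because $\eta_A$ is a monomorphism, so $\ker(\Hom(d_0,B)) = \ker(\Hom(\epsilon_A,B))$ exactly matches what the one-sided resolution gives. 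For the nonpositive range, one interprets $\Hom_{\cat{A}}(\mathbb{X}_A,B)$ in degrees $\leq 0$ as $\Hom_{\cat{A}}(I_{\leq -1},B)$ together with the degree-$0$ correction, and identifies the syzygy-type truncations with $\Sigma^n A$.

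Next I would make the identification $\ell\Ext^{-n}_{\cat{A}}(A,B) \cong \underline{\Hom}_{\class{I}}(\Sigma^n A, B)$ precise. The key input is the short exact sequence $0\to A\to I_{-1}\to \Sigma A\to 0$ coming from the first step of the injective coresolution, together with the standard fact (recalled just before the lemma) that $\Sigma A$ is well-defined in $\textnormal{St}_{\class{I}}(\cat{A})$. I expect to proceed by dimension shifting: the coresolution of $A$, truncated appropriately, becomes a coresolution of $\Sigma^n A$, so it suffices to treat $n=0$ and iterate. For $n=0$ the claim $\ell\Ext^0_{\cat{A}}(A,B) \cong \underline{\Hom}_{\class{I}}(A,B)$ amounts to: a cocycle in degree $0$ is a map $P_0\to B$ killed by precomposition with $d_1$ and (via the degree $-1$ piece) extending along $\epsilon_A$ to a map $A\to B$, modulo those maps $A\to B$ that factor through $I_{-1}$, i.e. through an injective. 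Here I would invoke Lemma~\ref{lemma-augmentation-homotopies} (with $\class{X}=\class{P}$) and Lemma~\ref{lemma-dual-augmentation-homotopies} (with $\class{Y}=\class{I}$), or more efficiently Lemma~\ref{lemma-full-augmentation-homotopies}, to see that chain maps $\mathbb{X}_A\to B$ (viewing $B$ as a complex concentrated in degree $0$, or rather using the universal property directly) up to homotopy are exactly $\Hom_{\cat{A}}(A,B)$ modulo maps factoring through $\class{I}$.

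For the final assertion — that $\ell\Ext^n_{\cat{A}}(A,-)$ descends along $\gamma_{\class{I}}$ — I would argue that if $g : B\to B'$ factors through an injective object $J$, say $g = \beta\alpha$ with $\alpha : B\to J$, $\beta : J\to B'$, then the induced map $\Hom_{\cat{A}}(\mathbb{X}_A,B)\to \Hom_{\cat{A}}(\mathbb{X}_A,B')$ is null-homotopic as a map of complexes, hence induces zero on all cohomology. This is where $J\in\class{I}$ is used essentially: because each term of $\mathbb{X}_A$ in degrees $\leq -1$ is injective but, more to the point, because the complex $\mathbb{X}_A$ is exact (a genuine full resolution) and $J$ is injective, the composite $\mathbb{X}_A\xrightarrow{}B\xrightarrow{\alpha}J$ admits a contracting-type lift. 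Concretely, $\Hom_{\cat{A}}(\mathbb{X}_A, J)$ is an exact complex of abelian groups when $J$ is injective and $\mathbb{X}_A$ is exact — here I would use that $\mathbb{X}_A$ being exact with the stated $\Ext$-vanishing makes $\Hom_{\cat{A}}(\mathbb{X}_A,J)$ exact — so the map $\Hom(\mathbb{X}_A,B)\to\Hom(\mathbb{X}_A,J)\to\Hom(\mathbb{X}_A,B')$ factors through an exact complex in a way that forces it to be null-homotopic. Alternatively, and perhaps more cleanly, one observes that a homotopy for $\alpha_* : \Hom(\mathbb{X}_A,B)\to\Hom(\mathbb{X}_A,J)$ is produced directly by lifting $\alpha\epsilon_A\colon P_0\to J$ and $\alpha$ itself against the injectivity of $J$ along the exact complex $\mathbb{X}_A$, which is precisely the mechanism of Lemma~\ref{lemma-augmentation-homotopies} and Lemma~\ref{lemma-dual-augmentation-homotopies} applied with target $J\in\class{I}$.

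The main obstacle I anticipate is bookkeeping the splicing map $d_0 = \eta_A\epsilon_A$ correctly when matching the two-sided cohomology to the one-sided classical $\Ext$ in degree $n=1$ and to $\underline{\Hom}_{\class{I}}$ in degree $0$ — in particular making sure the degree $0$ and degree $-1$ terms of $\Hom_{\cat{A}}(\mathbb{X}_A,B)$ interact so that the degree-$0$ cohomology picks up exactly the maps $A\to B$ modulo maps through $\class{I}$, rather than something slightly off by the image or kernel of $\Hom(\eta_A,B)$. Once the identification $H^0 = \underline{\Hom}_{\class{I}}(A,B)$ is nailed down, the dimension-shifting argument for $n\geq 1$ and the descent statement are routine given Lemmas~\ref{lemma-augmentation-homotopies}--\ref{lemma-full-augmentation-homotopies}.
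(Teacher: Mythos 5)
Your proposal is correct and takes essentially the same route as the paper: the degrees $n\geq 1$ reduce to the classical projective-resolution computation of $\Ext^n_{\cat{A}}$, the nonpositive degrees are identified with $\underline{\Hom}_{\,\class{I}}(\Sigma^n A,B)$ by the same cocycle/coboundary analysis the paper points to (the argument inside the proof of Theorem~\ref{them-lExt and rExt}), and your descent argument is exactly the paper's: for $g=\beta\alpha$ through $J\in\class{I}$, the map $\Hom_{\cat{A}}(\mathbb{X}_A,g)$ factors through the exact complex $\Hom_{\cat{A}}(\mathbb{X}_A,J)$. One small correction: that factorization forces the induced map on cohomology to be zero, which is all you need, but it does not by itself make the composite null-homotopic, so you should drop that stronger claim (or justify it separately via the lifting lemmas, as in your alternative).
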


\begin{proof}
The first statement is just the standard fact that we can compute $\Ext^n_{\cat{A}}$ using projective resolutions. We leave it to the reader to verify the second statement. However, we point out that a more general argument of the same nature appears ahead within the proof of Theorem~\ref{them-lExt and rExt}.

We describe two ways to see that we get a functor  $\ell\Ext^n_{\cat{A}}(A,-) : \textnormal{St}_{\class{I}}(\cat{A}) \xrightarrow{} \textbf{Ab}$. First, one can now proceed to show that the Yoneda Ext functor $\Ext^n_{\cat{A}}(A,-) : \cat{A} \xrightarrow{} \textnormal{Ab}$ descends to an additive functor $\Ext^n_{\cat{A}}(A,-) : \textnormal{St}_{\class{I}}(\cat{A}) \xrightarrow{} \textnormal{Ab}$ by factoring through $\gamma_{\class{I}} : \cat{A} \xrightarrow{} \textnormal{St}_{\class{I}}(\cat{A})$. 
This is a special case of Proposition~\ref{descending-Ext-functor}(1), by taking $\class{M} = (\cat{A},\cat{A},\class{I})$, and the last paragraph of the proof of that proposition indicates the proof of this special case. A second approach is as follows. Suppose a morphism $g : B \xrightarrow{} B'$ factors as $g = (B \xrightarrow{\alpha} J \xrightarrow{\beta} B')$ where $J \in \class{I}$. Then for a full resolution $\mathbb{X}_A$ as in Definition~\ref{definition-left}, the cochain map $\Hom_{\cat{A}}(\mathbb{X}_A , g)$ factors as 
$$\Hom_{\cat{A}}(\mathbb{X}_A , B) \xrightarrow{\Hom_{\cat{A}}(\mathbb{X}_A , \alpha)} \Hom_{\cat{A}}(\mathbb{X}_A , J) \xrightarrow{\Hom_{\cat{A}}(\mathbb{X}_A , \beta)} \Hom_{\cat{A}}(\mathbb{X}_A , B').$$
But $\Hom_{\cat{A}}(\mathbb{X}_A , J)$ is exact since $J$ is injective and it follows that for each $n$ the morphism $\ell\Ext^n_{\cat{A}}(A,g)$ is 0. It follows that the functor $\ell\Ext^n_{\cat{A}}(A, -)$ is compatible with $\gamma_{\class{I}} : \cat{A} \xrightarrow{} \textnormal{St}_{\class{I}}(\cat{A})$.
\end{proof}

\begin{proposition}
Assume $\cat{A}$ has enough projectives and injectives and $Y \in \cat{A}$. Then each short exact sequence $0 \xrightarrow{} A \xrightarrow{} B \xrightarrow{} C \xrightarrow{} 0$ induces a long exact sequence $$ \cdots  \xrightarrow{}   \underline{\Hom}_{\,\class{I}}(\Sigma^2 A,Y)   \xrightarrow{}   \underline{\Hom}_{\,\class{I}}(\Sigma C,Y)  \xrightarrow{}  \underline{\Hom}_{\,\class{I}}(\Sigma B,Y) \xrightarrow{}  \underline{\Hom}_{\,\class{I}}(\Sigma A,Y) \xrightarrow{}  $$  

 $$ \underline{\Hom}_{\,\class{I}}(C,Y) \xrightarrow{} \underline{\Hom}_{\,\class{I}}(B,Y)  \xrightarrow{} \underline{\Hom}_{\,\class{I}}(A,Y) \xrightarrow{}   \Ext^1_{\cat{A}}(C,Y)   \xrightarrow{}  \Ext^1_{\cat{A}}(B,Y) \xrightarrow{} $$

 $$ \Ext^1_{\cat{A}}(A,Y)   \xrightarrow{}  \Ext^2_{\cat{A}}(C,Y)  \xrightarrow{}  \Ext^2_{\cat{A}}(B,Y)    \xrightarrow{} \Ext^2_{\cat{A}}(A,Y)  \xrightarrow{}   \Ext^3_{\cat{A}}(C,Y)  \xrightarrow{} \cdots $$
\end{proposition}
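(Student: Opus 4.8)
The plan is to reduce the statement to a standard long exact sequence in cohomology by choosing compatible full resolutions for the three objects $A$, $B$, $C$, then invoking Lemma~\ref{lemma-stable-Ext} to reinterpret the resulting cohomology groups. First I would build a full resolution $\mathbb{X}_B$ of $B$ in the sense preceding Definition~\ref{definition-left}, and then use the horseshoe lemma (both in its classical projective form, applied to $P_{\geq 0}\to A$ and $P_{\geq 0}\to C$, and its dual injective form, applied to $A\to I_{\leq -1}$ and $C\to I_{\leq -1}$) to produce full resolutions $\mathbb{X}_A$ and $\mathbb{X}_C$ fitting into a degreewise-split short exact sequence of chain complexes $0\to \mathbb{X}_A\to \mathbb{X}_B\to \mathbb{X}_C\to 0$ lying over the given short exact sequence $0\to A\to B\to C\to 0$. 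The key point is that in every degree $n\in\Z$ the term $\mathbb{X}_{B,n}$ is either projective (for $n\ge 0$) or injective (for $n\le -1$), so both $0\to \mathbb{X}_{A,n}\to \mathbb{X}_{B,n}\to \mathbb{X}_{C,n}\to 0$ and its image under $\Hom_{\cat{A}}(-,Y)$ are short exact; the splicing map $d_0 = \eta\epsilon$ is automatically compatible because the horseshoe constructions on the two halves are glued along the given sequence $0\to A\to B\to C\to 0$ in degree zero.

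Having this, apply $\Hom_{\cat{A}}(-,Y)$ to get a short exact sequence of cochain complexes
$$0 \xrightarrow{} \Hom_{\cat{A}}(\mathbb{X}_C,Y) \xrightarrow{} \Hom_{\cat{A}}(\mathbb{X}_B,Y) \xrightarrow{} \Hom_{\cat{A}}(\mathbb{X}_A,Y) \xrightarrow{} 0,$$
and take the associated long exact cohomology sequence. By Definition~\ref{definition-left} the $n$-th cohomology of $\Hom_{\cat{A}}(\mathbb{X}_?,Y)$ is $\ell\Ext^n_{\cat{A}}(?,Y)$, and Lemma~\ref{lemma-stable-Ext} identifies this with $\Ext^n_{\cat{A}}(?,Y)$ for $n\ge 1$ and with $\underline{\Hom}_{\,\class{I}}(\Sigma^{-n}?,Y)$ for $n\le 0$ (reading $\Sigma^0$ as the identity). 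Substituting these identifications into the long exact sequence, and checking that the connecting map $\ell\Ext^0\to\ell\Ext^1$ becomes the expected map $\underline{\Hom}_{\,\class{I}}(A,Y)\to\Ext^1_{\cat{A}}(C,Y)$, yields exactly the displayed sequence. Naturality of the identification in Lemma~\ref{lemma-stable-Ext}, together with the fact that $\mathbb{X}_A\to\mathbb{X}_B\to\mathbb{X}_C$ lifts the given maps $A\to B\to C$, guarantees that the maps in the long exact sequence are the natural ones induced by $A\to B\to C$.

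I expect the main obstacle to be the horseshoe construction in the doubly-infinite setting and, more precisely, verifying that the projective half and the injective half can be chosen so that they splice compatibly — i.e.\ that the degree-zero data produced by the projective horseshoe lemma and the degree-$(-1)$ data produced by the injective horseshoe lemma are glued along one and the same short exact sequence $0\to A\to B\to C\to 0$, so that $d_0$ on $\mathbb{X}_B$ restricts and corestricts correctly. This is a matter of running the two horseshoe arguments with the shared input $0\to A\to B\to C\to 0$ rather than a genuine difficulty, but it is where care is needed; everything downstream is the formal machinery of the snake lemma applied to a short exact sequence of cochain complexes. One could alternatively avoid the horseshoe entirely by citing the fact (provable from Lemma~\ref{lemma-full-augmentation-homotopies}) that $\ell\Ext^n_{\cat{A}}(-,Y)$ is, up to natural isomorphism, independent of the chosen full resolution and is a $\delta$-functor on the relevant stable category; I would mention this as a remark but carry out the horseshoe argument as the primary proof since it is self-contained.
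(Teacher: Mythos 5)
Your proposal is correct and follows essentially the same route as the paper: apply both the projective and injective horseshoe lemmas to the given short exact sequence to get a degreewise split short exact sequence of full resolutions, note that degreewise splitness keeps it exact after $\Hom_{\cat{A}}(-,Y)$, and translate the resulting long exact sequence in $\ell\Ext^n_{\cat{A}}$ via Lemma~\ref{lemma-stable-Ext}. The only slip is directional: the horseshoe lemma takes the chosen (co)resolutions of $A$ and $C$ as input and produces the compatible one for the middle term $B$, rather than producing $\mathbb{X}_A$ and $\mathbb{X}_C$ from a previously chosen $\mathbb{X}_B$ as your opening sentence suggests.
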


\begin{proof}
Applying the usual horseshoe lemmas (both the projective and injective versions) leads us to a degreewise split short exact sequence
of chain complexes as indicated  below.
$$\begin{CD}
     0   @>>>    P_{\geq 0}   @>>>   P_{\geq 0}    @>>>    P_{\geq 0}    @>>>    0 \\
    @.        @V\eta_A\circ \epsilon_AVV     @V\eta_B\circ \epsilon_BVV      @V\eta_C\circ \epsilon_CVV   @.\\
        0   @>>>    I_{\leq -1}   @>>>   I_{\leq -1}    @>>>   I_{\leq -1}'    @>>>    0 \\
    \end{CD}$$
Since it is degreewise split it remains exact after applying any $\Hom_{\cat{A}}(- , Y)$. Then 
the fundamental lemma of homological algebra leads us to a long exact sequence in $\ell\Ext^n_{\cat{A}}$ which according to Lemma~\ref{lemma-stable-Ext} proves the result.
\end{proof}  

Everything above can be dualized. So $\class{I}$ gets replaced with the class $\class{P}$ of all projective objects to obtain $\textnormal{St}_{\class{P}}(\cat{A})$, the \textbf{projective stable category} of $\cat{A}$. Then instead of the full resolutions $\mathbb{X}_A$ we use cochain complex notation, setting $$\mathbb{X}^A \equiv (\class{P}^{\leq -1} \xrightarrow{\epsilon_A} A \xrightarrow{\eta_A} \class{I}^{\geq 0})$$ to obtain a full coresolution of $A$ obtained by pasting a projective resolution of $A$ together with an injective coresolution of $A$. (See Section~\ref{sec-cochain-complexes} for the language and notation we are using here.) We then replace Definition~\ref{definition-left} with $$r\Ext^n_{\cat{A}}(A,B) = H^n[\Hom_{\cat{A}}(A,\mathbb{X}^B)]$$
and replace $\Sigma$ with the functor $\Omega : \textnormal{St}_{\class{P}}(\cat{A}) \xrightarrow{} \textnormal{St}_{\class{P}}(\cat{A})$ obtained by using enough projectives.
We then get that each short exact sequence $0 \xrightarrow{} A \xrightarrow{} B \xrightarrow{} C \xrightarrow{} 0$, and object $X \in \cat{A}$, induces a long exact sequence $$ \cdots  \xrightarrow{}   \underline{\Hom}_{\,\class{P}}(X, \Omega^2 C)   \xrightarrow{}   \underline{\Hom}_{\,\class{P}}(X, \Omega A)  \xrightarrow{}  \underline{\Hom}_{\,\class{P}}(X, \Omega B) \xrightarrow{}  \underline{\Hom}_{\,\class{P}}(X, \Omega C) \xrightarrow{}  $$  
 $$ \underline{\Hom}_{\,\class{P}}(X, A) \xrightarrow{} \underline{\Hom}_{\,\class{P}}(X,B)  \xrightarrow{} \underline{\Hom}_{\,\class{P}}(X, C) \xrightarrow{}   \Ext^1_{\cat{A}}(X,A)   \xrightarrow{}  \Ext^1_{\cat{A}}(X, B) \xrightarrow{} $$ 
 $$ \Ext^1_{\cat{A}}(X, C)   \xrightarrow{}  \Ext^2_{\cat{A}}(X, A)  \xrightarrow{}  \Ext^2_{\cat{A}}(X, B)    \xrightarrow{} \Ext^2_{\cat{A}}(X, C)  \xrightarrow{}   \Ext^3_{\cat{A}}(X,A)  \xrightarrow{} \cdots $$

\section{The functors $\ell\Ext^n_{\mathfrak{M}}$}\label{sec-canonical-resolutions}

Throughout this section, $\mathfrak{M} = (\class{Q},\class{W},\class{R})$ denotes an hereditary abelian model structure on an abelian category $\cat{A}$, and we let $\omega$ denote its core $\omega :=\class{Q}\cap\class{W}\cap\class{R}$. Again, $\cat{A}$ can even be an exact category and $\mathfrak{M}$ an exact model structure in the sense of~\cite{gillespie-exact model structures}; see Section~\ref{sec-exact-cats}.

In the Introduction we defined what we mean by an $\mathfrak{M}$-resolution of an object $A \in \cat{A}$. In the language of the previous section, an \textbf{$\mathfrak{M}$-resolution}, $W_A$, is a full resolution 
$$W_A \equiv (W_{n\geq0} \xrightarrow{\epsilon_A} A \xrightarrow{\eta_A} W_{n\leq-1})$$
where $W_{n\geq0} \xrightarrow{\epsilon_A} A$ is a resolution constructed by using enough projectives of $(\class{Q}_{\class{W}},\class{R})$, and $A \xrightarrow{\eta_A} W_{n\leq-1}$ is a coresolution constructed by using enough injectives of $(\class{Q},\class{R}_{\class{W}})$.

\begin{theorem}[Comparison Theorem]\label{them-comparison-theorem}
Let $W_A$ and $W_B$ denote any $\mathfrak{M}$-resolutions of some objects $A, B \in \cat{A}$. Then for any morphism $f : A \xrightarrow{} B$, there exists a chain map $\{f_n\}_{n\in\Z} : W_A \xrightarrow{} W_B$ extending $f$ in the sense that $\epsilon_Bf_0 = f\epsilon_A$ and $\eta_Bf = f_{-1}\eta_A$. If $f \sim^{\omega} g$, then any such extension $\{f_n\}$ of $f$ is chain homotopic to any such extension $\{g_n\}$ of $g$. In particular, any extension $\{f_n\}$ of $f$ is unique up to chain homotopy.   
\end{theorem}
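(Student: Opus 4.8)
The plan is to deduce the statement directly from Lemma~\ref{lemma-full-augmentation-homotopies}: essentially all the work has been done there, and what remains is to identify its two auxiliary classes $\class{X}$ and $\class{Y}$ with the trivial classes of $\mathfrak{M}$ and to check that the hypotheses line up.

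Concretely, I would set $\class{X} := \class{Q}_{\class{W}}$ and $\class{Y} := \class{R}_{\class{W}}$. Both contain $0$, and $\class{X} \cap \class{Y} = \class{Q}\cap\class{W}\cap\class{R} = \omega$, which is exactly the class $\omega$ appearing in Lemma~\ref{lemma-full-augmentation-homotopies} and governing the homotopy relation $\sim^{\omega}$ in the present statement. Because $\mathfrak{Q} = (\class{Q},\class{R}_{\class{W}})$ and $\mathfrak{R} = (\class{Q}_{\class{W}},\class{R})$ are cotorsion pairs, we have $\leftperp{(\class{R}_{\class{W}})} = \class{Q}$ and $\rightperp{(\class{Q}_{\class{W}})} = \class{R}$. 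Now feed the $\mathfrak{M}$-resolutions in as the data the lemma requires. Take $X_A := W_A$: its nonnegative part $W_{n\geq0} \xrightarrow{\epsilon_A} A$ is a resolution (in particular an augmentation) with every term in $\class{Q}_{\class{W}} = \class{X}$, while its nonpositive part $A \xrightarrow{\eta_A} W_{n\leq-1}$ is a coresolution whose cokernels $\cok{\eta_A}$ and $\cok{d_n}$ (for $n \leq -1$) all lie in $\class{Q} = \leftperp{\class{Y}}$. Take $Y_B := W_B$: its nonpositive part $B \xrightarrow{\eta_B} W_{n\leq-1}$ is a coresolution (in particular a co-augmentation) with every term in $\class{R}_{\class{W}} = \class{Y}$, while its nonnegative part $W_{n\geq0} \xrightarrow{\epsilon_B} B$ is a resolution whose kernels $\ker{\epsilon_B}$ and $\ker{d_n}$ (for $n \geq 1$) all lie in $\class{R} = \rightperp{\class{X}}$. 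These are precisely the hypotheses of Lemma~\ref{lemma-full-augmentation-homotopies}, so it produces a chain map $\{f_n\}_{n\in\Z} : W_A \to W_B$ extending $f$ in the stated sense, and it gives that $f \sim^{\omega} g$ implies any extension of $f$ is chain homotopic to any extension of $g$; uniqueness up to chain homotopy is then the special case $g = f$.

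There is no real obstacle here, since the content has already been packaged into Lemma~\ref{lemma-full-augmentation-homotopies}; the one subtlety worth stating carefully is its built-in asymmetry. For the source $X_A$ one needs nonnegative \emph{terms} in $\class{X}$ and nonpositive \emph{cokernels} in $\leftperp{\class{Y}}$, whereas for the target $Y_B$ one needs nonpositive \emph{terms} in $\class{Y}$ and nonnegative \emph{kernels} in $\rightperp{\class{X}}$. Since in any $\mathfrak{M}$-resolution the nonnegative terms always lie in $\class{Q}_{\class{W}}$ and the nonpositive terms always lie in $\class{R}_{\class{W}}$, the assignment $\class{X} = \class{Q}_{\class{W}}$, $\class{Y} = \class{R}_{\class{W}}$ is forced, and then the two cotorsion-pair identities above are exactly what convert the ``kernels in $\class{R}$, cokernels in $\class{Q}$'' clauses in the definition of an $\mathfrak{M}$-resolution into the orthogonality clauses demanded by the lemma. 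I would also note that only $\Ext^1$-orthogonality enters, so the hereditary hypothesis on $\mathfrak{M}$ is not actually used in this theorem; it becomes relevant only later, for instance in Lemma~\ref{lemma-can-resolutions-cofibrant}.
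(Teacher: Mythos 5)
Your proposal is correct and is essentially the paper's own proof: the paper likewise deduces the theorem immediately from Lemma~\ref{lemma-full-augmentation-homotopies} by taking $\class{X} = \class{Q}_{\class{W}}$ and $\class{Y} = \class{R}_{\class{W}}$, with your verification of the hypotheses simply making explicit what the paper leaves to the reader. Your side remark that only $\Ext^1$-orthogonality is used, so the hereditary hypothesis is not needed here, also matches the paper, which first invokes heredity only in Lemma~\ref{lemma-can-resolutions-cofibrant}.
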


\begin{proof}
It follows immediately from Lemma~\ref {lemma-full-augmentation-homotopies} by taking $\class{X} = \class{Q}_{\class{W}}$ (the trivially cofibrant objects) and $\class{Y} = \class{R}_{\class{W}}$ (the trivially fibrant objects).
\end{proof}

\begin{remark}
Note that the above Comparison Theorem doesn't require the full hypotheses that $W_A$ and $W_B$ be $\mathfrak{M}$-resolutions. We only need:

(i) $W_{\geq0}\xrightarrow{\epsilon_A} A$ is an augmentation with $W_n$ trivially cofibrant for all $n\geq0$.

(ii) $A \xrightarrow{\eta_A} W_{\leq-1}$ is a coresolution with $\cok{\eta_A}$ and all $\cok{d_n}$ cofibrant for $n\leq-1$.

(iii) $W_{\geq0} \xrightarrow{\epsilon_B} B$ is a resolution with $\ker{\epsilon_B}$ and all $\ker{d_n}$ fibrant for all $n\geq 1$.

(iv) $B \xrightarrow{\eta_B} W_{\leq-1}$ is a co-augmentation with $W_n$ trivially fibrant for all $n \leq-1$. 
\end{remark}

Recall that $K(\cat{A})$ denotes the chain homotopy category of $\cat{A}$. Its objects are chain complexes and its morphisms are chain maps modulo the chain homotopy relation. A chain map $\alpha$ is called a \emph{chain homotopy equivalence} if $[\alpha]$ is an isomorphism in $K(\cat{A})$.

\begin{corollary}\label{cor-K(A)}
An $\mathfrak{M}$-resolution, $W_A$, is unique up to a canonical isomorphism in $K(\cat{A})$. The association $A \mapsto W_A$ defines a functor $\cat{A} \xrightarrow{}  K(\cat{A})$ which descends to a functor $\textnormal{St}_{\omega}(\cat{A})\xrightarrow{}  K(\cat{A})$ by factoring through $\gamma_{\omega} : \cat{A} \xrightarrow{} \textnormal{St}_{\omega}(\cat{A})$.
\end{corollary}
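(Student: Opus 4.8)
The plan is to deduce the corollary formally from the Comparison Theorem (Theorem~\ref{them-comparison-theorem}), which already packages all of the homotopical input. First I would treat uniqueness. Given two $\mathfrak{M}$-resolutions $W_A$ and $W_A'$ of the same object $A$, I would apply Theorem~\ref{them-comparison-theorem} to $\mathrm{id}_A$ in both directions, obtaining chain maps $\phi : W_A \xrightarrow{} W_A'$ and $\psi : W_A' \xrightarrow{} W_A$ extending $\mathrm{id}_A$. Since $\psi\phi$ and $\mathrm{id}_{W_A}$ both extend $\mathrm{id}_A$ and trivially $\mathrm{id}_A \sim^{\omega} \mathrm{id}_A$, the uniqueness clause of the theorem forces $\psi\phi$ to be chain homotopic to $\mathrm{id}_{W_A}$; symmetrically $\phi\psi$ is chain homotopic to $\mathrm{id}_{W_A'}$. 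Hence $[\phi]$ is an isomorphism in $K(\cat{A})$, and it is the \emph{canonical} such isomorphism because any two chain maps $W_A \xrightarrow{} W_A'$ extending $\mathrm{id}_A$ are chain homotopic, again by Theorem~\ref{them-comparison-theorem}.

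Next I would build the functor. Fix, for each object $A$, a choice of $\mathfrak{M}$-resolution $W_A$ (possible since both associated cotorsion pairs are complete). For a morphism $f : A \xrightarrow{} B$, Theorem~\ref{them-comparison-theorem} supplies a chain map $W_A \xrightarrow{} W_B$ extending $f$, unique up to chain homotopy, so its class $W_f \in K(\cat{A})(W_A, W_B)$ is well defined. Functoriality is then immediate from the same uniqueness statement: $\mathrm{id}_{W_A}$ extends $\mathrm{id}_A$, so $W_{\mathrm{id}_A} = \mathrm{id}_{W_A}$; and if $g : B \xrightarrow{} C$, the composite of an extension of $f$ with an extension of $g$ extends $gf$, so $W_g W_f = W_{gf}$. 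Additivity $W_{f+g} = W_f + W_g$ holds because sums of extensions are extensions. A different global system of choices produces a naturally isomorphic functor via the canonical isomorphisms from the previous paragraph.

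Finally, for the descent to $\textnormal{St}_{\omega}(\cat{A})$, I would invoke the second conclusion of Theorem~\ref{them-comparison-theorem}: if $f \sim^{\omega} g$, then any extension of $f$ is chain homotopic to any extension of $g$, hence $W_f = W_g$ in $K(\cat{A})$. Thus the additive functor $\cat{A} \xrightarrow{} K(\cat{A})$ annihilates every morphism factoring through an object of $\omega$, so by the universal property of the additive quotient it factors (uniquely) through $\gamma_{\omega} : \cat{A} \xrightarrow{} \textnormal{St}_{\omega}(\cat{A})$.

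I do not expect a genuine obstacle here, as the argument is entirely formal once Theorem~\ref{them-comparison-theorem} is available. The only points needing care are bookkeeping ones: remembering that ``the functor'' depends on an initial global choice of resolutions and so is canonical only up to natural isomorphism, and checking at each step — uniqueness of $W_A$, well-definedness of $W_f$, functoriality, and descent — that the relevant extension is unique up to chain homotopy, which is precisely what the Comparison Theorem provides.
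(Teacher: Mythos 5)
Your proposal is correct and follows essentially the same route as the paper: the paper's proof likewise extends $1_A$ in both directions, uses the uniqueness-up-to-homotopy clause of the Comparison Theorem to see the two extensions are inverse chain homotopy equivalences, and then cites the same standard arguments for functoriality and the descent through $\gamma_{\omega}$. Your write-up simply makes explicit the bookkeeping (choice of resolutions, $W_{gf}=W_gW_f$, additivity, and the $f\sim^{\omega}g$ step) that the paper leaves to the reader.
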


\begin{proof}
This follows from standard arguments using Theorem~\ref{them-comparison-theorem}. For example, if $W_A$ and $W'_A$ are two different $\mathfrak{M}$-resolutions of $A$, then $1_A$ uniquely extends to a morphism $[\alpha]: W_A \xrightarrow{} W'_A$ in $K(\cat{A})$. On the other hand, it must have a reversal $[\beta]: W'_A \xrightarrow{} W_A$ and $\alpha$ and $\beta$ are inverse chain homotopy equivalences since we must have $\alpha\beta \sim 1_{W'_A}$ and $\beta\alpha \sim 1_{W_A}$. So the choice of object $W_A$ is unique up to a canonical isomorphism in $K(\cat{A})$ and now one can go on to use similar arguments to verify that $A \mapsto W_A$ is a functor. 
\end{proof}

The most important case will be when we take an $\mathfrak{M}$-resolution $W_A$ of a cofibrant object $A$. They are characterized in the following lemma. 

\begin{lemma}\label{lemma-can-resolutions-cofibrant}
The following are equivalent for an object $A \in \cat{A}$.
\begin{enumerate}
\item $A$ is cofibrant (resp. trivially cofibrant).
\item Every cycle of any $\mathfrak{M}$-resolution $W_A$ is cofibrant (resp. trivially cofibrant).
\item $\Hom_{\cat{A}}(W_A,R)$ remains an exact complex for any $\mathfrak{M}$-resolution $W_A$  and any trivially fibrant $R \in \class{R}_{\class{W}}$ (resp. fibrant $R \in \class{R}$). 
\item Every component (resp. every cycle) of any $\mathfrak{M}$-resolution $W_A$ is trivially cofibrant.
\end{enumerate}
\end{lemma}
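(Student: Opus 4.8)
The plan is to prove the implications in the cycle
$(1)\Rightarrow(2)\Rightarrow(3)\Rightarrow(1)$, and separately $(1)\Rightarrow(4)\Rightarrow(1)$, handling the parenthetical ``trivially'' versions in parallel. I will work with an $\mathfrak{M}$-resolution
$$W_A \equiv (W_{\geq 0}\xrightarrow{\epsilon_A} A \xrightarrow{\eta_A} W_{\leq -1}),$$
recalling that by construction each $W_n$ for $n\geq 0$ is trivially cofibrant, each $W_n$ for $n\leq -1$ is trivially fibrant, each $\ker d_n = \ker\epsilon_A$-type cycle for $n\geq 1$ is fibrant, $\ker d_0 = \ker\epsilon_A$ is fibrant, $\cok d_0 = \cok\eta_A$ is cofibrant, and each $\cok d_n$ for $n\leq -1$ is cofibrant.

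For $(1)\Rightarrow(2)$: since $\class{Q}$ and $\class{Q}_{\class{W}}$ are the left-hand classes of hereditary cotorsion pairs, they are closed under kernels of epimorphisms between their members and under cokernels of monomorphisms with kernel in the class (this is the ``main practical consequence of the hereditary condition'' cited in Section~\ref{sec-prelims}), and also under extensions. Reading $W_A$ from the right: the cycles $Z_n$ for $n\leq -1$ sit in short exact sequences $0\to Z_{n}\to W_n \to Z_{n-1}\to 0$ with $W_n$ trivially fibrant; but more usefully, $Z_{-1}=\ker\epsilon_A$ is already given fibrant and $\cok d_0 = \cok\eta_A$ is given cofibrant, and the positive-degree cycles are the fibrant kernels. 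So I need to upgrade ``fibrant'' to ``cofibrant'' (resp.\ ``trivially cofibrant'') for the positive cycles and ``cofibrant'' to ``cofibrant'' is automatic on the negative side once $A$ itself is cofibrant. Concretely: if $A$ is cofibrant, then $0\to \ker\epsilon_A \to W_0 \to A\to 0$ exhibits $\ker\epsilon_A$ as the kernel of an epimorphism between cofibrant objects (with $\ker\epsilon_A$ already fibrant), hence $\ker\epsilon_A \in \class{Q}$, and being both fibrant and cofibrant, one then chases through $W_{\geq 1}$ inductively using closure under kernels of epis to get every positive cycle in $\class{Q}$; the thickness of $\class{W}$ then actually forces each such cycle into $\class{W}$ as well (it is an extension/kernel within acyclic data), giving (4) simultaneously. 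On the negative side, $A$ cofibrant plus $\cok\eta_A$ cofibrant plus closure of $\class{Q}$ under extensions along the conflations $0\to Z_n \to W_n \to Z_{n-1}\to 0$ pushes cofibrancy downward; here one uses that $W_n$ ($n\leq-1$) is trivially fibrant, hence in $\class{W}$, to run the thickness argument for the ``trivially'' variant.

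For $(2)\Rightarrow(3)$: break $W_A$ into its conflations $0\to Z_n\to W_n\to Z_{n-1}\to 0$. Applying $\Hom_{\cat{A}}(-,R)$ to each, the connecting maps assemble into the coboundary of $\Hom_{\cat{A}}(W_A,R)$, and exactness of that total complex is equivalent to $\Ext^1_{\cat{A}}(Z_{n-1},R)=0$ for all $n$ together with surjectivity of each $\Hom(W_n,R)\to\Hom(Z_n,R)$ — all of which follow from each $Z_n$ being cofibrant (resp.\ trivially cofibrant) and $R$ trivially fibrant (resp.\ fibrant), since $(\class{Q},\class{R}_{\class{W}})$ and $(\class{Q}_{\class{W}},\class{R})$ are cotorsion pairs. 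This is the routine ``dimension-shifting'' computation. For $(3)\Rightarrow(1)$: take a short exact sequence $0\to A'\to W_0\to A\to 0$ coming from the augmentation; I claim exactness of $\Hom_{\cat{A}}(W_A,R)$ forces $\Ext^1_{\cat{A}}(A,R)=0$ for all trivially fibrant (resp.\ fibrant) $R$, which by the defining orthogonality of the cotorsion pair $(\class{Q},\class{R}_{\class{W}})$ (resp.\ $(\class{Q}_{\class{W}},\class{R})$) means $A\in\class{Q}$ (resp.\ $A\in\class{Q}_{\class{W}}$) — for the trivially cofibrant case one must additionally check $A\in\class{W}$, which will come out of the fact that an $\mathfrak{M}$-resolution is an acyclic complex with trivial components on one side, forcing $A$ into the thick class $\class{W}$. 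The extraction of $\Ext^1_{\cat{A}}(A,R)$ from $H^0$ or $H^1$ of $\Hom_{\cat{A}}(W_A,R)$ is the only slightly delicate bookkeeping: I would use that $\cok\eta_A\in\class{Q}$ is given, so the relevant $\Ext^1$ already vanishes against the part of $R$ one does not control, and isolate the contribution of $A$ via the connecting homomorphism of $0\to A'\to W_0\to A\to 0$.

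The main obstacle I anticipate is keeping the ``trivial'' qualifier coherent throughout, i.e.\ making sure that whenever an object is shown cofibrant it is simultaneously shown to lie in $\class{W}$ in the parenthetical statement. The clean way to handle this is to exploit that every cycle and cokernel datum occurring in $W_A$ fits into conflations whose other two terms are known to be in $\class{W}$ (on the respective sides), so thickness of $\class{W}$ propagates membership in $\class{W}$ automatically; thus the two parallel statements really are proved by one argument, with ``$\class{W}$-membership'' riding along for free. Everything else is an application of the hereditary closure properties recorded in Section~\ref{sec-prelims} together with the cotorsion-pair orthogonality, plus the elementary homological algebra of splicing conflations — no new idea beyond Theorem~\ref{them-comparison-theorem} and the structure of $\mathfrak{M}$-resolutions is needed.
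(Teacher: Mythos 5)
Your cycle $(1)\Rightarrow(2)\Rightarrow(3)\Rightarrow(1)$ follows essentially the paper's route and is fine in outline, but your treatment of statement (4) has a genuine gap. In its unparenthesized form, (4) is a claim about the \emph{components} of $W_A$, not its cycles, and you never argue either direction of $(1)\Leftrightarrow(4)$ for components. Worse, the sentence you use to dismiss it --- that thickness of $\class{W}$ ``forces each such cycle into $\class{W}$ as well \dots giving (4) simultaneously'' --- is false when $A$ is merely cofibrant: since $W_0\in\class{W}$, the 2-out-of-3 property applied to $0\to\ker\epsilon_A\to W_0\to A\to 0$ shows that $\ker\epsilon_A\in\class{W}$ would force $A\in\class{W}$, so for non-trivial cofibrant $A$ the positive cycles are \emph{not} in $\class{W}$; and in any case cycles lying in $\class{W}$ is not what (4) asserts. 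The missing argument is short but needed: for $n\geq 0$ the components are trivially cofibrant by construction; for $n\leq -2$ each $W_n$ is an extension of two of the cofibrant cokernels of the coresolution and lies in $\class{W}$ (being trivially fibrant), hence is trivially cofibrant regardless of $A$; so only $W_{-1}$ is at stake, and the conflation $0\to A\to W_{-1}\to\cok{\eta_A}\to 0$ with $\cok{\eta_A}\in\class{Q}$ shows, via extension-closure and the hereditary closure of $\class{Q}$ under kernels of epimorphisms, that $A$ is cofibrant if and only if $W_{-1}$ is (trivially) cofibrant. (The parenthesized version of (4) is just (2) and needs no separate work.)

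Two auxiliary claims you make are also false, though you do not actually need them. First, the left class of a hereditary cotorsion pair is \emph{not} closed under cokernels of admissible monomorphisms between its members (that closure belongs to the right class); you only use kernels of epimorphisms and extensions, so simply drop the other clause. Second, in $(3)\Rightarrow(1)$ you say that for the trivially cofibrant case one must ``additionally check $A\in\class{W}$,'' deducing it from the fact that $W_A$ is acyclic with components in $\class{W}$ --- but \emph{every} object admits such a resolution, so that inference is invalid. Fortunately it is also unnecessary: $\Ext^1_{\cat{A}}(A,R)=0$ for all $R\in\class{R}$ already says $A\in\leftperp{\class{R}}=\class{Q}\cap\class{W}$ by the defining orthogonality of the cotorsion pair $(\class{Q}_{\class{W}},\class{R})$, exactly as the paper concludes. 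With these repairs, and with the extraction of $\Ext^1_{\cat{A}}(A,R)$ made explicit (exactness of $\Hom_{\cat{A}}(W_A,R)$ at the $W_1$--$W_0$ spot gives surjectivity of $\Hom_{\cat{A}}(W_0,R)\to\Hom_{\cat{A}}(\ker{\epsilon_A},R)$, and $\Ext^1_{\cat{A}}(W_0,R)=0$ then kills $\Ext^1_{\cat{A}}(A,R)$), your argument coincides with the paper's.
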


\begin{proof}
(1) $\implies$ (2). Each cokernel is already cofibrant in the coresolution $A \xrightarrow{\eta_A} W_{\leq-1}$. So if $A$ is cofibrant then \emph{all} cycles of $W_A$ are cofibrant, by the hereditary condition. Moreover, if $A$ is trivially cofibrant then all cycles are also trivial by the thickness condition on $\class{W}$.

(2) $\implies$ (3). Set $W = W_A$. We have short exact sequences
$$0 \xrightarrow{} Z_nW \xrightarrow{} W_n \xrightarrow{} Z_{n-1}W \xrightarrow{} 0.$$
So for any $R \in \class{R}_{\class{W}}$ (resp. $R \in \class{R}$), we have exact sequences
$$0 \xrightarrow{} \Hom_{\cat{A}}(Z_{n-1}W,R) \xrightarrow{} \Hom_{\cat{A}}(W_n,R)  \xrightarrow{} \Hom_{\cat{A}}(Z_{n}W,R)  \xrightarrow{} \Ext^1_{\cat{A}}(Z_{n-1}W,R) .$$ 
But $\Ext^1_{\cat{A}}(Z_{n-1}W,R)  = 0$, and this proves (3).

(3) $\implies$ (1). We have the short exact sequence 
$0 \xrightarrow{} \ker{\epsilon_A} \xrightarrow{} Q_0 \xrightarrow{} A \xrightarrow{} 0$ with
$\ker{\epsilon_A}$ fibrant and  $Q_0$ trivially cofibrant. So for each trivially fibrant $R$ (resp.  fibrant $R$), we have an exact sequence $$\Hom_{\cat{A}}(Q_0,R)  \xrightarrow{} \Hom_{\cat{A}}(\ker{\epsilon_A},R)  \xrightarrow{} \Ext^1_{\cat{A}}(A,R)  \xrightarrow{} \Ext^1_{\cat{A}}(Q_0,R) = 0.$$ 
So if (3) holds then it follows that $\Ext^1_{\cat{A}}(A,R) = 0$. So $A$ is cofibrant (resp. trivially cofibrant).  

It is only left to show that $A$ is cofibrant if and only if all terms of $W = W_A$ are trivially cofibrant. Note that since cofibrant objects are closed under extensions, $W_{-1}$ is the only possible non-cofibrant component of $W$. Considering the short exact sequence 
$0 \xrightarrow{} A \xrightarrow{\eta_A} W_{-1} \xrightarrow{} \cok{\eta_A}  \xrightarrow{} 0 $, which has $\cok{\eta_A}$ cofibrant, the hereditary condition guarantees that $A$ is cofibrant if and only if $W_{-1}$ is (trivially) cofibrant. 
\end{proof}

We recall the following definition  from the Introduction. 

\begin{definition}
A \textbf{canonical resolution} of an object $A$ is an $\mathfrak{M}$-resolution $W_{QA}$ of any cofibrant replacement $QA$ of $A$. 
\end{definition}

Thus Lemma~\ref{lemma-can-resolutions-cofibrant} lists properties of canonical resolutions. We note that if $A$ is already cofibrant we may use the terms \emph{$\mathfrak{M}$-resolution of $A$} and \emph{canonical resolution of $A$} interchangeably.

\begin{lemma}[Horseshoe Lemma]\label{lemma-horseshoe-lemma}
Any short exact sequence $$0 \xrightarrow{} A \xrightarrow{f} B \xrightarrow{g} C \xrightarrow{} 0$$ in $\cat{A}$ extends to a short exact sequence of $\mathfrak{M}$-resolutions $$0 \xrightarrow{} W_A \xrightarrow{\{f_n\}} W_B \xrightarrow{\{g_n\}} W_C \xrightarrow{} 0$$ in $\cha{A}$. 
\end{lemma}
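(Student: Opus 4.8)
The plan is to build the three $\mathfrak{M}$-resolutions compatibly, degree by degree, following the classical Horseshoe Lemma but splitting the work into the ``positive'' (resolution) half and the ``negative'' (coresolution) half separately, and then splicing. Recall that an $\mathfrak{M}$-resolution $W_B$ is a full resolution $W_{\geq 0}\xrightarrow{\epsilon_B} B \xrightarrow{\eta_B} W_{\leq -1}$, where the top half is built using enough projectives of $\mathfrak{R}=(\class{Q}_{\class{W}},\class{R})$ and the bottom half using enough injectives of $\mathfrak{Q}=(\class{Q},\class{R}_{\class{W}})$. The key point in each half is that the ``middle'' resolution is not chosen freely: given chosen resolutions for $A$ and $C$, we must produce a resolution for $B$ that maps onto $C$'s and receives $A$'s, in a short exact sequence that is degreewise split.

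First I would handle the coresolution half. Start with $0\to A\to B\to C\to 0$. Choose a special $\class{R}_{\class{W}}$-preenvelope $A\rightarrowtail W_{-1}^A$ with cokernel $\cok\eta_A \in \class{Q}$, and similarly $C\rightarrowtail W_{-1}^C$ with cokernel in $\class{Q}$. Since $W_{-1}^A\in\class{R}_{\class{W}}$, we have $\Ext^1_{\cat{A}}(C, W_{-1}^A)=0$, so the standard Horseshoe argument produces a map $B \to W_{-1}^A\oplus W_{-1}^C$ making the diagram of short exact sequences commute and degreewise split; moreover $W_{-1}^A\oplus W_{-1}^C\in\class{R}_{\class{W}}$ since $\class{R}_{\class{W}}$ is closed under (finite) direct sums, and we need this to be a co-augmentation by trivially fibrant objects, which it is. The snake lemma gives a short exact sequence $0\to\cok\eta_A\to\cok(B\to W_{-1}^A\oplus W_{-1}^C)\to\cok\eta_C\to 0$ with outer terms in $\class{Q}$, hence (as $\class{Q}$ is closed under extensions) the middle term is in $\class{Q}$. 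Now iterate: at each stage we have a short exact sequence of objects, the outer two of which already have their coresolutions built, all cokernels of differentials in $\class{Q}$; the same Horseshoe step extends one more degree. This gives the degreewise-split short exact sequence of coresolutions $0\to(A\to W_{\leq-1}^A)\to(B\to W_{\leq-1}^B)\to(C\to W_{\leq-1}^C)\to 0$.

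Dually, for the resolution half, choose special $\class{Q}_{\class{W}}$-precovers $Q_0^A\twoheadrightarrow A$ and $Q_0^C\twoheadrightarrow C$ with kernels in $\class{R}$. Since $Q_0^C\in\class{Q}_{\class{W}}$ and $\ker(Q_0^A\to A)\in\class{R}$, we have $\Ext^1_{\cat{A}}(Q_0^C,\ker(Q_0^A\to A))=0$, so the dual Horseshoe step gives $Q_0^A\oplus Q_0^C\twoheadrightarrow B$ with the compatible degreewise-split short exact sequence and with $Q_0^A\oplus Q_0^C\in\class{Q}_{\class{W}}$; the snake lemma exhibits the three kernels in a short exact sequence with outer terms in $\class{R}$, so the middle kernel is in $\class{R}$ because $\class{R}$ is closed under extensions. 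Iterating downward produces the degreewise-split short exact sequence of resolutions $0\to(W_{\geq0}^A\to A)\to(W_{\geq0}^B\to B)\to(W_{\geq0}^C\to C)\to 0$. Finally, splice: set $d_0^B=\eta_B\epsilon_B$ on $W_B$ (and likewise for $A,C$); the maps $\{f_n\}$ and $\{g_n\}$ assemble to chain maps $W_A\to W_B\to W_C$ in $\cha{A}$ because they commute with $\epsilon$'s and $\eta$'s, hence with $d_0$, and the sequence $0\to W_A\to W_B\to W_C\to 0$ is exact as it is exact in each degree (it is degreewise split in both halves). Each $W_B$ so constructed is genuinely an $\mathfrak{M}$-resolution: its nonnegative part is a resolution with trivially cofibrant terms and fibrant cycles, its negative part a coresolution with trivially fibrant terms and cofibrant cycles, and the spliced complex is exact at degree $0$ since $\ker d_0^B=\ker\epsilon_B\in\class{R}$ and $\cok d_0^B=\cok\eta_B\in\class{Q}$.

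The main obstacle is bookkeeping rather than depth: one must verify that at every inductive stage the objects being resolved sit in a short exact sequence whose outer terms are precisely the kernels (resp.\ cokernels) already equipped with the tail of their (co)resolution, so that the closure of $\class{R}$ (resp.\ $\class{Q}$) under extensions applies and the Horseshoe step's $\Ext^1$-vanishing hypothesis is met --- this is exactly where the hereditary condition enters, guaranteeing these syzygy objects remain in $\class{R}$ (resp.\ $\class{Q}$). The other point requiring a little care is that the constructed short exact sequence of complexes is degreewise split, which is what the classical Horseshoe Lemma delivers and what ensures exactness of the total complex sequence and compatibility with later Hom-ing out; I would state this explicitly as part of the conclusion, as it will be used in the applications (e.g.\ in the long exact sequence arguments).
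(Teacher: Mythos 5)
Your construction breaks down at the very first step of each half, and the failure is exactly the point of this lemma. In the coresolution half you assert that $\Ext^1_{\cat{A}}(C,W^A_{-1})=0$ ``since $W^A_{-1}\in\class{R}_{\class{W}}$,'' but this vanishing holds precisely when $C\in\leftperp{\class{R}_{\class{W}}}=\class{Q}$; the lemma is stated for an \emph{arbitrary} short exact sequence, so $C$ need not be cofibrant and the extension of $\eta_A$ over $A\rightarrowtail B$ need not exist. Dually, in the resolution half the lift of $Q^C_0\twoheadrightarrow C$ through $B\twoheadrightarrow C$ is obstructed by a class in $\Ext^1_{\cat{A}}(Q^C_0,A)$, i.e.\ it requires $A\in\class{R}$; the group you invoke, $\Ext^1_{\cat{A}}(Q^C_0,\ker\epsilon_A)$, does vanish but is irrelevant to that lift. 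So the na\"ive imitation of the classical Horseshoe Lemma only works when $A$ is fibrant and $C$ is cofibrant --- which is exactly what the Remark following the lemma in the paper says --- and your induction cannot get started in general (the later stages would indeed be fine, since by the hereditary condition the syzygies are fibrant and the cosyzygies cofibrant).

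The paper's proof instead cites Becker's generalized horseshoe lemma for complete hereditary cotorsion pairs, applied to $(\class{Q}_{\class{W}},\class{R})$ and $(\class{Q},\class{R}_{\class{W}})$. The essential extra idea there is not a direct-sum-plus-lift construction but an $\Ext$-theoretic one: forming the pullback of $B\twoheadrightarrow C\twoheadleftarrow Q^C_0$ (say) produces an extension class in $\Ext^1_{\cat{A}}(Q^C_0,A)$, and the hereditary hypothesis gives $\Ext^2_{\cat{A}}(Q^C_0,\ker\epsilon_A)=0$, so the long exact sequence for $0\to\ker\epsilon_A\to Q^A_0\to A\to 0$ shows $\Ext^1_{\cat{A}}(Q^C_0,Q^A_0)\to\Ext^1_{\cat{A}}(Q^C_0,A)$ is surjective; the class therefore lifts and one pushes out to assemble the middle approximation. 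Note also that with this argument the middle components are extensions of the outer ones, so the resulting short exact sequence of $\mathfrak{M}$-resolutions need not be degreewise split (your proposal claims splitness as part of the conclusion); none of the later applications in the paper use splitness --- they use instead that the components of $W_C$ are trivially cofibrant together with fibrancy of the test object to preserve exactness after applying $\Hom_{\cat{A}}(-,B)$ or $-\tensor B$.
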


\begin{proof}
Since the cotorsion pairs $(\class{Q}_{\class{W}}, \class{R})$ and $(\class{Q}, \class{R}_{\class{W}})$ are hereditary, it follows easily from the generalized horseshoe lemma~\cite[Lemma~1.4.4]{becker}. This lemma has appeared in a few places in the literature, and a different proof is in~\cite{gillespie-stable-cats-cotorsion-pairs}.
\end{proof}

\begin{remark}
Lemma~\ref{lemma-can-resolutions-cofibrant} requires the hereditary hypothesis on $\mathfrak{M}$ and this is the first place we used this assumption in the paper. It is interesting to note that the above Horseshoe Lemma~\ref{lemma-horseshoe-lemma} can be proved without the hereditary hypothesis in the special case that $A$ is fibrant and $C$ is cofibrant. (Indeed one can imitate the usual Horseshoe Lemma for projective resolutions to build up, and the dual to build down.) Furthermore, if $C$ is cofibrant then $W_C$ will still always have trivially cofibrant components without the hereditary hypothesis. However, this still doesn't seem to be enough to get where we wish to go in this paper! 
\end{remark}

\begin{definition}\label{def-lExt}
For objects $A, B \in \cat{A}$, we define $$\ell\Ext^n_{\mathfrak{M}}(A,B) := H^n[\Hom_{\cat{A}}(W_A,B)],$$ where $W_A$ is an $\mathfrak{M}$-resolution of $A$.  A different choice of  $W_A$ would yield a canonical isomorphism of abelian groups.
\end{definition}

Let us prove the statement that  $\ell\Ext^n_{\mathfrak{M}}(A,B)$ is well-defined (up to a canonical isomorphism), regardless of the choice of resolution $W_A$. Indeed if $W_A$ and $W'_A$ are two $\mathfrak{M}$-resolutions of $A$, then by Corollary~\ref{cor-K(A)} there is a canonical isomorphism $[\alpha] : W_A \xrightarrow{} W'_A$ in $K(\cat{A})$. For any other object $B \in \cat{A}$, since $\Hom_{\cat{A}}(-,B)$ is a contravariant additive functor, it takes $\alpha$ to a chain homotopy equivalence $\Hom_{\cat{A}}(W'_A,B) \xrightarrow{\alpha^*_B} \Hom_{\cat{A}}(W_A,B)$ of \emph{cochain} complexes of abelian groups. It follows that we have a canonical isomorphism: 
$$H^n[\Hom_{\cat{A}}(W'_A,B)] \xrightarrow{H^n([\alpha^*_B])} H^n[\Hom_{\cat{A}}(W_A,B)].$$

\begin{theorem}\label{them-lExt}
$\ell\Ext^n_{\mathfrak{M}}(A,B)$ is a covariant additive functor in variable $B$ and a contravariant additive functor in variable $A$. It satisfies the following properties:
\begin{enumerate}
\item If $A$ is cofibrant, then each fibration $B \xrightarrow{g} B'$ induces a long exact cohomology sequence:  
$$ 
\cdots  \xrightarrow{} \ell\Ext^{n-1}_{\mathfrak{M}}(A,B') \xrightarrow{}   \ell\Ext^n_{\mathfrak{M}}(A,\ker{g})   \xrightarrow{}  \ell\Ext^n_{\mathfrak{M}}(A,B) $$  $$\xrightarrow{} \ell\Ext^n_{\mathfrak{M}}(A,B')   \xrightarrow{} \ell\Ext^{n+1}_{\mathfrak{M}}(A, \ker{g})   \xrightarrow{} \cdots$$
Moreover, each functor $\ell\Ext^n_{\mathfrak{M}}(A,-)$ identifies left homotopic maps whenever $A$ is cofibrant; in particular it factors through $\gamma : \cat{A} \xrightarrow{} \textnormal{St}_{\omega}(\cat{A})$.

\item If $B$ is fibrant, then each cofibration $A \xrightarrow{f} A'$ induces a long exact cohomology sequence:  $$ 
\cdots  \xrightarrow{} \ell\Ext^{n-1}_{\mathfrak{M}}(A,B) \xrightarrow{}   \ell\Ext^n_{\mathfrak{M}}(\cok{f}, B)   \xrightarrow{}  \ell\Ext^n_{\mathfrak{M}}(A',B) $$  $$\xrightarrow{} \ell\Ext^n_{\mathfrak{M}}(A,B)   \xrightarrow{} \ell\Ext^{n+1}_{\mathfrak{M}}(\cok{f}, B)   \xrightarrow{} \cdots$$
Moreover, for any $B$, $\ell\Ext^n_{\mathfrak{M}}(-,B)$ factors through $\gamma : \cat{A} \xrightarrow{} \textnormal{St}_{\omega}(\cat{A})$.
\end{enumerate}
\end{theorem}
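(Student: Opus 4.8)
The plan is to establish each of the three asserted properties in turn: functoriality, the two long exact sequences, and the factorization through the stable categories. For \textbf{functoriality}, I would first fix a choice of $\mathfrak{M}$-resolution $W_A$ for each $A$; then for a morphism $f : A \to B$ Theorem~\ref{them-comparison-theorem} produces a chain map $\{f_n\} : W_A \to W_B$, unique up to chain homotopy, and so $\Hom_{\cat{A}}(-, B)$ and $\Hom_{\cat{A}}(W_A, -)$ convert this into a well-defined map on cohomology. Additivity in each variable is immediate since the comparison map $\{f_n\} + \{g_n\}$ extends $f+g$ and $\Hom_{\cat{A}}$ is additive; contravariance in $A$ and covariance in $B$ follow formally.

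For property (1), the key observation is Lemma~\ref{lemma-can-resolutions-cofibrant}(3): when $A$ is cofibrant, $\Hom_{\cat{A}}(W_A, -)$ carries any short exact sequence $0 \to \ker g \to B \to B' \to 0$ arising from a fibration $g$ (whose kernel is, by definition of fibration in an abelian model structure, such that the sequence is exact) to a short exact sequence of cochain complexes of abelian groups. More precisely, applying $\Hom_{\cat{A}}(W_n, -)$ degreewise gives $0 \to \Hom(W_n, \ker g) \to \Hom(W_n, B) \to \Hom(W_n, B') \to \Ext^1(W_n, \ker g)$, and since each $W_n$ is trivially cofibrant (by Lemma~\ref{lemma-can-resolutions-cofibrant}(4)) and $\ker g$ is fibrant, the $\Ext^1$ term vanishes, so the sequence of complexes is degreewise exact. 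The snake lemma / fundamental long exact sequence of homological algebra then yields the stated long exact sequence in $\ell\Ext^n_{\mathfrak{M}}(A, -)$. For the homotopy-invariance clause: if $h : B \to B'$ factors through some $R \in \class{R}_{\class{W}}$, then $\Hom_{\cat{A}}(W_A, h)$ factors through $\Hom_{\cat{A}}(W_A, R)$, which is an exact cochain complex by Lemma~\ref{lemma-can-resolutions-cofibrant}(3); hence $\ell\Ext^n_{\mathfrak{M}}(A, h) = 0$ for all $n$, so the functor identifies left-homotopic maps and factors through $\gamma_\omega$ (since left homotopy is finer than $\sim^\omega$, or rather: it kills the left homotopy relation, a fortiori the full one—one checks the direction carefully).

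Property (2) is the part requiring a little more care, since here we vary the \emph{first} argument, and $\ell\Ext^n_{\mathfrak{M}}(-, B)$ is computed by resolving the first variable. Given a cofibration $f : A \to A'$ with cokernel $C = \cok f$ (so $0 \to A \to A' \to C \to 0$ is a short exact sequence, $f$ being an admissible monomorphism), I would invoke the Horseshoe Lemma~\ref{lemma-horseshoe-lemma} to obtain a short exact sequence of $\mathfrak{M}$-resolutions $0 \to W_A \to W_{A'} \to W_C \to 0$ in $\cha{A}$. The crucial point is that this sequence is \emph{degreewise split}: in each degree $n \geq 0$ it is a short exact sequence $0 \to (W_A)_n \to (W_{A'})_n \to (W_C)_n \to 0$ with $(W_C)_n$ trivially cofibrant and $(W_A)_n$ fibrant-... no—rather, $(W_C)_n \in \class{Q}_{\class{W}}$ and $(W_A)_n \in \class{Q}_{\class{W}}$ as well, but splitness comes from the horseshoe construction itself (the generalized horseshoe lemma produces termwise-split sequences when built against a cotorsion pair). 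Since it is degreewise split, applying $\Hom_{\cat{A}}(-, B)$ preserves exactness, giving a short exact sequence of cochain complexes
$$0 \to \Hom_{\cat{A}}(W_C, B) \to \Hom_{\cat{A}}(W_{A'}, B) \to \Hom_{\cat{A}}(W_A, B) \to 0,$$
and the associated long exact cohomology sequence is precisely the one claimed. The factorization through $\gamma_\omega$ in the second variable is handled exactly as before, using that a map $h : A \to A'$ factoring through $\class{Q}_{\class{W}}$ yields, via the comparison theorem, a comparison chain map factoring (up to homotopy) through an $\mathfrak{M}$-resolution of that trivial object, whose $\Hom_{\cat{A}}(-, B)$-complex one argues is exact—or more directly, by observing $\ell\Ext^n_{\mathfrak{M}}(-,B)$ sends $\sim^\omega$-equivalent maps to equal maps via the chain-homotopy clause of Theorem~\ref{them-comparison-theorem}. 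The main obstacle I anticipate is verifying that the horseshoe-produced sequence of $\mathfrak{M}$-resolutions really is degreewise split (so that $\Hom(-,B)$ stays exact without needing $B$ fibrant in each degree)—this is where the precise form of Becker's generalized horseshoe lemma, and the hereditary hypothesis, must be used carefully; the long-exact-sequence bookkeeping and the homotopy-invariance clauses are then routine.
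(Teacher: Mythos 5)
Your treatment of the functoriality statements and of part (1) matches the paper's argument and is correct, including the observation that $\Hom_{\cat{A}}(W_A,R)$ is exact for $R \in \class{R}_{\class{W}}$ when $A$ is cofibrant, which kills maps factoring through $\class{R}_{\class{W}}$ and hence a fortiori $\sim^{\omega}$-equivalent maps.

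The genuine gap is in part (2), precisely at the point you flagged as your ``main obstacle.'' The short exact sequence of $\mathfrak{M}$-resolutions $0 \to W_A \to W_{A'} \to W_C \to 0$ produced by the generalized horseshoe lemma is \emph{not} degreewise split in general. In nonnegative degrees the three components lie in $\class{Q}_{\class{W}}$, and there is no orthogonality of $\class{Q}_{\class{W}}$ against itself; Becker's construction realizes the degree-$n$ term of $W_{A'}$ as an extension of $(W_C)_n$ by $(W_A)_n$ whose class lifts (via surjectivity of $\Ext^1_{\cat{A}}((W_C)_n,(W_A)_n) \to \Ext^1_{\cat{A}}((W_C)_n, Z_n)$, which is where heredity enters) a generally nonzero pullback class, so it need not split. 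The classical direct-sum horseshoe argument only goes through in the special case noted in the Remark after Lemma~\ref{lemma-horseshoe-lemma}, namely when $A$ is fibrant and $C$ is cofibrant, because the lift of $(W_C)_0 \to C$ to $A'$ needs $\Ext^1_{\cat{A}}((W_C)_0, A)=0$. A further warning sign is your own comment that splitness would make $\Hom_{\cat{A}}(-,B)$ exact ``without needing $B$ fibrant in each degree'': the theorem hypothesizes $B$ fibrant, and that hypothesis is exactly what the correct argument consumes. The paper's route is: since $f$ is a cofibration, $C=\cok{f}$ is cofibrant, so by Lemma~\ref{lemma-can-resolutions-cofibrant}(4) \emph{all} components of $W_C$ (negative degrees included) are trivially cofibrant; then, because $B$ is fibrant, $\Ext^1_{\cat{A}}((W_C)_n,B)=0$ for every $n$, so applying $\Hom_{\cat{A}}(-,B)$ to the degreewise (non-split) short exact sequences still yields a short exact sequence of cochain complexes, and the long exact sequence follows. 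Replacing your splitness claim by this vanishing argument repairs the proof; your handling of the factorization through $\gamma_{\omega}$ in the first variable via the comparison theorem is fine as stated.
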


\begin{proof}
For any fixed resolution $W_A$ of $A$ we have the functor $H^n[\Hom_{\cat{A}}(W_A,-)]$ which is covariant and additive and so $\ell\Ext^n_{\mathfrak{M}}(A,-) := H^n[\Hom_{\cat{A}}(W_A,-)]$ is such a functor. For a different choice of resolution $W'_A$ then going back to the paragraph after Definition~\ref{def-lExt}, one can verify that the $[\alpha^*_B]$ assemble to provide a natural isomorphism of functors $$\{H^n([\alpha^*_B])\} : H^n[\Hom_{\cat{A}}(W'_A,-)] \xrightarrow{} H^n[\Hom_{\cat{A}}(W_A,-)].$$ Thus $\ell\Ext^n_{\mathfrak{M}}(A,-)$ is a functor that is well-defined up to canonical isomorphism.
 
 We now go on to prove statement~(1). So suppose $A$ is cofibrant and $$0 \xrightarrow{} F \xrightarrow{f} B \xrightarrow{g} B' \xrightarrow{} 0$$ is a short exact sequence with $F$ fibrant. By Lemma~\ref{lemma-can-resolutions-cofibrant}~(4), we know that all components of $W_A$ are trivially cofibrant. So since $F$ is fibrant, applying $\Hom_{\cat{A}}(W_A,-)$ gives us a short exact sequence of cochain complexes of abelian groups
 $$0 \xrightarrow{} \Hom_{\cat{A}}(W_A, F) \xrightarrow{f_*} \Hom_{\cat{A}}(W_A, B) \xrightarrow{g_*} \Hom_{\cat{A}}(W_A,B') \xrightarrow{} 0.$$ Now applying cohomology $H^n$, the fundamental lemma of homological algebra provides the long exact sequence of $\ell\Ext^n_{\mathfrak{M}}$ groups. 

To complete the proof of (1) we now show that $\ell\Ext^n_{\mathfrak{M}}(A,-)$ identifies left homotopic maps. We recall that two maps are left homotopic, written $f \sim^{\ell} g$, if and only if $g-f$ factors through a trivially fibrant object. We wish to show that $\ell\Ext^n_{\mathfrak{M}}(A,f) =  \ell\Ext^n_{\mathfrak{M}}(A,g)$ for such maps. But since  $\ell\Ext^n_{\mathfrak{M}}(A,-)$ is an additive functor it is enough to show  $\ell\Ext^n_{\mathfrak{M}}(A,h) = 0$ whenever $h \sim^{\ell} 0$. So suppose $h = \beta\alpha$ where $B_1 \xrightarrow{\alpha} R \xrightarrow{\beta} B_2$ and $R \in \class{R}_{\class{W}}$.
Then we get $$\Hom_{\cat{A}}(W_A, B_1) \xrightarrow{\alpha_*} \Hom_{\cat{A}}(W_A, R) \xrightarrow{\beta_*} \Hom_{\cat{A}}(W_A,B_2).$$ By Lemma~\ref{lemma-can-resolutions-cofibrant}~(3), the complex $\Hom_{\cat{A}}(W_A, R)$ is acyclic, and so $\ell\Ext^n_{\mathfrak{M}}(A,R) = 0$ for all $n$. This proves that $\ell\Ext^n_{\mathfrak{M}}(A,h) :  \ell\Ext^n_{\mathfrak{M}}(A,B_1)  \xrightarrow{}  \ell\Ext^n_{\mathfrak{M}}(A,B_2)$ is 0 for all $n$.

We now turn to show that $\ell\Ext^n_{\mathfrak{M}}(A,B)$ is a functor in variable $A$ and prove the statements in~(2). First, Corollary~\ref{cor-K(A)} tells us that $\mathfrak{M}$-resolutions provide a functor, $\cat{A} \xrightarrow{} K(\cat{A})$, well-defined on objects up to a canonical isomorphism, and that it factors as $\cat{A} \xrightarrow{\gamma_{\omega}} \textnormal{St}_{\omega}(\cat{A}) \xrightarrow{} K(\cat{A})$.  For a fixed $B$, since $\Hom_{\cat{A}}(-, B)$ is a contravariant additive functor it induces a functor $\Hom_{\cat{A}}(-, B) : K(\cat{A}) \xrightarrow{} K(\textbf{Ab})$, where again $K(\textbf{Ab})$ is the homotopy category of cochain complexes of abelian groups. Finally, composing all these functors with  cohomology gives us a contravariant additive functor
$$\cat{A} \xrightarrow{\gamma_{\omega}} \textnormal{St}_{\omega}(\cat{A}) \xrightarrow{} K(\cat{A})  \xrightarrow{\Hom_{\cat{A}}(-, B)} K(\textbf{Ab})   \xrightarrow{H^n} \textbf{Ab}, $$
which is precisely the functor $\ell\Ext^n_{\mathfrak{M}}(-,B)$.

Now suppose $B$ is fibrant and $$\class{E} : 0 \xrightarrow{} A \xrightarrow{f} A' \xrightarrow{g} C \xrightarrow{} 0$$ is a short exact sequence with $C$ cofibrant. Then using the Horseshoe Lemma~\ref{lemma-horseshoe-lemma} we may construct a short exact sequence naturally extending $\class{E}$ to a short exact sequence of $\mathfrak{M}$-resolutions $$0 \xrightarrow{} W_A \xrightarrow{\{f_n\}} W_{A'} \xrightarrow{\{g_n\}} W_{C} \xrightarrow{} 0.$$ 
Since $C$ is cofibrant, all components of $W_C$ are trivially cofibrant by Lemma~\ref{lemma-can-resolutions-cofibrant}~(4). So since $B$ is fibrant, applying $\Hom_{\cat{A}}(-,B)$ gives us another short exact sequence
 $$0 \xrightarrow{} \Hom_{\cat{A}}(W_C, B) \xrightarrow{\{g^*_n\}} \Hom_{\cat{A}}(W_{A'}, B) \xrightarrow{\{f^*_n\}} \Hom_{\cat{A}}(W_A,B) \xrightarrow{} 0.$$ Finally, applying cohomology $H^n$, the fundamental lemma of homological algebra provides the long exact sequence of $\ell\Ext^n_{\mathfrak{M}}$ groups. 
\end{proof}

We record the following easy lemma concerning the vanishing of $\ell\Ext^n_{\mathfrak{M}}$ groups.

\begin{lemma}\label{lemma-lExt-vanishing}
If $A$ is cofibrant (resp. trivially cofibrant) and $B$ is trivially fibrant (resp. fibrant), then $\ell\Ext^n_{\mathfrak{M}}(A,B) = 0$ for all $n \in \Z$. 
\end{lemma}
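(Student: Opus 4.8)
The plan is to compute $\ell\Ext^n_{\mathfrak{M}}(A,B)$ using a convenient $\mathfrak{M}$-resolution and exploit the hypotheses on $A$ and $B$ to see directly that the relevant $\Hom$-complex is acyclic in all degrees. First I would fix any $\mathfrak{M}$-resolution $W_A$ of $A$, so that $\ell\Ext^n_{\mathfrak{M}}(A,B) = H^n[\Hom_{\cat{A}}(W_A,B)]$. Since $A$ is cofibrant (resp.\ trivially cofibrant), Lemma~\ref{lemma-can-resolutions-cofibrant}~(3) applies: it says precisely that $\Hom_{\cat{A}}(W_A,R)$ remains an exact complex for any trivially fibrant $R \in \class{R}_{\class{W}}$ (resp.\ any fibrant $R \in \class{R}$). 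Taking $R = B$, which is exactly trivially fibrant (resp.\ fibrant) by hypothesis, we conclude $\Hom_{\cat{A}}(W_A,B)$ is exact, hence $H^n[\Hom_{\cat{A}}(W_A,B)] = 0$ for all $n \in \Z$. That is the whole argument.

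In fact the statement is essentially a restatement of the equivalence (1)~$\Longleftrightarrow$~(3) in Lemma~\ref{lemma-can-resolutions-cofibrant}, specialized to the case where the test object is the second variable $B$. One could alternatively phrase it via Lemma~\ref{lemma-can-resolutions-cofibrant}~(2) together with the short exact sequences $0 \to Z_nW_A \to (W_A)_n \to Z_{n-1}W_A \to 0$: each cycle $Z_nW_A$ is cofibrant (resp.\ trivially cofibrant), so $\Ext^1_{\cat{A}}(Z_nW_A, B) = 0$ by the defining orthogonality of the cotorsion pairs $\mathfrak{Q} = (\class{Q},\class{R}_{\class{W}})$ and $\mathfrak{R} = (\class{Q}_{\class{W}},\class{R})$, and the usual long exact sequence argument shows the $\Hom$-complex is exact. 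But invoking part~(3) of the lemma directly is cleaner and avoids repeating that computation.

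There is really no main obstacle here — the lemma is a one-line consequence of work already done. The only thing to be careful about is keeping the parenthetical ``resp.''\ cases aligned: ``$A$ cofibrant'' pairs with ``$B$ trivially fibrant'' (this is the orthogonality in the left cotorsion pair $\mathfrak{Q}$), while ``$A$ trivially cofibrant'' pairs with ``$B$ fibrant'' (the orthogonality in the right cotorsion pair $\mathfrak{R}$), and one must check that these are exactly the two cases covered by the two parenthetical alternatives in Lemma~\ref{lemma-can-resolutions-cofibrant}~(3). They are, so the proof is immediate.
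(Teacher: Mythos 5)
Your proof is correct and is exactly the paper's argument: the paper also deduces the vanishing immediately from Lemma~\ref{lemma-can-resolutions-cofibrant}, parts (1) and (3), with the ``resp.''\ cases aligned just as you describe. (The paper only adds the side remark that in the case $A$ trivially cofibrant the conclusion holds even without the hereditary hypothesis, since the nonnegative part of $W_A$ is then contractible.)
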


\begin{proof}
It follows immediately from Lemma~\ref{lemma-can-resolutions-cofibrant}~(1)+(3). That lemma uses the hereditary hypothesis. But note that for the case $A$ trivially cofibrant, the nonnegative portion of $W_A$ is contractible, and it follows that all cycles of $W_A$ are trivially cofibrant.  Thus $\Hom_{\cat{A}}(W_A,B)$ will remain exact for any fibrant $B$ even without the hereditary hypothesis. 
\end{proof}

\section{The functors $r\Ext^n_{\mathfrak{M}}$}\label{sec-cochain-complexes}

Everything we have done in Sections~\ref{sec-augmentations-homotopies} and~\ref{sec-canonical-resolutions} assumes \emph{chain} complex notation. Other times we will wish to use the language and notation of \emph{cochain} complexes. In this notation, the co-augmentations of Section~\ref{sec-augmentations-homotopies} will be written 
$$0 \xrightarrow{} A \xrightarrow{\eta} X^{0} \xrightarrow{d^0} X^1 \xrightarrow{d^1} \cdots ,$$ 
and denoted briefly by $A \xrightarrow{\eta} X^{\geq 0}$, while augmentations will be denoted $X^{\leq -1} \xrightarrow{\epsilon} A$. A  \emph{full co-augmentation} $X \equiv (X^{\leq-1} \xrightarrow{\eta \epsilon} X^{\geq 0})$ is obtained by splicing these together and setting $d^{-1} = \eta \epsilon$, and we call it a \textbf{full coresolution} if it is an exact complex.
Lemmas~\ref{lemma-augmentation-homotopies}-\ref{lemma-full-augmentation-homotopies} each hold in the exact same way, but with the different notation. 

Moving to the analogs of Section~\ref{sec-canonical-resolutions}, by an \textbf{$\mathfrak{M}$-coresolution}, $W^A$, of an object $A \in \cat{A}$, we mean a full coresolution
$$W^A \equiv (W^{n\leq -1} \xrightarrow{\epsilon_A} A \xrightarrow{\eta_A} W^{n\geq 0})$$
where $A \xrightarrow{\eta_A} W^{n\geq 0}$ is a coresolution constructed by using enough injectives of $(\class{Q},\class{R}_{\class{W}})$, and $W^{n\leq -1} \xrightarrow{\epsilon_A} A$ is a resolution constructed by using enough projectives of $(\class{Q}_{\class{W}},\class{R})$. If $A$ is fibrant we refer to such a $W^A$ as a \emph{canonical coresolution}, and for a general object $A$ a \textbf{canonical coresolution} of $A$ refers to a canonical coresolution of any fibrant replacement. The reader can formulate and verify the duals of Theorem~\ref{them-comparison-theorem} through Lemma~\ref{lemma-lExt-vanishing}. For convenience and referencing we will now state the dual of Definition~\ref{def-lExt} and Theorem~\ref{them-lExt}.

\begin{definition}\label{def-rExt}
For objects $A, B \in \cat{A}$, we define $$r\Ext^n_{\mathfrak{M}}(A,B) := H^n[\Hom_{\cat{A}}(A, W^B)],$$ where $W^B$ is an $\mathfrak{M}$-coresolution of $B$.  A different choice of  $W^B$ would yield a canonical isomorphism of abelian groups.
\end{definition}

\begin{theorem}\label{them-rExt}
$r\Ext^n_{\mathfrak{M}}(A,B)$ is a covariant additive functor in variable $B$ and a contravariant additive functor in variable $A$. It satisfies the following properties:
\begin{enumerate}
\item If $B$ is fibrant,  then each cofibration $A \xrightarrow{f} A'$ induces a long exact cohomology sequence:  $$ \cdots  \xrightarrow{} r\Ext^{n-1}_{\mathfrak{M}}(A,B) \xrightarrow{} r\Ext^n_{\mathfrak{M}}(\cok{f}, B)   \xrightarrow{}  r\Ext^n_{\mathfrak{M}}(A',B) $$  $$\xrightarrow{} r\Ext^n_{\mathfrak{M}}(A,B)   \xrightarrow{} r\Ext^{n+1}_{\mathfrak{M}}(\cok{f}, B)   \xrightarrow{} \cdots$$
Moreover, each functor $r\Ext^n_{\mathfrak{M}}(-,B)$ identifies right homotopic maps whenever $B$ is fibrant; in particular it factors through $\gamma : \cat{A} \xrightarrow{} \textnormal{St}_{\omega}(\cat{A})$.

\item If $A$ is cofibrant,  then each fibration $B \xrightarrow{g} B'$ induces a long exact cohomology sequence:  
$$ 
\cdots  \xrightarrow{} r\Ext^{n-1}_{\mathfrak{M}}(A,B') \xrightarrow{}   r\Ext^n_{\mathfrak{M}}(A,\ker{g})   \xrightarrow{}  r\Ext^n_{\mathfrak{M}}(A,B) $$  $$\xrightarrow{} r\Ext^n_{\mathfrak{M}}(A,B')   \xrightarrow{} r\Ext^{n+1}_{\mathfrak{M}}(A, \ker{g})   \xrightarrow{} \cdots$$
Moreover, for any $A$, $r\Ext^n_{\mathfrak{M}}(A,-)$ factors through $\gamma : \cat{A} \xrightarrow{} \textnormal{St}_{\omega}(\cat{A})$.
\end{enumerate}
\end{theorem}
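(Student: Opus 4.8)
The plan is to obtain this theorem for free as the formal dual of Theorem~\ref{them-lExt}. Everything in Sections~\ref{sec-augmentations-homotopies} and~\ref{sec-canonical-resolutions} was built from the pair of classes $(\class{Q}_{\class{W}},\class{R})$ (enough projectives, giving the part of $W_A$ in nonnegative degrees) and $(\class{Q},\class{R}_{\class{W}})$ (enough injectives, giving the part in negative degrees), together with the thick class $\class{W}$. The opposite category $\cat{A}^{\mathrm{op}}$ carries the hereditary abelian model structure $\mathfrak{M}^{\mathrm{op}} = (\class{R},\class{W},\class{Q})$ whose core is again $\omega$, whose left cotorsion pair is $(\class{R},\class{Q}_{\class{W}})$ and whose right cotorsion pair is $(\class{R}_{\class{W}},\class{Q})$; under the duality, an $\mathfrak{M}$-coresolution $W^B$ of $B$ in $\cat{A}$ is precisely an $\mathfrak{M}^{\mathrm{op}}$-resolution $W_B$ of $B$ viewed in $\cat{A}^{\mathrm{op}}$, cochain degree $i$ corresponding to chain degree $-i$. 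Thus the well-definedness of $r\Ext^n_{\mathfrak{M}}(A,B)$ up to canonical isomorphism, and its bifunctoriality (contravariant in $A$, covariant in $B$), are exactly Definition~\ref{def-lExt}, Corollary~\ref{cor-K(A)}, and the first paragraph of the proof of Theorem~\ref{them-lExt} read in $\cat{A}^{\mathrm{op}}$, after the bookkeeping identity $\Hom_{\cat{A}}(A, W^B) = \Hom_{\cat{A}^{\mathrm{op}}}(W_B, A)$.

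For part~(1): assume $B$ is fibrant, i.e.\ $B$ is cofibrant in $\cat{A}^{\mathrm{op}}$, and let $A \xrightarrow{f} A'$ be a cofibration in $\cat{A}$ with cofibrant cokernel $C$, i.e.\ $0 \to C \to A' \to A \to 0$ is a short exact sequence in $\cat{A}^{\mathrm{op}}$ with $C$ fibrant in $\cat{A}^{\mathrm{op}}$. Applying Theorem~\ref{them-lExt}~(1) in $\cat{A}^{\mathrm{op}}$ to this sequence and the object $B$ yields the long exact cohomology sequence, which after translating degrees and reversing arrows is exactly the asserted long exact sequence in $r\Ext^{\bullet}_{\mathfrak{M}}(-,B)$. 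Likewise the statement that $r\Ext^n_{\mathfrak{M}}(-,B)$ identifies right homotopic maps when $B$ is fibrant is the dual of ``$\ell\Ext^n_{\mathfrak{M}}(A,-)$ identifies left homotopic maps when $A$ is cofibrant'': right homotopy in $\cat{A}$ (difference factors through $\class{Q}_{\class{W}}$) is left homotopy in $\cat{A}^{\mathrm{op}}$, and one invokes Lemma~\ref{lemma-can-resolutions-cofibrant}~(3) for $\mathfrak{M}^{\mathrm{op}}$ to see $\Hom_{\cat{A}}(Q,W^B) = \Hom_{\cat{A}^{\mathrm{op}}}(W_B,Q)$ is acyclic for $Q \in \class{Q}_{\class{W}}$. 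That it then factors through $\gamma : \cat{A} \to \textnormal{St}_{\omega}(\cat{A})$ follows since full homotopy implies right homotopy. Part~(2) is the dual of Theorem~\ref{them-lExt}~(2): with $A$ cofibrant ($=$ fibrant in $\cat{A}^{\mathrm{op}}$) and $B \xrightarrow{g} B'$ a fibration with fibrant kernel $F$ ($0 \to B' \to B \to F \to 0$ a conflation in $\cat{A}^{\mathrm{op}}$ with $F$ cofibrant there), one feeds this into Theorem~\ref{them-lExt}~(2) in $\cat{A}^{\mathrm{op}}$ — which uses the Horseshoe Lemma~\ref{lemma-horseshoe-lemma}, itself self-dual since both cotorsion pairs are hereditary — and the functoriality in $B$ together with compatibility with $\gamma_{\omega}$ comes from Corollary~\ref{cor-K(A)} applied to $\mathfrak{M}^{\mathrm{op}}$.

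The only real obstacle is purely notational: keeping the degree reversal $i \leftrightarrow -i$, the variance swap, and the exchange of ``fibration/kernel'' with ``cofibration/cokernel'' consistent, so that the long exact sequences come out with the indices displayed in the statement rather than shifted or reflected. No new mathematical input is needed beyond the observation that $\mathfrak{M}^{\mathrm{op}} = (\class{R},\class{W},\class{Q})$ is again a hereditary abelian model structure with the same core, which is immediate from the definitions in Section~\ref{sec-prelims} since the defining conditions (thickness of $\class{W}$, completeness and hereditariness of the two cotorsion pairs) are self-dual.
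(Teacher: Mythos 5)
Your proposal is correct and takes essentially the same approach as the paper: the paper gives no separate proof of this theorem, stating it as the dual of Theorem~\ref{them-lExt} and leaving the reader to "formulate and verify the duals" of Theorem~\ref{them-comparison-theorem} through Lemma~\ref{lemma-lExt-vanishing}, which is precisely what you carry out by passing to $\cat{A}^{\mathrm{op}}$ with $\mathfrak{M}^{\mathrm{op}} = (\class{R},\class{W},\class{Q})$. One small bookkeeping correction: the identification of an $\mathfrak{M}$-coresolution $W^B$ with an $\mathfrak{M}^{\mathrm{op}}$-resolution is degree-preserving (cochain degree $i$ corresponds to chain degree $i$, since the $\class{R}_{\class{W}}$-terms occupy degrees $\geq 0$ on both sides), not $i \leftrightarrow -i$ as your parenthetical says; with this fixed one gets $r\Ext^n_{\mathfrak{M}}(A,B) = \ell\Ext^n_{\mathfrak{M}^{\mathrm{op}}}(B,A)$ with no degree shift, so the long exact sequences come out with exactly the displayed indices.
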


\section{The functors $\Ext^n_{\textnormal{Ho}(\mathfrak{M})}$}\label{sec-Ho-Ext}

In the previous sections we defined the functors $\ell\Ext^n_{\mathfrak{M}}(A,B)$ using an $\mathfrak{M}$-resolution of $A$ and the functors $r\Ext^n_{\mathfrak{M}}(A,B)$ using an $\mathfrak{M}$-coresolution of $B$. They share nice properties when $A$ is cofibrant and $B$ is fibrant and this is because they are in fact naturally isomorphic in this case. This is reminiscent of how the left and right homotopy relations coincide when the source is cofibrant and the target is fibrant. 
One could try and prove directly the canonical isomorphism $$\ell\Ext^n_{\mathfrak{M}}(A,B) \cong r\Ext^n_{\mathfrak{M}}(A,B),$$ for $A$ cofibrant and $B$ fibrant. We instead wish to give another characterization of these functors, see Theorem~\ref{them-lExt and rExt}, from which the isomorphism automatically  follows.  

One may now wish to review Proposition~\ref{prop-suspension functor} of the Appendix.

\begin{definition}\label{def-full-trivial-resolutions}
Let $A \in \cat{A}$. By a \textbf{full trivial resolution of $A$} we mean a full resolution $W \equiv (W_{n\geq 0} \xrightarrow{\epsilon}A\xrightarrow{\eta}W_{n\leq -1})$ of $A$ with all components $W_n \in \class{W}$.  A different choice of a full trivial resolution, $W'$, yields canonical isomorphisms $Z_{n-1}W \cong Z_{n-1}W'$, in $\textnormal{Ho}(\mathfrak{M})$, and these are precisely the objects $\Omega^{n}A$, where we use the convention that $\Omega^{-n}A := \Sigma^nA$ for $n>0$. 

On the other hand, by a \textbf{full trivial coresolution of $A$} we mean a full coresolution $W = (W^{n\leq -1} \xrightarrow{\epsilon}A\xrightarrow{\eta}W^{n\geq 0})$ of $A$ with all components $W^n \in \class{W}$. The cycles of a full trivial coresolution $W$ may be denoted by $\Sigma^nA$, with the convention that $\Sigma^{-n}A := \Omega^nA$ for $n>0$. 
\end{definition}

\begin{proposition}\label{prop-functors-commute}
Regardless of the short exact sequences used to compute the functors $Q$ (cofibrant replacement), $\Omega$ (loop), $R$ (fibrant replacement), and $\Sigma$ (suspension), we have canonical isomorphisms in  $\textnormal{Ho}(\mathfrak{M})$:
\begin{enumerate}
\item $\Omega^nQA \cong Q\Omega^nA$
\item $\Sigma^nQA \cong Q\Sigma^nA$
\item $\Omega^nRA \cong R\Omega^nA$
\item $\Sigma^nRA \cong R\Sigma^nA$
\end{enumerate}
\end{proposition}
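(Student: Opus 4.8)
The plan is to reduce the whole proposition to a single observation: every object $W^nA$ appearing in a full trivial (co)resolution lies in $\class{W}$, and the classes $\class{Q}$ and $\class{R}$ behave well with respect to the short exact sequences that define $\Omega$, $\Sigma$, $Q$, $R$. It suffices by symmetry (swapping $\Sigma \leftrightarrow \Omega$ is dual, and replacing $Q$ by $R$ is the opposite-category dual) to prove statement (1) for $n=1$, i.e. $\Omega QA \cong Q\Omega A$; the general $n$ follows by iterating, since $\Omega^nQA \cong \Omega(\Omega^{n-1}QA) \cong \Omega(Q\Omega^{n-1}A) \cong \Omega Q(\Omega^{n-1}A) \cong Q\Omega(\Omega^{n-1}A) = Q\Omega^nA$, using Definition~\ref{def-full-trivial-resolutions} to know $\Omega^{n}$ is computed by cycles of a full trivial resolution.

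First I would fix a cofibrant replacement sequence $0 \to R' \to QA \to A \to 0$ with $R' \in \class{R}_{\class{W}}$, and a short exact sequence $0 \to \Omega A \to W \to A \to 0$ with $W \in \class{W}$ computing $\Omega A$. Now I would take the pullback of $QA \to A \leftarrow W$, giving an object $P$ sitting in two short exact sequences: $0 \to R' \to P \to W \to 0$ and $0 \to \Omega A \to P \to QA \to 0$. Since $R' \in \class{W}$ and $W \in \class{W}$, thickness of $\class{W}$ applied to the first sequence shows $P \in \class{W}$. Thus the second sequence, $0 \to \Omega A \to P \to QA \to 0$ with $QA \in \class{Q}$ and $P \in \class{W}$, simultaneously exhibits (reading it as $0 \to \Omega A \to P \to QA \to 0$ with middle term trivial) a candidate for $\Omega(QA)$ — since $QA$ is the object on the right and $P$ is trivial. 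By the well-definedness up to canonical isomorphism in $\textnormal{Ho}(\mathfrak{M})$ of objects fitting into such sequences (Definition~\ref{def-full-trivial-resolutions}, together with Proposition~\ref{prop-suspension functor} of the Appendix), $\Omega(QA) \cong \Omega A$ in $\textnormal{Ho}(\mathfrak{M})$. On the other hand, $\Omega A \cong Q\Omega A$ in $\textnormal{Ho}(\mathfrak{M})$ because in the homotopy category $\gamma_{\omega}$ identifies an object with its cofibrant replacement. Composing these gives $\Omega QA \cong Q\Omega A$. I would then check the isomorphism is natural/canonical by running the same pullback construction on morphisms, invoking the comparison-type uniqueness already established.

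For (3) and (4), the argument is the exact dual: for $\Omega^nRA \cong R\Omega^nA$ one pushes out along a fibrant replacement sequence $0 \to RA \to RA \to Q'' \to 0$ wait — rather, one uses $0 \to A \to RA \to Q'' \to 0$ with $Q'' \in \class{Q}_{\class{W}}$ and the defining sequence for $\Omega A$, forms the appropriate pushout, and uses thickness of $\class{W}$ to see the new middle term is trivial, exhibiting $\Omega(RA)$; then $\Omega A \cong R\Omega A$ in $\textnormal{Ho}(\mathfrak{M})$ since $\gamma_{\omega}$ identifies an object with its fibrant replacement. Statements (2) and (4) involving $\Sigma$ are handled identically with the roles of pullback/pushout and $\Omega$/$\Sigma$ interchanged, and the convention $\Omega^{-n} := \Sigma^n$ makes the combined statement $\Omega^n$ for all $n \in \Z$ once the positive cases are known.

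The main obstacle I anticipate is not the existence of the isomorphisms but the \emph{canonicality} claim — i.e. verifying that the isomorphisms $\Omega^nQA \cong Q\Omega^nA$ are independent of all the chosen short exact sequences and are natural in $A$. This requires care: one must show that two different pullback squares (arising from two different choices of replacement/loop sequences) yield canonically isomorphic objects in $\textnormal{Ho}(\mathfrak{M})$, and that a morphism $A \to A'$ induces a commuting square. I expect this to follow from the uniqueness-up-to-canonical-isomorphism statements already packaged into Definition~\ref{def-full-trivial-resolutions} and the Appendix's treatment of $\Omega$ and $\Sigma$ as well-defined functors on $\textnormal{Ho}(\mathfrak{M})$, so the real content is bookkeeping rather than a new idea; nevertheless it is where the proof will spend most of its length.
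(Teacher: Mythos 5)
Your argument is correct in its main line but follows a genuinely different route from the paper. The paper works at the level of whole resolutions: it applies the generalized horseshoe lemma to each sequence $0 \xrightarrow{} \Omega^{n+1}A \xrightarrow{} W_n \xrightarrow{} \Omega^nA \xrightarrow{} 0$ coming from a full trivial resolution $W$ of $A$, and splices the resulting cofibrant approximations into a short exact sequence of complexes $0 \xrightarrow{} R \xrightarrow{} QW \xrightarrow{} W \xrightarrow{} 0$ in which $QW$ is a full trivial resolution of $QA$ with cycles $Q\Omega^nA$; items (1) and (2) for all $n$ then follow at once from Definition~\ref{def-full-trivial-resolutions}, and this complex-level sequence is exactly what is reused later in Section~\ref{sec-becker} (sequence~(\ref{eq-ses2})). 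Your object-by-object argument --- pull back $QA \twoheadrightarrow A \twoheadleftarrow W$, use thickness of $\class{W}$ to see the pullback is trivial, invoke the uniqueness of Proposition~\ref{prop-suspension functor}, then iterate --- is shorter and perfectly adequate for the proposition as stated; its essence is simply that $\Omega$ and $\Sigma$ are functors on $\textnormal{Ho}(\mathfrak{M})$ while $QA \xrightarrow{} A$ and $A \xrightarrow{} RA$ become isomorphisms there (note the correct justification for $\Omega A \cong Q\Omega A$ is that the replacement map is a weak equivalence, hence an isomorphism in $\textnormal{Ho}(\mathfrak{M})$, not a property of $\gamma_{\omega}$, which lands in $\textnormal{St}_{\omega}(\cat{A})$). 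What your proof does not produce is the resolution-level statement that the paper's proof both establishes and later needs.

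The one step that would fail as written is your treatment of the mixed cases (2) and (3). The pullback construction pairs $\Omega$ with $Q$, and its exact dual --- the pushout of $RA \leftarrowtail A \rightarrowtail W$ --- pairs $\Sigma$ with $R$, i.e.\ it proves (4), not (3). For (3), there is no pushout to form from the two sequences $0 \xrightarrow{} A \xrightarrow{} RA \xrightarrow{} Q'' \xrightarrow{} 0$ and $0 \xrightarrow{} \Omega A \xrightarrow{} W \xrightarrow{} A \xrightarrow{} 0$ (they do not form a span with common source), and triviality of some middle term would not by itself exhibit $\Omega(RA)$, which requires a trivial object mapping \emph{onto} $RA$. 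A fix in your spirit: choose $0 \xrightarrow{} \Omega(RA) \xrightarrow{} W'' \xrightarrow{} RA \xrightarrow{} 0$ with $W'' \in \class{W}$ and pull it back along $A \rightarrowtail RA$; the snake lemma gives $0 \xrightarrow{} P \xrightarrow{} W'' \xrightarrow{} Q'' \xrightarrow{} 0$, so $P \in \class{W}$ by thickness, while $0 \xrightarrow{} \Omega(RA) \xrightarrow{} P \xrightarrow{} A \xrightarrow{} 0$ exhibits $\Omega(RA)$ as a legitimate choice of $\Omega A$, whence $\Omega RA \cong \Omega A \cong R\Omega A$. Dually, for (2) push a sequence computing $\Sigma(QA)$ out along $QA \twoheadrightarrow A$. (Alternatively, all four cases follow in one line by applying the functors $\Omega^n,\Sigma^n$ of Proposition~\ref{prop-suspension functor} to the $\textnormal{Ho}(\mathfrak{M})$-isomorphisms $QA \cong A$ and $A \cong RA$.)
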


\begin{proof}
It is a basic fact that fibrant and cofibrant replacement computations are unique up to canonical isomorphism in $\textnormal{Ho}(\mathfrak{M})$, and the same is true for $\Omega$ and $\Sigma$ by Proposition~\ref{prop-suspension functor}. Now for any full trivial resolution $W$, of $A$, we have short exact sequences
$0 \xrightarrow{} \Omega^{n+1}A   \xrightarrow{}   W_n   \xrightarrow{}   \Omega^nA   \xrightarrow{} 0$. Using the generalized horseshoe lemma, \cite[Lemma~1.4.4]{becker} (the proof holds in exact categories, or see~\cite{gillespie-stable-cats-cotorsion-pairs} for another proof for exact categories), we may construct a  commutative diagram with all rows and columns short exact sequences. 
$$  \begin{CD}
        0 @>>>   R''    @>>>   R_n    @>>>    R'    @>>>    0 \\
    @.        @VVV     @VVV     @VVV   @.\\
    0   @>>>  Q\Omega^{n+1}A     @>>>   QW_n    @>>>  Q\Omega^nA    @>>>  0  \\
    @.        @VVV     @VVV      @VVV   @.\\
    0   @>>>  \Omega^{n+1}A    @>>>   W_n    @>>>    \Omega^nA    @>>>  0  \\
    \end{CD}
$$ and with $R_n,R',R''$ all trivially fibrant. So the middle row represents a cofibrant approximation of the bottom row, and $QW_n \in \class{Q}_{\class{W}}$. By splicing all these short exact sequences together we obtain a short exact sequence of \emph{chain complexes}
$$0 \xrightarrow{} R \xrightarrow{} QW \xrightarrow{} W \xrightarrow{} 0$$
where $R \in \tilclass{R_{\class{W}}}$, the class of all exact complexes with trivially fibrant cycles, and $QW \in \class{C}$, the class of all exact complexes with cofibrant cycles but with trivially cofibrant components. The point here is that the complex $QW$ is a full trivial resolution of $QA$ and that $Z_{n-1}(QW) = Q\Omega^nA$. Referring to Definition~\ref{def-full-trivial-resolutions} we conclude (1) and (2). The proof of (3) and (4) is similar but using fibrant replacements and the notion of a full trivial coresolution. 
\end{proof}

\begin{theorem}\label{them-lExt and rExt}
For $A$ cofibrant and $B$ fibrant, there are natural isomorphisms:
$$\ell\Ext^n_{\mathfrak{M}}(A,B) \cong \textnormal{Ho}(\mathfrak{M})(\Omega^n A,B) \cong \textnormal{Ho}(\mathfrak{M})(A,\Sigma^n B) \cong r\Ext^n_{\mathfrak{M}}(A,B)$$
\end{theorem}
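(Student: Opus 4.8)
The plan is to prove the chain of isomorphisms by connecting each Ext-type group to a $\textnormal{Ho}(\mathfrak{M})$ morphism set, using that $A$ is cofibrant and $B$ is fibrant so that all homotopy relations coincide and all replacements can be ignored. The heart of the matter is the middle isomorphism $\ell\Ext^n_{\mathfrak{M}}(A,B) \cong \textnormal{Ho}(\mathfrak{M})(\Omega^n A, B)$; the isomorphism $r\Ext^n_{\mathfrak{M}}(A,B) \cong \textnormal{Ho}(\mathfrak{M})(A,\Sigma^n B)$ is then its exact dual, and $\textnormal{Ho}(\mathfrak{M})(\Omega^n A, B) \cong \textnormal{Ho}(\mathfrak{M})(A,\Sigma^n B)$ is the standard adjunction between the loop and suspension functors on the homotopy category (Proposition~\ref{prop-suspension functor} of the Appendix, together with Proposition~\ref{prop-functors-commute}).

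First I would fix an $\mathfrak{M}$-resolution $W_A$ of $A$. Since $A$ is cofibrant, Lemma~\ref{lemma-can-resolutions-cofibrant} gives that every cycle $Z_{k}W_A$ is cofibrant and every component $(W_A)_k$ is trivially cofibrant. For $n \geq 1$, I would break $W_A$ into short exact sequences $0 \to Z_k W_A \to (W_A)_k \to Z_{k-1}W_A \to 0$ and use the dimension-shifting argument: applying $\Hom_{\cat{A}}(-,B)$ and passing to cohomology identifies $H^n[\Hom_{\cat{A}}(W_A,B)]$ with the cokernel of $\Hom_{\cat{A}}((W_A)_{n-1},B) \to \Hom_{\cat{A}}(Z_{n-1}W_A, B)$, which (since $(W_A)_{n-1}$ and $(W_A)_{n}$ are trivially cofibrant and $B$ is fibrant, using the hereditary condition to kill the relevant $\Ext^1$) is exactly $\underline{\Hom}_{\,\omega}(Z_{n-1}W_A, B)$ — the quotient of $\Hom$ by maps factoring through a trivially cofibrant object, which equals the full homotopy relation because $B$ is fibrant. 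Now $Z_{n-1}W_A$ is a cofibrant object representing $\Omega^n A$ in $\textnormal{Ho}(\mathfrak{M})$ (comparing with Definition~\ref{def-full-trivial-resolutions}: $W_A$ restricted to the cycles is the data of a full trivial resolution once one notes trivially cofibrant components lie in $\class{W}$, or one applies Proposition~\ref{prop-functors-commute} to pass between $\Omega^n A$ and $Q\Omega^n A \cong Z_{n-1}(QW)$), and since it is already cofibrant and $B$ is fibrant, $\underline{\Hom}_{\,\omega}(Z_{n-1}W_A, B) = \textnormal{Ho}(\mathfrak{M})(Z_{n-1}W_A, B) \cong \textnormal{Ho}(\mathfrak{M})(\Omega^n A, B)$. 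For $n \leq 0$ the same argument applies but reading off cohomology in the coresolution part of $W_A$, where the cycles are the objects $\Sigma^{|n|}A = \Omega^n A$; the case $n = 0$ reduces directly to $\Hom_{\cat{A}}(A,B)$ modulo maps factoring through trivially fibrant/cofibrant objects, i.e. $\textnormal{Ho}(\mathfrak{M})(A,B)$.

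Next I would establish naturality. The identification $H^n[\Hom_{\cat{A}}(W_A,B)] \cong \textnormal{Ho}(\mathfrak{M})(\Omega^n A, B)$ is natural in $B$ because the connecting/dimension-shifting isomorphisms are built from the fixed short exact sequences of $W_A$ and $\Hom_{\cat{A}}(-,B)$ is functorial; it is natural in $A$ because by the Comparison Theorem~\ref{them-comparison-theorem} a morphism $A \to A'$ induces a chain map $W_A \to W_{A'}$ unique up to homotopy, hence a well-defined map on cohomology and compatibly on the cycles, which is precisely the induced map $\Omega^n A \to \Omega^n A'$. The dual statements give $r\Ext^n_{\mathfrak{M}}(A,B) \cong \textnormal{Ho}(\mathfrak{M})(A, \Sigma^n B)$ naturally (working in an $\mathfrak{M}$-coresolution $W^B$ of the fibrant object $B$, whose cycles are the $\Sigma^k B$, and using $A$ cofibrant to keep $\Hom_{\cat{A}}(A,-)$ exact on the trivially fibrant components).

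Finally, $\textnormal{Ho}(\mathfrak{M})(\Omega^n A, B) \cong \textnormal{Ho}(\mathfrak{M})(A, \Sigma^n B)$: since $\mathfrak{M}$ is hereditary, $\Omega$ and $\Sigma$ are mutually inverse auto-equivalences of $\textnormal{Ho}(\mathfrak{M})$ (this is exactly the content of the Appendix, Proposition~\ref{prop-suspension functor}), so applying $\Sigma^n$ to morphisms and using $\Sigma^n \Omega^n A \cong A$ gives the bijection; it is natural by the same functoriality. Composing the three isomorphisms yields the theorem. I expect the main obstacle to be the careful bookkeeping in the $n \leq 0$ cases and at the splice point $n = 0$ — verifying that the cycle objects of $W_A$ genuinely represent $\Omega^n A$ for all integers $n$ (positive, negative, and zero) with the conventions of Definition~\ref{def-full-trivial-resolutions}, and that switching between $\Omega^n A$ and its cofibrant replacement (needed because a priori $W_A$ is built on $A$ itself, which is cofibrant, but the abstract $\Omega^n A$ need not be) is harmless, which is where Proposition~\ref{prop-functors-commute} does the work.
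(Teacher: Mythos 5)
Your proposal is correct and follows essentially the same route as the paper's proof: identify $H^n[\Hom_{\cat{A}}(W_A,B)]$ with $\underline{\Hom}_{\,\omega}(Z_{n-1}W_A,B)$ using that the cycles of $W_A$ are cofibrant and its components trivially cofibrant (with $B$ fibrant collapsing the relevant homotopy relations, exactly the paper's Claim), recognize those cycles as representatives of $\Omega^n A$, quote Proposition~\ref{prop-suspension functor} for the middle isomorphism, and dualize for $r\Ext^n_{\mathfrak{M}}$. One small simplification: since $A$ is cofibrant, $W_A$ is itself a full trivial resolution of $A$ with cofibrant cycles, so those cycles are already legitimate (cofibrant) choices of $\Omega^n A$ in the sense of Definition~\ref{def-full-trivial-resolutions}, and the detour through Proposition~\ref{prop-functors-commute} at the end of your sketch is unnecessary here.
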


\begin{proof}
We already know, from Proposition~\ref{prop-suspension functor}, that we have natural isomorphisms $\textnormal{Ho}(\mathfrak{M})(\Omega^n A,B) \cong \textnormal{Ho}(\mathfrak{M})(A,\Sigma^n B)$.
 
We recall that the functor $\ell\Ext^n_{\mathfrak{M}}(A,B) := H^n[\Hom_{\cat{A}}(W_A,B)]$ is well-defined up to a canonical isomorphism, where $W_A$ is some canonical resolution of $A$. Because $A$ is cofibrant, we are assured  by Lemma~\ref{lemma-can-resolutions-cofibrant} that $W_A$ has cofibrant cycles, denoted $\Omega^nA$, and trivially cofibrant components, denoted $W_n$. Applying $\Hom_{\cat{A}}(-,B)$ to $W_A$ gives us a cochain complex of abelian groups:
$$\cdots \xleftarrow{} \Hom_{\cat{A}}(W_2,B) \xleftarrow{d^*_2}  \Hom_{\cat{A}}(W_1,B) \xleftarrow{d^*_1} \Hom_{\cat{A}}(W_0,B) \xleftarrow{d^*_0}  \Hom_{\cat{A}}(W_{-1},B) \cdots$$
To compute $\ell\Ext^n_{\mathfrak{M}}(A,B)$ we need $\ker{d^*_{n+1}}/\im{d^*_{n}}$, the cohomology in degree $n$. Now each $W_n \xrightarrow{d_n} W_{n-1}$ factors as $W_n \xrightarrow{\epsilon_n} \Omega^nA \xrightarrow{\eta_{n-1}} W_{n-1}$ where $\epsilon_n $ is an epimorphism and $\eta_{n-1}$ is a monomorphism. We have $\cok{d_{n+1}} = \cok{(\eta_n \epsilon_{n+1})} = \cok{\eta_n} = \epsilon_n$. This implies that we have a left exact sequence of abelian groups
$$0 \xrightarrow{} \Hom_{\cat{A}}(\Omega^nA,B) \xrightarrow{\epsilon^*_n} \Hom_{\cat{A}}(W_n,B)  \xrightarrow{d^*_{n+1}} \Hom_{\cat{A}}(W_{n+1},B).$$ 
In particular, $\epsilon^*_n$ is a monomorphism identifying  $\Hom_{\cat{A}}(\Omega^nA,B)$ with $\ker{d^*_{n+1}}$. Since $B$ is fibrant and $\Omega^nA$ is cofibrant,  we have a natural isomorphism $$\textnormal{Ho}(\mathfrak{M})(\Omega^n A,B) \cong \underline{\Hom}_{\,\omega}(\Omega^n A,B),$$ and so we will have proven $\ell\Ext^n_{\mathfrak{M}}(A,B) \cong \textnormal{Ho}(\mathfrak{M})(\Omega^n A,B)$ once we establish the following. 

\

\noindent \underline{Claim}: The isomorphism $\Hom_{\cat{A}}(\Omega^nA,B) \xrightarrow{\epsilon^*_n}  \ker{d^*_{n+1}} \subseteq \Hom_{\cat{A}}(W_n,B)$ carries the subgroup $\{\,f \in  \Hom_{\cat{A}}(\Omega^nA,B)  \, | \, f \sim^{\omega} 0 \,\}$ onto the subgroup $\im{d^*_n}$. 

\

So suppose $f \in \Hom_{\cat{A}}(\Omega^nA,B)$ satisfies $f \sim^{\omega} 0$. Then $f : \Omega^nA \xrightarrow{} B$ factors as $\Omega^n A \xrightarrow{\alpha} W \xrightarrow{\beta} B$ where $W \in \omega$. Then applying $\Hom_{\class{A}}(-,W)$ to the exact sequence
$$0 \xrightarrow{} \Omega^{n}A \xrightarrow{\eta_{n-1}} W_{n-1} \xrightarrow{\epsilon_{n-1}} \Omega^{n-1}A \xrightarrow{} 0,$$ and using $\Ext^1_{\cat{A}}(\Omega^{n-1}A,W) = 0$, we see that $\alpha$ extends to $W_{n-1}$ through the morphism $\Omega^{n} A  \xrightarrow{\eta_{n-1}} W_{n-1}$. That is, $\alpha = v\eta_{n-1}$ for some $v : W_{n-1} \xrightarrow{} W$. So now $\beta v \in \Hom_{\cat{A}}(W_{n-1}, B)$ and we check $d^*_n(\beta v) = \epsilon^*_n(f)$, proving that $\epsilon^*_n$ maps $f$ into $\im{d^*_n}$.
On the other hand, suppose $f \in \Hom_{\cat{A}}(\Omega^nA,B)$ satisfies $\epsilon_n^*(f) \in \im{d^*_n}$, and we will show $f \sim^{\omega} 0$. Now $\epsilon_n^*(f) \in \im{d^*_n}$ means $f\epsilon_n = td_n$ for some morphism $t : W_{n-1} \xrightarrow{} B$. So then $f\epsilon_n = t\eta_{n-1}\epsilon_n$ and by right canceling $\epsilon_n$ we get  $f= t\eta_{n-1}$. Therefore $f$ factors through $W_{n-1}$, a trivially cofibrant object. But since $B$ is fibrant this is  enough to conclude $f \sim^{\omega} 0$, by~\cite[Proposition~4.4]{gillespie-exact model structures}. 

This proves the Claim, and establishes $\ell\Ext^n_{\mathfrak{M}}(A,B) \cong \textnormal{Ho}(\mathfrak{M})(\Omega^n A,B)$. A similar argument with $r\Ext^n_{\mathfrak{M}}(A,B)$ will establish $\textnormal{Ho}(\mathfrak{M})(A,\Sigma^n B) \cong r\Ext^n_{\mathfrak{M}}(A,B)$.
\end{proof}

Because of the canonical isomorphism $\ell\Ext^n_{\mathfrak{M}}(A,B) \cong r\Ext^n_{\mathfrak{M}}(A,B)$ in Theorem~\ref{them-lExt and rExt} we may, in the case that $A$ is cofibrant and $B$ is fibrant, simply denote this group by $\Ext^n_{\mathfrak{M}}(A,B)$, with the realization that it may be computed by either a canonical resolution of $A$, or, a canonical coresolution of $B$. With this observation we make the following definition. 

\begin{definition}\label{def-Ho-Ext}
$\Ext^n_{\textnormal{Ho}(\mathfrak{M})}(A,B) := \Ext^n_{\mathfrak{M}}(RQA,RQB)$.
\end{definition}

Recall that there is a canonical functor $\gamma_{\mathfrak{M}} : \cat{A} \xrightarrow{} \textnormal{Ho}(\mathfrak{M})$. It is the identity on objects, and takes a morphism $f : A \xrightarrow{} B$ to $[R(Q(f))]_{\omega} \in \underline{\Hom}_{\,\omega}(RQA,RQB)$, where $Q(f)$ and $R(Q(f))$ are any morphisms making the diagram below commute.

$$\begin{CD}
     A    @<p_A<<  QA    @>j_{QA}>>  RQA   \\
     @VVf V     @VV Q(f) V      @VVR(Q(f)) V   \\
    B    @<p_{B}<<   QB    @>j_{QB}>>  RQB     \\
\end{CD}$$

\begin{theorem}\label{them-Ho-Ext}
Each $\Ext^n_{\textnormal{Ho}(\mathfrak{M})}(A,B)$ defines a functor with the following properties.  
\begin{enumerate}
\item $\Ext^n_{\textnormal{Ho}(\mathfrak{M})}(A,-) : \cat{A} \xrightarrow{} \textbf{Ab}$ is a covariant additive functor and short exact sequences in $\cat{A}$ are sent to long exact cohomology sequences of $\Ext^n_{\textnormal{Ho}(\mathfrak{M})}$ groups. $\Ext^n_{\textnormal{Ho}(\mathfrak{M})}(A,-)$  descends via $\gamma_{\mathfrak{M}} : \cat{A} \xrightarrow{} \textnormal{Ho}(\mathfrak{M})$ to a well-defined functor on  $\textnormal{Ho}(\mathfrak{M})$.
\item $\Ext^n_{\textnormal{Ho}(\mathfrak{M})}(-,B) : \cat{A} \xrightarrow{} \textbf{Ab}$ is a contravariant additive functor and short exact sequences in $\cat{A}$ are sent to long exact cohomology sequences of $\Ext^n_{\textnormal{Ho}(\mathfrak{M})}$ groups. $\Ext^n_{\textnormal{Ho}(\mathfrak{M})}(-,B)$ decends via $\gamma_{\mathfrak{M}} : \cat{A} \xrightarrow{} \textnormal{Ho}(\mathfrak{M})$ to a well-defined functor on  $\textnormal{Ho}(\mathfrak{M})$.
\item We have the following isomorphisms, natural in both $A$ and $B$:
 $$\Ext^n_{\textnormal{Ho}(\mathfrak{M})}(A,B) \cong \Ext^n_{\mathfrak{M}}(QA,RB).$$ In particular, the group $\Ext^n_{\textnormal{Ho}(\mathfrak{M})}(A,B)$ may be computed by either applying $H^n[\Hom_{\cat{A}}(-,RB)]$ to a canonical resolution of $A$, or, by applying $H^n[\Hom_{\cat{A}}(QA,-)]$ to a canonical coresolution of $B$.
\end{enumerate}
\end{theorem}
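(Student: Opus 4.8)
The plan is to establish the three parts in a convenient order, leaning heavily on the machinery already built up. The central tool is Theorem~\ref{them-lExt and rExt}, which gives for \emph{bifibrant} $A, B$ the chain of natural isomorphisms $\ell\Ext^n_{\mathfrak{M}}(A,B) \cong \textnormal{Ho}(\mathfrak{M})(\Omega^n A,B) \cong \textnormal{Ho}(\mathfrak{M})(A,\Sigma^n B) \cong r\Ext^n_{\mathfrak{M}}(A,B)$; unwinding the definition $\Ext^n_{\textnormal{Ho}(\mathfrak{M})}(A,B) := \Ext^n_{\mathfrak{M}}(RQA,RQB)$ this immediately identifies $\Ext^n_{\textnormal{Ho}(\mathfrak{M})}(A,B)$ with $\textnormal{Ho}(\mathfrak{M})(\Omega^n RQA, RQB)$. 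So I would first prove part (3), since it is really the engine, and then read off (1) and (2) from it together with the long exact sequences of Theorem~\ref{them-lExt} and Theorem~\ref{them-rExt}.

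For part (3), I would argue that $\Ext^n_{\mathfrak{M}}(RQA,RQB) \cong \Ext^n_{\mathfrak{M}}(QA,RB)$ naturally. The idea is that a cofibrant replacement $QA$ is already cofibrant, and there is a weak equivalence $QA \to RQA$ which is a trivial cofibration; its cokernel lies in $\class{Q}_{\class{W}}\cap\text{(something trivially fibrant)}$ — more precisely, the defining sequence $0 \to QA \to RQA \to W \to 0$ has $W \in \class{R}_{\class{W}}$. Applying Theorem~\ref{them-lExt}(1) — the long exact sequence in the second variable for the fibration into which we embed, combined with the vanishing Lemma~\ref{lemma-lExt-vanishing} — I would show $\ell\Ext^n_{\mathfrak{M}}(QA, RQB) \cong \ell\Ext^n_{\mathfrak{M}}(QA, RB)$ because $RQB$ and $RB$ differ (up to homotopy) by an object of $\class{R}_{\class{W}}$, and $\ell\Ext^n_{\mathfrak{M}}(QA,-)$ kills trivially fibrant objects. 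Dually, Theorem~\ref{them-lExt}(2) handles the first variable: since $W_{RB}$ computes against a fibrant target, and $QA \to RQA$ has trivially fibrant cokernel, the long exact sequence collapses. Naturality in both variables follows because all the comparison maps are induced by the structural maps $p_A, j_{QA}$ appearing in the definition of $\gamma_{\mathfrak{M}}$, and these are functorial up to homotopy by the Comparison Theorem~\ref{them-comparison-theorem}. The ``in particular'' clause is then automatic: $\Ext^n_{\mathfrak{M}}(QA,RB)$ is by definition computed either as $H^n[\Hom_{\cat{A}}(W_{QA},RB)]$ — a canonical resolution of $A$ paired against $RB$ — or, via the $r\Ext$ description, as $H^n[\Hom_{\cat{A}}(QA, W^{RB})]$, a canonical coresolution of $B$.

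For parts (1) and (2): functoriality on $\cat{A}$ is clear from (3) once we know $\ell\Ext^n_{\mathfrak{M}}(-,-)$ is bifunctorial, which is Theorem~\ref{them-lExt}. That the functor descends to $\textnormal{Ho}(\mathfrak{M})$ amounts to checking it sends weak equivalences to isomorphisms, equivalently that it identifies maps that become equal in $\textnormal{Ho}(\mathfrak{M})$; but a map $f$ with $RQ(f)$ null-homotopic factors (on bifibrant replacements) through an object of $\omega$, and $\Ext^n_{\mathfrak{M}}(RQA,-)$ kills such maps by the argument already used in the proof of Theorem~\ref{them-lExt}(1) (the complex $\Hom_{\cat{A}}(W_{RQA}, W)$ is acyclic for $W\in\omega \subseteq \class{R}_{\class{W}}$). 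For the long exact sequences: given $0 \to A \to B \to C \to 0$ in $\cat{A}$, I would apply $RQ$ — but $RQ$ is not exact, so instead I would use the Horseshoe Lemma~\ref{lemma-horseshoe-lemma} philosophy: replace the sequence by a short exact sequence of \emph{bifibrant} objects that is equivalent in $\textnormal{Ho}(\mathfrak{M})$ (this is possible because one can first take a cofibrant replacement of the whole sequence via the generalized horseshoe lemma, then a fibrant replacement), and then invoke Theorem~\ref{them-lExt}(2) for the contravariant variable and Theorem~\ref{them-rExt}(1)/Theorem~\ref{them-lExt}(1) for the covariant variable, noting that a short exact sequence of bifibrant objects is both a ``cofibration with cofibrant cokernel'' and a ``fibration with fibrant kernel''.

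The main obstacle I anticipate is the naturality and well-definedness bookkeeping in step (3) — specifically, verifying that the two independent collapses (first variable via trivially fibrant cokernel of $QA \to RQA$, second variable via trivially fibrant ``difference'' of $RB$ and $RQB$) are compatible and produce an isomorphism natural in $A$ and $B$ simultaneously, rather than just for each variable separately. This requires being careful that the comparison chain maps furnished by Theorem~\ref{them-comparison-theorem} are chosen coherently, and that the resulting square of isomorphisms commutes up to the canonical identifications in $K(\cat{A})$. A secondary subtlety is confirming, in part (1)/(2), that the short exact sequence of bifibrant replacements can be produced functorially enough that the connecting maps in the long exact sequence are natural; here one should cite the hereditary hypothesis (needed for Lemma~\ref{lemma-horseshoe-lemma}) and the fact, recorded in Proposition~\ref{prop-functors-commute}, that $Q$ and $R$ commute with $\Omega$ and $\Sigma$ up to canonical isomorphism in $\textnormal{Ho}(\mathfrak{M})$.
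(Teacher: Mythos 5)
Your overall architecture is the same as the paper's (the paper proves (1) and (2) first, directly as $r\Ext^n_{\mathfrak{M}}(RQA,RQ(-))$ and $\ell\Ext^n_{\mathfrak{M}}(RQ(-),RQB)$, and then (3); your reordering is harmless, and your treatment of the long exact sequences via the generalized horseshoe lemma and of descent via the universal property or via killing maps that factor through $\omega$ is exactly what the paper does). But the collapse argument you sketch for (3) has two concrete problems. First, the cokernel of $QA \rightarrowtail RQA$ lies in $\class{Q}_{\class{W}}$, not $\class{R}_{\class{W}}$: a fibrant replacement sequence is $0 \to QA \to RQA \to C \to 0$ with $C$ trivially cofibrant. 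As you wrote it the vanishing lemma would not apply; with the correct class the step is fine: apply Theorem~\ref{them-lExt}(2) to the cofibration $QA \to RQA$ against the fibrant $RB$ and use Lemma~\ref{lemma-lExt-vanishing} on $C \in \class{Q}_{\class{W}}$ to get $\Ext^n_{\mathfrak{M}}(RQA,RB) \cong \Ext^n_{\mathfrak{M}}(QA,RB)$, which is the paper's isomorphism $j^*_A$. Second, and this is a genuine gap, the second-variable step ``$RQB$ and $RB$ differ up to homotopy by an object of $\class{R}_{\class{W}}$'' is not something you can feed into Theorem~\ref{them-lExt}(1) or \ref{them-rExt}: there is no canonical fibration relating $RQB$ and $RB$ with trivially fibrant kernel, and the comparison weak equivalence $RQB \to RB$ need not be a stable $\omega$-equivalence because $RB$ need not be cofibrant; if you factor it as a trivial cofibration followed by a trivial fibration, the trivial cofibration has cokernel in $\class{Q}_{\class{W}}$, on which $\ell\Ext^n_{\mathfrak{M}}(QA,-)$ does not vanish, so the long exact sequence does not collapse there.

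The paper closes this gap by interpolating $QRB$: the cofibrant replacement sequence $0 \to F \to QRB \xrightarrow{p_B} RB \to 0$ has $F \in \class{R}_{\class{W}}$, so $(p_B)_*$ is an isomorphism by the long exact sequence plus Lemma~\ref{lemma-lExt-vanishing}; moreover $QRB$ is bifibrant (it is an extension of the fibrant objects $F$ and $RB$), so a natural weak equivalence $\tau_B : RQB \to QRB$ is a stable $\omega$-equivalence, which $\Ext^n_{\mathfrak{M}}(RQA,-)$ inverts by Theorem~\ref{them-rExt}(2). Some such two-step comparison (or an equivalent device) is needed; your one-step version does not compile into an argument from the cited results. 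With that repaired, the rest of your plan, including reading off (1) and (2) from (3) together with Theorems~\ref{them-lExt} and~\ref{them-rExt} and the horseshoe lemma, goes through and matches the paper's proof in substance.
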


\begin{proof}
(1) Bifibrant replacement determines a covariant additive functor $RQ : \class{A} \xrightarrow{}  \textnormal{St}_{\omega}(\class{A})$. (It is well-defined, with respect to the choice of bifibrant replacement objects used, up to a canonical isomorphism. It even descends to $\textnormal{St}_{\omega}(\class{A})$ via the factorization $\cat{A} \xrightarrow{\gamma_{\omega}} \textnormal{St}_{\omega}(\class{A}) \xrightarrow{RQ}  \textnormal{St}_{\omega}(\class{A})$; this follows from~\cite[Lemmas~2.4 and~2.5]{gillespie-stable-cats-cotorsion-pairs}.) Thus for any object $A$, we obtain a covariant additive functor $r\Ext^n_{\mathfrak{M}}(A,RQ(-)) : \cat{A} \xrightarrow{} \textbf{Ab}$, by Theorem~\ref{them-rExt}~(2). Again, with respect to the choices involved, it is well-defined up to a canonical isomorphism.
In particular, for any fixed bifibrant replacement, $RQA$, we have a covariant additive functor $\Ext^n_{\textnormal{Ho}(\mathfrak{M})}(A,-) := r\Ext^n_{\mathfrak{M}}(RQA,RQ(-))$. 
But if $R'Q'A$ is a different choice of bifibrant replacement of $A$, then we have a canonical stable $\omega$-equivalence $RQA \xrightarrow{\alpha} R'Q'A$ by~\cite[Lemma~4.24]{dwyer-spalinski}. This means $[\alpha]_{\omega}$ is an isomorphism in $\textnormal{St}_{\omega}(\cat{A})$. It then follows from Theorem~\ref{them-rExt}~(1) that we have isomorphisms
$$r\Ext^n_{\mathfrak{M}}(R'Q'A,RQB) \xrightarrow{[\alpha]^*_B} r\Ext^n_{\mathfrak{M}}(RQA,RQB)$$ which assemble to a natural isomorphism 
$$r\Ext^n_{\mathfrak{M}}(R'Q'A,RQ(-)) \xrightarrow{\{[\alpha]^*_B\}} r\Ext^n_{\mathfrak{M}}(RQA,RQ(-)).$$
Thus we have proved that $\Ext^n_{\textnormal{Ho}(\mathfrak{M})}(A,-) : \cat{A} \xrightarrow{} \textbf{Ab}$ is a covariant additive functor and it follows from Theorem~\ref{them-rExt}~(2) along with the generalized horseshoe lemma, \cite[Lemma~1.4.4]{becker} or see~\cite{gillespie-stable-cats-cotorsion-pairs}, that it takes short exact sequences in $\cat{A}$ to long exact sequences in $\textbf{Ab}$.

Now let us define $\Ext^n_{\textnormal{Ho}(\mathfrak{M})}(A,-)$ on $\textnormal{Ho}(\mathfrak{M})$. So say we have a morphism $[f]_{\omega} \in \textnormal{Ho}(\mathfrak{M})(A,B) :=  \underline{\Hom}_{\,\omega}(RQA,RQB)$. Then the definition $\Ext^n_{\textnormal{Ho}(\mathfrak{M})}(A,[f]_{\omega}) := r\Ext^n_{\mathfrak{M}}(RQA,f)$ is well-defined by Theorem~\ref{them-rExt}~(2), and this definition is compatible with $\gamma_{\mathfrak{M}} : \cat{A} \xrightarrow{} \textnormal{Ho}(\mathfrak{M})$. (One could also check that $\Ext^n_{\textnormal{Ho}(\mathfrak{M})}(A,-)$ takes weak equivalences in $\cat{A}$ to isomorphisms of abelian groups, since weak equivalences between bifibrant objects are stable $\omega$-equivalences. So the universal property of $\textnormal{Ho}(\mathfrak{M})$ applies and says that $\Ext^n_{\textnormal{Ho}(\mathfrak{M})}(A,-)$ uniquely factors through $\gamma_{\mathfrak{M}}$. The above makes this factorization explicit.) 

The proof of (2) is similar; one shows that $$\Ext^n_{\textnormal{Ho}(\mathfrak{M})}(-,B) := \ell\Ext^n_{\mathfrak{M}}(RQ(-),RQB)$$ is a well-defined, additive and contravariant functor. 

We now prove (3). First, for any $A$, consider any fibrant approximation sequence $0 \xrightarrow{} QA \xrightarrow{j} RQA \xrightarrow{q} C \xrightarrow{} 0$, of $QA$. Applying $\Ext^n_{\mathfrak{M}}(-,RB)$ and using the associated long exact sequence, along with Lemma~\ref{lemma-lExt-vanishing}, we conclude there is an isomorphism  $j^*_A : \Ext^n_{\mathfrak{M}}(RQA,RB) \xrightarrow{} \Ext^n_{\mathfrak{M}}(QA,RB)$. Now for any morphism $f : A \xrightarrow{} A'$ we get a commutative diagram with exact rows:
$$\begin{CD}
     0   @>>> QA    @>j_A>>  RQA    @>>>  C    @>>>    0 \\
    @.        @VVQ(f) V     @VV R(Q(f)) V      @VVV   @.\\
    0   @>>>  QA'    @>j_{A'}>>   RQA'    @>>>  C'    @>>> 0   \\
\end{CD}$$ Since $\Ext^n_{\mathfrak{M}}(-,RB)$ is a functor, applying it to the left square shows that we have a natural isomorphism $\{j^*_A\} : \Ext^n_{\mathfrak{M}}(RQA,RB) \xrightarrow{} \Ext^n_{\mathfrak{M}}(QA,RB)$. Next, for objects $B$, consider cofibrant approximation sequences $0 \xrightarrow{} F \xrightarrow{i} QRB \xrightarrow{p} RB \xrightarrow{} 0$, of $RB$. A similar argument as above shows that we have a natural isomorphism $\{(p_B)_*\} : \Ext^n_{\mathfrak{M}}(RQA,QRB) \xrightarrow{} \Ext^n_{\mathfrak{M}}(RQA,RB)$. But finally, it is easy to construct a natural transformation $\{\tau_B\} : RQB \xrightarrow{} QRB$, with each $\tau_B$ a weak equivalence. It follows from~\cite[Lemma~4.24]{dwyer-spalinski} that each $\tau_B$ is a stable $\omega$-equivalence. Therefore, by Theorem~\ref{them-rExt}~(2) we have natural isomorphisms $\{(\tau_B)_*\} : \Ext^n_{\mathfrak{M}}(RQA,RQB) \xrightarrow{} \Ext^n_{\mathfrak{M}}(RQA,QRB)$. So composing all these isomorphisms we get a natural isomorphism $$\{j^*_A\} \circ \{(p_B)_*\} \circ \{(\tau_B)_*\} : \Ext^n_{\mathfrak{M}}(RQA,RQB) \xrightarrow{} \Ext^n_{\mathfrak{M}}(QA,RB).$$
\end{proof}

The next corollary, combined with Theorem~\ref{them-Ho-Ext}(3), makes precise how we have proved that morphism sets in $\textnormal{Ho}(\mathfrak{M})$ may be realized as cohomology groups computed via canonical (co)resolutions in $\cat{A}$.

\begin{corollary}\label{cor-Ho-Ext}
 There are natural isomorphisms:
$$\textnormal{Ho}(\mathfrak{M})(\Omega^n A,B) \cong \Ext^n_{\textnormal{Ho}(\mathfrak{M})}(A,B) \cong \textnormal{Ho}(\mathfrak{M})(A,\Sigma^n B)$$
\end{corollary}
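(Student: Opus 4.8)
The plan is to obtain both isomorphisms purely formally from the two main structural results already established. Theorem~\ref{them-Ho-Ext}(3) gives the natural isomorphism $\Ext^n_{\textnormal{Ho}(\mathfrak{M})}(A,B)\cong\Ext^n_{\mathfrak{M}}(QA,RB)$, and since $QA$ is cofibrant and $RB$ is fibrant, Theorem~\ref{them-lExt and rExt} applies to the pair $(QA,RB)$ and gives
$$\Ext^n_{\mathfrak{M}}(QA,RB)\cong\textnormal{Ho}(\mathfrak{M})(\Omega^nQA,RB)\cong\textnormal{Ho}(\mathfrak{M})(QA,\Sigma^nRB),$$
naturally in $A$ and $B$. (One small consistency point to record first: a canonical resolution of the cofibrant object $QA$ has all components in $\class{Q}_{\class{W}}\cup\class{R}_{\class{W}}\subseteq\class{W}$, so it is a full trivial resolution in the sense of Definition~\ref{def-full-trivial-resolutions}; hence its cycles are exactly the objects $\Omega^nQA$ produced by the loop functor of the Appendix, and the notation in Theorem~\ref{them-lExt and rExt} matches that of Proposition~\ref{prop-suspension functor}.)

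It then remains to move $Q$ and $R$ out of the hom-sets. The cofibrant-replacement map $p_A\colon QA\to A$ and the fibrant-replacement map $B\to RB$ are weak equivalences, hence $\gamma_{\mathfrak{M}}$ sends them to isomorphisms in $\textnormal{Ho}(\mathfrak{M})$; as usual these assemble into natural isomorphisms $Q\xrightarrow{\sim}\mathrm{id}$ and $\mathrm{id}\xrightarrow{\sim}R$ of endofunctors of $\textnormal{Ho}(\mathfrak{M})$. Since $\Omega$ and $\Sigma$ are, by the Appendix, endofunctors of $\textnormal{Ho}(\mathfrak{M})$, applying $\Omega^n$ and $\Sigma^n$ yields natural isomorphisms $\Omega^nQA\cong\Omega^nA$ and $\Sigma^nRB\cong\Sigma^nB$ in $\textnormal{Ho}(\mathfrak{M})$ (one could equally route through $Q\Omega^nA$ and $R\Sigma^nB$ using Proposition~\ref{prop-functors-commute}). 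Pre- and post-composing with these isomorphisms gives
$$\textnormal{Ho}(\mathfrak{M})(\Omega^nQA,RB)\cong\textnormal{Ho}(\mathfrak{M})(\Omega^nA,B),\qquad \textnormal{Ho}(\mathfrak{M})(QA,\Sigma^nRB)\cong\textnormal{Ho}(\mathfrak{M})(A,\Sigma^nB),$$
and chaining all of the above proves the corollary. The negative and zero values of $n$ are handled automatically by the conventions $\Omega^{-n}:=\Sigma^n$ for $n>0$ and $\Omega^0=\Sigma^0=\mathrm{id}$.

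Beyond invoking Theorems~\ref{them-Ho-Ext} and~\ref{them-lExt and rExt}, there is essentially no mathematical content here; the one step that needs genuine care --- and which I expect to be the only real obstacle, mild as it is --- is verifying that every isomorphism used is natural in both variables \emph{at once}, so that the composite is a natural isomorphism of bifunctors $\cat{A}^{\mathrm{op}}\times\cat{A}\to\textbf{Ab}$. Concretely this amounts to checking that the naturality squares coming from the $Q$- and $R$-replacements commute with the naturality of the isomorphisms of Theorem~\ref{them-lExt and rExt} in their cofibrant and fibrant arguments; this is routine diagram-chasing, but it is the place where one should be explicit rather than hand-wave.
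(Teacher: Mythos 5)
Your proposal is correct and takes essentially the same route as the paper: both proofs chain Theorem~\ref{them-Ho-Ext}(3) with Theorem~\ref{them-lExt and rExt} applied to the pair $(QA,RB)$, and then strip the replacements using that cofibrant and fibrant replacements become natural isomorphisms in $\textnormal{Ho}(\mathfrak{M})$. The only (cosmetic) difference is in the last step: the paper first rewrites $\Omega^nQA\cong Q\Omega^nA$ via Proposition~\ref{prop-functors-commute}(1) before invoking the replacement isomorphism, whereas you apply the endofunctor $\Omega^n$ of $\textnormal{Ho}(\mathfrak{M})$ directly to the isomorphism $QA\cong A$ --- an interchange you yourself note, and which is legitimate since your preliminary consistency point identifies the cycles of the canonical resolution with the iterated loop/suspension objects of the Appendix.
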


\begin{proof}
We have $\Ext^n_{\textnormal{Ho}(\mathfrak{M})}(A,B) \cong \Ext^n_{\mathfrak{M}}(QA,RB) \cong \textnormal{Ho}(\mathfrak{M})(\Omega^n QA,RB)$ by Theorem~\ref{them-Ho-Ext}~(3) and Theorem~\ref{them-lExt and rExt}. Then by Proposition~\ref{prop-functors-commute}~(1), we continue to get $\textnormal{Ho}(\mathfrak{M})(\Omega^n QA,RB) \cong \textnormal{Ho}(\mathfrak{M})(Q\Omega^nA,RB)$. All these isomorphisms are natural and since cofibrant and fibrant replacements also provide natural isomorphisms in $\textnormal{Ho}(\mathfrak{M})$, we conclude that all these are naturally isomorphic to $\textnormal{Ho}(\mathfrak{M})(\Omega^n A,B)$. Similar reasoning shows $\Ext^n_{\textnormal{Ho}(\mathfrak{M})}(A,B) \cong \textnormal{Ho}(\mathfrak{M})(A,\Sigma^n B)$.
\end{proof}

Combining Theorem~\ref{them-Ho-Ext} and Corollary~\ref{cor-Ho-Ext}, we also recover the existence of long exact sequences attached to short exact sequences.

\begin{corollary}\label{cor-long-triangles}
Any short exact sequence $0 \xrightarrow{} A \xrightarrow{} B \xrightarrow{} C \xrightarrow{} 0$ in $\cat{A}$ induces long exact sequences of abelian groups as follows:
\begin{enumerate}
\item 
For each object $X \in \cat{A}$, and setting $\Sigma^{-n} := \Omega^n$ for $n > 0$, there is a long exact sequence:
$$ 
\cdots  \xrightarrow{} \textnormal{Ho}(\mathfrak{M})(X,\Sigma^{n-1} C)  \xrightarrow{}  \textnormal{Ho}(\mathfrak{M})(X,\Sigma^n A)  \xrightarrow{}  \textnormal{Ho}(\mathfrak{M})(X,\Sigma^n B)$$  $$\xrightarrow{}  \textnormal{Ho}(\mathfrak{M})(X,\Sigma^n C)  \xrightarrow{}  \textnormal{Ho}(\mathfrak{M})(X,\Sigma^{n+1} C)  \xrightarrow{} \cdots$$
\item 
For each object $X \in \cat{A}$, and setting $\Omega^{-n} := \Sigma^n$ for $n > 0$, there is a long exact sequence:
$$ 
\cdots  \xrightarrow{} \textnormal{Ho}(\mathfrak{M})(\Omega^{n-1} A, X)  \xrightarrow{}  \textnormal{Ho}(\mathfrak{M})(\Omega^n C,X)  \xrightarrow{}  \textnormal{Ho}(\mathfrak{M})(\Omega^n B,X)$$  $$\xrightarrow{}  \textnormal{Ho}(\mathfrak{M})(\Omega^n A,X)  \xrightarrow{}  \textnormal{Ho}(\mathfrak{M})(\Omega^{n+1} C,X)  \xrightarrow{} \cdots$$
\end{enumerate}
\end{corollary}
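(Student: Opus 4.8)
The plan is to obtain both long exact sequences as purely formal consequences of Theorem~\ref{them-Ho-Ext} and the natural isomorphisms of Corollary~\ref{cor-Ho-Ext}; no new construction is required, only careful bookkeeping of the degree conventions.

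For (1), fix an object $X \in \cat{A}$. By Theorem~\ref{them-Ho-Ext}(1), the covariant functor $\Ext^n_{\textnormal{Ho}(\mathfrak{M})}(X,-)$ sends the short exact sequence $0 \to A \to B \to C \to 0$ to a long exact cohomology sequence
\[
\cdots \to \Ext^{n-1}_{\textnormal{Ho}(\mathfrak{M})}(X,C) \to \Ext^n_{\textnormal{Ho}(\mathfrak{M})}(X,A) \to \Ext^n_{\textnormal{Ho}(\mathfrak{M})}(X,B)
\]
\[
\to \Ext^n_{\textnormal{Ho}(\mathfrak{M})}(X,C) \to \Ext^{n+1}_{\textnormal{Ho}(\mathfrak{M})}(X,A) \to \cdots
\]
running over all $n \in \Z$. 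Corollary~\ref{cor-Ho-Ext} supplies, for every integer $m$ and every object $Y$, an isomorphism $\Ext^m_{\textnormal{Ho}(\mathfrak{M})}(X,Y) \cong \textnormal{Ho}(\mathfrak{M})(X,\Sigma^m Y)$ natural in $Y$ (with the convention $\Sigma^{-k} := \Omega^k$ for $k > 0$). I would apply this isomorphism in each degree to transport the sequence above into the one claimed in (1). The one thing to verify is that these isomorphisms intertwine the structure maps: the maps in the $\Ext_{\textnormal{Ho}(\mathfrak{M})}$ sequence are induced — via the functoriality of $\Ext^n_{\textnormal{Ho}(\mathfrak{M})}(X,-)$ on $\textnormal{Ho}(\mathfrak{M})$ asserted in Theorem~\ref{them-Ho-Ext}(1) — by the images under $\gamma_{\mathfrak{M}}$ of the morphisms $A \to B$ and $B \to C$, while the maps in the target sequence are induced by $\Sigma^m$ of those same morphisms; naturality of the comparison isomorphism in $Y$ then makes every square commute, and the connecting homomorphisms correspond because they are transported across the same natural isomorphism.

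For (2), the argument is the mirror image: one uses the contravariant functor $\Ext^n_{\textnormal{Ho}(\mathfrak{M})}(-,X)$ and the long exact sequence of Theorem~\ref{them-Ho-Ext}(2), then transports along the natural isomorphism $\Ext^m_{\textnormal{Ho}(\mathfrak{M})}(Y,X) \cong \textnormal{Ho}(\mathfrak{M})(\Omega^m Y,X)$ of Corollary~\ref{cor-Ho-Ext} (natural in $Y$, with $\Omega^{-k} := \Sigma^k$ for $k > 0$). I expect no genuine obstacle: once Theorem~\ref{them-Ho-Ext} and Corollary~\ref{cor-Ho-Ext} are available, this corollary is a formal translation. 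The one delicate point is purely clerical — matching the degree shifts in the $\Ext^\bullet_{\textnormal{Ho}(\mathfrak{M})}$ sequences against the conventions $\Sigma^{-k} = \Omega^k$ and $\Omega^{-k} = \Sigma^k$, and making sure naturality of the comparison isomorphism is invoked in the second variable for (1) and in the first variable for (2).
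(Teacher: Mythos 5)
Your proposal is correct and is precisely the paper's argument: the paper gives no separate proof, deriving the corollary exactly by combining the long exact sequences of Theorem~\ref{them-Ho-Ext} with the natural isomorphisms of Corollary~\ref{cor-Ho-Ext} and the stated conventions $\Sigma^{-n}:=\Omega^n$, $\Omega^{-n}:=\Sigma^n$. Your extra remarks on naturality and degree bookkeeping only make explicit what the paper leaves implicit.
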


For any object $A$ we can find a short exact sequence $0 \xrightarrow{} A \xrightarrow{} W \xrightarrow{} \Sigma A \xrightarrow{} 0$ where $W$ is trivial. Then  since
$\Ext^n_{\textnormal{Ho}(\mathfrak{M})}(-,B)$ vanishes on trivial objects, and sends short exact sequences to long exact sequences, we deduce the following dimension shifting formulas. 

\begin{corollary}\label{cor-dim-shift}
The following dimension shifting formulas hold for all integers $m,n$, 
where we are setting $\Sigma^{-n} := \Omega^n$ and  $\Omega^{-n} := \Sigma^n$ for $n > 0$:
 \[ \Ext^n_{\textnormal{Ho}(\mathfrak{M})}(A,B)  = \begin{cases}
            \Ext^{n+m}_{\textnormal{Ho}(\mathfrak{M})}(\Sigma^mA,B) \\
            
           \\
            
             \Ext^{n+m}_{\textnormal{Ho}(\mathfrak{M})}(A,\Omega^mB).
             \end{cases} \]
\end{corollary}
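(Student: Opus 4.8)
The plan is to reduce the whole statement to the single shift $\Ext^n_{\textnormal{Ho}(\mathfrak{M})}(X,B)\cong\Ext^{n+1}_{\textnormal{Ho}(\mathfrak{M})}(\Sigma X,B)$, valid for every object $X$ and every $n\in\Z$, together with its dual $\Ext^n_{\textnormal{Ho}(\mathfrak{M})}(A,Y)\cong\Ext^{n+1}_{\textnormal{Ho}(\mathfrak{M})}(A,\Omega Y)$, and then to iterate. The first step is the key vanishing: if $W\in\class{W}$ is trivial, then $\Ext^n_{\textnormal{Ho}(\mathfrak{M})}(W,B)=0=\Ext^n_{\textnormal{Ho}(\mathfrak{M})}(A,W)$ for all $n$. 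Indeed, by Theorem~\ref{them-Ho-Ext}(3), $\Ext^n_{\textnormal{Ho}(\mathfrak{M})}(W,B)\cong\Ext^n_{\mathfrak{M}}(QW,RB)$, and $QW$ is trivially cofibrant — it lies in $\class{Q}$ and, being weakly equivalent to the trivial object $W$, in $\class{W}$ by thickness — while $RB$ is fibrant, so the group vanishes by Lemma~\ref{lemma-lExt-vanishing}; the dual vanishing is identical. (Equivalently, $\Ext^n_{\textnormal{Ho}(\mathfrak{M})}$ descends to an additive functor on $\textnormal{Ho}(\mathfrak{M})$ by Theorem~\ref{them-Ho-Ext}(1)--(2), in which $W$ is a zero object.)

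Next I realize the shift. By the Appendix (Proposition~\ref{prop-suspension functor}), any suspension $\Sigma X$ fits into a short exact sequence $0\to X\to W\to\Sigma X\to 0$ with $W\in\class{W}$. Applying the contravariant functor $\Ext^{\bullet}_{\textnormal{Ho}(\mathfrak{M})}(-,B)$, which by Theorem~\ref{them-Ho-Ext}(2) sends this to a long exact sequence, the terms $\Ext^n_{\textnormal{Ho}(\mathfrak{M})}(W,B)$ appearing in degrees $n$ and $n+1$ vanish by the first step, so the connecting maps are isomorphisms $\Ext^n_{\textnormal{Ho}(\mathfrak{M})}(X,B)\xrightarrow{\ \sim\ }\Ext^{n+1}_{\textnormal{Ho}(\mathfrak{M})}(\Sigma X,B)$ for all $n\in\Z$. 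Dually, the short exact sequence $0\to\Omega B\to W\to B\to 0$ and the covariant functor $\Ext^n_{\textnormal{Ho}(\mathfrak{M})}(A,-)$ (Theorem~\ref{them-Ho-Ext}(1)) give $\Ext^n_{\textnormal{Ho}(\mathfrak{M})}(A,B)\cong\Ext^{n+1}_{\textnormal{Ho}(\mathfrak{M})}(A,\Omega B)$.

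It remains to iterate and reconcile the sign conventions. For $m>0$, compose $m$ copies of the first isomorphism, using at each stage that $\Sigma(\Sigma^{k}A)\cong\Sigma^{k+1}A$ canonically in $\textnormal{Ho}(\mathfrak{M})$ (Definition~\ref{def-full-trivial-resolutions}) and that $\Ext^{\bullet}_{\textnormal{Ho}(\mathfrak{M})}(-,B)$ descends to $\textnormal{Ho}(\mathfrak{M})$; this yields $\Ext^n_{\textnormal{Ho}(\mathfrak{M})}(A,B)\cong\Ext^{n+m}_{\textnormal{Ho}(\mathfrak{M})}(\Sigma^m A,B)$. For $m=-k<0$, iterate instead the dual short exact sequence $0\to\Omega A\to W\to A\to 0$ — equivalently, apply the first isomorphism to the objects $\Omega^{j}A$ and use $\Sigma\Omega^{j}A\cong\Omega^{j-1}A$, which holds since $\Sigma$ and $\Omega$ are mutually inverse auto-equivalences of $\textnormal{Ho}(\mathfrak{M})$ (Proposition~\ref{prop-suspension functor}) — to obtain $\Ext^n_{\textnormal{Ho}(\mathfrak{M})}(A,B)\cong\Ext^{n-k}_{\textnormal{Ho}(\mathfrak{M})}(\Omega^k A,B)$, which is exactly the asserted formula under the convention $\Sigma^{-k}:=\Omega^k$. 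The case $m=0$ is trivial, and the second family of formulas, involving $\Omega^m B$, is handled symmetrically from $\Ext^n_{\textnormal{Ho}(\mathfrak{M})}(A,B)\cong\Ext^{n+1}_{\textnormal{Ho}(\mathfrak{M})}(A,\Omega B)$ and its inverse. There is no serious obstacle; the only care needed is the bookkeeping of the conventions $\Sigma^{-n}:=\Omega^n$ and $\Omega^{-n}:=\Sigma^n$ and the (already available) fact that iterated loops and suspensions are well defined up to canonical isomorphism in $\textnormal{Ho}(\mathfrak{M})$, so that passing through these isomorphisms does not disturb the $\Ext$ groups.
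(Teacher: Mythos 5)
Your proposal is correct and follows essentially the same route as the paper: the paper also takes the short exact sequence $0 \to A \to W \to \Sigma A \to 0$ with $W \in \class{W}$, notes that $\Ext^n_{\textnormal{Ho}(\mathfrak{M})}(-,B)$ vanishes on trivial objects, and reads the shift isomorphisms off the long exact sequence of Theorem~\ref{them-Ho-Ext}, iterating for general $m$. Your write-up merely makes explicit the vanishing step (via Theorem~\ref{them-Ho-Ext}(3) and Lemma~\ref{lemma-lExt-vanishing}) and the bookkeeping with the conventions, which the paper leaves implicit.
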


\begin{example}\label{example-GorenProj}
Assume $\cat{A}$ has enough projectives, and suppose that $(\class{C}, \class{W})$ is a projective cotorsion pair. It means $\mathfrak{M} = (\class{C}, \class{W}, \class{A})$ is an hereditary abelian model structure. There are a variety of such model structures on $R$-Mod, the category of (say left) $R$-modules over a ring $R$, and $\ch$, the category of chain complexes of such $R$-modules, described in~\cite{gillespie-hereditary-abelian-models}. The quintessential example is the cotorsion pair $(\class{G}\class{P}, \rightperp{\class{G}\class{P}})$, where $\class{G}\class{P}$ is the class of Gorenstein projective $R$-modules over an Iwanaga-Gorenstein ring $R$. In any case, any cofibrant $C \in \class{C}$ is a Gorenstein projective object by~\cite[Theorem~5.2/5.4]{gillespie-recollement} and the proof given there reveals that any $\class{M}$-resolution of $C$ provides an exact chain complex, $\mathbb{P}_C$, with $C = Z_{-1}(\mathbb{P}_C)$ and such that $\Hom_{\cat{A}}(\mathbb{P}_C, P)$ remains exact for any projective object $P \in \cat{A}$. Such a complex $\mathbb{P}_C$ is usually called a \emph{totally acyclic complex of projectives}, or, a \emph{complete projective resolution of $C$}. Thus a canonical resolution of a general object $A \in \cat{A}$ provides a complete projective resolution, $\mathbb{P}_{QA}$, of a cofibrant approximation $QA$ of $A$.
Moreover we have natural isomorphisms
$$\textnormal{Ho}(\mathfrak{M})(\Omega^n A,B) \cong H^n[\Hom_{\cat{A}}(\mathbb{P}_{QA}, B)] \cong \textnormal{Ho}(\mathfrak{M})(A,\Sigma^n B)$$
due to Corollary~\ref{cor-Ho-Ext} along with Theorem~\ref{them-Ho-Ext}.

For a general ring $R$, if it turns out that $(\class{G}\class{P}, \rightperp{\class{G}\class{P}})$ is indeed a complete cotorsion pair, then it is automatic by~\cite[Theorem~5.2/5.4]{gillespie-recollement} that we have an hereditary abelian model structure $\mathfrak{M}_{prj}  =(\class{G}\class{P}, \rightperp{\class{G}\class{P}}, R\text{-Mod})$ on $R$-Mod. In this case it is easy to see that a canonical resolution in $\mathfrak{M}_{prj} $ of a Gorenstein projective module is \emph{equivalent} to a complete projective resolution since any cycle of a totally acyclic complex of projectives is, by definition, Gorenstein projective.   
\end{example}


\section{Relation to the Yoneda Ext functor}\label{sec-Yoneda}

We continue to let $\mathfrak{M} = (\class{Q},\class{W},\class{R})$ denote an hereditary abelian model structure on an abelian category $\cat{A}$ and let $\omega$ denote its core $\omega :=\class{Q}\cap\class{W}\cap\class{R}$. (Again, $\cat{A}$ can in fact be an exact category and $\mathfrak{M}$ an exact model structure; see Section~\ref{sec-exact-cats}). In the previous sections we defined bifunctors $\Ext^n_{\textnormal{Ho}(\mathfrak{M})}$ for all integers $n$. But the usual Yoneda Ext functor, denoted $\Ext^n_{\cat{A}}$, is also defined for all natural numbers $n$. Recall that it is defined, regardless of whether or not projective or injective resolutions exist, to be the group of all (equivalence classes of) $n$-fold exact sequences. See~\cite[Chapter 3]{homology} or~\cite[Chapter VII]{mitchell}. Our aim is to show that $$\Ext^n_{\textnormal{Ho}(\mathfrak{M})}(A,B) \cong \Ext^n_{\cat{A}}(QA,RB),$$ for $n \geq 1$. We first consider some instances that will guarantee that the Yoneda Ext functor descends to a functor on $\textnormal{St}_{\omega}(\cat{A})$, similar to what we saw in Theorems~\ref{them-lExt} and~\ref{them-rExt}.

\begin{proposition}\label{descending-Ext-functor}
Let $\mathfrak{M} = (\class{Q},\class{W},\class{R})$ be an hereditary abelian model structure.
\begin{enumerate}
\item  For each cofibrant $C \in \class{Q}$ and $n \geq 1$, the covariant Yoneda Ext functor $\Ext^n_{\cat{A}}(C,-) : \cat{A} \xrightarrow{} \textnormal{Ab}$ identifies left homotopic maps. In particular, it descends to an additive functor $\Ext^n_{\cat{A}}(C,-) : \textnormal{St}_{\omega}(\cat{A}) \xrightarrow{} \textnormal{Ab}$ by factoring through $\gamma_{\omega} : \cat{A} \xrightarrow{} \textnormal{St}_{\omega}(\cat{A})$.

\item For  each fibrant $F \in \class{R}$ and $n \geq 1$, the contravariant Yoneda Ext functor $\Ext^n_{\cat{A}}(-,F) : \cat{A} \xrightarrow{} \textnormal{Ab}$ identifies right homotopic maps. In particular, it descends to an additive functor $\Ext^n_{\cat{A}}(-,F) : \textnormal{St}_{\omega}(\cat{A}) \xrightarrow{} \textnormal{Ab}$  by factoring through $\gamma_{\omega} : \cat{A} \xrightarrow{} \textnormal{St}_{\omega}(\cat{A})$.
\end{enumerate}
\end{proposition}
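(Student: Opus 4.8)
The plan is to prove part (1); part (2) follows by a dual argument. By additivity of $\Ext^n_{\cat{A}}(C,-)$ it suffices to show that if $h : B \xrightarrow{} B'$ is left homotopic to $0$, then $\Ext^n_{\cat{A}}(C,h) = 0$. Since $h \sim^{\ell} 0$ means $h$ factors as $B \xrightarrow{\alpha} R \xrightarrow{\beta} B'$ with $R \in \class{R}_{\class{W}}$ trivially fibrant, functoriality gives a factorization $\Ext^n_{\cat{A}}(C,h) = \Ext^n_{\cat{A}}(C,\beta) \circ \Ext^n_{\cat{A}}(C,\alpha)$, so it is enough to show $\Ext^n_{\cat{A}}(C,R) = 0$ for every $C \in \class{Q}$ and every $R \in \class{R}_{\class{W}}$ and $n \geq 1$. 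But this is precisely the hereditary hypothesis on the left cotorsion pair $\mathfrak{Q} = (\class{Q},\class{R}_{\class{W}})$: by definition $\mathfrak{M}$ hereditary means both associated cotorsion pairs are hereditary, i.e. $\Ext^i_{\cat{A}}(Q,R) = 0$ for all $Q \in \class{Q}$, $R \in \class{R}_{\class{W}}$, and $i \geq 1$. This establishes that $\Ext^n_{\cat{A}}(C,-)$ identifies left homotopic maps.

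For the ``in particular'' clause, I would invoke the universal property of the localization (or simply verify directly): since $\Ext^n_{\cat{A}}(C,-)$ is an additive functor $\cat{A} \xrightarrow{} \textnormal{Ab}$ that sends every morphism factoring through $\class{R}_{\class{W}}$ to a morphism that factors through $\Ext^n_{\cat{A}}(C,R) = 0$, it sends all of $\sim^{\omega}$-related pairs to equal maps; indeed $f \sim^{\omega} g$ implies $g - f$ factors through the core $\omega = \class{Q}\cap\class{W}\cap\class{R} \subseteq \class{R}_{\class{W}}$, so $\Ext^n_{\cat{A}}(C,g-f) = 0$. Hence $\Ext^n_{\cat{A}}(C,-)$ is constant on $\sim^{\omega}$-equivalence classes, and therefore factors uniquely through $\gamma_{\omega} : \cat{A} \xrightarrow{} \textnormal{St}_{\omega}(\cat{A})$ as an additive functor $\Ext^n_{\cat{A}}(C,-) : \textnormal{St}_{\omega}(\cat{A}) \xrightarrow{} \textnormal{Ab}$. (This is the same reasoning used at the end of the proof of Lemma~\ref{lemma-stable-Ext}, second approach, applied here with the class $\class{R}_{\class{W}}$ in place of $\class{I}$.)

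There is really no serious obstacle here: the content is entirely in the observation that a morphism factoring through a trivially fibrant object induces the zero map on $\Ext^n_{\cat{A}}(C,-)$, which is immediate from the hereditary hypothesis together with functoriality. The only mild subtlety worth a sentence is making sure that we may work with the full homotopy relation $\sim^{\omega}$ rather than just $\sim^{\ell}$: this is fine precisely because $\omega \subseteq \class{R}_{\class{W}}$, so full homotopy is a coarser relation than, hence implies the conclusion for, left homotopy. Part (2) is dual, using instead the hereditary vanishing $\Ext^n_{\cat{A}}(Q,F) = 0$ for $Q \in \class{Q}_{\class{W}}$ and $F \in \class{R}$, the fact that $\omega \subseteq \class{Q}_{\class{W}}$, and that right homotopic maps differ by a map factoring through $\class{Q}_{\class{W}}$.
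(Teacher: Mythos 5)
Your proof is correct, but it takes a genuinely different route from the paper's. You reduce, exactly as the paper does, to showing $\Ext^n_{\cat{A}}(C,h)=0$ when $h$ factors as $B\xrightarrow{\alpha}R\xrightarrow{\beta}B'$ with $R\in\class{R}_{\class{W}}$, and then you finish in one stroke: by bifunctoriality and biadditivity of Yoneda Ext, $\Ext^n_{\cat{A}}(C,h)$ factors through $\Ext^n_{\cat{A}}(C,R)$, which vanishes for $n\geq 1$ precisely by the paper's definition of the hereditary condition on the cotorsion pair $(\class{Q},\class{R}_{\class{W}})$. This is legitimate given how the paper defines ``hereditary'' (vanishing of $\Ext^i_{\cat{A}}$ for $i\geq 1$) and given the standard fact, valid in exact categories, that Yoneda Ext is an additive bifunctor respecting composition. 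The paper instead argues at the level of Yoneda representatives: it replaces an $n$-fold extension by an equivalent one whose intermediate kernels $L_i$ are cofibrant (using enough projectives of $(\class{Q},\class{R}_{\class{W}})$ and the closure of $\class{Q}$ under kernels of epimorphisms, the stated ``practical consequence'' of heredity), extends $\alpha$ over $A\to P_n$ using only the $\Ext^1$-orthogonality $\Ext^1_{\cat{A}}(L_{n-1},W)=0$, and concludes via the Homotopy Lemma that the pushed-out sequence splits. What each approach buys: yours is much shorter, at the price of invoking the higher-Ext vanishing and the bifunctor structure of $\Ext^n_{\cat{A}}$ as black boxes; the paper's argument is more self-contained and uses only $\Ext^1$-level orthogonality together with the kernel-closure property, so it would survive even under the weaker ``closure'' formulation of heredity, and it exhibits an explicit splitting. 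One terminological slip: the full homotopy relation $\sim^{\omega}$ is \emph{finer} (not coarser) than $\sim^{\ell}$; the implication you actually use, namely that $f\sim^{\omega}g$ forces $g-f$ to factor through $\omega\subseteq\class{R}_{\class{W}}$ and hence $\Ext^n_{\cat{A}}(C,g-f)=0$, is the correct one, so the descent to $\textnormal{St}_{\omega}(\cat{A})$ stands.
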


\begin{proof}
We will only prove (1) as statement (2) is dual. We need to show that $\Ext^n_{\cat{A}}(C,f) = \Ext^n_{\cat{A}}(C, g)$ whenever $[f]_{\ell} = [g]_{\ell}$. (That is, whenever $f$ and $g$ are left homotopic, which by~\cite[Proposition~4.4]{gillespie-exact model structures}, is the case if and only if their difference factors through an object of $\class{R}_{\class{W}} := \class{W} \cap \class{R}$.)

We recall the Yoneda description of $\Ext^n_{\cat{A}}(C,-)$. For a given object $A$, the elements of $\Ext^n_{\cat{A}}(C,A)$ are equivalence classes of exact $n$-sequences of the form
$$\epsilon: \ \  0 \xrightarrow{} A \xrightarrow{} Y_n  \xrightarrow{} \cdots \xrightarrow{} Y_{2}  \xrightarrow{} Y_1 \xrightarrow{} C \xrightarrow{} 0.$$ As described in~\cite[pp.~79]{weibel}, the equivalence relation is \emph{generated} by the relation $\sim$, where $\epsilon' \sim \epsilon$ means there exists some commutative diagram of the form
$$\begin{CD}
\epsilon' : \ \ 0 @>>>  A @>>> Y'_n @>>> \cdots @>>> Y'_1 @>>> C  @>>> 0 \\
@. @|  @VVV @. @VVV @|  @.\\
\epsilon : \ \ 0 @>>> A @>>>  Y_n  @>>> \cdots @>>> Y_1 @>>> C  @>>> 0 \\
\end{CD}$$
As for morphisms, we recall that for a given $f : A \xrightarrow{} B$, then $$\Ext^n_{\cat{A}}(C,f) : \Ext^n_{\cat{A}}(C,A) \xrightarrow{} \Ext^n_{\cat{A}}(C,B)$$ is a group homomorphism defined by the pushout construction:
$$\begin{CD}
     0   @>>>  A    @>>>   Y_n @>>> Y_{n-1} @>>> \cdots   @>>>  Y_1 @>>> C    @>>>    0 \\
    @.        @VfVV     @VVV      @|  @. @| @| \\
    0   @>>>  B    @>>>   P    @>>> Y_{n-1} @>>> \cdots  @>>>  Y_1 @>>> C    @>>> 0   . \\
    \end{CD}
$$ 
That these constructions determine a well-defined additive functor $\Ext^n_{\cat{A}}(C,-) : \cat{A} \xrightarrow{} \textnormal{Ab}$ is well-known and so it remains to show $\Ext^n_{\cat{A}}(C,f) = 0$ whenever $f$ factors through an object of $\class{R}_{\class{W}}$. Our proof will rely on the fact that we can replace any representative of $\Ext^n_{\cat{A}}(C,A)$:
$$\epsilon: \ \  0 \xrightarrow{} A \xrightarrow{k} Y_n  \xrightarrow{f_n}  Y_{n-1} \xrightarrow{f_{n-1}} \cdots \xrightarrow{} Y_{2}  \xrightarrow{f_{2}} Y_{1} \xrightarrow{f_1} C \xrightarrow{} 0,$$ with an equivalent $n$-sequence 
$$\epsilon': \ \  0 \xrightarrow{} A \xrightarrow{k'} P_n  \xrightarrow{t_n}  Q_{n-1} \xrightarrow{t_{n-1}} \cdots \xrightarrow{} Q_{2}  \xrightarrow{t_{2}} Q_{1} \xrightarrow{t_1} C \xrightarrow{} 0, $$ such that for each $i = 1,2,\cdots,n-1$, the object $L_i :=\ker{t_i} \in \class{Q}$. 
So lets first prove this.

We start on the right end of $\epsilon$, considering the exact 2-sequence shown 
$$0 \xrightarrow{} \ker{f_{2}} \xrightarrow{} Y_{2} \xrightarrow{f_2} Y_{1} \xrightarrow{f_1} C \xrightarrow{} 0.$$ 
Since $(\class{Q}, \class{R}_{\class{W}} )$ has enough projectives, there is an epimorphism $p : Q_1 \twoheadrightarrow Y_1$ with $Q_1 \in \class{Q}$.  Letting $P_2$ denote the pullback of $Y_2 \xrightarrow{f_2} Y_1 \xleftarrow{p} Q_1$, one constructs a morphism of exact 2-sequences 
$$\begin{CD}
0 @>>>  \ker{f_{2}} @>>> P_{2} @>f'_{2}>> Q_{1} @>t_{1}>> C  @>>> 0 \\
@. @|  @Vp'VV @VpVV @|  @.\\
0 @>>> \ker{f_{2}} @>>>  Y_{2}  @>f_{2}>> Y_{1} @>f_{1}>> C  @>>> 0. \\
\end{CD}$$
But then $L_{1} := \ker{t_{1}}$ is also cofibrant because the model structure is hereditary. This morphism of exact 2-sequences can be extended to a morphism of exact $n$-sequences, by ``pasting'' the top row, at $\ker{f_{2}}$,  together with the rest of $\epsilon$, yielding the (Yoneda equivalent) exact $n$-sequence shown:
$$0 \xrightarrow{} A \xrightarrow{k} Y_n  \xrightarrow{f_n} Y_{n-1} \xrightarrow{f_{n-1}} \cdots \xrightarrow{f_4} Y_{3}  \xrightarrow{f'_{3}}  P_{2}  \xrightarrow{f'_{2}} Q_{1} \xrightarrow{t_{1}} C \xrightarrow{} 0.$$

Next, we focus on the portion of this new $n$-sequence shown below:
$$0 \xrightarrow{} \ker{f'_{3}} \xrightarrow{} Y_{3} \xrightarrow{f'_3} P_{2} \xrightarrow{f'_{2}} L_{1} \xrightarrow{} 0.$$ Recalling that $L_{1}$ is cofibrant, we repeat the same procedure, obtaining another Yoneda equivalent exact $n$-sequence as shown:
$$0 \xrightarrow{} A \xrightarrow{k} Y_n  \xrightarrow{f_n} Y_{n-1} \xrightarrow{f_{n-1}} \cdots \xrightarrow{f_5} Y_{4}  \xrightarrow{f'_{4}} P_{3}  \xrightarrow{f''_{3}}  Q_{2}  \xrightarrow{t_{2}} Q_{1} \xrightarrow{t_{1}} C \xrightarrow{} 0,$$
this one also having $L_{2} := \ker{t_{2}}$ cofibrant.

In this way, we can continue the process, from right to left, finally obtaining the desired (Yoneda equivalent) exact $n$-sequence
$$\epsilon': \ \  0 \xrightarrow{} A \xrightarrow{k'} P_{n}  \xrightarrow{f''_n} Q_{n-1} \xrightarrow{t_{n-1}} Q_{n-2} \xrightarrow{t_{n-2}} \cdots \xrightarrow{} Q_{2}  \xrightarrow{t_{2}} Q_{1} \xrightarrow{t_{1}} C \xrightarrow{} 0 $$ having $L_i :=\ker{t_i}$ cofibrant for each $i = 1,2,\cdots,n-1$.

So now, finally, we are able to show $\Ext^n_{\cat{A}}(C,f) = 0$ whenever $f : A \xrightarrow{} B$ factors as $f = \beta \alpha$ through an object $W \in \class{R}_{\class{W}}$. In this case, since $\Ext^1_{\cat{A}}(L_{n-1},W)=0$,  the morphism $A \xrightarrow{\alpha} W$ extends over $A \xrightarrow{k'} P_{n}$.
The Homotopy Lemma~\cite[Lemma~3.2]{gillespie-stable-cats-cotorsion-pairs} now applies and implies that the pushed-out $n$-sequence in the bottom row of the diagram below represents 0 in $\Ext^n_{\cat{A}}(C,B)$.
$$\begin{CD}
     0   @>>>  A    @>k'>>   P_{n} @>t_n>> Q_{n-1} @>t_{n-1}>> \cdots   @>t_1>> C    @>>>    0 \\
    @.        @VfVV     @VVV      @|  @. @|  \\
    0   @>>>  B    @>>>   P    @>>> Q_{n-1} @>t_{n-1}>> \cdots  @>t_1>> C    @>>> 0   . \\
    \end{CD}
$$ 
\end{proof}

With Proposition~\ref{descending-Ext-functor} in hand, we make the following definition and obtain the next theorem. Note the analogy to Definition~\ref{def-Ho-Ext} and Theorem~\ref{them-Ho-Ext}. However, we are now restricted to only $n \geq 1$.

\begin{definition}\label{def-Ho-Yoneda-Ext}
$\underline{\Ext}^n_{\,\textnormal{Ho}(\mathfrak{M})}(A,B) := \Ext^n_{\cat{A}}(RQA,RQB)$, for  $n \geq 1$.
\end{definition}

We again recall that there is a canonical functor $\gamma_{\mathfrak{M}} : \cat{A} \xrightarrow{} \textnormal{Ho}(\mathfrak{M})$. It is the identity on objects, and takes a morphism $f : A \xrightarrow{} B$ to the homotopy class $[R(Q(f))]_{\omega} \in \underline{\Hom}_{\,\omega}(RQA,RQB)$.

\begin{theorem}\label{them-Ho-Yoneda-Ext}
For each $n \geq 1$, $\underline{\Ext}^n_{\,\textnormal{Ho}(\mathfrak{M})}(A,B)$ defines a functor with the following properties.  
\begin{enumerate}
\item $\underline{\Ext}^n_{\,\textnormal{Ho}(\mathfrak{M})}(A,-): \cat{A} \xrightarrow{} \textbf{Ab}$ is a covariant additive functor and short exact sequences in $\cat{A}$ are sent to long exact cohomology sequences of $\underline{\Ext}^n_{\,\textnormal{Ho}(\mathfrak{M})}$ groups. $\underline{\Ext}^n_{\,\textnormal{Ho}(\mathfrak{M})}(A,-)$ descends via $\gamma_{\mathfrak{M}} : \cat{A} \xrightarrow{} \textnormal{Ho}(\mathfrak{M})$ to a well-defined functor on  $\textnormal{Ho}(\mathfrak{M})$.
\item $\underline{\Ext}^n_{\,\textnormal{Ho}(\mathfrak{M})}(-,B) : \cat{A} \xrightarrow{} \textbf{Ab}$ is a contravariant additive functor and short exact sequences in $\cat{A}$ are sent to long exact cohomology sequences of $\underline{\Ext}^n_{\,\textnormal{Ho}(\mathfrak{M})}$ groups. $\underline{\Ext}^n_{\,\textnormal{Ho}(\mathfrak{M})}(-,B)$ descends via $\gamma_{\mathfrak{M}} : \cat{A} \xrightarrow{} \textnormal{Ho}(\mathfrak{M})$ to a well-defined functor on  $\textnormal{Ho}(\mathfrak{M})$.
\item We have the following isomorphisms, natural in both $A$ and $B$:
 $$\underline{\Ext}^n_{\,\textnormal{Ho}(\mathfrak{M})}(A,B) \cong \Ext^n_{\cat{A}}(QA,RB).$$
\item We have the following isomorphisms, natural in both $A$ and $B$: 
$$\textnormal{Ho}(\mathfrak{M})(\Omega^n A,B) \cong \Ext^n_{\cat{A}}(QA,RB) \cong \textnormal{Ho}(\mathfrak{M})(A,\Sigma^n B).$$ 
\end{enumerate}
\end{theorem}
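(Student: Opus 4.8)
The plan is to prove the four parts in order, with parts (1) and (2) giving functoriality and long-exactness, part (3) identifying the groups with Yoneda Ext of the (co)fibrant replacements, and part (4) following by combining (3) with Theorem~\ref{them-Ho-Yoneda-Ext}(3) applied in tandem with Corollary~\ref{cor-Ho-Ext} and Theorem~\ref{them-lExt and rExt}. The overall strategy mirrors the proof of Theorem~\ref{them-Ho-Ext}, but replacing the bifunctors $\ell\Ext^n_{\mathfrak{M}}$ and $r\Ext^n_{\mathfrak{M}}$ by the Yoneda Ext functors and using Proposition~\ref{descending-Ext-functor} in place of Lemma~\ref{lemma-can-resolutions-cofibrant}.

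For part (1), fix a bifibrant replacement $RQA$ and set $\underline{\Ext}^n_{\,\textnormal{Ho}(\mathfrak{M})}(A,-) := \Ext^n_{\cat{A}}(RQA, RQ(-))$. Since $RQ : \cat{A} \xrightarrow{} \textnormal{St}_{\omega}(\cat{A})$ is a covariant additive functor (as recalled in the proof of Theorem~\ref{them-Ho-Ext}(1), via \cite[Lemmas~2.4 and~2.5]{gillespie-stable-cats-cotorsion-pairs}), and since $RQA$ is cofibrant, Proposition~\ref{descending-Ext-functor}(1) shows $\Ext^n_{\cat{A}}(RQA,-)$ factors through $\gamma_{\omega}$; composing gives a well-defined covariant additive functor $\cat{A} \xrightarrow{} \textbf{Ab}$. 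Independence of the choice of $RQA$ up to natural isomorphism follows exactly as in Theorem~\ref{them-Ho-Ext}(1): a different bifibrant replacement $R'Q'A$ gives a stable $\omega$-equivalence $RQA \xrightarrow{\alpha} R'Q'A$ by \cite[Lemma~4.24]{dwyer-spalinski}, and Proposition~\ref{descending-Ext-functor}(2) applied to the contravariant variable (note $RQ(-)$ is fibrant-valued) makes $[\alpha]^*$ an isomorphism. Long-exactness: a short exact sequence $0 \xrightarrow{} A \xrightarrow{} B \xrightarrow{} C \xrightarrow{} 0$ is carried by $RQ$ to a short exact sequence of bifibrant objects (via the generalized horseshoe lemma, \cite[Lemma~1.4.4]{becker}, applied twice as in the proof of Theorem~\ref{them-Ho-Ext}), and $\Ext^n_{\cat{A}}(RQA,-)$ sends it to the usual long exact sequence of Yoneda Ext groups. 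Descent to $\textnormal{Ho}(\mathfrak{M})$ is then immediate: set $\underline{\Ext}^n_{\,\textnormal{Ho}(\mathfrak{M})}(A,[f]_{\omega}) := \Ext^n_{\cat{A}}(RQA, f)$ for $[f]_{\omega} \in \underline{\Hom}_{\,\omega}(RQA,RQB)$, which is well-defined by Proposition~\ref{descending-Ext-functor}(2) and compatible with $\gamma_{\mathfrak{M}}$. Part (2) is entirely dual, using Proposition~\ref{descending-Ext-functor}(1) and (2) with the roles of the variables swapped.

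For part (3), run the same chain of natural isomorphisms as in the proof of Theorem~\ref{them-Ho-Ext}(3). First, a fibrant approximation sequence $0 \xrightarrow{} QA \xrightarrow{j} RQA \xrightarrow{} C \xrightarrow{} 0$ has $C$ trivially fibrant, so applying $\Ext^n_{\cat{A}}(-,RB)$ and using that $\Ext^{\geq1}_{\cat{A}}(R_{\class{W}}\text{-objects}, RB)$... — more precisely, $C \in \class{R}_{\class{W}}$ and $RB \in \class{R}$, so by the hereditary condition $\Ext^m_{\cat{A}}(C,RB) = 0$ for all $m \geq 1$; also $\Ext^m_{\cat{A}}(C,RB)$ with one more dimension shift needs $\Ext^m_{\cat{A}}(C,RB)=0$, which holds. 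Hence the connecting maps force $j^* : \Ext^n_{\cat{A}}(RQA,RB) \xrightarrow{\cong} \Ext^n_{\cat{A}}(QA,RB)$, naturally in $A$. Dually, a cofibrant approximation $0 \xrightarrow{} F \xrightarrow{} QRB \xrightarrow{p} RB \xrightarrow{} 0$ has $F \in \class{R}_{\class{W}}$ and $QRB$... — one uses $\Ext^m_{\cat{A}}(RQA, F) = 0$ for $m \geq 1$ since $RQA \in \class{Q}$ and $F \in \class{R}_{\class{W}}$, giving $p_* : \Ext^n_{\cat{A}}(RQA,QRB) \xrightarrow{\cong} \Ext^n_{\cat{A}}(RQA,RB)$ naturally. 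Finally the natural weak equivalences $\tau_B : RQB \xrightarrow{} QRB$ are stable $\omega$-equivalences by \cite[Lemma~4.24]{dwyer-spalinski}, so $(\tau_B)_* : \Ext^n_{\cat{A}}(RQA,RQB) \xrightarrow{\cong} \Ext^n_{\cat{A}}(RQA,QRB)$ is an isomorphism by Proposition~\ref{descending-Ext-functor}(2). Composing yields $\underline{\Ext}^n_{\,\textnormal{Ho}(\mathfrak{M})}(A,B) = \Ext^n_{\cat{A}}(RQA,RQB) \cong \Ext^n_{\cat{A}}(QA,RB)$, natural in both variables. Part (4) then follows by chaining: from Theorem~\ref{them-Ho-Ext}(3) and Theorem~\ref{them-lExt and rExt} we have $\Ext^n_{\textnormal{Ho}(\mathfrak{M})}(A,B) \cong \Ext^n_{\mathfrak{M}}(QA,RB) \cong \textnormal{Ho}(\mathfrak{M})(\Omega^nQA,RB) \cong \textnormal{Ho}(\mathfrak{M})(A,\Sigma^nB)$; it therefore suffices to identify $\Ext^n_{\mathfrak{M}}(QA,RB)$ with $\Ext^n_{\cat{A}}(QA,RB)$ for $n \geq 1$ when $QA$ is cofibrant and $RB$ is fibrant. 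That identification comes from comparing the defining cohomology $H^n[\Hom_{\cat{A}}(W_{QA}, RB)]$, computed from a canonical resolution, with the Yoneda description: the positive part $W_{n \geq 0} \xrightarrow{\epsilon} QA$ is a resolution of $QA$ by trivially cofibrant objects, each $\Hom_{\cat{A}}(W_n, RB)$-computation identifying $\Ext^n_{\mathfrak{M}}$ with the dimension-shifted $\Hom$ of a syzygy, which by the standard argument (as in Lemma~\ref{lemma-stable-Ext}, first statement) agrees with $\Ext^n_{\cat{A}}(QA,RB)$ for $n \geq 1$ because the syzygies $\Omega^n(QA)$ are cofibrant and $\Ext^1$-acyclicity against $RB$ holds degreewise.

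The main obstacle I expect is the last identification in part (4): showing that the ``resolution-computed'' group $\Ext^n_{\mathfrak{M}}(QA,RB)$ genuinely coincides with the Yoneda group $\Ext^n_{\cat{A}}(QA,RB)$ for $n \geq 1$. The subtlety is that $W_{QA}$ is \emph{not} a projective resolution — its components are only trivially cofibrant, not projective — so one cannot just invoke the textbook theorem. Instead one must argue that, for $n \geq 1$, the cohomology in degree $n$ of $\Hom_{\cat{A}}(W_{QA},RB)$ depends only on the syzygy $\Omega^n(QA)$ via $\Hom$ modulo the image of the differential, which by the Claim in the proof of Theorem~\ref{them-lExt and rExt} equals $\textnormal{Ho}(\mathfrak{M})(\Omega^n(QA), RB)$; then one must match this against Yoneda Ext, e.g. by noting that the short exact sequences $0 \xrightarrow{} \Omega^{k+1}(QA) \xrightarrow{} W_k \xrightarrow{} \Omega^k(QA) \xrightarrow{} 0$ with $W_k$ trivially cofibrant are admissible and that $\Ext^{\geq1}_{\cat{A}}(W_k, RB)$ need not vanish — so one cannot dimension-shift Yoneda Ext through $W_k$ directly. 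The clean resolution of this is to instead derive part (4)'s isomorphism $\Ext^n_{\cat{A}}(QA,RB) \cong \textnormal{Ho}(\mathfrak{M})(\Omega^n A, B)$ directly from part (3) together with the already-established $\underline{\Ext}^n_{\,\textnormal{Ho}(\mathfrak{M})}(A,B) \cong \Ext^n_{\textnormal{Ho}(\mathfrak{M})}(A,B)$ — which itself should be extracted by comparing Definition~\ref{def-Ho-Yoneda-Ext} with Definition~\ref{def-Ho-Ext} via the universal property, using that both $\Ext^n_{\cat{A}}(RQ(-),RQ(-))$ and $\Ext^n_{\mathfrak{M}}(RQ(-),RQ(-))$ satisfy the same long exact sequences and agree on the trivial objects (both vanish there) and on cofibrant-fibrant pairs for $n \geq 1$ via a dimension-shifting comparison anchored at $n=1$, where $\Ext^1_{\cat{A}}$ and $\Ext^1_{\mathfrak{M}}$ manifestly coincide since $\Ext^1_{\mathfrak{M}}(C,F) = H^1[\Hom_{\cat{A}}(W_C,F)]$ is the usual $\Ext^1$ computed from the start of the resolution $W_{1} \xrightarrow{} W_0 \xrightarrow{} C$. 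Pinning down this $n=1$ base case cleanly, and the dimension-shift that propagates it, is where the real work lies.
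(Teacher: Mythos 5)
Your treatment of parts (1)--(3) is essentially the paper's: the paper handles them by the same transfer of the proof of Theorem~\ref{them-Ho-Ext}, with Proposition~\ref{descending-Ext-functor} in place of the vanishing lemmas. One slip there should be fixed: in the fibrant replacement sequence $0 \to QA \to RQA \to C \to 0$ the cokernel $C$ lies in $\class{Q}_{\class{W}}$ (trivially cofibrant), not in $\class{R}_{\class{W}}$ as you assert; the vanishing $\Ext^m_{\cat{A}}(C,RB)=0$ for $m\ge 1$ that you need does hold, but because $(\class{Q}_{\class{W}},\class{R})$ is hereditary --- with $C\in\class{R}_{\class{W}}$ and $RB\in\class{R}$ there would be no such vanishing at all. (Similarly, the isomorphism $(\tau_B)_*$ uses Proposition~\ref{descending-Ext-functor}(1), not (2), since it lives in the covariant variable with cofibrant source $RQA$.) These are repairable.

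Part (4), however, contains a genuine gap, and it stems from a false claim: you assert that $\Ext^{\ge 1}_{\cat{A}}(W_k,RB)$ ``need not vanish'' for the components $W_k$ of a canonical resolution of $QA$. In fact $W_k\in\class{Q}_{\class{W}}$ and $RB\in\class{R}$, so heredity of the cotorsion pair $(\class{Q}_{\class{W}},\class{R})$ gives $\Ext^i_{\cat{A}}(W_k,RB)=0$ for all $i\ge 1$; this is precisely the vanishing that makes the Yoneda dimension shift through the syzygy sequences $0\to\Omega^{k+1}(QA)\to W_k\to\Omega^k(QA)\to 0$ legitimate, and it is exactly what the paper invokes (``we need $\Ext^i_{\cat{A}}(QW_A,RB)=0$ for all $i\ge 1$''). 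Having wrongly blocked that route, you substitute a comparison of $\underline{\Ext}^n_{\,\textnormal{Ho}(\mathfrak{M})}$ with $\Ext^n_{\textnormal{Ho}(\mathfrak{M})}$ ``via the universal property,'' on the grounds that both satisfy the same long exact sequences and vanish on trivial objects --- but that is not an argument: no comparison map is constructed, and two functors with these formal properties need not be isomorphic. You then circle back to the very dimension shift you declared unavailable and explicitly leave the $n=1$ base case open (``where the real work lies''). That base case is the substantive content of (4), and the paper proves it directly: from $0\to\Omega A\to W_A\to A\to 0$ with $W_A\in\class{W}$, the generalized horseshoe lemma produces $0\to Q\Omega A\to QW_A\to QA\to 0$ with $QW_A\in\class{Q}_{\class{W}}$; applying $\Hom_{\cat{A}}(-,RB)$ yields a connecting map $\delta:\Hom_{\cat{A}}(Q\Omega A,RB)\to\Ext^1_{\cat{A}}(QA,RB)$ which is onto (a pushout argument using $\Ext^1_{\cat{A}}(QW_A,RB)=0$), and whose kernel is identified, via the Homotopy Lemma together with the fact that the source is cofibrant and the target fibrant, with the maps that are $\sim^{\omega} 0$; hence $\underline{\Hom}_{\,\omega}(Q\Omega A,RB)\cong\Ext^1_{\cat{A}}(QA,RB)$, and the higher cases follow by the hereditary dimension shift above. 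The identification with $\Ext^n_{\textnormal{Ho}(\mathfrak{M})}$ is then deduced afterwards (Corollary~\ref{cor-Ext}), not used as an input. Until you either carry out such a $\delta$-type argument or restore the (true) vanishing that powers the dimension shift, part (4) is unproven in your proposal.
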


\begin{proof}
With the help of Proposition~\ref{descending-Ext-functor}, the proofs of (1)--(3) are very similar to the proofs of (1)--(3) of Theorem~\ref{them-Ho-Ext}.  It remains to prove (4) and 
we will now show $\textnormal{Ho}(\mathfrak{M})(\Omega A,B) \cong \Ext^1_{\cat{A}}(QA,RB)$. As shown in Proposition~\ref{prop-suspension functor}, given an object $A$ we compute $\Omega A$ by taking a short exact sequence $\Omega A \xrightarrow{}  W_A \xrightarrow{} A \xrightarrow{} 0$ with $W_A \in \class{W}$. (A potentially different $\Omega A$ resulting from a different short exact sequence will be canonically isomorphic, in $\textnormal{Ho}(\mathfrak{M})$). From the generalized horseshoe lemma, see~\cite[Lemma~1.4.4]{becker} or~\cite{gillespie-stable-cats-cotorsion-pairs}, we can find a cofibrant replacement sequence as in  the top row below: 
$$\begin{CD}
    0   @>>>  Q\Omega A    @>>>   QW_A    @>>>  QA    @>>>  0  \\
    @.        @VVV     @VVV      @VVV   @.\\
    0   @>>>  \Omega A    @>>>   W_A    @>>>  A   @>>>  0  \\
    \end{CD}$$
Since $\class{W}$ is closed under extensions we note that $QW_A \in \class{Q}_{\class{W}}$, the class of trivially cofibrant objects. Now given the other object $B$, we apply $\Hom_{\class{A}}(-,RB)$ to the top row and it gives us a homomorphism $$\delta : \Hom_{\class{A}}(Q\Omega A,RB) \xrightarrow{} \Ext^1_{\cat{A}}(QA,RB).$$ In the Yoneda Ext description, $\delta$ is defined via ``pushout'', as indicated below:
\begin{equation}\label{diag-pushout} 
\begin{CD}
 \ \ \ \ \ \ \ \ 0   @>>>  Q\Omega A    @>>>   QW_A    @>>>  QA    @>>>  0  \\
    @.        @VfVV     @VVV      @|   @.\\
 \delta(f) :   0   @>>> RB    @>>>   P   @>>>   Q A    @>>>  0  \\
    \end{CD}
\end{equation}
We can prove directly that $\delta$ is onto. Indeed given any short exact sequence as in the bottom row below, we use that $\Ext^1_{\cat{A}}(QW_A,RB) = 0$ to construct a morphism of short exact sequences as shown.
$$\begin{CD}
    0   @>>>  Q\Omega A    @>>>   QW_A    @>>>  QA    @>>>  0  \\
    @.        @VtVV     @VVV      @|   @.\\
    0   @>>> RB    @>>>   Z    @>>>   Q A    @>>>  0  \\
    \end{CD}$$ 
By~\cite[Proposition~2.12]{buhler-exact categories} the left square is a pushout, proving that the bottom row is $\delta(t)$ as desired.

\

\noindent \underline{Claim}: $\ker{\delta} = \{\,f \in \Hom_{\cat{A}}(Q\Omega A,RB)  \, | \, f \sim^{\omega} 0  \,\}$ 

\

To prove the claim, suppose $\delta(f) = 0$. It means that there exists a lift  $QA \xrightarrow{} P$ (or \emph{section}) in the pushout diagram~(\ref{diag-pushout}) making the lower right triangle commute. But by the Homotopy Lemma~\cite[Lemma~3.2]{gillespie-stable-cats-cotorsion-pairs}, this is equivalent to a morphism $QW_A \xrightarrow{} RB$ making the upper left triangle of diagram~(\ref{diag-pushout}) commute. This proves $f$ factors through an object of $\class{Q}_{\class{W}}$. But since the source object $Q\Omega A$ is cofibrant and the target object $RB$ is fibrant, we conclude by~\cite[Proposition~4.4(5)]{gillespie-exact model structures} that $f$ actually factors through an object of $\omega$. This proves $\subseteq$. To prove $\supseteq$, suppose $f$ factors as $Q\Omega A \xrightarrow{\alpha} W \xrightarrow{\beta} RB$ where $W \in \omega$. Then applying $\Hom_{\class{A}}(-,W)$ to the top row of diagram~(\ref{diag-pushout}), and using $\Ext^1_{\cat{A}}(QA,W) = 0$, we see that $\alpha$ extends through $Q\Omega A  \xrightarrow{} QW_A$. Composing the new map with $\beta$, the Homotopy Lemma now allows us to conclude $\delta(f)$ splits, so $f \in \ker{\delta}$. This completes the proof of the Claim.

 Thus $\delta$ descends to an isomorphism $$\underline{\Hom}_{\,\omega}(Q\Omega A,RB) \xrightarrow{\bar{\delta}} \Ext^1_{\cat{A}}(QA,RB).$$
The result for $n=1$ now follows by composing with the natural isomorphism $$\textnormal{Ho}(\mathfrak{M})(A,B) \cong \underline{\Hom}_{\,\omega}(QA,RB).$$ 
For $n>1$, we may use an inductive dimension shifting argument. For example, from what we just proved we have $\textnormal{Ho}(\mathfrak{M})(\Omega^2 A,B) \cong \Ext^1_{\cat{A}}(Q\Omega A,RB)$. But applying  $\Hom_{\cat{A}}(-,RB)$ to the short exact sequence  $0   \xrightarrow{}  Q\Omega A     \xrightarrow{}   QW_A     \xrightarrow{}  QA    \xrightarrow{}   0$ we deduce $\Ext^1_{\cat{A}}(Q\Omega A,RB) \cong \Ext^2_{\cat{A}}(QA,RB)$. Note that this dimension shifting argument relies on the fact that $(\class{Q}_{\class{W}},\class{R})$ is an hereditary cotorsion pair because we need $\Ext^i_{\cat{A}}(QW_A,RB) = 0$ for all $i \geq 1$.

We have shown $\textnormal{Ho}(\mathfrak{M})(\Omega^n A,B) \cong \Ext^n_{\cat{A}}(QA,RB)$ and a dual argument will prove $\textnormal{Ho}(\mathfrak{M})(A,\Sigma^n B) \cong \Ext^n_{\cat{A}}(QA,RB)$. Of course this also must be automatic since $\Sigma$ and $\Omega$ are inverse autoequivalences on $\textnormal{Ho}(\mathfrak{M})$.
\end{proof}

From parts (3) and (4) of Theorem~\ref{them-Ho-Yoneda-Ext}, along with Corollary~\ref{cor-Ho-Ext}, we obtain the following corollary.

\begin{corollary}\label{cor-Ext}
For all objects $A$ and $B$ and each $n \geq 1$ we have natural isomorphisms 
$$\Ext^n_{\textnormal{Ho}(\mathfrak{M})}(A,B) \cong \underline{\Ext}^n_{\,\textnormal{Ho}(\mathfrak{M})}(A,B)  \cong \Ext^n_{\cat{A}}(QA,RB).$$
\end{corollary}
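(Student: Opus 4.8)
The plan is to obtain both isomorphisms by simply splicing together natural isomorphisms that have already been established. First, fix $n \geq 1$. Corollary~\ref{cor-Ho-Ext} gives a natural isomorphism $\Ext^n_{\textnormal{Ho}(\mathfrak{M})}(A,B) \cong \textnormal{Ho}(\mathfrak{M})(\Omega^n A,B)$, natural in both variables. Theorem~\ref{them-Ho-Yoneda-Ext}(4) gives a natural isomorphism $\textnormal{Ho}(\mathfrak{M})(\Omega^n A,B) \cong \Ext^n_{\cat{A}}(QA,RB)$. Composing these two yields the natural isomorphism $\Ext^n_{\textnormal{Ho}(\mathfrak{M})}(A,B) \cong \Ext^n_{\cat{A}}(QA,RB)$. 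Second, Theorem~\ref{them-Ho-Yoneda-Ext}(3) directly supplies $\underline{\Ext}^n_{\,\textnormal{Ho}(\mathfrak{M})}(A,B) \cong \Ext^n_{\cat{A}}(QA,RB)$, natural in both variables; inverting this and composing with the previous chain produces $\Ext^n_{\textnormal{Ho}(\mathfrak{M})}(A,B) \cong \underline{\Ext}^n_{\,\textnormal{Ho}(\mathfrak{M})}(A,B)$. Putting everything on one line gives exactly the asserted string of isomorphisms.

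The only points that need a sentence of care, rather than any real work, are bookkeeping points. One is the restriction to $n \geq 1$: the bifunctor $\Ext^n_{\textnormal{Ho}(\mathfrak{M})}$ is defined for all integers $n$, whereas $\underline{\Ext}^n_{\,\textnormal{Ho}(\mathfrak{M})}$ and the Yoneda functor $\Ext^n_{\cat{A}}$ are only defined for $n \geq 1$, so the statement is made under that hypothesis and the inputs (Theorem~\ref{them-Ho-Yoneda-Ext}(3),(4)) are available precisely in that range. The other is that the three cited isomorphisms are each asserted to be natural in $A$ and in $B$ separately, hence their composites are as well; so the resulting isomorphisms in the corollary are natural as claimed, with no coherence check beyond functoriality of composition of natural transformations.

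I do not anticipate a genuine obstacle here — the content has all been done in Theorem~\ref{them-Ho-Yoneda-Ext} and Corollary~\ref{cor-Ho-Ext}, and this corollary is purely a repackaging. If one wanted to be thorough, the single thing worth double-checking is that the isomorphism $\textnormal{Ho}(\mathfrak{M})(\Omega^n A,B) \cong \Ext^n_{\cat{A}}(QA,RB)$ from Theorem~\ref{them-Ho-Yoneda-Ext}(4) and the isomorphism $\Ext^n_{\textnormal{Ho}(\mathfrak{M})}(A,B) \cong \textnormal{Ho}(\mathfrak{M})(\Omega^n A,B)$ from Corollary~\ref{cor-Ho-Ext} are compatible with the same choices of replacements $QA$, $RB$, $\Omega^n$; since both are constructed from the canonical replacement/loop functors and both are asserted to be independent of those choices up to canonical isomorphism in $\textnormal{Ho}(\mathfrak{M})$, this is immediate.
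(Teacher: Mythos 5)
Your argument is correct and is exactly the route the paper takes: the paper derives Corollary~\ref{cor-Ext} precisely by combining parts (3) and (4) of Theorem~\ref{them-Ho-Yoneda-Ext} with Corollary~\ref{cor-Ho-Ext}, just as you do. Your bookkeeping remarks on the range $n \geq 1$ and on naturality are consistent with the paper's (unstated) intent, so nothing further is needed.
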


Note that while the above description of $\Ext^n_{\textnormal{Ho}(\mathfrak{M})}(A,B)$ only holds for positive integers we may use Corollary~\ref{cor-dim-shift} to express any $\Ext^n_{\textnormal{Ho}(\mathfrak{M})}$ group as an $\Ext^1_{\cat{A}}$ group. For example, 
$$\Ext^{-1}_{\textnormal{Ho}(\mathfrak{M})}(A,B) \cong \Ext^0_{\textnormal{Ho}(\mathfrak{M})}(\Sigma A,B)\cong \Ext^1_{\textnormal{Ho}(\mathfrak{M})}(\Sigma^2 A,B) \cong \Ext^1_{\cat{A}}(Q\Sigma^2 A, RB).$$ In particular, taking $n=0$ and $m=1$ in Corollaries~\ref{cor-Ho-Ext} and~\ref{cor-dim-shift}, then applying Theorem~\ref{them-Ho-Yoneda-Ext}~(4) along with Proposition~\ref{prop-functors-commute} yields the following result describing morphism sets in the homotopy category as $\Ext^1_{\cat{A}}$ groups on the ground category $\cat{A}$.

\begin{corollary}\label{cor-Ho-sets}
For all objects $A$ and $B$ we have natural isomorphisms 
$$\Ext^1_{\cat{A}}(\Sigma QA,RB)   \cong \textnormal{Ho}(\mathfrak{M})(A,B)  \cong \Ext^1_{\cat{A}}(QA,\Omega RB).$$
\end{corollary}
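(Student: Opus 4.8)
The plan is to chain together results already established. The key inputs are Corollary~\ref{cor-Ext} (equivalently Theorem~\ref{them-Ho-Yoneda-Ext}~(4)), which identifies $\textnormal{Ho}(\mathfrak{M})$-morphism sets with Yoneda $\Ext^1$ groups on $\cat{A}$; the dimension-shifting formulas of Corollary~\ref{cor-dim-shift}; and Proposition~\ref{prop-functors-commute}, which lets $\Sigma$ and $\Omega$ pass through the replacement functors. First I would record that Corollary~\ref{cor-Ho-Ext} at $n=0$ gives $\textnormal{Ho}(\mathfrak{M})(A,B) \cong \Ext^0_{\textnormal{Ho}(\mathfrak{M})}(A,B)$, and then Corollary~\ref{cor-dim-shift} at $n=0$, $m=1$ gives
$$\Ext^0_{\textnormal{Ho}(\mathfrak{M})}(A,B) \cong \Ext^1_{\textnormal{Ho}(\mathfrak{M})}(\Sigma A,B) \cong \Ext^1_{\textnormal{Ho}(\mathfrak{M})}(A,\Omega B).$$

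Next, I would apply Corollary~\ref{cor-Ext} with $n=1$ to rewrite the two middle groups as honest Yoneda Ext groups on $\cat{A}$, obtaining $\Ext^1_{\textnormal{Ho}(\mathfrak{M})}(\Sigma A,B) \cong \Ext^1_{\cat{A}}(Q\Sigma A, RB)$ and $\Ext^1_{\textnormal{Ho}(\mathfrak{M})}(A,\Omega B) \cong \Ext^1_{\cat{A}}(QA, R\Omega B)$. Finally, Proposition~\ref{prop-functors-commute}~(2) and~(3) provide canonical isomorphisms $Q\Sigma A \cong \Sigma QA$ and $R\Omega B \cong \Omega RB$ in $\textnormal{Ho}(\mathfrak{M})$; feeding these in and composing all the isomorphisms so far produces $\Ext^1_{\cat{A}}(\Sigma QA, RB) \cong \textnormal{Ho}(\mathfrak{M})(A,B) \cong \Ext^1_{\cat{A}}(QA, \Omega RB)$, naturally in both variables.

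The only point that is not routine bookkeeping is the last substitution: Proposition~\ref{prop-functors-commute} produces the isomorphisms $Q\Sigma A \cong \Sigma QA$ and $R\Omega B \cong \Omega RB$ only in $\textnormal{Ho}(\mathfrak{M})$, whereas $\Ext^1_{\cat{A}}$ a priori lives over $\cat{A}$, so I must check these isomorphisms survive after applying $\Ext^1_{\cat{A}}$, and, more basically, that the groups $\Ext^1_{\cat{A}}(\Sigma QA,RB)$ and $\Ext^1_{\cat{A}}(QA,\Omega RB)$ do not depend on the chosen replacements or on the short exact sequences defining $\Sigma$ and $\Omega$. This is precisely what Proposition~\ref{descending-Ext-functor} supplies: since $RB$ is fibrant, $\Ext^1_{\cat{A}}(-,RB)$ identifies right homotopic maps and hence factors through $\gamma_{\omega}:\cat{A}\xrightarrow{}\textnormal{St}_{\omega}(\cat{A})$, so it carries the comparison $Q\Sigma A \xrightarrow{} \Sigma QA$ (which one may take to be a weak equivalence between cofibrant objects, hence a stable $\omega$-equivalence) to an isomorphism and simultaneously makes $\Ext^1_{\cat{A}}(\Sigma QA,RB)$ independent of the choices up to canonical isomorphism; dually, since $QA$ is cofibrant, $\Ext^1_{\cat{A}}(QA,-)$ identifies left homotopic maps and handles $R\Omega B \cong \Omega RB$. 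Tracking naturality through each step then yields the asserted naturality in $A$ and $B$. A slightly cleaner packaging of the whole argument is to apply Theorem~\ref{them-Ho-Yoneda-Ext}~(4) directly with $A$ replaced by $\Sigma A$ and $B$ replaced by $\Omega B$, using that $\Omega\Sigma$ and $\Sigma\Omega$ are naturally isomorphic to the identity on $\textnormal{Ho}(\mathfrak{M})$, and then renaming $Q\Sigma A$ as $\Sigma QA$ and $R\Omega B$ as $\Omega RB$ via Proposition~\ref{prop-functors-commute}.
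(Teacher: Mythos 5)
Your reduction is exactly the paper's: it derives Corollary~\ref{cor-Ho-sets} by the same chain -- Corollary~\ref{cor-Ho-Ext} at $n=0$, Corollary~\ref{cor-dim-shift} at $n=0,m=1$, Theorem~\ref{them-Ho-Yoneda-Ext}~(4) (equivalently Corollary~\ref{cor-Ext} at $n=1$), and Proposition~\ref{prop-functors-commute} -- and you are right that the only non-routine point is transporting the $\textnormal{Ho}(\mathfrak{M})$-isomorphisms $Q\Sigma A \cong \Sigma QA$ and $R\Omega B \cong \Omega RB$ through the Yoneda $\Ext^1_{\cat{A}}$.

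One inference in your repair of that step is not valid as stated: a weak equivalence between cofibrant objects is \emph{not} in general a stable $\omega$-equivalence (the cited \cite[Lemma~4.24]{dwyer-spalinski}-type fact requires both objects bifibrant; for instance, in the injective model structure on chain complexes every object is cofibrant and $E \xrightarrow{} 0$, for $E$ exact but not contractible with injective components, is a weak equivalence that is not an isomorphism in $\textnormal{St}_{\omega}$). So Proposition~\ref{descending-Ext-functor} plus ``stable $\omega$-equivalence'' does not finish the argument. The conclusion you want is nevertheless true, and can be secured in either of two ways. Concretely: choose $\Sigma QA$ from a sequence $0 \xrightarrow{} QA \xrightarrow{} R \xrightarrow{} \Sigma QA \xrightarrow{} 0$ with $R \in \class{R}_{\class{W}}$ and $\Sigma QA \in \class{Q}$ (and dually $\Omega RB \in \class{R}$) -- this choice is genuinely needed, since for an arbitrary trivial middle term the group $\Ext^1_{\cat{A}}(\Sigma QA,RB)$ can change (e.g.\ in the model structure $(\class{P},\textnormal{All},\textnormal{All})$ on $\mathbb{Z}$-modules one may take $\Sigma \mathbb{Z} = \mathbb{Q}/\mathbb{Z}$, and $\Ext^1_{\mathbb{Z}}(\mathbb{Q}/\mathbb{Z},\mathbb{Z}) \neq 0$ while $\textnormal{Ho}(\mathfrak{M})$ is trivial). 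Then a weak equivalence $w$ between cofibrant objects factors as a trivial cofibration with cokernel in $\class{Q}_{\class{W}}$ followed by a trivial fibration which, its target being cofibrant, splits with kernel in $\omega$; hereditary orthogonality $\Ext^{i\geq 1}_{\cat{A}}(\class{Q}_{\class{W}},\class{R})=0$ then shows $\Ext^1_{\cat{A}}(w,RB)$ is an isomorphism. Alternatively, and more cleanly, avoid comparison maps in $\cat{A}$ altogether: by Theorem~\ref{them-Ho-Yoneda-Ext}~(1)--(3), $\Ext^1_{\cat{A}}(Q(-),R(-)) \cong \underline{\Ext}^1_{\,\textnormal{Ho}(\mathfrak{M})}$ is a functor on $\textnormal{Ho}(\mathfrak{M})$ in each variable, so it is invariant under the $\textnormal{Ho}(\mathfrak{M})$-isomorphisms of Proposition~\ref{prop-functors-commute}, and with the cofibrant/fibrant choices above $\Sigma QA$ and $\Omega RB$ serve as their own replacements. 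With that adjustment your argument is complete and agrees with the paper's.
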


\begin{example}\label{example-K(A)}
Let $\cat{A}$ be any additive category, including perhaps an exact or abelian category. Let $\cha{A}_{dw}$ denote the associated chain complex category along with the degreewise split exact structure. Denote the Yoneda Ext groups by $\Ext^n_{dw}(X,Y)$. They are equivalence classes of $n$-fold exact sequences of chain complexes, obtained by the splicing together of degreewise split short exact sequences. $\cha{A}_{dw}$ is a well-known Frobenius category and the contractible complexes serve as the projective-injective objects. It means we have a Hovey triple $\mathfrak{M}$ = (All, Contractible complexes, All). Its homotopy category is precisely $K(\cat{A})$, the usual chain homotopy category of complexes. In light of Example~\ref{example-suspensions} from Appendix~\ref{sec-suspension and loop}, the isomorphisms at the end of Corollary~\ref{cor-Ho-sets} recover the well-known isomorphisms:
$$\Ext^1_{dw}(\Sigma X,Y) \cong K(\cat{A})(X,Y)  \cong \Ext^1_{dw}(X,\Sigma^{-1} Y).$$
The statement of Theorem~\ref{them-Ho-Yoneda-Ext}~(4)  becomes the more general variation
$$K(\cat{A})(\Sigma^{-n}X,Y) \cong \Ext^n_{dw}(X,Y) \cong K(\cat{A})(X,\Sigma^n Y).$$
Similar statements are recovered for any Frobenius category. So for example, taking a field $k$ and a finite group $G$, then the category $kG$-Mod of modules over the group algebra $kG$ is a Frobenius category. The analogous statements recover basic facts of group cohomology.
\end{example}


\section{The functors $\Tor_n^{\textnormal{Ho}(\mathfrak{M})}$}\label{section-Tor}

In this section we use canonical resolutions to define Tor functors, assuming that all trivially cofibrant objects are flat. To this end we suppose that we have a covariant additive functor $- \tensor - : \cat{A} \times \cat{A}'  \xrightarrow{} \textbf{Ab}$, called a \emph{tensor product}, defined on abelian (or exact) categories $\cat{A}$ and $\cat{A}'$.  Recall, $\textbf{Ab}$ is the category of abelian groups. For each object $A \in \cat{A}$, the functor $A \tensor -$ is assumed to be right exact in the sense that it takes short exact sequences to right exact sequences in  $\textbf{Ab}$. We say that $A$ is \emph{flat} if $A \tensor -$ is an exact functor. Similarly,  each $- \tensor A'$ is assumed right exact and we say $A'$ is flat if it is exact. 

We keep our usual running assumption that the model structure $\mathfrak{M} = (\class{Q},\class{W},\class{R})$ on $\cat{A}$ is hereditary. But in addition, we assume throughout this section that all objects in $\class{Q}\cap\class{W}$ are flat.  Similarly, we assume $\cat{A}'$ has an hereditary model structure $\mathfrak{M}' = (\class{Q}',\class{W}',\class{R}')$ for which all objects in $\class{Q}' \cap \class{W}'$ are flat. Note that these assumptions imply that each object of $\cat{A}$, or $\cat{A}'$, can be represented as a quotient of a flat object. 

\begin{lemma}\label{lemma-purity}
Any short exact sequence in $\cat{A}$, or $\cat{A}'$, ending with a flat object is a \emph{pure exact sequence}. That is, it remains exact after being tensored by any object of the other category. 
\end{lemma}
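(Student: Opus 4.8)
The plan is to prove the statement by a standard dimension-shifting argument, exploiting the fact that flat objects have no higher Tor against anything, and that short exact sequences ending in a flat object therefore stay exact after tensoring. Let me set up the notation: suppose $0 \xrightarrow{} K \xrightarrow{} L \xrightarrow{} F \xrightarrow{} 0$ is a short exact sequence in $\cat{A}$ with $F$ flat, and let $X$ be any object of $\cat{A}'$. I want to show $0 \xrightarrow{} K \tensor X \xrightarrow{} L \tensor X \xrightarrow{} F \tensor X \xrightarrow{} 0$ is exact. By right-exactness of $-\tensor X$ (part of the hypotheses on the tensor product), the sequence $K \tensor X \xrightarrow{} L \tensor X \xrightarrow{} F \tensor X \xrightarrow{} 0$ is already exact, so the only thing to check is that $K \tensor X \xrightarrow{} L \tensor X$ is a monomorphism.

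First I would reduce to the case where $X$ is flat. Since the running assumption of the section guarantees every object of $\cat{A}'$ is a quotient of a flat object, choose a short exact sequence $0 \xrightarrow{} X' \xrightarrow{} P \xrightarrow{} X \xrightarrow{} 0$ with $P$ flat (here $P \in \class{Q}' \cap \class{W}'$). Now tensor the sequence $0 \xrightarrow{} K \xrightarrow{} L \xrightarrow{} F \xrightarrow{} 0$ with this short exact sequence and chase the resulting $3 \times 3$ diagram of right-exact rows and columns. The middle column $0 \xrightarrow{} K \tensor P \xrightarrow{} L \tensor P \xrightarrow{} F \tensor P \xrightarrow{} 0$ is exact because $P$ is flat, so $-\tensor P$ is an exact functor. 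The right-hand column $0 \xrightarrow{} K \tensor X' \xrightarrow{} K \tensor P \xrightarrow{} K \tensor X \xrightarrow{} 0$ need not be left exact, but a diagram chase (or one extra application of the snake lemma in the direction where things are exact) shows that it suffices to know the map $K \tensor P \xrightarrow{} L \tensor P$ is injective, which we already have. More precisely: the kernel of $K \tensor X \xrightarrow{} L \tensor X$ is identified, via the surjections $K \tensor P \twoheadrightarrow K \tensor X$ and $L \tensor P \twoheadrightarrow L \tensor X$, with the image in $K \tensor X$ of elements of $K\tensor P$ whose image in $L \tensor P$ comes from $L \tensor X'$; and one checks this image is zero using injectivity of $K\tensor P \to L\tensor P$ together with commutativity of the square relating the two copies of $-\tensor X'$. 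This is the routine part and I would not grind through it.

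So the crux reduces to the case $X$ flat, and there the claim is immediate: if $X$ is flat then $-\tensor X$ is exact by definition, so $0 \xrightarrow{} K \tensor X \xrightarrow{} L \tensor X \xrightarrow{} F \tensor X \xrightarrow{} 0$ is exact without using anything about $F$ at all. The symmetric statement for $\cat{A}'$ (a short exact sequence in $\cat{A}'$ ending in a flat object stays exact after tensoring with any object of $\cat{A}$) is proved the same way, using the dual hypothesis that every object of $\cat{A}$ is a quotient of an object of $\class{Q} \cap \class{W}$, which is flat.

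The main obstacle is purely bookkeeping: carrying out the $3 \times 3$ diagram chase cleanly, since the rows and columns are only right-exact a priori and one has to be careful about which maps are known to be injective before concluding. There is no deep input needed — no cotorsion pairs, no model structure beyond the bare flatness hypothesis and the existence of flat covers (quotients) — so the only risk is an indexing or direction error in the chase. I expect the argument to be short once the reduction to flat $X$ is made explicit.
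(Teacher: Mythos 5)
Your overall strategy (take a flat presentation $0 \to X' \to P \to X \to 0$ of $X$ and chase the resulting $3\times 3$ diagram) is exactly the ``standard formal argument'' the paper invokes by citing Lang, but the chase you sketch has a genuine gap, and it sits precisely at the one step where the hypothesis that $F$ is flat must be used. Notice that nowhere in your argument do you invoke flatness of $F$: the reduction uses only flatness of $P$ (exactness of the middle column and injectivity of $K\tensor P \to L\tensor P$), and your ``crux'' case uses only flatness of $X$. An argument with that property would prove that \emph{every} short exact sequence in $\cat{A}$ stays exact after tensoring with every $X$, which is false already for $\Z$-modules ($0 \to \Z \xrightarrow{2} \Z \to \Z/2 \to 0$ tensored with $\Z/2$). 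Concretely, the gap is here: having lifted $u \in \ker(K\tensor X \to L\tensor X)$ to $\tilde u \in K\tensor P$ and written its image $w \in L\tensor P$ as the image of some $v \in L\tensor X'$, you cannot conclude anything from injectivity of $K\tensor P \to L\tensor P$ and the square relating $K\tensor X'$ and $L\tensor X'$, because you do not yet know that $v$ comes from $K\tensor X'$ (the map $K\tensor X' \to L\tensor X'$ is not injective or surjective in general).

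The missing step, which is where $F$ flat enters, is: push $v$ into $F\tensor X'$; its image in $F\tensor P$ agrees with the image of $w$, which is $0$ because $w$ lies in the image of $K\tensor P$ and the column $K\tensor P \to L\tensor P \to F\tensor P$ is a complex; since $F$ is flat, $F\tensor X' \to F\tensor P$ is injective, so $v$ lifts to some $t \in K\tensor X'$. Only now does your intended finish work: the image of $t$ in $K\tensor P$ maps to $w$ in $L\tensor P$, so by injectivity of $K\tensor P \to L\tensor P$ it equals $\tilde u$, and since the composite $K\tensor X' \to K\tensor P \to K\tensor X$ is zero, $u = 0$. Also, the framing ``reduce to the case $X$ flat'' is misleading: there is no such reduction; the argument is a single chase in which flatness of $P$ and flatness of $F$ are each used once. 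With the step above inserted, your proof coincides with the standard one the paper cites, and it is valid in the paper's setting because all the objects in the chase are abelian groups.
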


\begin{proof}
A standard formal argument, see for example~\cite[Lemma~XVI.3.3]{lang}, will work in this setting.  
\end{proof}

\begin{definition}\label{def-lTor-rTor}
For objects $A \in \cat{A}$ and $B \in \cat{A}'$, we define $$\ell\Tor_n^{\mathfrak{M}}(A,B) := H_n[W_A \tensor B],$$ where $W_A$ is an $\mathfrak{M}$-resolution of $A$.  Similarly we define
$$r\Tor_n^{\mathfrak{M}}(A,B) := H_n[A \tensor W_B],$$ where $W_B$ is an $\mathfrak{M}'$-resolution of $B$. 
Different choices for  $W_A$ or $W_B$ yield canonical isomorphisms of abelian groups.
\end{definition}

Let us prove the statement that  $\ell\Tor_n^{\mathfrak{M}}(A,B)$ is well-defined (up to a canonical isomorphism), regardless of the choice of resolution $W_A$. Indeed if $W_A$ and $W'_A$ are two $\mathfrak{M}$-resolutions of $A$, then by Corollary~\ref{cor-K(A)} there is a canonical isomorphism $[\alpha] : W_A \xrightarrow{} W'_A$ in $K(\cat{A})$. For any object $B \in \cat{A}'$, since $- \tensor B$ is a covariant additive functor, it takes $\alpha$ to a chain homotopy equivalence $W_A \tensor B \xrightarrow{\alpha \tensor B} W'_A \tensor B$ of chain complexes of abelian groups. It follows that we have a canonical isomorphism: 
$$H_n[W_A \tensor B] \xrightarrow{H_n([\alpha \tensor B])} H_n[W'_A \tensor B].$$

\begin{theorem}\label{them-lTor}
Each $\ell\Tor_n^{\mathfrak{M}}(A,B)$ is a covariant additive functor in both $A$ and $B$ and satisfies the following properties:
\begin{enumerate}
\item If $A \in \cat{A}$ is cofibrant, then short exact sequences in $\cat{A}'$ are sent to long exact homology sequences of $\ell\Tor_n^{\mathfrak{M}}$ groups. Moreover, for any $A$, the functors $\ell\Tor^n_{\mathfrak{M}}(A,-)$ identify right homotopic maps. In particular each factors through $\gamma' : \cat{A}' \xrightarrow{} \textnormal{St}_{\omega'}(\cat{A}')$. 
\item For any $B \in \cat{A}'$, each cofibration in $\cat{A}$ induces a long exact homology sequence of $\ell\Tor_n^{\mathfrak{M}}(-,B)$ groups. Moreover, for any $B$, each $\ell\Tor_n^{\mathfrak{M}}(-,B)$ factors through $\gamma : \cat{A} \xrightarrow{} \textnormal{St}_{\omega}(\cat{A})$.
\end{enumerate}
\end{theorem}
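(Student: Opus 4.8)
The plan is to imitate the proof of Theorem~\ref{them-lExt}, replacing the contravariant functor $\Hom_{\cat{A}}(-,B)$ everywhere by the covariant right-exact functor $-\tensor B$ (and dually $A\tensor-$), and then importing the purity afforded by Lemma~\ref{lemma-purity} to keep short exact sequences exact after tensoring.

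\medskip

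First I would establish functoriality. For a fixed $\mathfrak{M}$-resolution $W_A$, the assignment $B\mapsto H_n[W_A\tensor B]$ is a covariant additive functor $\cat{A}'\xrightarrow{}\textbf{Ab}$, since $-\tensor-$ is additive in each variable and $H_n$ is additive; a different choice $W'_A$ produces, via Corollary~\ref{cor-K(A)}, a canonical chain homotopy equivalence $[\alpha]:W_A\xrightarrow{}W'_A$, and $-\tensor B$ sends it to a chain homotopy equivalence $W_A\tensor B\xrightarrow{}W'_A\tensor B$, so the $H_n([\alpha\tensor B])$ assemble into a natural isomorphism of functors $H_n[W_A\tensor-]\xrightarrow{}H_n[W'_A\tensor-]$. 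For variable $A$: Corollary~\ref{cor-K(A)} gives a functor $\cat{A}\xrightarrow{}K(\cat{A})$ factoring through $\gamma_\omega$, postcomposing with $-\tensor B : K(\cat{A})\xrightarrow{}K(\textbf{Ab})$ and then $H_n$ gives the covariant additive functor $\ell\Tor_n^{\mathfrak{M}}(-,B)$ factoring through $\gamma_\omega:\cat{A}\xrightarrow{}\textnormal{St}_\omega(\cat{A})$, which is the second sentence of part (2).

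\medskip

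Next, part (1). Let $0\xrightarrow{}A'\xrightarrow{}B'\xrightarrow{}C'\xrightarrow{}0$ be a short exact sequence in $\cat{A}'$, and suppose $A\in\cat{A}$ is cofibrant. By Lemma~\ref{lemma-can-resolutions-cofibrant}(4) every component $W_n$ of $W_A$ is trivially cofibrant, hence flat by the standing hypothesis of this section; so by Lemma~\ref{lemma-purity} the given short exact sequence stays exact after applying $-\tensor W_n$, i.e.\ we get a short exact sequence of chain complexes $0\xrightarrow{}W_A\tensor A'\xrightarrow{}W_A\tensor B'\xrightarrow{}W_A\tensor C'\xrightarrow{}0$; the fundamental lemma of homological algebra then yields the long exact homology sequence in $\ell\Tor_n^{\mathfrak{M}}$. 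For the ``identifies right homotopic maps'' clause, by additivity it suffices to show $\ell\Tor_n^{\mathfrak{M}}(A,h)=0$ when $h:B_1\xrightarrow{}B_2$ factors as $B_1\xrightarrow{\alpha}Q\xrightarrow{\beta}B_2$ through $Q\in\class{Q}'\cap\class{W}'$; then $W_A\tensor h$ factors through $W_A\tensor Q$, and it is enough that $W_A\tensor Q$ be an exact (acyclic) complex. This last point is the one I expect to require the most care: $W_A$ has cofibrant cycles by Lemma~\ref{lemma-can-resolutions-cofibrant}(2) but is, a priori, an exact complex whose short exact sequences of cycles $0\xrightarrow{}Z_nW_A\xrightarrow{}W_n\xrightarrow{}Z_{n-1}W_A\xrightarrow{}0$ need not split; however each such sequence ends in the \emph{flat} object $Z_{n-1}W_A$ (trivially cofibrant cycles are flat), hence is pure by Lemma~\ref{lemma-purity}, so stays short exact after $-\tensor Q$, and splicing these back together shows $W_A\tensor Q$ is exact. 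Therefore $\ell\Tor_n^{\mathfrak{M}}(A,-)$ kills $\class{Q}'\cap\class{W}'$ and factors through $\gamma':\cat{A}'\xrightarrow{}\textnormal{St}_{\omega'}(\cat{A}')$.

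\medskip

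Finally, the cofibration statement in part (2): given a cofibration $A\xrightarrow{f}A'$ in $\cat{A}$, i.e.\ a short exact sequence $0\xrightarrow{}A\xrightarrow{f}A'\xrightarrow{g}C\xrightarrow{}0$ with $C$ cofibrant, the Horseshoe Lemma~\ref{lemma-horseshoe-lemma} extends it to a short exact sequence of $\mathfrak{M}$-resolutions $0\xrightarrow{}W_A\xrightarrow{}W_{A'}\xrightarrow{}W_C\xrightarrow{}0$; since $C$ is cofibrant, $W_C$ has trivially cofibrant --- hence flat --- components, so by Lemma~\ref{lemma-purity} each degreewise sequence $0\xrightarrow{}(W_A)_n\xrightarrow{}(W_{A'})_n\xrightarrow{}(W_C)_n\xrightarrow{}0$ remains exact after $-\tensor B$, giving a short exact sequence of complexes $0\xrightarrow{}W_A\tensor B\xrightarrow{}W_{A'}\tensor B\xrightarrow{}W_C\tensor B\xrightarrow{}0$, and the fundamental lemma produces the long exact homology sequence in $\ell\Tor_n^{\mathfrak{M}}(-,B)$. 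The dual assertions for $r\Tor$ follow by the symmetric argument with $A\tensor-$, $\mathfrak{M}'$, and $\cat{A}'$ in place of $-\tensor B$, $\mathfrak{M}$, and $\cat{A}$.
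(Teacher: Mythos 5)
Your proposal follows the paper's proof closely (functoriality via Corollary~\ref{cor-K(A)}, the long exact sequence in part~(1) via flatness of the components of $W_A$, and the long exact sequence in part~(2) via the Horseshoe Lemma~\ref{lemma-horseshoe-lemma} together with Lemma~\ref{lemma-purity}), but one step is justified incorrectly: the acyclicity of $W_A\tensor Q$ in the ``identifies right homotopic maps'' argument. You argue that each cycle sequence $0\to Z_nW_A\to W_n\to Z_{n-1}W_A\to 0$ ends in a flat object because ``trivially cofibrant cycles are flat.'' But when $A$ is merely cofibrant, Lemma~\ref{lemma-can-resolutions-cofibrant}(2) only gives that the cycles are cofibrant, and in this section only the objects of $\class{Q}\cap\class{W}$ are assumed flat; worse, the clause you are proving is asserted for \emph{any} $A$, in which case the cycles of $W_A$ need not be cofibrant at all (the nonnegative-degree cycles are fibrant). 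So the purity-and-splicing route, as written, does not go through. The repair is simpler than what you attempted and is what the paper does: $Q\in\class{Q}'\cap\class{W}'$ is itself flat, so $-\tensor Q$ is an \emph{exact} functor; applying it to the exact complex $W_A$ (equivalently, to each cycle short exact sequence, with no flatness hypothesis on the cycles) yields an exact complex, hence $\ell\Tor_n^{\mathfrak{M}}(A,Q)=0$ for all $n$ and all $A$, which is all you need.

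A smaller point: in part~(1) you cite Lemma~\ref{lemma-purity} to keep $0\to A'\to B'\to C'\to 0$ exact after tensoring with $W_n$, but that lemma concerns sequences ending in a flat object, and this sequence in $\cat{A}'$ is arbitrary; the correct justification is again simply that $W_n$ is flat, i.e.\ $W_n\tensor -$ is exact. Lemma~\ref{lemma-purity} is the right tool only where you use it in part~(2), where a sequence in $\cat{A}$ whose third term is trivially cofibrant (hence flat) is tensored with an arbitrary $B$.
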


\begin{proof}
For any fixed resolution $W_A$ of $A$ we have the functor $H_n[W_A \tensor -]$ which is covariant and additive and so $\ell\Tor_n^{\mathfrak{M}}(A,-) := H_n[W_A \tensor -]$ is such a functor. For a different choice of resolution $W'_A$ then going back to the paragraph after Definition~\ref{def-lTor-rTor}, one can verify that the $[\alpha \tensor B]$ assemble to provide a natural isomorphism of functors $$\{H_n([\alpha \tensor B])\} : H_n[W'_A \tensor -] \xrightarrow{} H_n[W_A \tensor -].$$ Thus $\ell\Tor_n^{\mathfrak{M}}(A,-)$ is a functor that is well-defined up to canonical isomorphism.
 
 We now go on to prove statement~(1). So suppose $A \in \cat{A}$ is cofibrant and $$0 \xrightarrow{} B' \xrightarrow{f} B \xrightarrow{g} B'' \xrightarrow{} 0$$ is a short exact sequence. By Lemma~\ref{lemma-can-resolutions-cofibrant}~(4), we know that all components of $W_A$ are (trivially) cofibrant, and therefore flat. So applying $W_A \tensor -$ gives us the following short exact sequence
 $$0 \xrightarrow{} W_A \tensor B' \xrightarrow{} W_A \tensor B \xrightarrow{} W_A \tensor B'' \xrightarrow{} 0.$$ Now applying homology $H_n$, the fundamental lemma of homological algebra provides the long exact sequence of $\ell\Tor_n^{\mathfrak{M}}$ groups. 
 
Next, we show that for any $A \in \cat{A}$, the functors $\ell\Tor_n^{\mathfrak{M}}(A,-)$ identify right homotopic maps. We recall that two maps are right homotopic, written $f \sim^{r} g$, if and only if $g-f$ factors through a trivially cofibrant object. We wish to show that $\ell\Tor_n^{\mathfrak{M}}(A,f) =  \ell\Tor_n^{\mathfrak{M}}(A,g)$ for such maps. But since  $\ell\Tor_n^{\mathfrak{M}}(A,-)$ is an additive functor it is enough to show  $\ell\Tor_n^{\mathfrak{M}}(A,h) = 0$ whenever $h \sim^{r} 0$. So suppose $h = \beta\alpha$ where $B_1 \xrightarrow{\alpha} Q \xrightarrow{\beta} B_2$ and $Q \in \class{Q}'\cap{\class{W}'}$.
Then we get $$W_A \tensor B_1 \xrightarrow{\alpha \tensor B} W_A \tensor Q \xrightarrow{\beta \tensor B} W_A \tensor B_2.$$ But since $Q$ is flat the complex $W_A \tensor Q$ is acyclic, and so $\ell\Tor_n^{\mathfrak{M}}(A,Q) = 0$ for all $n$. This proves that $\ell\Tor_n^{\mathfrak{M}}(A,h) :  \ell\Tor_n^{\mathfrak{M}}(A,B_1)  \xrightarrow{}  \ell\Tor_n^{\mathfrak{M}}(A,B_2)$ is 0 for all $n$.

We now turn to show that $\ell\Tor_n^{\mathfrak{M}}(A,B)$ is a functor in variable $A$ and prove the statements in~(2). First, Corollary~\ref{cor-K(A)} tells us that $\mathfrak{M}$-resolutions provide a functor, $\cat{A} \xrightarrow{} K(\cat{A})$, well-defined on objects up to a canonical isomorphism, and that it factors as $\cat{A} \xrightarrow{\gamma_{\omega}} \textnormal{St}_{\omega}(\cat{A}) \xrightarrow{} K(\cat{A})$.  For a fixed $B$, since $- \tensor B$ is a covariant additive functor it induces a functor $- \tensor B : K(\cat{A}) \xrightarrow{} K(\textbf{Ab})$, where here $K(\textbf{Ab})$ is the homotopy category of chain complexes of abelian groups. Finally, composing all these functors with homology gives us a covariant additive functor
$$\cat{A} \xrightarrow{\gamma_{\omega}} \textnormal{St}_{\omega}(\cat{A}) \xrightarrow{} K(\cat{A})  \xrightarrow{- \tensor B} K(\textbf{Ab})   \xrightarrow{H_n} \textbf{Ab}, $$
which is precisely the functor $\ell\Tor_n^{\mathfrak{M}}(-,B)$.

Finally, suppose we have a short exact sequence $$\class{E} : 0 \xrightarrow{} A \xrightarrow{f} A' \xrightarrow{g} C \xrightarrow{} 0$$ with $C$ cofibrant. Then using the Horseshoe Lemma~\ref{lemma-horseshoe-lemma} we may extend $\class{E}$ to a short exact sequence of $\mathfrak{M}$-resolutions $$0 \xrightarrow{} W_A \xrightarrow{\{f_n\}} W_{A'} \xrightarrow{\{g_n\}} W_{C} \xrightarrow{} 0.$$ 
Since $C$ is cofibrant, all components of $W_C$ are trivially cofibrant by Lemma~\ref{lemma-can-resolutions-cofibrant}~(4). So by Lemma~\ref{lemma-purity}, applying $- \tensor B$ gives us another short exact sequence
 $$0 \xrightarrow{} W_A \tensor B \xrightarrow{\{f_n \tensor B \}} W_{A'} \tensor B \xrightarrow{\{g_n \tensor B\}} W_C \tensor B \xrightarrow{} 0.$$ Finally, applying homology $H_n$, the fundamental lemma of homological algebra provides the long exact sequence of $\ell\Tor_n^{\mathfrak{M}}$ groups. 
\end{proof}

We state the right version now too. 

\begin{theorem}\label{them-rTor}
Each $r\Tor_n^{\mathfrak{M}}(A,B)$ is a covariant additive functor in both $A$ and $B$ and satisfies the following properties:
\begin{enumerate}
\item If $B \in \cat{A}'$ is cofibrant, then short exact sequences in $\cat{A}$ are sent to long exact homology sequences of $r\Tor_n^{\mathfrak{M}}$ groups. Moreover, for any $B$, the functors $r\Tor^n_{\mathfrak{M}}(-,B)$ identify right homotopic maps. In particular each factors through $\gamma : \cat{A} \xrightarrow{} \textnormal{St}_{\omega}(\cat{A})$. 
\item For any $A \in \cat{A}$, each cofibration in $\cat{A}'$ induces a long exact homology sequence of $\ell\Tor_n^{\mathfrak{M}}(A,-)$ groups. Moreover, for any $A$, each $r\Tor_n^{\mathfrak{M}}(A,-)$ factors through $\gamma' : \cat{A}' \xrightarrow{} \textnormal{St}_{\omega'}(\cat{A}')$.
\end{enumerate}
\end{theorem}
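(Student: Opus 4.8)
The plan is to transcribe the proof of Theorem~\ref{them-lTor} with the roles of $(\cat{A},\mathfrak{M})$ and $(\cat{A}',\mathfrak{M}')$ interchanged, so that the standing hypothesis that every object of $\class{Q}'\cap\class{W}'$ is flat does the work that the corresponding hypothesis on $\class{Q}\cap\class{W}$ did there. First I would record well-definedness: given two $\mathfrak{M}'$-resolutions $W_B$ and $W'_B$ of $B$, Corollary~\ref{cor-K(A)} applied in $\cat{A}'$ yields a canonical isomorphism $[\alpha]\colon W_B\to W'_B$ in $K(\cat{A}')$, and since $A\tensor-$ is covariant additive it sends $\alpha$ to a chain homotopy equivalence $A\tensor W_B\to A\tensor W'_B$, hence a canonical isomorphism on $H_n$. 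Functoriality of $r\Tor_n^{\mathfrak{M}}(A,-)$ is then the composite $\cat{A}'\xrightarrow{\gamma_{\omega'}}\textnormal{St}_{\omega'}(\cat{A}')\to K(\cat{A}')\xrightarrow{A\tensor-}K(\textbf{Ab})\xrightarrow{H_n}\textbf{Ab}$, while functoriality of $r\Tor_n^{\mathfrak{M}}(-,B)$ in $A$ is immediate, since for a fixed $\mathfrak{M}'$-resolution $W_B$ the assignment $A\mapsto H_n[A\tensor W_B]$ is visibly covariant and additive.

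For part~(1), if $B$ is cofibrant in $\cat{A}'$ then Lemma~\ref{lemma-can-resolutions-cofibrant}(4) makes every component of $W_B$ trivially cofibrant, hence flat, so $-\tensor(W_B)_k$ is exact for all $k$; applying $-\tensor W_B$ to a short exact sequence in $\cat{A}$ therefore yields a short exact sequence of chain complexes, and the fundamental lemma of homological algebra supplies the long exact homology sequence. For the homotopy statement, if $h\colon A_1\to A_2$ in $\cat{A}$ factors through some $Q\in\class{Q}\cap\class{W}$, then $Q$ is flat, so $Q\tensor W_B$—being $Q$ tensored with the exact complex $W_B$—is exact, whence $r\Tor_n^{\mathfrak{M}}(Q,B)=0$ for all $n$, and by additivity $r\Tor_n^{\mathfrak{M}}(h,B)=0$; thus $r\Tor_n^{\mathfrak{M}}(-,B)$ identifies right homotopic maps and factors through $\gamma\colon\cat{A}\to\textnormal{St}_{\omega}(\cat{A})$.

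For part~(2), a cofibration $0\to B\to B'\to C\to 0$ in $\cat{A}'$ with $C$ cofibrant extends, by the Horseshoe Lemma~\ref{lemma-horseshoe-lemma} applied in $\cat{A}'$, to a short exact sequence of $\mathfrak{M}'$-resolutions $0\to W_B\to W_{B'}\to W_C\to 0$; here every component of $W_C$ is trivially cofibrant, hence flat, so by Lemma~\ref{lemma-purity} the sequence stays degreewise exact after applying $A\tensor-$, and the fundamental lemma again produces the long exact homology sequence. That $r\Tor_n^{\mathfrak{M}}(A,-)$ factors through $\gamma'\colon\cat{A}'\to\textnormal{St}_{\omega'}(\cat{A}')$ reduces to showing $r\Tor_n^{\mathfrak{M}}(A,W)=0$ whenever $W\in\omega'$ (indeed whenever $W$ is trivially cofibrant): by Lemma~\ref{lemma-can-resolutions-cofibrant}(2) every cycle of any $\mathfrak{M}'$-resolution $W_W$ of such a $W$ is trivially cofibrant, hence flat, so each short exact sequence $0\to Z_kW_W\to (W_W)_k\to Z_{k-1}W_W\to 0$ ends with a flat object and is therefore pure by Lemma~\ref{lemma-purity}; splicing these shows $A\tensor W_W$ is an exact complex, so $r\Tor_n^{\mathfrak{M}}(A,W)=0$ and $r\Tor_n^{\mathfrak{M}}(A,-)$ annihilates every morphism factoring through $W$.

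I expect no genuine obstacle here—the argument is a routine dualization—but I would flag the one place where it is not literally mechanical. In Theorem~\ref{them-lTor}, the vanishing $\ell\Tor_n^{\mathfrak{M}}(A,Q)=0$ for trivially cofibrant $Q\in\cat{A}'$ is trivial because $W_A$ is a single fixed exact complex and $Q$ is flat; in the present dual the first variable $A\in\cat{A}$ is arbitrary and need not be flat, so one cannot conclude directly that $A\tensor W_W$ is exact and must instead feed in the fact that an $\mathfrak{M}'$-resolution of a trivially cofibrant object has flat cycles and then invoke purity (Lemma~\ref{lemma-purity}). This is the only step carrying any content beyond bookkeeping, and it is exactly the analogue of the remark appearing in the proof of Lemma~\ref{lemma-lExt-vanishing}.
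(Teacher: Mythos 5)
Your proposal is correct and matches the paper's intent: the paper states Theorem~\ref{them-rTor} without proof, as the symmetric ``right version'' of Theorem~\ref{them-lTor}, and your argument is exactly that transcription with the two categories interchanged, invoking the same ingredients (Corollary~\ref{cor-K(A)} for $\mathfrak{M}'$-resolutions, Lemma~\ref{lemma-can-resolutions-cofibrant}, the Horseshoe Lemma~\ref{lemma-horseshoe-lemma}, and Lemma~\ref{lemma-purity}). Your extra observation that $r\Tor_n^{\mathfrak{M}}(A,W)=0$ for trivially cofibrant $W$ is precisely the paper's Lemma~\ref{lemma-lTor-vanishing} and correctly handles the one genuinely asymmetric point you flagged.
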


We also have the following easy lemmas concerning the vanishing of the functors $\ell\Tor_n^{\mathfrak{M}}$ and  $r\Tor_n^{\mathfrak{M}}$.

\begin{lemma}\label{lemma-lTor-vanishing}
If either $A$ or $B$ is trivially cofibrant, then for all $n \in \Z$, we have both  $\ell\Tor_n^{\mathfrak{M}}(A,B) = 0$ and  $r\Tor_n^{\mathfrak{M}}(A,B) = 0$. 
\end{lemma}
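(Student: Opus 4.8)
The plan is to reduce the statement to the vanishing of $\ell\Tor^{\mathfrak{M}}_n$ and $r\Tor^{\mathfrak{M}}_n$ on a flat factor, which is exactly the mechanism already exploited in the ``right homotopic maps'' portion of the proof of Theorem~\ref{them-lTor}. The key observation is that a trivially cofibrant object is flat, by the standing hypothesis of this section, and moreover that an $\mathfrak{M}$-resolution built on a trivially cofibrant object is especially well-behaved.

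First I would treat $\ell\Tor^{\mathfrak{M}}_n(A,B) = 0$ when $A$ is trivially cofibrant. By Lemma~\ref{lemma-can-resolutions-cofibrant}~(2) applied in the ``trivially cofibrant'' case, every cycle $Z_nW_A$ of any $\mathfrak{M}$-resolution $W_A$ of $A$ is trivially cofibrant; in fact, as noted in the proof of Lemma~\ref{lemma-lExt-vanishing}, the nonnegative part $W_{\geq0}\xrightarrow{\epsilon_A}A$ is already contractible, and splicing with the (cofibrant-cycled) coresolution one sees all of $W_A$ has trivially cofibrant cycles, hence trivially cofibrant components. Since trivially cofibrant objects are flat, $W_A$ is a bounded-below-type complex whose every cycle is flat; but more to the point, $W_A$ is an exact complex with flat cycles, so tensoring the short exact sequences $0\to Z_nW_A\to (W_A)_n\to Z_{n-1}W_A\to 0$ with $B$ stays short exact (Lemma~\ref{lemma-purity}, since the cycles are flat), and splicing these back together shows $W_A\tensor B$ is exact. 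Therefore $H_n[W_A\tensor B]=0$ for all $n$, i.e.\ $\ell\Tor^{\mathfrak{M}}_n(A,B)=0$. The dual argument with an $\mathfrak{M}'$-resolution $W_B$ handles $r\Tor^{\mathfrak{M}}_n(A,B)=0$ when $B$ is trivially cofibrant.

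Next I would handle the ``mixed'' cases, where $\ell\Tor$ is computed via $W_A$ but it is $B$ that is trivially cofibrant (and dually). Here $A$ is arbitrary, so we cannot control the cycles of $W_A$; instead we use that $B$, being trivially cofibrant, is flat. Then for any $\mathfrak{M}$-resolution $W_A$, which is by definition an exact complex, the functor $-\tensor B$ is exact, hence $W_A\tensor B$ is again exact, giving $\ell\Tor^{\mathfrak{M}}_n(A,B)=H_n[W_A\tensor B]=0$. Symmetrically, if $A$ is trivially cofibrant hence flat, then $A\tensor W_B$ is exact for any $\mathfrak{M}'$-resolution $W_B$ of $B$, so $r\Tor^{\mathfrak{M}}_n(A,B)=0$. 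Combining the two paragraphs covers all four combinations of (which variable is trivially cofibrant) $\times$ (which Tor functor), which is the full statement.

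I do not expect a serious obstacle here; the only mild subtlety is making sure that in the first paragraph one really does get a \emph{short} exact sequence after tensoring --- this is where Lemma~\ref{lemma-purity} (purity of sequences ending in a flat object) is invoked, applied to the sequences $0\to Z_nW_A\to (W_A)_n\to Z_{n-1}W_A\to 0$ whose right-hand terms are flat because all cycles are trivially cofibrant. One should also remark, exactly as in Lemma~\ref{lemma-lExt-vanishing}, that the case where the \emph{resolved} object is trivially cofibrant does not even need the hereditary hypothesis, since the contractibility of $W_{\geq0}$ already forces all cycles to be trivially cofibrant; but since we are assuming hereditary throughout anyway this is only a side comment.
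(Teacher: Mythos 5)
Your argument is correct and is essentially the paper's proof spelled out: the paper's one-line proof cites exactly Lemma~\ref{lemma-can-resolutions-cofibrant} (the cycles of $W_A$, resp.\ $W_B$, are trivially cofibrant hence flat when the resolved object is trivially cofibrant) together with Lemma~\ref{lemma-purity}, with the mixed cases handled by flatness of the other tensor factor, just as you do. The side remark about not needing the hereditary hypothesis in the trivially cofibrant case is a correct bonus, parallel to the comment in Lemma~\ref{lemma-lExt-vanishing}.
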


\begin{proof}
It follows immediately from Lemma~\ref{lemma-can-resolutions-cofibrant}~(1)+(2) along with Lemma~\ref{lemma-purity}.
\end{proof}

$\Tor_n^{\mathfrak{M}}(A,B)$ is balanced in the case that $A$ and $B$ are each cofibrant. 

\begin{proposition}\label{prop-left-right-Tor}
If $A$ and $B$ are each cofibrant then we have natural isomorphisms $\ell\Tor_n^{\mathfrak{M}}(A,B) \cong r\Tor_n^{\mathfrak{M}}(A,B)$  for all $n \in \Z$.
\end{proposition}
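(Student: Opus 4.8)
The plan is to push everything down to homological degree $1$ by dimension shifting, and then bridge the $\ell$- and $r$-constructions through the classical (balanced) $\Tor_1$ bifunctor. First I would record dimension-shift isomorphisms for $\ell\Tor^{\mathfrak{M}}$ and $r\Tor^{\mathfrak{M}}$, exactly in the spirit of Corollary~\ref{cor-dim-shift}. Fix an $\mathfrak{M}$-resolution $W_A$ of the cofibrant object $A$; by Lemma~\ref{lemma-can-resolutions-cofibrant} all of its components are trivially cofibrant and all of its cycles are cofibrant, and the short exact sequences $0\to\Omega A\to W_0\to A\to 0$ and $0\to A\to W_{-1}\to\Sigma A\to 0$ extracted from $W_A$ realize cofibrant models of $\Omega A$ and $\Sigma A$ (cf.\ Proposition~\ref{prop-functors-commute}). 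Feeding these cofibrations into Theorem~\ref{them-lTor}(2) and using that $\ell\Tor_n^{\mathfrak{M}}$ vanishes on the trivially cofibrant middle terms (Lemma~\ref{lemma-lTor-vanishing}) gives $\ell\Tor_n^{\mathfrak{M}}(A,B)\cong\ell\Tor_{n-1}^{\mathfrak{M}}(\Omega A,B)$ and $\ell\Tor_n^{\mathfrak{M}}(A,B)\cong\ell\Tor_{n+1}^{\mathfrak{M}}(\Sigma A,B)$; iterating (each $\Omega^jA$, $\Sigma^jA$ being cofibrant) yields, for every $n\in\Z$ and with the convention $\Omega^{-k}:=\Sigma^k$,
$$\ell\Tor_n^{\mathfrak{M}}(A,B)\cong\ell\Tor_1^{\mathfrak{M}}(\Omega^{n-1}A,B),$$
and dually, via Theorem~\ref{them-rTor}(2), $r\Tor_n^{\mathfrak{M}}(A,B)\cong r\Tor_1^{\mathfrak{M}}(A,\Omega^{n-1}B)$.

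For the base case, since $A$ is cofibrant the non-negative part $W_{\geq0}\xrightarrow{\epsilon_A}A$ of $W_A$ is an honest resolution of $A$ by flat objects (trivially cofibrant objects are flat, by hypothesis), and tensoring the brutal-truncation short exact sequence $0\to W_{\leq-1}\to W_A\to W_{\geq0}\to 0$ with $B$ shows
$$\ell\Tor_1^{\mathfrak{M}}(A,B)=H_1(W_A\otimes B)\cong H_1(W_{\geq0}\otimes B)=\Tor_1(A,B),$$
the classical $\Tor_1$, which exists and is a balanced bifunctor because (as observed in Section~\ref{section-Tor}) every object of $\cat{A}$ and of $\cat{A}'$ is a quotient of a flat object. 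Dually $r\Tor_1^{\mathfrak{M}}(A,B)\cong\Tor_1(A,B)$. So the Proposition reduces to the sliding statement: \emph{for cofibrant $A\in\cat{A}$, $B\in\cat{A}'$ and all $m\in\Z$, $\Tor_1(\Omega^mA,B)\cong\Tor_1(A,\Omega^mB)$.}

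For $m\geq 0$ this is classical dimension shifting for $\Tor_1$ along the flat syzygy sequences $0\to\Omega^{j+1}A\to W_j\to\Omega^jA\to 0$, both sides being $\Tor_{m+1}(A,B)$. For $m\leq-1$, writing $\Omega^m=\Sigma^{-m}$, I would induct on $-m$, the crux being $m=-1$. Choose short exact sequences $0\to A\xrightarrow{\eta_A}F\to\Sigma A\to 0$ and $0\to B\xrightarrow{\eta_B}G\to\Sigma B\to 0$ with $F,G$ trivially cofibrant, hence flat. Then $\Tor_1(\Sigma A,B)=\ker(\eta_A\otimes 1_B\colon A\otimes B\to F\otimes B)$ and $\Tor_1(A,\Sigma B)=\ker(1_A\otimes\eta_B\colon A\otimes B\to A\otimes G)$, while the map $\eta_A\otimes\eta_B\colon A\otimes B\to F\otimes G$ factors both as $(1_F\otimes\eta_B)(\eta_A\otimes 1_B)$ and as $(\eta_A\otimes 1_G)(1_A\otimes\eta_B)$; since $F$ and $G$ are flat, $1_F\otimes\eta_B$ and $\eta_A\otimes 1_G$ are monomorphisms (Lemma~\ref{lemma-purity}), so all three kernels coincide. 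The inductive step follows by applying this with $\Sigma^{k-1}A$, resp.\ $\Sigma^{k-1}B$ (again cofibrant), in the roles of $A$, resp.\ $B$. Chaining
$$\ell\Tor_n^{\mathfrak{M}}(A,B)\cong\ell\Tor_1^{\mathfrak{M}}(\Omega^{n-1}A,B)\cong\Tor_1(\Omega^{n-1}A,B)\cong\Tor_1(A,\Omega^{n-1}B)\cong r\Tor_1^{\mathfrak{M}}(A,\Omega^{n-1}B)\cong r\Tor_n^{\mathfrak{M}}(A,B)$$
then finishes the proof.

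I expect the real work to be twofold. The sliding lemma for negative $m$ is the genuinely new ingredient, and it is precisely where the standing hypothesis that all trivially cofibrant objects are flat does its work — in the two monomorphism cancellations above. The other point is routine but must be verified: that every isomorphism in the chain is natural in both $A$ and $B$. The dimension-shift isomorphisms are natural because the connecting maps of Theorems~\ref{them-lTor} and~\ref{them-rTor} are, together with the functoriality (up to homotopy) of $W_A$ from Corollary~\ref{cor-K(A)}; the identifications with classical $\Tor_1$ and the sliding lemma require checking that the relevant short exact sequences may be chosen functorially, for which the generalized horseshoe lemma used throughout Section~\ref{section-Tor} suffices.
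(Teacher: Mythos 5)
There is a genuine gap, and it sits exactly at the load-bearing step of your argument: the appeal to ``the classical $\Tor_1$, which exists and is a balanced bifunctor because every object of $\cat{A}$ and of $\cat{A}'$ is a quotient of a flat object.'' In this setting there is no assumption of enough projectives, and an ample supply of flats does not by itself produce a well-defined balanced Tor with long exact sequences and dimension shifting in both variables: flat objects have no lifting property against general epimorphisms, so neither the comparison theorem nor the horseshoe lemma is available for arbitrary flat resolutions, and the standard proof that flat resolutions compute a balanced Tor (\cite[Theorem~2.7.2]{weibel}) is precisely the tensor-product double complex together with the Acyclic Assembly Lemma --- which is the paper's proof of this very proposition. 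Concretely, your chain uses both $\ell\Tor_1^{\mathfrak{M}}(\Omega^{n-1}A,B)\cong\Tor_1(\Omega^{n-1}A,B)$ (first variable resolved) and $r\Tor_1^{\mathfrak{M}}(A,\Omega^{n-1}B)\cong\Tor_1(A,\Omega^{n-1}B)$ (second variable resolved) with one and the same bifunctor $\Tor_1$; asserting that these two computations give the same group is exactly the $n=1$ case of Proposition~\ref{prop-left-right-Tor}, so the argument is circular as written. The same issue infects the sliding lemma for $m\geq 0$, where ``both sides being $\Tor_{m+1}(A,B)$'' presupposes the balanced derived-functor apparatus in both variables.

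What is salvageable is your $m=-1$ computation: for $0\to A\xrightarrow{\eta_A}F\to\Sigma A\to 0$ and $0\to B\xrightarrow{\eta_B}G\to\Sigma B\to 0$ with $F,G$ flat, the identity $\ker(\eta_A\otimes 1_B)=\ker(\eta_A\otimes\eta_B)=\ker(1_A\otimes\eta_B)$ is correct and genuinely uses the flatness hypothesis, and the dimension shifts you perform \emph{inside} $\ell\Tor^{\mathfrak{M}}$ and $r\Tor^{\mathfrak{M}}$ via Theorems~\ref{them-lTor}(2), \ref{them-rTor}(2) and Lemma~\ref{lemma-lTor-vanishing} are fine since all syzygies of $W_A$, $W_B$ are cofibrant with trivially cofibrant (hence flat) components (Lemma~\ref{lemma-can-resolutions-cofibrant}). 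One could plausibly rebuild your route by proving, entirely within the $\ell$- and $r$-theories, identifications such as $\ell\Tor_1^{\mathfrak{M}}(\Sigma A,B)\cong\ker(\eta_A\otimes 1_B)$ and $r\Tor_1^{\mathfrak{M}}(A,\Sigma B)\cong\ker(1_A\otimes\eta_B)$ and then shifting one degree at a time, taking care that the resulting isomorphisms are independent of the chosen suspension sequences and natural (using that both theories factor through the stable categories). But that reworking is not what you wrote, and it would still have to be assembled with some care. The paper avoids all of this in one stroke: since both $W_A$ and $W_B$ consist of flat objects and may be truncated at cycles in any degree, the double complex $W_A\otimes W_B$ and Weibel's Acyclic Assembly Lemma give $\ell\Tor_k^{\mathfrak{M}}(A,B)\cong r\Tor_k^{\mathfrak{M}}(A,B)$ for all $k\in\Z$ directly.
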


\begin{proof}
One can imitate the usual proof that Tor is balanced by flat modules. See~\cite[Theorem~2.7.2]{weibel}. The key ingredient is to note that all components of $W_A$ and $W_B$ will be trivially cofibrant, and hence flat. So each may be truncated at the cycles in any degree we wish; for example,
$$\cdots \xrightarrow{} W_{n+2} \xrightarrow{} W_{n+1} \xrightarrow{} Z_nW \xrightarrow{} 0$$   
becomes a flat resolution of $Z_nW$. Hence the \cite[Acyclic Assembly Lemma Theorem~2.7.3]{weibel} will apply to conclude that $\ell\Tor_k^{\mathfrak{M}}(A,B) \cong r\Tor_k^{\mathfrak{M}}(A,B)$  for all $k > n$, and hence for all $k \in \Z$.
\end{proof}

Because of the natural isomorphism  $\ell\Tor_n^{\mathfrak{M}}(A,B) \cong r\Tor_n^{\mathfrak{M}}(A,B)$ in Proposition~\ref{prop-left-right-Tor} we may, in the case that $A$ and $B$ are each is cofibrant, simply denote this group by $\Tor_n^{\mathfrak{M}}(A,B)$, with the realization that it may be computed by either a canonical resolution of $A$, or, a canonical resolution of $B$. With this observation we make the following definition. 

\begin{definition}\label{def-Ho-Tor}
$\Tor_n^{\textnormal{Ho}(\mathfrak{M})}(A,B) := \Tor_n^{\mathfrak{M}}(RQA,RQB)$.
\end{definition}

The main properties of $\Tor_n^{\textnormal{Ho}(\mathfrak{M})}(A,B) $ appear in the following theorem. 

\begin{theorem}\label{them-Ho-Tor}
Each $\Tor_n^{\textnormal{Ho}(\mathfrak{M})}(A,B)$ defines a functor with the following properties.  
\begin{enumerate}
\item $\Tor_n^{\textnormal{Ho}(\mathfrak{M})}(A,-) : \cat{A}' \xrightarrow{} \textbf{Ab}$ is a covariant additive functor and short exact sequences in $\cat{A}'$ are sent to long exact homology sequences of $\Tor_n^{\textnormal{Ho}(\mathfrak{M})}$ groups. $\Tor_n^{\textnormal{Ho}(\mathfrak{M})}(A,-)$ descends via $\gamma' : \cat{A}' \xrightarrow{} \textnormal{Ho}(\cat{A}')$ to a well-defined functor on  $\textnormal{Ho}(\cat{A}')$.
\item $\Tor_n^{\textnormal{Ho}(\mathfrak{M})}(-,B) : \cat{A} \xrightarrow{} \textbf{Ab}$ is a covariant additive functor and short exact sequences in $\cat{A}$ are sent to long exact homology sequences of $\Tor_n^{\textnormal{Ho}(\mathfrak{M})}$ groups. $\Tor_n^{\textnormal{Ho}(\mathfrak{M})}(-,B)$
descends via $\gamma_{\cat{A}} : \cat{A} \xrightarrow{} \textnormal{Ho}(\cat{A})$ to a well-defined functor on  $\textnormal{Ho}(\cat{A})$.
\item We have the following isomorphisms, natural in both $A$ and $B$:
 $$\Tor_n^{\textnormal{Ho}(\mathfrak{M})}(A,B) \cong \Tor_n^{\mathfrak{M}}(QA,QB).$$ In particular, the group $\Tor_n^{\textnormal{Ho}(\mathfrak{M})}(A,B)$ may be computed by either applying $H^n[- \tensor QB]$ to a canonical resolution of $A$, or, by applying $H_n[QA \tensor -]$ to a canonical resolution of $B$.
 \item Suppose that our categories $\cat{A}$ and $\cat{A}'$ have enough projectives. Then the usual construction of left derived functors via projective resolutions yields bifunctors $\Tor_n(-,-)$ for $n \geq 0$. We have $$\Tor_n^{\textnormal{Ho}(\mathfrak{M})}(A,B) \cong \Tor_n(QA,QB)$$ for all $n \geq 1$.
\end{enumerate}
\end{theorem}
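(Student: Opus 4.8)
The plan is to run through Theorem~\ref{them-Ho-Tor} in the same order, and along essentially the same lines, as the development of the $\Ext^n_{\textnormal{Ho}(\mathfrak{M})}$ functors in Section~\ref{sec-Ho-Ext}, with Proposition~\ref{prop-left-right-Tor} and Lemma~\ref{lemma-lTor-vanishing} playing the roles that the balancing isomorphism $\ell\Ext\cong r\Ext$ and Lemma~\ref{lemma-lExt-vanishing} played there. I would establish (3) first. Given $A\in\cat{A}$, take a fibrant replacement sequence $0\xrightarrow{}QA\xrightarrow{}RQA\xrightarrow{}C\xrightarrow{}0$ of $QA$ with $C\in\class{Q}_{\class{W}}$; since $\class{Q}$ is closed under extensions, $RQA$ is again cofibrant, so all four objects are cofibrant and the monomorphism is a cofibration. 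Applying $\ell\Tor_n^{\mathfrak{M}}(-,QB)$ — which by Proposition~\ref{prop-left-right-Tor} agrees with $\Tor_n^{\mathfrak{M}}(-,QB)$ on cofibrant objects — Theorem~\ref{them-lTor}(2) provides a long exact sequence all of whose terms $\ell\Tor_n^{\mathfrak{M}}(C,QB)$ vanish by Lemma~\ref{lemma-lTor-vanishing}, whence $\Tor_n^{\mathfrak{M}}(QA,QB)\cong\Tor_n^{\mathfrak{M}}(RQA,QB)$. Dually, a fibrant replacement sequence $0\xrightarrow{}QB\xrightarrow{}RQB\xrightarrow{}C'\xrightarrow{}0$ of $QB$ in $\cat{A}'$, together with Theorem~\ref{them-rTor}(2) and Lemma~\ref{lemma-lTor-vanishing}, gives $\Tor_n^{\mathfrak{M}}(RQA,QB)\cong\Tor_n^{\mathfrak{M}}(RQA,RQB)$. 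Composing yields $\Tor_n^{\textnormal{Ho}(\mathfrak{M})}(A,B)=\Tor_n^{\mathfrak{M}}(RQA,RQB)\cong\Tor_n^{\mathfrak{M}}(QA,QB)$, and naturality in both variables follows by the usual diagram chase, the replacement functors being well-defined up to canonical isomorphism and the connecting maps of Theorems~\ref{them-lTor} and~\ref{them-rTor} being natural.

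Granting (3), parts (1) and (2) follow exactly as parts (1) and (2) of Theorem~\ref{them-Ho-Ext}: a short exact sequence in $\cat{A}'$ (resp.\ $\cat{A}$) is extended, via the generalized horseshoe lemma~\cite[Lemma~1.4.4]{becker}, to a short exact sequence of cofibrant replacements whose monomorphism is a cofibration, and Theorem~\ref{them-rTor}(2) applied to $\Tor_n^{\mathfrak{M}}(QA,-)$ (resp.\ Theorem~\ref{them-lTor}(2) applied to $\Tor_n^{\mathfrak{M}}(-,QB)$) produces the long exact homology sequence; the functors descend to $\textnormal{Ho}(\mathfrak{M}')$ (resp.\ $\textnormal{Ho}(\mathfrak{M})$) because bifibrant replacement sends weak equivalences to isomorphisms in the stable category while $r\Tor_n^{\mathfrak{M}}(RQA,-)$ (resp.\ $\ell\Tor_n^{\mathfrak{M}}(-,RQB)$) is already defined there. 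For (4), assume $\cat{A}$ and $\cat{A}'$ have enough projectives and let $\Tor_n(-,-)$ denote the classical left-derived bifunctors of statement~(4). By (3) it suffices to compute $\Tor_n^{\mathfrak{M}}(QA,QB)=H_n[W_{QA}\tensor QB]$ for an $\mathfrak{M}$-resolution $W_{QA}$ of the cofibrant object $QA$. By Lemma~\ref{lemma-can-resolutions-cofibrant}(4) every component $W_m$ of $W_{QA}$ is trivially cofibrant, hence flat by the running hypothesis, so $F_\bullet\equiv(\cdots\xrightarrow{}W_2\xrightarrow{}W_1\xrightarrow{}W_0\xrightarrow{}0)$ together with the augmentation $W_0\xrightarrow{\epsilon_{QA}}QA$ is a flat resolution of $QA$. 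For $n\geq1$ the homology $H_n[W_{QA}\tensor QB]$ depends only on $W_{n+1}\tensor QB$, $W_n\tensor QB$, $W_{n-1}\tensor QB$, all in nonnegative degree, so it equals $H_n[F_\bullet\tensor QB]$, and the standard fact that $\Tor$ may be computed from flat resolutions (\cite[Theorem~3.2.8]{weibel}, which transfers to the present generality as the other classical homological inputs of this paper do; compare the proof of Proposition~\ref{prop-left-right-Tor}) identifies this with $\Tor_n(QA,QB)$. The restriction $n\geq1$ cannot be dropped: in degree $0$ the differential $W_0\xrightarrow{d_0}W_{-1}$ factors as $\eta_{QA}\epsilon_{QA}$ through the object $QA$, which need not be flat, so $H_0[W_{QA}\tensor QB]$ need not agree with $QA\tensor QB$.

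The steps (1)--(3) are a routine adaptation of the arguments already carried out for the $\Ext^n_{\textnormal{Ho}(\mathfrak{M})}$ functors; the one ingredient not already available, and the point to handle with care, is the final step of (4): recognizing the nonnegative part of a canonical resolution of $QA$ as a flat resolution and invoking the flat-resolution computation of $\Tor$ in the exact-categorical setting. I expect this, rather than the (elementary) degree bookkeeping, to be where the proof needs the most attention.
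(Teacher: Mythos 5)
Your handling of (1)--(3) is essentially the paper's own: for those parts the paper literally says to imitate the proof of Theorem~\ref{them-Ho-Ext}, and your reduction of (3) via the trivial cofibrations $QA \rightarrowtail RQA$ and $QB \rightarrowtail RQB$ (with trivially cofibrant cokernels), the vanishing Lemma~\ref{lemma-lTor-vanishing}, the balancing Proposition~\ref{prop-left-right-Tor}, and the use of the generalized horseshoe lemma together with Theorems~\ref{them-lTor} and~\ref{them-rTor} for (1)--(2), is exactly the intended adaptation. Where you genuinely diverge is (4). The paper does not pass through the flat resolution lemma: it proves directly that every trivially cofibrant object $C$ is acyclic for the classical functors $\Tor_n(-,B)$ --- any projective resolution of $C$ has all syzygies trivially cofibrant, because projectives lie in $\class{Q}_{\class{W}}$ and the cotorsion pairs are hereditary, hence the syzygies are flat, so by Lemma~\ref{lemma-purity} the resolution stays exact after applying $-\tensor B$ --- and then invokes \cite[Dual of Theorem~XX.6.2]{lang}, the theorem that left derived functors may be computed from resolutions by acyclic objects; since the nonnegative part of a canonical resolution of $A$ is a resolution of $QA$ by trivially cofibrant objects, the isomorphism for $n\geq 1$ follows (your degree-$0$ caveat is the same observation that forces $n\geq 1$). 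Your appeal to \cite[Theorem~3.2.8]{weibel} is the weak link you yourself flag: Weibel proves the flat resolution lemma for modules, and its proof rests on flat objects being $\Tor$-acyclic, which in this two-category setting with a tensor product into $\textbf{Ab}$ is not automatic. It can be obtained --- projectives here are trivially cofibrant, hence flat by the running hypothesis, so a balancing/acyclic-assembly argument as in the proof of Proposition~\ref{prop-left-right-Tor} shows flats are $\Tor$-acyclic, after which the flat resolution lemma follows from the same acyclic-resolution theorem --- but that is more machinery than is needed; the cleaner discharge is the paper's, which only needs acyclicity of the trivially cofibrant components and makes visible where the hereditary hypothesis and Lemma~\ref{lemma-purity} enter. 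So your outline is correct; to make (4) airtight, replace the bare citation of Weibel either by the acyclicity-of-$\class{Q}_{\class{W}}$ argument plus \cite{lang}, or by supplying the balancing argument that transfers the flat resolution lemma to this generality.
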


\begin{proof}
The reader can formulate a proof by imitating the proof of Theorem~\ref{them-Ho-Ext}. To prove~(4) assume we have enough projective. So for any $B$ we have left derived functors $\Tor_n(-, B)$ of $- \tensor B$. We note that if $C$ is a trivially cofibrant object then $\Tor_n(C, B) = 0$ for $n \geq 1$. Indeed any projective resolution of $C$ will have all kernels trivially cofibrant since $\class{Q}_{\class{W}}$ contains all projectives and by the hereditary hypothesis. It follows from Lemma~\ref{lemma-purity} that $\class{P} \tensor B \xrightarrow{} C \tensor B \xrightarrow{} 0$ remains exact for any projective resolution $\class{P} \xrightarrow{} C \xrightarrow{} 0$. This proves $\Tor_n(C, B) = 0$ for $n \geq 1$. It now follows from~\cite[Dual of Theorem~XX.6.2]{lang} that the usual left derived functors $\Tor_n(A, B) \,  (n \geq 0)$ may be computed, on any object $A$, via any resolution of $A$ by trivially cofibrant objects. So since $\Tor_n^{\textnormal{Ho}(\mathfrak{M})}(A,B)$ can be computed by applying $H_n[- \tensor QB]$ to any canonical resolution of $A$, and since any canonical resolution of $A$ will provide a resolution of $QA$ by trivially cofibrant objects, we conclude $\Tor_n^{\textnormal{Ho}(\mathfrak{M})}(A,B) \cong  \Tor_n(QA,QB)$ for $n \geq 1$.
\end{proof}


\section{Canonical resolutions are cofibrant replacements}\label{sec-becker}

Hanno Becker shows in~\cite{becker-realization-functor} that if $\mathfrak{M} = (\class{Q},\class{W},\class{R})$ is a cofibrantly generated abelian model structure on a Grothendieck category $\cat{A}$, then $\mathfrak{M}$ will lift to several Quillen equivalent model structures on $\cha{A}$. We don't know whether or not these constructions can be done universally, for \emph{any} hereditary abelian model structure on \emph{any} abelian category $\cat{A}$. However the work in this paper certainly relates to the Becker's model structures and this section is aimed to make explicit how our canonical resolutions give rise to cofibrant replacements in Becker's model structures. 

Unless stated otherwise $\mathfrak{M} = (\class{Q},\class{W},\class{R})$ once again denotes any hereditary abelian model structure on any abelian (or exact) category $\cat{A}$. Throughout this section, we will let $\class{C}$ denote the class of all exact (acyclic) chain complexes with all cycles cofibrant but with all components trivially cofibrant. In the author's notation from~\cite{gillespie} and~\cite{gillespie-degreewise-model-strucs}, it means $\class{C} := \tilclass{Q} \cap dw\widetilde{\class{Q}_\class{W}}$. On the other hand, we let $\class{F}$ denote the class of all exact chain complexes with all cycles fibrant but with all components trivially fibrant. That is, $\class{F} := \tilclass{R} \cap dw\widetilde{\class{R}_\class{W}}$.

\subsection{Special $\class{C}$-precovers of sphere complexes} 
Let $A$ be any object of $\cat{A}$, and let $W_A$ be any $\mathfrak{M}$-resolution. So  
$$W_A \equiv (W_{n\geq0} \xrightarrow{\epsilon_A} A \xrightarrow{\eta_A} W_{n\leq-1}),$$
where $W_{n\geq0} \xrightarrow{\epsilon_A} A$ is a resolution constructed by using enough projectives of $(\class{Q}_{\class{W}},\class{R})$, and $A \xrightarrow{\eta_A} W_{n\leq-1}$ is a coresolution constructed by using enough injectives of $(\class{Q},\class{R}_{\class{W}})$. Writing $d_1 = e_1\eta_0$ where $e_1 : W_1 \xrightarrow{} \ker{\epsilon_A}$ is an epimorphism and $\eta_0 : \ker{\epsilon_A} \xrightarrow{} W_0$ is a monomorphism, we get a short exact sequence of chain complexes: 
$$\begin{tikzcd}
\vdots \arrow[d] & \vdots \arrow[d] \\
W_2 \arrow[r, equal] \arrow[d, "d_2" ']& W_2   \arrow[d, "d_2"] \\
W_1 \arrow[r, equal] \arrow[d, two heads, "e_1" '] & W_1  \arrow[r] \arrow[d, "d_1"] \arrow[ld, two heads, "e_1" ']  & 0 \arrow[d] \\
\ker{\epsilon_A} \arrow[r, tail, "\eta_0"]  \arrow[d, "0" '] & W_0  \arrow[r, ,two heads, "\epsilon_A"] \arrow[d, "d_0"] & A \arrow[d] \arrow[ld, tail, "\eta_A"] \\
W_{-1}  \arrow[r, equal]  \arrow[d, "d_{-1}" ']  &     W_{-1}    \arrow[r]  \arrow[d, "d_{-1}"]  &      0 \arrow[d]   \\
W_{-2}  \arrow[r, equal]   \arrow[d] &     W_{-2}    \arrow[r] \arrow[d] &      0    \\
\vdots & \vdots \\
\end{tikzcd}$$
Recall that the complex on the right is often denoted $S^0(A)$, and called the \emph{sphere on $A$}. With this notation we will denote the above short exact sequence by 
\begin{equation}\label{eq-ses}
0 \xrightarrow{} K_A \xrightarrow{} W_A \xrightarrow{\epsilon_A}  S^0(A) \xrightarrow{} 0.
\end{equation}

\begin{lemma}\label{lemma-ses}
Let $X \equiv (X_{n\geq0} \xrightarrow{\epsilon_B} B \xrightarrow{\eta_B} X_{n\leq-1})$ be a full augmentation of an object $B$ such that each $X_n$ is trivially cofibrant for $n\geq 0$, and  $B \xrightarrow{\eta_B} X_{n\leq-1}$ is a coresolution with $\cok{\eta_B}$ and each $\cok{d_n}$ cofibrant for $n\leq-1$. Then in  the short exact sequence of (\ref{eq-ses}), any chain map $\{f_n\} : X \xrightarrow{} K_A$ must be null homotopic. 
\end{lemma}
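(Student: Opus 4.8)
The plan is to recognise $K_A$ itself as a full augmentation of the zero object and then to quote Lemma~\ref{lemma-full-augmentation-homotopies}. The key observation is that the differential $K_0 \to K_{-1}$ of $K_A$ is the restriction of $d_0 = \eta_A\epsilon_A$ to $\ker\epsilon_A$, and $\epsilon_A$ vanishes on $\ker\epsilon_A$, so this differential is $0$. Hence $K_A$ is exactly the full augmentation $\bigl((K_A)_{n\geq0}\xrightarrow{0}0\xrightarrow{0}(K_A)_{n\leq-1}\bigr)$ of $0$. Its positive part $\cdots\to W_2\xrightarrow{d_2}W_1\xrightarrow{e_1}\ker\epsilon_A\to0$ is a \emph{resolution} of $0$: it is exact, because $W_{n\geq0}\xrightarrow{\epsilon_A}A$ is a resolution and $e_1$ is epic, and each of its kernels --- namely $\ker\epsilon_A$ together with $\ker d_n$ for $n\geq1$ --- lies in $\class{R}$, since the resolution part of $W_A$ is built using enough projectives of $(\class{Q}_{\class{W}},\class{R})$. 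Its negative part $0\to W_{-1}\xrightarrow{d_{-1}}W_{-2}\to\cdots$ is a co-augmentation of $0$ all of whose components $W_n$, $n\leq-1$, are trivially fibrant. On the other side, $X$ is, by hypothesis, a full augmentation of $B$ whose positive components are trivially cofibrant and whose negative part is a coresolution with all cokernels cofibrant.

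I would then apply Lemma~\ref{lemma-full-augmentation-homotopies} with $\class{X}=\class{Q}_{\class{W}}$ and $\class{Y}=\class{R}_{\class{W}}$, letting $X$ play the role of ``$X_A$'' and $K_A$ that of ``$Y_B$''. Since $\rightperp{\class{Q}_{\class{W}}}=\class{R}$ and $\leftperp{\class{R}_{\class{W}}}=\class{Q}$ by the two defining cotorsion pairs of $\mathfrak{M}$, the hypotheses of that lemma are precisely the facts recorded above: the positive components of $X$ lie in $\class{X}$; the cokernels of the coresolution part of $X$ lie in $\leftperp{\class{Y}}=\class{Q}$; the negative components of $K_A$ lie in $\class{Y}$; and the kernels of the resolution part of $K_A$ lie in $\rightperp{\class{X}}=\class{R}$. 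The lemma then gives: since the (unique) morphism $0\colon B\to0$ satisfies $0\sim^{\omega}0$, any chain map $X\to K_A$ extending $0\colon B\to0$ is chain homotopic to any extension of $0$, hence to the zero chain map, i.e.\ is null homotopic.

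Finally I would check that an arbitrary chain map $\{f_n\}\colon X\to K_A$ does extend $0\colon B\to0$ in the sense required by Lemma~\ref{lemma-full-augmentation-homotopies}: the only nonautomatic condition is $f_{-1}\eta_B=0$, and this follows from the chain-map identity in degree $0$, which --- because the $K_A$-differential out of $K_0=\ker\epsilon_A$ is $0$ --- reads $0=f_{-1}\circ d^X_0=f_{-1}\eta_B\epsilon_B$, whence $f_{-1}\eta_B=0$ since $\epsilon_B$ is epic. Thus $\{f_n\}$ is null homotopic. The one genuinely fiddly point is the bookkeeping of the second paragraph --- matching $X$ and $K_A$ against the (deliberately asymmetric) hypotheses of Lemma~\ref{lemma-full-augmentation-homotopies} and verifying the exactness of the truncated positive part of $K_A$; once $K_A$ has been recognised as a full augmentation of the zero object, the conclusion is essentially a special case of that earlier homotopy lemma.
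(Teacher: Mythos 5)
Your proposal follows the paper's own route exactly: the paper likewise observes that $K_A$ is a full augmentation of $0$ whose nonnegative part is a resolution of $0$ with fibrant cycles and whose negative part has trivially fibrant components, and then applies Lemma~\ref{lemma-full-augmentation-homotopies} with $\class{X}=\class{Q}_{\class{W}}$ and $\class{Y}=\class{R}_{\class{W}}$, regarding any chain map $X \to K_A$ as an extension of the zero morphism $B \to 0$. Your verification of the hypotheses of that lemma is correct and matches the paper's.

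The one point to flag is your last step. The paper simply asserts that any chain map $X \to K_A$ is an extension of $B \to 0$; you correctly isolate the only nontrivial content of this, namely $f_{-1}\eta_B = 0$, and you deduce it from $f_{-1}\eta_B\epsilon_B = 0$ by cancelling $\epsilon_B$ on the right. But the hypotheses of Lemma~\ref{lemma-ses} only make $X_{\geq 0}\xrightarrow{\epsilon_B} B$ an augmentation, not a resolution, so $\epsilon_B$ need not be an epimorphism, and the cancellation is not justified at the stated level of generality. Note the condition is not vacuous: since $d^{K_A}_0 = 0$, the homotopy identity in degree $-1$ reads $f_{-1} = s_{-1}d^X_{-1}$, which forces $f_{-1}\eta_B = 0$ for any null homotopic map, so something really must be said here, and taking $X_{\geq 0} = 0$ shows it is not automatic from the chain map equations alone. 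In the two places the lemma is actually invoked, Proposition~\ref{prop-ses}(1) and (2), the map $\epsilon_B$ \emph{is} an epimorphism ($W_B$ is an $\mathfrak{M}$-resolution, resp.\ $C \in \class{C}$ is exact with $B = Z_{-1}C$), so your argument covers every application; in this respect your write-up is more careful than the paper's at precisely the point the paper leaves implicit. Just be aware that the epimorphism property is an extra assumption you are importing (or should verify in each application) rather than one contained in the hypotheses as written.
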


\begin{proof}
Looking at the construction of $K_A$ we note that $$K_A \equiv (K_{n\geq0} \xrightarrow{} 0 \xrightarrow{} K_{n\leq-1})$$ is a full augmentation of $0$,
where $K_{n\geq0} \xrightarrow{} 0$ is the resolution 
$$\cdots \xrightarrow{} W_2 \xrightarrow{d_2} W_1 \xrightarrow{e_1} \ker{\epsilon_A}  \xrightarrow{0} 0 \xrightarrow{} 0$$ which has every kernel (cycle) a fibrant object,  and, $0 \xrightarrow{} K_{n\leq-1}$ is the co-augmentation $$0 \xrightarrow{} 0 \xrightarrow{0} W_{-1} \xrightarrow{d_{-1}} W_{-2} \xrightarrow{} \cdots$$ which has all components trivially fibrant. 

Now any chain map $X \xrightarrow{} K_A$ is an extension of the trivial map $B \xrightarrow{} 0$. So such a chain map must be null homotopic by Lemma~\ref{lemma-full-augmentation-homotopies} by taking $\class{X} = \class{Q}_{\class{W}}$ and $\class{Y} = \class{R}_{\class{W}}$ in the statement of that lemma. 
\end{proof}

\begin{proposition}\label{prop-ses}
The short exact sequence (\ref{eq-ses}) satisfies the following: 
\begin{enumerate}
\item  Any chain map $W_B \xrightarrow{} K_A$ is null homotopic whenever $W_B$ is an $\mathfrak{M}$-resolution of some object $B$.  
\item  Any chain map $C \xrightarrow{} K_A$ is null homotopic whenever $C \in \class{C}$. 
\item $\Ext^1_{\cha{A}}(C,K_A) = 0$ for any $C \in \class{C}$. That is, $K_A \in \rightperp{\class{C}}$.
\item If $A$ is cofibrant, then the short exact sequence in (\ref{eq-ses}) is a special $\class{C}$-precover of $S^0(A)$.
\end{enumerate}
\end{proposition}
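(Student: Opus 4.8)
The plan is to assemble the four parts in order, using each to feed the next. For part (1), a chain map $W_B \to K_A$ is in particular an extension of the zero map $B \to 0$ along the full augmentations, so Lemma~\ref{lemma-full-augmentation-homotopies} (with $\class{X} = \class{Q}_{\class{W}}$, $\class{Y} = \class{R}_{\class{W}}$) applies exactly as in Lemma~\ref{lemma-ses}: the hypotheses of that lemma on $K_A$ were already verified there (the nonnegative part of $K_A$ is a resolution of $0$ with fibrant cycles, the negative part a co-augmentation with trivially fibrant components), and an $\mathfrak{M}$-resolution $W_B$ satisfies the source-side hypotheses. Hence any such chain map is null homotopic. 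Part (1) is essentially a restatement of Lemma~\ref{lemma-ses} with $X = W_B$, so there is nothing new here.

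For part (2), given $C \in \class{C}$, I want to reduce to part (1) by observing that an arbitrary exact complex with cofibrant cycles and trivially cofibrant components is itself ``assembled from'' the kind of full augmentation that Lemma~\ref{lemma-full-augmentation-homotopies} governs. Concretely, splitting $C$ at any chosen cycle $Z_{-1}C =: B$ exhibits $C$ as a full augmentation $C_{n\geq 0} \xrightarrow{} B \xrightarrow{} C_{n\leq -1}$: the nonnegative half is a resolution of $B$ with all components trivially cofibrant, and the negative half is a coresolution of $B$ with all cokernels (the cycles $Z_nC$ for $n \leq -1$) cofibrant. That is precisely the source-side hypothesis needed to run Lemma~\ref{lemma-full-augmentation-homotopies} against $K_A$, so again any chain map $C \to K_A$ is null homotopic. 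The only subtlety is checking that the source-side hypotheses of Lemma~\ref{lemma-full-augmentation-homotopies} (equivalently of Lemma~\ref{lemma-ses}) only require ``trivially cofibrant components and cofibrant cycles,'' which is exactly what membership in $\class{C}$ gives — so this step is routine once the bookkeeping is set up.

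For part (3), I use the standard identification of $\Ext^1_{\cha{A}}$ with chain maps modulo null-homotopic ones, shifted appropriately. The cleanest route: an element of $\Ext^1_{\cha{A}}(C, K_A)$ is represented by a short exact sequence $0 \to K_A \to E \to C \to 0$ of complexes; since $C$ is exact (acyclic), this sequence being split in $\cha{A}$ is equivalent to the existence of a chain map $C \to E$ splitting the surjection, and the obstruction to such a splitting living in degree cohomology is measured by a chain map $C \to K_A[1]$ (or equivalently $C[-1] \to K_A$) up to homotopy. But $C[\pm 1] \in \class{C}$ as well since $\class{C}$ is closed under shifts, so part (2) forces that obstruction class to vanish. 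Alternatively, and perhaps more transparently, one invokes the dimension-shift/degreewise-split argument: $K_A$ has trivially fibrant components in negative degrees and the positive part is built from the $W_n$, so one can compute $\Ext^1_{\cha{A}}(C, K_A)$ via the hom-complex $\homcomplex(C, K_A)$ whose cohomology in degree $0$ vanishes by part (2) applied to $C$ and all its shifts. I expect this to be the step requiring the most care, because pinning down the precise isomorphism $\Ext^1_{\cha{A}}(C, K_A) \cong H^{1}\homcomplex(C,K_A)$ (or $H^0$ of a shift) in an exact category, without assuming enough projectives/injectives in $\cha{A}$, needs the Yoneda description and a short argument that every extension of $C$ by $K_A$ is degreewise split — which holds because the components $C_n$ are trivially cofibrant and the relevant $\Ext^1_{\cat{A}}$ groups against the components of $K_A$ vanish by the hereditary hypothesis.

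For part (4), assume $A$ is cofibrant. Then by Lemma~\ref{lemma-can-resolutions-cofibrant} the cycles of $W_A$ are all cofibrant and the components all trivially cofibrant, so $W_A \in \class{C}$; likewise $K_A \in \class{C}^{\perp}$ by part (3). Thus the short exact sequence \eqref{eq-ses}
$$0 \xrightarrow{} K_A \xrightarrow{} W_A \xrightarrow{\epsilon_A} S^0(A) \xrightarrow{} 0$$
has middle term in $\class{C}$ and kernel in $\rightperp{\class{C}}$, which is by definition a special $\class{C}$-precover of $S^0(A)$. The only thing to double-check is that $W_A \in \class{C}$ genuinely requires cofibrancy of $A$ (it does: the component $W_{-1}$ and the cycles need not be cofibrant otherwise, as noted in Lemma~\ref{lemma-can-resolutions-cofibrant}), so the hypothesis is used exactly once and exactly where expected. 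The main obstacle overall is part (3); parts (1), (2), (4) are direct consequences of the homotopy lemmas and the cofibrancy characterization.
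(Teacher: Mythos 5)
Your treatment of (1), (2) and (4) is exactly the paper's: (1) and (2) are Lemma~\ref{lemma-ses} (i.e.\ Lemma~\ref{lemma-full-augmentation-homotopies} with $\class{X}=\class{Q}_{\class{W}}$, $\class{Y}=\class{R}_{\class{W}}$) applied with $X=W_B$, respectively with $X=C$ and $B=Z_{-1}C$, and (4) combines (3) with the fact, from Lemma~\ref{lemma-can-resolutions-cofibrant} and the hereditary hypothesis, that $W_A\in\class{C}$ when $A$ is cofibrant, so that the sequence (\ref{eq-ses}) has its middle term in $\class{C}$ and its kernel in $\rightperp{\class{C}}$. So far there is nothing to add.

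The gap is in (3), at exactly the step you yourself flagged as delicate. Your plan --- show that every extension $0\to K_A\to Z\to C\to 0$ with $C\in\class{C}$ is degreewise split, then identify degreewise split extensions with homotopy classes of chain maps as in Example~\ref{example-K(A)} (so that $\Ext^1_{\cha{A}}(C,K_A)=\Ext^1_{dw}(C,K_A)$ vanishes by part (2) and shift-invariance of $\class{C}$) --- is also the paper's plan; note your first route via an ``obstruction class'' $C\to K_A[1]$ does not bypass this, since $\Ext^1_{\cha{A}}$ is computed by homotopy classes of maps only \emph{after} one knows the extensions in question are degreewise split. But your justification of the degreewise splitting, namely that ``the relevant $\Ext^1_{\cat{A}}$ groups against the components of $K_A$ vanish by the hereditary hypothesis,'' is not correct as stated. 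What is needed is $\Ext^1_{\cat{A}}(C_n,(K_A)_n)=0$ for every $n$, with $C_n\in\class{Q}_{\class{W}}$. In degrees $n\le 0$ this has nothing to do with heredity: it is the cotorsion-pair orthogonality $\Ext^1_{\cat{A}}(\class{Q}_{\class{W}},\class{R})=0$, using that $(K_A)_0=\ker{\epsilon_A}\in\class{R}$ and $(K_A)_n=W_n\in\class{R}_{\class{W}}$ for $n\le-1$; this fibrancy of the components of $K_A$ is what the paper's proof invokes. In degrees $n\ge 1$, however, $(K_A)_n=W_n$ is a component of the resolution built from enough projectives of $(\class{Q}_{\class{W}},\class{R})$, so it lies only in $\class{Q}_{\class{W}}$, and neither the hereditary hypothesis nor any orthogonality you cite gives $\Ext^1_{\cat{A}}(C_n,W_n)=0$ for two trivially cofibrant objects. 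So your reduction to degreewise split extensions is unjustified precisely in positive degrees; to complete the argument you must either supply a reason why those degreewise extension classes vanish (the paper's wording, asserting all components of $K_A$ are fibrant, is what carries this step there, and in positive degrees this too needs more than what you have written) or handle the positive-degree part of $K_A$ --- the summand $\cdots\to W_2\to W_1\to\ker{\epsilon_A}\to 0$, which splits off of $K_A$ since the differential out of degree $0$ is zero --- by a separate argument rather than by componentwise splitting.
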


\begin{proof}
Statement (1) follows from Lemma~\ref{lemma-ses} by taking $X = W_B$.  Statement (2) also follows from Lemma~\ref{lemma-ses} by taking $X = C$ and $B = Z_{-1}C$. 

For (3), since $\ker{\epsilon_A} \in \class{R}$, we see that in fact all the components of $K_A$ are fibrant. Thus the Yoneda Ext group $\Ext^1_{\cha{A}}(C,K_A)$ coincides with the subgroup $\Ext^1_{dw}(C,K_A)$ of all degreewise split short exact sequences 
$0 \xrightarrow{} K_A \xrightarrow{} Z \xrightarrow{} C \xrightarrow{} 0$. But then from Example~\ref{example-K(A)}, it follows from (2) that any such short exact sequence truly splits in the category $\cha{A}$. That is, $\Ext^1_{\cha{A}}(C,K_A) = 0$.

For (4), note that if $A$ is cofibrant then the hereditary condition guarantees that $W_A \in \class{C}$.    So in this case, (3) tells us that $\epsilon_A : W_A \twoheadrightarrow S^0(A)$ is a special $\class{C}$-precover in the category $\cha{A}$. 
\end{proof}

\begin{theorem}\label{them-C-precovers}
For any object $A \in \cat{A}$, the complex $S^n(A)$ has a special $\class{C}$-precover. More specifically, the composition $W_{QA} \xrightarrow{\epsilon_{QA}} S^n(QA) \xrightarrow{S^n(p_A)} S^n(A)$ is a special $\class{C}$-precover whenever $QA \xrightarrow{p_A} A$ is a cofibrant replacement in $\mathfrak{M}$.
\end{theorem}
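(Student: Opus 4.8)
The plan is to reduce the statement for $S^n(A)$ to the case $n=0$ already handled in Proposition~\ref{prop-ses}, and then to handle the passage from a cofibrant object to an arbitrary object $A$ via the cofibrant replacement sequence. First I would observe that the shift (suspension) functor on $\cha{A}$ sends $S^0(X)$ to $S^n(X)$ (up to the usual sign conventions), and that it is an exact autoequivalence preserving the class $\class{C}$, since $\class{C}$ is defined by conditions on cycles and components that are stable under shifting. Hence a special $\class{C}$-precover of $S^0(X)$ shifts to a special $\class{C}$-precover of $S^n(X)$; so it suffices to produce a special $\class{C}$-precover of $S^0(A)$ for arbitrary $A$, and then the map $W_{QA} \xrightarrow{\epsilon_{QA}} S^0(QA)$ shifts appropriately.

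Next I would use the cofibrant replacement sequence $0 \xrightarrow{} R \xrightarrow{} QA \xrightarrow{p_A} A \xrightarrow{} 0$ with $QA \in \class{Q}$ and $R \in \class{R}_{\class{W}}$. Applying the sphere functor $S^0(-)$ (which is exact) gives a short exact sequence of complexes $0 \xrightarrow{} S^0(R) \xrightarrow{} S^0(QA) \xrightarrow{S^0(p_A)} S^0(A) \xrightarrow{} 0$. By Proposition~\ref{prop-ses}(4), since $QA$ is cofibrant, the map $\epsilon_{QA} : W_{QA} \twoheadrightarrow S^0(QA)$ is a special $\class{C}$-precover, with kernel $K_{QA} \in \rightperp{\class{C}}$. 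I would then splice: form the composite $W_{QA} \xrightarrow{\epsilon_{QA}} S^0(QA) \xrightarrow{S^0(p_A)} S^0(A)$, which is an epimorphism (composition of epimorphisms), and compute its kernel $K$ as an extension $0 \xrightarrow{} K_{QA} \xrightarrow{} K \xrightarrow{} S^0(R) \xrightarrow{} 0$. To conclude that this composite is a special $\class{C}$-precover of $S^0(A)$, I must show $K \in \rightperp{\class{C}}$, i.e. $\Ext^1_{\cha{A}}(C,K) = 0$ for all $C \in \class{C}$.

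The key computation is therefore showing $\Ext^1_{\cha{A}}(C, S^0(R)) = 0$ for $C \in \class{C}$ and $R \in \class{R}_{\class{W}}$; then the extension sequence for $K$, together with $K_{QA} \in \rightperp{\class{C}}$ and the long exact $\Ext$-sequence, forces $K \in \rightperp{\class{C}}$, and closure of $\rightperp{\class{C}}$ under such extensions gives the result. To see $\Ext^1_{\cha{A}}(C, S^0(R)) = 0$: since $R$ is trivially fibrant, $S^0(R)$ has all components fibrant, so (as in the proof of Proposition~\ref{prop-ses}(3)) $\Ext^1_{\cha{A}}(C, S^0(R))$ coincides with $\Ext^1_{dw}(C, S^0(R))$, the degreewise-split extensions; via the $\Hom$-$\Ext$ identities recalled in Example~\ref{example-K(A)} this is computed by $K(\cat{A})$-morphisms, and a null-homotopy argument — any chain map from an exact complex $C$ with trivially cofibrant components into the complex $S^0(R)$ concentrated in degree $0$ with $R \in \class{R}_{\class{W}}$ is null homotopic, since $C \xrightarrow{} S^0(R)$ amounts to a map $Z_0 C \xrightarrow{} R$ killing the relevant cycle, and $\Ext^1_{\cat{A}}(Z_{-1}C, R) = 0$ because $Z_{-1}C$ is cofibrant and $R$ trivially fibrant — shows the extension splits. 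The main obstacle is getting this last null-homotopy/splitting argument exactly right: identifying precisely what data a chain map $C \xrightarrow{} S^0(R)$ consists of, and invoking the correct orthogonality ($Z_{-1}C \in \class{Q}$ versus $R \in \class{R}_{\class{W}}$) to contract it; everything else is formal bookkeeping with exact sequences and the shift functor.
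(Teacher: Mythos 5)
Your proposal is correct and follows essentially the same route as the paper's proof: identify the kernel of the composite as an extension of $K_{QA}$ by the sphere on the trivially fibrant kernel of $p_A$ (the paper does this via the snake/$3\times 3$ lemma), show that sphere lies in $\rightperp{\class{C}}$ because extensions by it are degreewise split and chain maps into it from complexes in $\class{C}$ are null homotopic, and conclude by closure of $\rightperp{\class{C}}$ under extensions together with Proposition~\ref{prop-ses}. The only cosmetic differences are your explicit shift-functor reduction from $S^n$ to $S^0$ and the (harmless, self-acknowledged) cycle-index bookkeeping in the null-homotopy step, where the paper instead just cites exactness of the Hom-complex $\Hom_{\cat{A}}(C,S^n(R_A))$.
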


\begin{proof}
The composition is certainly an epimorphism, and we have $W_{QA} \in \class{C}$. So it is left to show that the kernel of the composition is in $\rightperp{\class{C}}$. Note that the diagram below is commutative and has exact rows.
$$\begin{CD}
     0   @>>> K_{QA}    @>>>  W_{QA}    @>\epsilon_{QA}>>  S^n(QA)    @>>>    0 \\
    @.        @VVV     @VS^n(p_A)\,\circ\,\epsilon_{QA}VV      @VVS^n(p_A)V   @.\\
    0   @>>>  0   @>>>   S^n(A)   @=  S^n(A)    @>>> 0   \\
\end{CD}$$ 
It follows from the snake lemma that the kernel of the composition $S^n(p_A)\,\circ\,\epsilon_{QA}$ is an extension of $K_{QA}$ and $\ker{S^n(p_A)}$. (For exact categories we  can apply the $3\times 3$-Lemma~\cite[Cor.3.6]{buhler-exact categories}.) Since $\rightperp{\class{C}}$ is closed under extensions and since $K_{QA} \in \rightperp{\class{C}}$ by Proposition~\ref{prop-ses}(4), it remains to show that $\ker{S^n(p_A)} \in \rightperp{\class{C}}$. But recall that a cofibrant replacement $p_A$ arises from a short exact sequence 
$$0 \xrightarrow{} R_A \xrightarrow{i_A} QA \xrightarrow{p_A} A \xrightarrow{} 0$$ with $QA \in \class{Q}$ and $R_A \in \class{R}_{\class{W}}$, so $R_A$  is trivially fibrant. Now for any $C \in \class{C}$, we see that $\Hom_{\cat{A}}(C,S^n(R_A))$ remains exact since the cycles of $C$ are all cofibrant. It follows that for all $C \in \class{C}$, any chain map $C \xrightarrow{} S^n(R_A)$ is null homotopic. But also $\Ext^1(C,S^n(R_A))$  consists of all \emph{degreewise split} short exact sequences $$0 \xrightarrow{} S^n(R_A) \xrightarrow{} Z \xrightarrow{} C \xrightarrow{} 0,$$ and it follows from this that $\Ext^1(C,S^n(R_A)) = 0$. Again we are using the statements in Example~\ref{example-K(A)}.
\end{proof}

Now assume that $\cat{A}$ is a Grothendieck category and that the model structure $\mathfrak{M} = (\class{Q}, \class{W}, \class{R})$ is cofibrantly generated. Becker shows in~\cite{becker-realization-functor} that $\mathfrak{M}$ lifts to a cofibrantly generated abelian model structure $(\class{C}, \class{V}, \dgclass{R})$, on $\cha{A}$. So the trivially cofibrant chain complexes are those in $\class{C} \cap \class{V} = \tilclass{Q_{\class{W}}}$, the class of all exact complexes with trivially cofibrant cycles. The class of all fibrant complexes is $\dgclass{R} := \rightperp{\tilclass{Q_{\class{W}}}}$. The next result tells us that cofibrant replacements in this model structure can be used to compute $\Ext^n_{\textnormal{Ho}(\mathfrak{M})}(A,B)$. 
Dual results hold for special $\class{F}$-preenvelopes and fibrant approximations.

\begin{corollary}\label{cor-Becker}
Assume that $\cat{A}$ is a Grothendieck category and that $\mathfrak{M} = (\class{Q}, \class{W}, \class{R})$ is cofibrantly generated. Then any canonical resolution of $A \in \cat{A}$ provides a cofibrant replacement of $S^0(A)$ in the model structure $(\class{C}, \class{V}, \dgclass{R})$. Moreover,
$$\Ext^n_{\textnormal{Ho}(\mathfrak{M})}(A,B) \cong H^n[\Hom_\cat{A}(C,RB)]$$
where $C$ is any cofibrant replacement of $S^0(A)$ in the model structure $(\class{C}, \class{V}, \dgclass{R})$, and $RB$ is any fibrant replacement of $B$ in $\mathfrak{M}$. 
\end{corollary}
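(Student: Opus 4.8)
The plan is to combine two pieces that are already in place: (i) Theorem~\ref{them-C-precovers}, which identifies canonical resolutions with special $\class{C}$-precovers of sphere complexes, and (ii) the general fact that a special precover for the left cotorsion pair of an abelian model structure is exactly a cofibrant replacement. First I would invoke Becker's result, cited just before the corollary, that $\mathfrak{M}$ lifts to the cofibrantly generated abelian model structure $(\class{C},\class{V},\dgclass{R})$ on $\cha{A}$; here $(\class{C},\class{C}\cap\class{V})=(\class{C},\tilclass{R_\class{W}})$ is the left cotorsion pair. A cofibrant replacement of $S^0(A)$ in this model structure is, by definition (see the discussion of cofibrant replacements in Section~\ref{sec-prelims}), a short exact sequence $0\to Z\to C\to S^0(A)\to 0$ with $C\in\class{C}$ and $Z\in\tilclass{R_\class{W}}$, i.e.\ a special $\class{C}$-precover whose kernel lies in the class of exact complexes with trivially fibrant cycles. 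Theorem~\ref{them-C-precovers} already produces a special $\class{C}$-precover of $S^0(A)$ from any canonical resolution $W_{QA}$ of $A$, namely the composition $W_{QA}\xrightarrow{\epsilon_{QA}} S^0(QA)\xrightarrow{S^0(p_A)} S^0(A)$; so the only thing to check for the first assertion is that the \emph{kernel} of this composition lies in $\tilclass{R_\class{W}}$, not merely in $\rightperp{\class{C}}$. From the proof of Theorem~\ref{them-C-precovers} the kernel is an extension of $K_{QA}$ by $\ker{S^0(p_A)}=S^0(R_A)$ with $R_A\in\class{R}_\class{W}$; inspecting the construction of $K_{QA}$ in~(\ref{eq-ses}) shows its cycles are $\ker{\epsilon_{QA}}\in\class{R}$ together with the (trivial) cycles $Z_nW_{QA}$ for $n\le -1$, while all its components are fibrant. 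Since $QA$ is cofibrant, Lemma~\ref{lemma-can-resolutions-cofibrant}(4) forces all components of $W_{QA}$ to be trivially cofibrant, hence all components of $K_{QA}$ are in $\class{Q}_\class{W}\cap\class{R}\subseteq\class{W}$; being exact with trivially fibrant components and fibrant cycles, $K_{QA}\in\tilclass{R_\class{W}}$ (the thickness of $\class{W}$ and the hereditary condition give trivially fibrant cycles). An extension of $S^0(R_A)$ (also in $\tilclass{R_\class{W}}$) by $K_{QA}$ therefore lies in $\tilclass{R_\class{W}}$, since that class is closed under extensions. This establishes that $W_{QA}$, together with the map to $S^0(A)$, is a cofibrant replacement of $S^0(A)$ in $(\class{C},\class{V},\dgclass{R})$.

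For the displayed isomorphism, I would argue that $\Ext^n_{\textnormal{Ho}(\mathfrak{M})}(A,B)$ is already computed by applying $H^n[\Hom_{\cat{A}}(-,RB)]$ to a canonical resolution $W_{QA}$ of $A$: this is precisely Theorem~\ref{them-Ho-Ext}(3), which gives $\Ext^n_{\textnormal{Ho}(\mathfrak{M})}(A,B)\cong\Ext^n_{\mathfrak{M}}(QA,RB)=\ell\Ext^n_{\mathfrak{M}}(QA,RB)=H^n[\Hom_{\cat{A}}(W_{QA},RB)]$. Since any two cofibrant replacements of $S^0(A)$ in Becker's model structure are chain homotopy equivalent (standard model-category uniqueness of cofibrant replacements, or directly: $\class{C}$-precovers of a fixed object are unique up to chain homotopy by the Comparison Theorem~\ref{them-comparison-theorem} applied with $\class{X}=\class{Q}_\class{W}$, $\class{Y}=\class{R}_\class{W}$), and $W_{QA}$ is one such, any cofibrant replacement $C$ satisfies $[C]\cong[W_{QA}]$ in $K(\cat{A})$. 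Applying the contravariant additive functor $\Hom_{\cat{A}}(-,RB)$ and then $H^n$ yields a canonical isomorphism $H^n[\Hom_{\cat{A}}(C,RB)]\cong H^n[\Hom_{\cat{A}}(W_{QA},RB)]\cong\Ext^n_{\textnormal{Ho}(\mathfrak{M})}(A,B)$, as desired.

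The main obstacle I anticipate is the bookkeeping in the first paragraph: verifying that the kernel of $S^0(p_A)\circ\epsilon_{QA}$ lands in the smaller class $\tilclass{R_\class{W}}$ rather than just in $\rightperp{\class{C}}$, which requires carefully reading off the cycles and components of $K_{QA}$ from the diagram in~(\ref{eq-ses}) and using $A=QA$ cofibrant in Lemma~\ref{lemma-can-resolutions-cofibrant}(4) together with thickness of $\class{W}$. A secondary point to be careful about is that for general (non-cofibrant) $A$ the complex $W_{QA}$ really is a cofibrant replacement of $S^0(A)$ and not of $S^0(QA)$ — but this is exactly what Theorem~\ref{them-C-precovers} was set up to deliver, via the composite with $S^0(p_A)$. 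The dual statement for $\class{F}$-preenvelopes and fibrant approximations follows by the evident dualization, which I would state but not belabor.
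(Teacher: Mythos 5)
There is a genuine gap, and it begins with your identification of the trivially fibrant objects of Becker's model structure. For the Hovey triple $(\class{C},\class{V},\dgclass{R})$ on $\cha{A}$ the left cotorsion pair is $(\class{C},\class{V}\cap\dgclass{R})=(\class{C},\rightperp{\class{C}})$, so a cofibrant replacement of $S^0(A)$ is a short exact sequence $0\to Z\to C\to S^0(A)\to 0$ with $C\in\class{C}$ and $Z\in\rightperp{\class{C}}$ --- not with $Z\in\tilclass{R_{\class{W}}}$. The class $\rightperp{\class{C}}$ is strictly larger than $\tilclass{R_{\class{W}}}$: it contains non-exact complexes such as $S^0(R)$ with $R\in\class{R}_{\class{W}}$ and the complexes $K_{QA}$ of (\ref{eq-ses}) (see Proposition~\ref{prop-ses}). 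Consequently the stronger membership you set out to verify is false: $K_{QA}$ is not exact, since its cycles in degree $-1$ are $Z_{-1}W_{QA}\cong QA$ while its boundaries there are $0$ (the differential out of degree $0$ of $K_{QA}$ is zero); likewise the kernel $J$ of $W_{QA}\to S^0(A)$ has $Z_{-1}J\cong QA$ but boundaries only $\cong R_A$, so $J$ fails exactness in degree $-1$ whenever $A\neq 0$ and cannot lie in $\tilclass{R_{\class{W}}}$. What is true, and all that is needed, is $J\in\rightperp{\class{C}}$, which is exactly Theorem~\ref{them-C-precovers}; with the correct description of the trivially fibrant class the first assertion is immediate from that theorem and your first paragraph should be deleted rather than repaired.

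The second, more serious, gap is your assertion that an arbitrary cofibrant replacement $C$ of $S^0(A)$ in $(\class{C},\class{V},\dgclass{R})$ is chain homotopy equivalent to $W_{QA}$. Standard uniqueness of cofibrant replacements only yields maps $C\rightleftarrows W_{QA}$ over $S^0(A)$ whose composites differ from the identities by morphisms factoring through objects of $\class{X}:=\class{C}\cap\rightperp{\class{C}}$, i.e.\ a canonical isomorphism in $\textnormal{St}_{\class{X}}(\cha{A})$; this is a priori weaker than a chain homotopy equivalence. The substance of the paper's proof is precisely the verification that the complexes in $\class{X}$ are contractible (they are contractible complexes with components in $\omega$), which upgrades the stable isomorphism to a chain homotopy equivalence and makes $H^n[\Hom_{\cat{A}}(-,RB)]$ invariant. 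Your fallback appeal to the Comparison Theorem~\ref{them-comparison-theorem} does not close this hole: a general $C\in\class{C}$ has cofibrant cycles and trivially cofibrant components, but its nonnegative cycles need not be fibrant and its negative components need not be trivially fibrant, so $C$ is not an $\mathfrak{M}$-resolution and cannot serve as the \emph{target} in Lemma~\ref{lemma-full-augmentation-homotopies}; one gets a comparison map $C\to W_{QA}$ but no inverse up to homotopy by that route. To finish you must either prove the contractibility of the objects of $\class{X}$ (as the paper does), or show directly that $H^n[\Hom_{\cat{A}}(X,RB)]=0$ for $X\in\class{X}$, or bypass the comparison altogether by imitating the proof of Theorem~\ref{them-lExt and rExt} to identify $H^n[\Hom_{\cat{A}}(C,RB)]$ with $\textnormal{Ho}(\mathfrak{M})(\Omega^nA,B)$. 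Your final reduction to Theorem~\ref{them-Ho-Ext}(3) is fine once such an argument is in place.
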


\begin{proof}
The special $\class{C}$-precover of $S^0(A)$ from Theorem~\ref{them-C-precovers} provides a short exact sequence $0 \xrightarrow{} J  \xrightarrow{} W_{QA} \xrightarrow{}  S^0(A) \xrightarrow{} 0$. It has $W_{QA} \in \class{C}$ and $J \in \rightperp{\class{C}}$. This is precisely a cofibrant replacement of $S^0(A)$, in $(\class{C}, \class{V}, \dgclass{R})$. Given any other such cofibrant replacement sequence  $0 \xrightarrow{} K \xrightarrow{} C \xrightarrow{}  S^0(A) \xrightarrow{} 0$ we deduce from~\cite[Fundamental Lemma~3.4]{gillespie-stable-cats-cotorsion-pairs} that there is a canonical isomorphism $[f]_{\class{X}} : W_{QA} \xrightarrow{} C$ in the category $\textnormal{St}_{\class{X}}(\cha{A})$, where $\class{X} = \class{C} \cap \rightperp{\class{C}}$. This is the stable category of $\cha{A}$ modulo the equivalence relation $\sim^{\class{X}}$ defined by $f \sim^{\class{X}} g$ iff $g-f$ factors through an object of $\class{X}$. But one can verify that the complexes in $\class{X}$ are precisely the contractible complexes with components in the core $\omega = \class{Q} \cap \class{W} \cap \class{R}$. Since these complexes are contractible,  $f \sim^{\class{X}} g$ implies $f$ and $g$ are chain homotopic. It follows that a representative $f$ of the canonical isomorphism $[f]_{\class{X}} : W_{QA} \xrightarrow{} C$ must be a chain homotopy equivalence. It then follows that the induced morphisms
$$H^n[\Hom_{\class{A}}(C, RB)]\xrightarrow{H^n([f^*])} H^n[\Hom_{\class{A}}(W_{QA}, RB)]$$  are all isomorphisms. So the result now follows from Theorem~\ref{them-Ho-Ext}(3). 
\end{proof}

\subsection{Special $\class{C}$-precovers of trivial resolutions}
Again let $A \in \cat{A}$, but now consider a full trivial resolution  $W \equiv (W_{n\geq 0} \xrightarrow{\epsilon}A\xrightarrow{\eta}W_{n\leq -1})$ in the sense of Definition~\ref{def-full-trivial-resolutions}. The proof of Proposition~\ref{prop-functors-commute} constructs a short exact sequence 
\begin{equation}\label{eq-ses2}
0 \xrightarrow{} R \xrightarrow{} QW \xrightarrow{}  W \xrightarrow{} 0,
\end{equation}
where $R \in \tilclass{R_{\class{W}}}$, the class of all exact complexes with trivially fibrant cycles, and $QW \in \class{C}$. 

\begin{theorem}
The epimorphism in the short exact sequence of~(\ref{eq-ses2}) is a special $\class{C}$-precover and we have an isomorphism
$$\Ext^n_{\textnormal{Ho}(\mathfrak{M})}(A,B) \cong H^n[\Hom_\cat{A}(QW,RB)].$$ 
\end{theorem}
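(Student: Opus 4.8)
The plan is to treat the two assertions separately, reducing each to tools already developed above. From the proof of Proposition~\ref{prop-functors-commute}, the short exact sequence~(\ref{eq-ses2}) has $QW\in\class{C}$, while $R$ is obtained by splicing short exact sequences all of whose terms are trivially fibrant; in particular $R$ is an exact complex all of whose components and cycles are trivially fibrant. The epimorphism in~(\ref{eq-ses2}) is obviously onto with $QW\in\class{C}$, so the first assertion reduces to showing $R\in\rightperp{\class{C}}$, i.e. $\Ext^1_{\cha{A}}(C,R)=0$ for every $C\in\class{C}$.

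To prove that, I would first note that the components of $C$ are trivially cofibrant and those of $R$ are trivially fibrant, so $\Ext^1_{\cat{A}}(C_n,R_n)=0$ and every short exact sequence $0\to R\to Z\to C\to0$ in $\cha{A}$ is degreewise split; hence, exactly as in the proof of Theorem~\ref{them-C-precovers} and using Example~\ref{example-K(A)}, it suffices to show that every chain map from a complex in $\class{C}$ to an exact complex with trivially fibrant cycles and components is null homotopic (shifts of $C$ stay in $\class{C}$ and shifts of $R$ stay of this type). Given such a chain map $f\colon C\to R'$, I would factor the degree-zero differentials of $C$ and of $R'$ through their cycles, presenting $C$ as a full resolution of the cofibrant object $U:=Z_{-1}C$ and $R'$ as a full resolution of the trivially fibrant object $V:=Z_{-1}R'$. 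With $\class{X}=\class{Q}_{\class{W}}$, $\class{Y}=\class{R}_{\class{W}}$ and $\omega=\class{X}\cap\class{Y}$, these presentations satisfy the hypotheses of Lemma~\ref{lemma-full-augmentation-homotopies} (using the cotorsion pair identities $\leftperp{\class{R}_{\class{W}}}=\class{Q}$ and $\rightperp{\class{Q}_{\class{W}}}=\class{R}$), so $f$, being an extension of the induced map $\bar f\colon U\to V$, is null homotopic once $\bar f\sim^{\omega}0$. The only non-formal step is this last claim: any morphism from a cofibrant object $U$ to a trivially fibrant object $V$ factors through $\omega$. To see it, take a cofibrant replacement $0\to V'\to QV\to V\to0$ (so $QV\in\class{Q}_{\class{W}}$ by thickness of $\class{W}$) and then a fibrant replacement $0\to QV\to RQV\to V''\to0$ (so $RQV\in\omega$ by thickness and closure of $\class{Q}$ under extensions); lift $U\to V$ along $QV\twoheadrightarrow V$ using $\Ext^1_{\cat{A}}(U,V')=0$, extend $QV\to V$ along $QV\hookrightarrow RQV$ using $\Ext^1_{\cat{A}}(V'',V)=0$, and compose. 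This gives $\bar f\sim^{\omega}0$, hence $R\in\rightperp{\class{C}}$, so the epimorphism in~(\ref{eq-ses2}) is a special $\class{C}$-precover.

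For the cohomology isomorphism, the key observation is that $QW$ is a full trivial resolution of $QA$ whose components are trivially cofibrant and whose degree-$n$ cycle is $Q\Omega^nA$ (again from the proof of Proposition~\ref{prop-functors-commute}) — precisely the structural features of an $\mathfrak{M}$-resolution of the cofibrant object $QA$ that are used in the proof of Theorem~\ref{them-lExt and rExt}. Running that argument verbatim, with $QW$ in place of $W_A$, $QA$ in place of $A$, and the fibrant object $RB$ in place of $B$, yields $H^n[\Hom_{\cat{A}}(QW,RB)]\cong\underline{\Hom}_{\,\omega}(Q\Omega^nA,RB)\cong\textnormal{Ho}(\mathfrak{M})(Q\Omega^nA,RB)$. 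By Proposition~\ref{prop-functors-commute}(1), $Q\Omega^nA\cong\Omega^nQA$ in $\textnormal{Ho}(\mathfrak{M})$, so this equals $\textnormal{Ho}(\mathfrak{M})(\Omega^nQA,RB)\cong\Ext^n_{\mathfrak{M}}(QA,RB)$ by Theorem~\ref{them-lExt and rExt} (legitimate since $QA$ is cofibrant and $RB$ is fibrant), and finally $\Ext^n_{\mathfrak{M}}(QA,RB)\cong\Ext^n_{\textnormal{Ho}(\mathfrak{M})}(A,B)$ by Theorem~\ref{them-Ho-Ext}(3). I expect the main obstacle to be the null-homotopy argument of the second paragraph — especially the verification that the map induced on the middle cycles factors through the core $\omega$; the rest is a direct adaptation of the lemmas and comparison results already in the paper.
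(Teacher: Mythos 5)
Your proposal is correct, and for the cohomology isomorphism it follows essentially the same route as the paper: the paper's proof simply says to imitate the argument of Theorem~\ref{them-lExt and rExt} (using that $QW$ has trivially cofibrant components and cycles $Q\Omega^nA$) to get $H^n[\Hom_{\cat{A}}(QW,RB)] \cong \underline{\Hom}_{\,\omega}(Q\Omega^nA,RB) \cong \textnormal{Ho}(\mathfrak{M})(\Omega^nA,B)$, and then concludes by Corollary~\ref{cor-Ho-Ext}; your chain through Proposition~\ref{prop-functors-commute}(1), Theorem~\ref{them-lExt and rExt} and Theorem~\ref{them-Ho-Ext}(3) is the same computation made slightly more explicit. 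Where you genuinely diverge is the precover claim: the paper disposes of $\Ext^1_{\cha{A}}(C,R)=0$ for $C \in \class{C}$, $R \in \tilclass{R_{\class{W}}}$ by citing~\cite[Lemmas~2.1 and~3.9]{gillespie}, whereas you re-prove it internally --- degreewise splitness from $\Ext^1_{\cat{A}}(\class{Q}_{\class{W}},\class{R}_{\class{W}})=0$, reduction via Example~\ref{example-K(A)} to showing chain maps $C \to R$ (and shifts) are null homotopic, and then Lemma~\ref{lemma-full-augmentation-homotopies} with $\class{X}=\class{Q}_{\class{W}}$, $\class{Y}=\class{R}_{\class{W}}$ applied to the middle-cycle map, whose factorization through $\omega$ you verify by the two-step lift/extension argument (this last step could also be quoted directly from~\cite[Proposition~4.4(5)]{gillespie-exact model structures}, as the paper does in similar situations). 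This is exactly the technique the paper itself uses for Lemma~\ref{lemma-ses}, Proposition~\ref{prop-ses} and Theorem~\ref{them-C-precovers}, so your version buys a self-contained proof within the paper's framework and works verbatim in the exact-category setting, at the cost of a page of argument that the citation compresses; I checked the hypotheses of Lemma~\ref{lemma-full-augmentation-homotopies} for your decompositions of $C$ and $R$ (cofibrant cokernels on the source side, fibrant kernels on the target side) and they do hold, so there is no gap.
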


\begin{proof}
It follows from~\cite[Lemma~2.1 and Lemma~3.9]{gillespie} that $\Ext^1_{\cha{A}}(C,R) = 0$ for all $C \in \class{C}$. Therefore the epimorphism $QW \xrightarrow{} W$ of~(\ref{eq-ses2}) is a special $\class{C}$-precover of $W$.

Next, the argument in the proof of Theorem~\ref{them-lExt and rExt} can be imitated to show 
$$H^n[\Hom_\cat{A}(QW,RB)] \cong \underline{\Hom}_{\,\omega}(Q\Omega^n A, RB) \cong  \textnormal{Ho}(\mathfrak{M})(\Omega^n A,B) $$
and so the result follows from Corollary~\ref{cor-Ho-Ext}.
\end{proof}

Now we get the following corollary in the same way we obtained Corollary~\ref{cor-Becker}.

\begin{corollary}
Assume that $\cat{A}$ is a Grothendieck category and that $\mathfrak{M} = (\class{Q}, \class{W}, \class{R})$ is cofibrantly generated. Then 
$$\Ext^n_{\textnormal{Ho}(\mathfrak{M})}(A,B) \cong H^n[\Hom_\cat{A}(QW,RB)]$$
where $QW$ is any cofibrant replacement, in the model structure $(\class{C}, \class{V}, \dgclass{R})$, of a full trivial resolution $W \equiv (W_{n\geq 0} \xrightarrow{\epsilon}A\xrightarrow{\eta}W_{n\leq -1})$, and $RB$ is any fibrant replacement of $B$ in $\mathfrak{M}$. 
\end{corollary}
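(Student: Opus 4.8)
The plan is to deduce this from the theorem just proved, by the same comparison argument that established Corollary~\ref{cor-Becker}. That theorem already exhibits, via the short exact sequence~(\ref{eq-ses2}), one cofibrant replacement $QW$ of $W$ in $(\class{C},\class{V},\dgclass{R})$ --- namely $0 \to R \to QW \to W \to 0$ with $QW \in \class{C}$ and $R \in \rightperp{\class{C}}$ --- and shows $\Ext^n_{\textnormal{Ho}(\mathfrak{M})}(A,B) \cong H^n[\Hom_{\cat{A}}(QW,RB)]$; moreover the proof given there imitates the argument of Theorem~\ref{them-lExt and rExt} to identify this cohomology with $\underline{\Hom}_{\,\omega}(Q\Omega^n A,RB) \cong \textnormal{Ho}(\mathfrak{M})(\Omega^n A,B)$, which is visibly independent of the chosen full trivial resolution $W$. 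So the only content still to be verified is that an arbitrary cofibrant replacement $C$ of $W$ in $(\class{C},\class{V},\dgclass{R})$ computes the same $H^n[\Hom_{\cat{A}}(-,RB)]$ as the particular $QW$ above.

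I would argue this exactly as in Corollary~\ref{cor-Becker}. Write the given cofibrant replacement of $W$ as a short exact sequence $0 \to K \to C \to W \to 0$ with $C \in \class{C}$ and $K \in \rightperp{\class{C}}$. Since both $QW$ and $C$ are special $\class{C}$-precovers of $W$, \cite[Fundamental Lemma~3.4]{gillespie-stable-cats-cotorsion-pairs} provides a canonical isomorphism $[f]_{\class{X}} \colon QW \to C$ in $\textnormal{St}_{\class{X}}(\cha{A})$, where $\class{X} = \class{C} \cap \rightperp{\class{C}}$ and $\sim^{\class{X}}$ identifies maps whose difference factors through an object of $\class{X}$. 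As noted in the proof of Corollary~\ref{cor-Becker}, the objects of $\class{X}$ are precisely the contractible complexes with components in the core $\omega$; hence $f \sim^{\class{X}} g$ implies that $f$ and $g$ are chain homotopic, so any representative $f$ of $[f]_{\class{X}}$ is a chain homotopy equivalence $QW \simeq C$ in $\cha{A}$. Applying the additive functor $\Hom_{\cat{A}}(-,RB)$ and then $H^n$ converts $f$ into an isomorphism $H^n[\Hom_{\cat{A}}(C,RB)] \cong H^n[\Hom_{\cat{A}}(QW,RB)]$, and composing with the isomorphism from the preceding theorem gives the stated result; naturality in $A$ and $B$ is inherited from Theorem~\ref{them-Ho-Ext}(3), since all the intervening isomorphisms are canonical.

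The one step that requires genuine care --- and the only place this is not purely formal --- is the identification of $\class{X} = \class{C} \cap \rightperp{\class{C}}$ with the contractible complexes whose components lie in $\omega$: this is what upgrades the a priori merely \emph{stable} isomorphism produced by the Fundamental Lemma to an honest chain homotopy equivalence, and hence what makes $H^n[\Hom_{\cat{A}}(-,RB)]$ insensitive both to the choice of cofibrant replacement and to the choice of the underlying full trivial resolution. Since that identification was already carried out inside the proof of Corollary~\ref{cor-Becker}, here it can simply be cited, and everything else is bookkeeping.
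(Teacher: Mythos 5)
Your proposal is correct and follows exactly the route the paper intends: the paper's own proof consists of the remark that the corollary follows ``in the same way we obtained Corollary~\ref{cor-Becker},'' i.e.\ by comparing an arbitrary cofibrant replacement of $W$ with the particular $QW$ from~(\ref{eq-ses2}) via the Fundamental Lemma, identifying $\class{C}\cap\rightperp{\class{C}}$ with the contractible complexes with components in $\omega$, and so upgrading the stable isomorphism to a chain homotopy equivalence before applying $H^n[\Hom_{\cat{A}}(-,RB)]$ and the preceding theorem. Your write-up supplies precisely these steps, so it matches the paper's argument.
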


\appendix

\section{The suspension and loop functor}\label{sec-suspension and loop}

In this Appendix we give a direct construction of the suspension and loop functors on the homotopy category of an hereditary exact model structure.  So throughout, we let $\cat{A}$ be an exact category and assume it to have an exact model structure and we let $\mathfrak{M} = (\class{Q},\class{W}, \class{R})$ denote the associated triple~\cite[Theorem~3.3]{gillespie-exact model structures}. We also continue to assume that the cotorsion pairs are hereditary and we let $\omega := \class{Q}\cap\class{W}\cap \class{R}$ denote its \emph{core}. 
For those interested only in abelian categories, simply replace ``admissible monomorphism'' with `` monomorphism'' and ``admissible epimorphism'' with ``epimorphism''. See Section~\ref{sec-exact-cats} and~\cite{buhler-exact categories} and~\cite{gillespie-exact model structures}.

We gave a definition of $\textnormal{Ho}(\mathfrak{M})$, the homotopy category of $\mathfrak{M}$,  in Section~\ref{sec-prelims}. Using this definition we will now give a direct construction of $\textnormal{Ho}(\mathfrak{M}) \xrightarrow{\Sigma} \textnormal{Ho}(\mathfrak{M})$, the \emph{suspension functor}, and its inverse $\textnormal{Ho}(\mathfrak{M}) \xrightarrow{\Omega} \textnormal{Ho}(\mathfrak{M})$, the \emph{loop functor}.
In an appendix of his thesis, Hanno Becker gave a different proof that showed that the loop and suspension functors can be computed as described in the next proposition. 


\begin{proposition}\label{prop-suspension functor}
Let $\mathfrak{M} = (\class{Q},\class{W}, \class{R})$ be an hereditary exact model structure on $\class{A}$ and $\textnormal{Ho}(\mathfrak{M})$ its homotopy category.
\begin{enumerate}
\item The process of taking the cokernel of any admissible monomorphism  $X \rightarrowtail W$ with $W \in \class{W}$ determines an additive endofunctor $\textnormal{Ho}(\mathfrak{M}) \xrightarrow{\Sigma} \textnormal{Ho}(\mathfrak{M})$, called the \textbf{suspension functor}. It is well-defined on objects up to a canonical isomorphism. 
\item The process of taking the kernel of any admissible epimorphism  $W  \twoheadrightarrow X$ with $W \in \class{W}$ determines an additive endofunctor $\textnormal{Ho}(\mathfrak{M}) \xrightarrow{\Omega} \textnormal{Ho}(\mathfrak{M})$, called the \textbf{loop functor}. It is well-defined on objects up to a canonical isomorphism.
\item $\Sigma$ and $\Omega$ are each left and right adjoint to each other. 
\item In fact, $\Sigma$ and $\Omega$ are inverse automorphisms of $\textnormal{Ho}(\mathfrak{M})$. In particular, $\Sigma$ is what is usually called a \emph{translation functor on $\textnormal{Ho}(\mathfrak{M})$}.
\end{enumerate}
\end{proposition}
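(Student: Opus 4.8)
The plan is to establish parts (1)--(3) first and then derive (4) essentially formally. For (1), given an object $X$, choose a short exact sequence $0 \to X \to W \to \Sigma X \to 0$ with $W \in \class{W}$; such a sequence exists because the cotorsion pair $(\class{Q},\class{R}_{\class{W}})$ (or rather, the completeness of the pairs making up $\mathfrak{M}$) provides a monomorphism into a trivially fibrant, hence trivial, object — in fact one can even take $W$ trivially fibrant. I would then invoke a version of Lemma~\ref{lemma-full-augmentation-homotopies} (or the shorter Lemmas~\ref{lemma-augmentation-homotopies}/\ref{lemma-dual-augmentation-homotopies}, applied to the one-step (co)resolutions) to show that any morphism $f : X \to Y$ lifts to a morphism of chosen sequences, inducing $\Sigma f : \Sigma X \to \Sigma Y$, and that this induced map is independent, up to the homotopy relation $\sim^\omega$, of the choice of lift and of the chosen sequences. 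This uses that $W, W' \in \class{W}$ are zero objects in $\textnormal{Ho}(\mathfrak{M})$ together with the hereditary hypothesis (so that $\class{W}$ is closed under the relevant kernels/cokernels and the orthogonality $\Ext^1_{\cat{A}}(Q,R_\class{W})=0$, $\Ext^1_{\cat{A}}(Q_\class{W},R)=0$ can be applied to produce the lifts). Part (2) is the exact dual, taking $0 \to \Omega X \to W \to X \to 0$ with $W \in \class{W}$.

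For (3), I would exhibit the adjunction isomorphism $\textnormal{Ho}(\mathfrak{M})(\Sigma X, Y) \cong \textnormal{Ho}(\mathfrak{M})(X, \Omega Y)$ directly. Pick short exact sequences $0 \to X \to W \to \Sigma X \to 0$ with $W \in \class{W}$ and $0 \to \Omega Y \to W' \to Y \to 0$ with $W' \in \class{W}$. Given $[g] : \Sigma X \to Y$ in $\textnormal{Ho}(\mathfrak{M})$, represent it by a morphism $\Sigma X \to Y$ in $\cat{A}$ after suitable (co)fibrant replacement; pull back the sequence defining $\Omega Y$ along $g$, or equivalently use that $\Ext^1_{\cat{A}}$-orthogonality lets one complete a commuting ladder between the two chosen sequences, and read off a map $X \to \Omega Y$. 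Checking that this is well-defined on homotopy classes and mutually inverse to the analogous construction in the other direction is the standard "connecting map is an isomorphism" argument, which goes through because maps out of / into objects of $\class{W}$ vanish in $\textnormal{Ho}(\mathfrak{M})$ and the core $\omega$ controls the homotopy relation. Since $\textnormal{Ho}(\mathfrak{M})$ is additive and the construction is visibly symmetric in $\Sigma$ and $\Omega$, each is simultaneously a left and a right adjoint of the other; naturality in $X$ and $Y$ follows from the functoriality established in (1) and (2).

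For (4), the key point is that both $\Sigma\Omega$ and $\Omega\Sigma$ are naturally isomorphic to the identity. Starting from $0 \to \Omega X \to W \to X \to 0$ with $W \in \class{W}$, apply $\Sigma$: since $W$ is trivial and hence maps to the zero object in $\textnormal{Ho}(\mathfrak{M})$, the same short exact sequence can be read as computing $\Sigma\Omega X$, and one gets a canonical isomorphism $\Sigma\Omega X \cong X$ in $\textnormal{Ho}(\mathfrak{M})$ — concretely, one can take the defining sequence for $\Sigma(\Omega X)$ to be $0 \to \Omega X \to W \to X \to 0$ itself, since $W \in \class{W}$, so $\Sigma(\Omega X) = X$ on the nose for that choice, and then invoke the uniqueness up to canonical isomorphism from (1). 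Dually $\Omega\Sigma X \cong X$. One checks these isomorphisms are natural using the functoriality and the already-established adjunction (indeed the unit and counit of the adjunction in (3) are these very isomorphisms). Hence $\Sigma$ and $\Omega$ are inverse autoequivalences; that $\Sigma$ is a translation (shift) functor in the triangulated sense is then the statement that $\textnormal{Ho}(\mathfrak{M})$ carries such an automorphism, which is what is claimed.

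The main obstacle I anticipate is not (4) — which is nearly formal once (1)--(3) are in place — but rather the bookkeeping in (1) and (3): verifying that $\Sigma f$ is genuinely well-defined on $\textnormal{Ho}(\mathfrak{M})$, i.e. that it does not depend on the choice of defining short exact sequences, on the choice of lifting ladder, or on the chosen bifibrant replacements used to represent morphisms of $\textnormal{Ho}(\mathfrak{M})$ by honest morphisms of $\cat{A}$. This requires carefully combining the $\Ext^1$-orthogonality of the two cotorsion pairs with the characterization of the homotopy relation via factoring through $\omega$, and it is exactly here that the hereditary hypothesis is used in an essential way (to keep the relevant kernels and cokernels inside $\class{Q}$, $\class{R}$, and $\class{W}$). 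I would isolate this as a lemma: any two "$\Sigma$-defining" short exact sequences for $X$ are connected by a map of short exact sequences inducing an isomorphism on cokernels that is canonical in $\textnormal{Ho}(\mathfrak{M})$ — this is really a special case of Lemma~\ref{lemma-full-augmentation-homotopies} and can be cited as such.
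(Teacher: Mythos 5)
Your overall architecture matches the paper's: define $\Sigma X$ and $\Omega X$ via short exact sequences with middle term in $\class{W}$, obtain functoriality by completing ladders, build the adjunction in (3) by the same ladder construction, and get (4) by reading the sequence $0 \rightarrow \Omega X \rightarrow W \rightarrow X \rightarrow 0$ as a defining sequence for $\Sigma(\Omega X)$. However, there is a genuine gap in the mechanism you propose for well-definedness and functoriality. First, a morphism $X \rightarrow X'$ in $\textnormal{Ho}(\mathfrak{M})$ is by definition an element of $\underline{\Hom}_{\,\omega}(RQX,RQX')$; it need not be (the image under $\gamma_{\mathfrak{M}}$ of) a morphism $X \rightarrow X'$ in $\cat{A}$. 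So ``lift $f\colon X \rightarrow Y$ to a map of the chosen sequences'' only defines $\Sigma$ on morphisms coming from $\cat{A}$, not on all of $\textnormal{Ho}(\mathfrak{M})$. Second, given two defining sequences $X \rightarrowtail W \twoheadrightarrow \Sigma X$ and $X \rightarrowtail W' \twoheadrightarrow \Sigma' X$ with only $W, W' \in \class{W}$, there is in general no $\Ext^1$-orthogonality producing a ladder between them in $\cat{A}$: Lemma~\ref{lemma-full-augmentation-homotopies} requires, on the source side, cokernels in $\leftperp{\class{Y}} = \class{Q}$ and, on the target side, components in $\class{R}_{\class{W}}$, and an arbitrary $W' \in \class{W}$ together with an arbitrary cokernel $\Sigma X$ satisfies neither; so the comparison is not ``a special case of Lemma~\ref{lemma-full-augmentation-homotopies}'' and cannot simply be cited. (Taking $W$ trivially fibrant helps on one side only, and in any case the statement of Proposition~\ref{prop-suspension functor} allows arbitrary $W \in \class{W}$.)

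The paper closes exactly this gap by first applying the generalized horseshoe lemma (\cite[Lemma~1.4.4]{becker}, as in Lemma~\ref{lemma-horseshoe-lemma}) to lift any defining sequence to a bifibrant approximation sequence $RQX \rightarrowtail RQW \twoheadrightarrow RQ\Sigma X$, whose middle term lies in the core $\omega$ because $\class{W}$ is closed under extensions. All lifting and uniqueness arguments are then carried out inside the Frobenius category $\class{Q}\cap\class{R}$ with its complete cotorsion pair $(\class{Q}\cap\class{R},\omega)$, where the Fundamental Lemmas of \cite{gillespie-stable-cats-cotorsion-pairs} give both the fill-ins for an arbitrary representative $RQX \rightarrow RQX'$ of a $\textnormal{Ho}(\mathfrak{M})$-morphism and the uniqueness, up to $\sim^{\omega}$, of the induced class on cokernels. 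This one device simultaneously yields the canonical isomorphisms and additivity in (1)--(2), the adjunction isomorphism $\delta_{X,Y}$ and its naturality in (3), and the naturality of $\Sigma\Omega \cong \mathrm{Id}$ and $\Omega\Sigma \cong \mathrm{Id}$ in (4); your object-level identification $\Sigma(\Omega X) = X$ for the chosen sequence is fine, but the naturality is not automatic from the adjunction and is verified in the paper by an explicit diagram, again using the uniqueness part of the Fundamental Lemma. To repair your proposal, replace the appeal to Lemma~\ref{lemma-full-augmentation-homotopies} by this passage to bifibrant approximation sequences (or prove an analogous existence-and-uniqueness lemma for fill-ins over the core).
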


\begin{proof}
We first note that since the cotorsion pairs are hereditary we may use the generalized horseshoe lemma, \cite[Lemma~1.4.4]{becker} or see~\cite{gillespie-stable-cats-cotorsion-pairs},  to construct, for any given admissible short exact sequence $X \rightarrowtail W  \twoheadrightarrow Y$ with $W \in \class{W}$,  a commutative diagram as below whose rows are  admissible short exact sequences, and top row a bifibrant approximation sequence for $X \rightarrowtail W  \twoheadrightarrow Y$. 
$$  \begin{CD}
        0 @>>>   RQX    @>>>   RQW    @>>>    RQY    @>>>    0 \\
    @.        @AAA     @AAA     @AAA   @.\\
    0   @>>>  QX    @>>>   QW    @>>>  QY    @>>>  0  \\
    @.        @VVV     @VVV      @VVV   @.\\
    0   @>>>  X    @>>>   W    @>>>    Y    @>>>  0  \\
    \end{CD}$$ 
Note that $RQW \in \omega$, since $\class{W}$ is closed under extensions. 

Now consider two arbitrary objects $X$ and $X'$ and two arbitrary admissible short exact sequences $X \rightarrowtail W  \twoheadrightarrow Y$, and $X' \rightarrowtail W'  \twoheadrightarrow Y'$, with $W,  W' \in \class{W}$. A morphism $X \xrightarrow{} X'$,  in $\textnormal{Ho}(\mathfrak{M})$,  is (by definition) a morphism in $\underline{\Hom}_{\,\omega}(RQX,RQX')$; so it is an  equivalence class $RQX \xrightarrow{[f]} RQX'$ for the relation $\sim^{\omega}$. But $(\class{Q}\cap\class{R},\omega)$ is a complete cotorsion pair in the Frobenius category $\class{Q}\cap\class{R}$, and so by \cite[Dual Fundamental Lemma~2.5]{gillespie-stable-cats-cotorsion-pairs}, the representative map $RQX \xrightarrow{f} RQX'$ completes to a morphism of short exact sequences
$$  \begin{CD}
      0   @>>>    RQX    @>>>   RQW    @>>>  RQY    @>>>    0 \\
    @.        @V f VV     @V h VV      @V c VV   @.\\
     0   @>>>  RQX'    @>>>   RQW'    @>>>  RQY'    @>>> 0.    \\
    \end{CD}$$ Moreover, $[c] \in \underline{\Hom}_{\,\omega}(RQY,RQY')$ is uniquely determined by just $[f]$, the equivalence class of $f$.

Thus by taking $X = X'$ and $[f] = [1_X]$,  we get a morphism $QY \xrightarrow{[c]} QY'$ and a reversal  $QY' \xrightarrow{[d]} QY$ which together must satisfy $[d] \circ [c] = [1_{QY}]$ and  $[c] \circ [d] = [1_{QY'}]$ (due to the uniqueness portion of \cite[Dual Fundamental Lemma~2.5]{gillespie-stable-cats-cotorsion-pairs}). Of course $[c]$ is a morphism from $Y\xrightarrow{} Y'$, in $\textnormal{Ho}(\mathfrak{M})$, and so it is an isomorphism with inverse $[d]$. Thus we have shown that the process of taking a $\Sigma X$ sitting within an admissible short exact sequence $X \rightarrowtail W  \twoheadrightarrow \Sigma X$,  with $W \in \class{W}$, is unique up to a canonical isomorphism in $\textnormal{Ho}(\mathfrak{M})$.

But in fact the action $X \mapsto \Sigma X$ is not just well-defined on objects: One verifies with similar arguments that the general action $[f] \mapsto \Sigma([f]) :=  [c]$ determines a functor, $\textnormal{Ho}(\mathfrak{M}) \xrightarrow{\Sigma} \textnormal{Ho}(\mathfrak{M})$. The uniqueness portion of \cite[Dual Fundamental Lemma~2.5]{gillespie-stable-cats-cotorsion-pairs} will also show that $\Sigma$ is additive. 
On the other hand, similar arguments using \cite[Fundamental Lemma~2.4]{gillespie-stable-cats-cotorsion-pairs}
provide us with the dual loop functor $\textnormal{Ho}(\mathfrak{M}) \xrightarrow{\Omega} \textnormal{Ho}(\mathfrak{M})$. 

We now show $\Omega$ is left adjoint to $\Sigma$. Recall that this means there is a bijection of the hom-sets $\delta_{X,Y} \mathcolon \textnormal{Ho}(\mathfrak{M})(\Omega X,Y)  \cong  \textnormal{Ho}(\mathfrak{M})(X,\Sigma Y)$, that is, $$\delta_{X,Y} \mathcolon \underline{\Hom}_{\,\omega}(RQ\Omega X,RQY) \cong  \underline{\Hom}_{\,\omega}(RQX,RQ\Sigma Y),$$ and that it is natural in both $X$ and $Y$. But it is easy now to define such a natural bijection. Given $X$ and $Y$, write the short exact sequences $0 \xrightarrow{} \Omega X \xrightarrow{} W_X \xrightarrow{} X \xrightarrow{} 0$ and $0 \xrightarrow{} Y \xrightarrow{} W_Y \xrightarrow{} \Sigma Y \xrightarrow{} 0$. Then for $[f] \in \underline{\Hom}_{\,\omega}(RQ\Omega X,RQY)$, define $\delta_{X,Y}([f]) = [c]$ in the diagram below where we again are using \cite[Dual Fundamental Lemma~2.5]{gillespie-stable-cats-cotorsion-pairs} applied to the cotorsion pair $(\class{Q}\cap\class{R},\omega)$ in the Frobenius category $\class{Q}\cap\class{R}$:
$$\begin{CD}
     0   @>>>  RQ\Omega X    @>>>   RQW_X    @>>>   RQX    @>>>    0 \\
    @.        @V f VV     @VVV      @V c VV   @.\\
    0   @>>>  RQY    @>>>   RQW_Y    @>>>   \Sigma RQY    @>>> 0   \\
    \end{CD}
$$ Then it is clear that $\delta$ is a bijection with inverse defined by $\delta_{X,Y}^{-1}([c]) = [f]$ because of the Fundamental Lemmas. Naturality in $Y$ means that given any morphism  $$([g] \mathcolon Y_1 \xrightarrow{} Y_2) \in  \textnormal{Ho}(\mathfrak{M})(Y_1,Y_2) = \underline{\Hom}_{\,\omega}(RQY_1,RQY_2),$$ we have commutativity of the square:
$$\begin{CD}
     \underline{\Hom}_{\,\omega}(RQ\Omega X,RQY_1)    @> \delta_{X,Y_1} >>  \underline{\Hom}_{\,\omega}(RQX, RQ\Sigma Y_1) \\
            @V [g]_* VV     @VV (\Sigma [g])_* V \\
    \underline{\Hom}_{\,\omega}(RQ\Omega X,RQY_2)    @>> \delta_{X,Y_2} >  \underline{\Hom}_{\,\omega}(RQX, RQ\Sigma Y_2)    \\
\end{CD}$$
So given $ [f] \in \underline{\Hom}_{\,\omega}(RQ\Omega X,RQY_1)$ we form the commutative diagram:
$$\begin{CD}
     0   @>>> RQ\Omega X    @>>>   RQW_X    @>>>    RQX    @>>>    0 \\
    @.        @V f VV     @VVV      @VV c V   @.\\
     0   @>>> RQY_1    @>>>   RQW_{Y_1}    @>>>   RQ\Sigma Y_1    @>>>    0 \\
    @.        @V g VV     @VVV      @VV c' V   @.\\
    0   @>>>   RQY_2    @>>>   RQW_{Y_2}    @>>>  RQ\Sigma Y_2    @>>> 0   \\
\end{CD}$$
In order to check commutativity of the naturality diagram, we compute $$\delta_{X,Y_2}([g]_*([f])) = \delta_{X,Y_2}([g]\circ[f]) = \delta_{X,Y_2}([gf]).$$ And on the other hand, $$(\Sigma [g])_*(\delta_{X,Y_1}([f])) = (\Sigma [g]) \circ ([c]) = [c'] \circ [c] = [c'c].$$ But $\delta_{X,Y_2}([gf]) = [c'c]$ by \cite[Dual Fundamental Lemma~2.5]{gillespie-stable-cats-cotorsion-pairs}. This completes the verification of naturality in $Y$. To check naturality in $X$ one must show that given $[h] \mathcolon X_1 \xrightarrow{} X_2$, we have commutativity of the square:
$$\begin{CD}
     \underline{\Hom}_{\,\omega}(RQ\Omega X_2,RQY)    @> \delta_{X_2,Y} >>  \underline{\Hom}_{\,\omega}(RQX_2, RQ\Sigma Y) \\
            @V \Omega([h])^* VV     @VV [h]^* V \\
    \underline{\Hom}_{\,\omega}(RQ\Omega X_1, RQY)    @>> \delta_{X_2,Y} >  \underline{\Hom}_{\,\omega}(RQX_1, RQ\Sigma Y) \ .   \\
\end{CD}$$ This holds in a similar way and proves that $\Omega$ is left adjoint to $\Sigma$. Moreover, the same methods will show that $\Omega$ is also right adjoint to $\Sigma$.

\

We now show that there are natural isomorphisms of functors $\Omega \Sigma \cong I_{\textnormal{Ho}(\mathfrak{M})}$ and $\Sigma \Omega \cong I_{\textnormal{Ho}(\mathfrak{M})}$, where $I_{\textnormal{Ho}(\mathfrak{M})}$ is the identity functor on $\textnormal{Ho}(\mathfrak{M})$. We just do this for $\Sigma \Omega \cong I_{\textnormal{Ho}(\mathfrak{M})}$. Thus we are required to construct for each $X \in \textnormal{Ho}(\mathfrak{M})$, an isomorphism $([\psi_X] \mathcolon \Sigma \Omega X \xrightarrow{} X) \in \underline{\Hom}_{\,\omega}(RQ\Sigma \Omega X, RQX)$ such that given any $([f] \mathcolon X \xrightarrow{} Y) \in \underline{\Hom}_{\,\omega}(RQX,RQY)$ we have a commutative square:
$$\begin{CD}
     \Sigma \Omega X    @> [\psi_X] >>  X \\
            @V \Sigma \Omega ([f]) VV     @VV [f] V \\
    \Sigma \Omega Y    @>> [\psi_Y] >  Y    \\
\end{CD}$$

Applying the definition of $\Omega$, we obtain a commutative diagram
$$\begin{CD}
     0   @>>> RQ\Omega X    @>>>  RQW_X    @>>>  RQX    @>>>    0 \\
    @.        @V k VV     @V h VV      @V f VV   @.\\
    0   @>>>  RQ\Omega Y    @>>>   RQW_Y    @>>>  RQY    @>>> 0   \\
\end{CD}$$ and $\Omega ([f]) = [k]$. Then from the definition of $\Sigma$ we get short exact sequences $0 \xrightarrow{} \Omega X \xrightarrow{} W_{\Omega X} \xrightarrow{} \Sigma \Omega X \xrightarrow{} 0$ and $0 \xrightarrow{} \Omega Y \xrightarrow{} W_{\Omega Y} \xrightarrow{} \Sigma \Omega Y \xrightarrow{} 0$ and these lead us to the commutative diagram (three sides of a parallelepiped):
$$\begin{CD}
     0   @>>> RQ\Omega X    @>>>   RQW_{\Omega X}    @>>>    RQ\Sigma \Omega X    @>>>    0 \\
    @.        @|     @V h_1 VV      @VV \psi_X V   @.\\
     0   @>>> RQ\Omega X    @>>>   RQW_X    @>>>    RQX    @>>>    0 \\
    @.        @V k VV     @V h VV      @V f VV   @.\\
    0   @>>>  RQ\Omega Y    @>>>   RQW_Y    @>>>  RQY    @>>> 0   \\
    @.        @|     @A h_2 AA      @AA \psi_Y A   @.\\
    0   @>>>  RQ\Omega Y    @>>>   RQW_{\Omega Y}   @>>> RQ\Sigma \Omega Y    @>>> 0   \\
\end{CD}$$
Since we can find reversals for $\psi_X$ and $\psi_Y$, it readily follows from \cite[Dual Fundamental Lemma~2.5]{gillespie-stable-cats-cotorsion-pairs} that $[\psi_X] \mathcolon \Sigma \Omega X \xrightarrow{} X$ and $[\psi_Y] \mathcolon \Sigma \Omega Y \xrightarrow{} Y$ are isomorphisms. So these form the natural isomorphisms that we seek, we just need to finish the argument that the proper square commutes.

But we may compose the left most vertical arrows, and this composition is just $k \mathcolon RQ\Omega X \xrightarrow{} RQ\Omega Y$. Applying the definition of $\Sigma$ we get fill in maps $h' \mathcolon  RQW_{\Omega X} \xrightarrow{}  RQW_{\Omega Y}$ and $c \mathcolon RQ\Sigma \Omega X \xrightarrow{} RQ\Sigma \Omega Y$ so that $\Sigma([k]) = [c]$. Furthermore, \cite[Dual Fundamental Lemma~2.5]{gillespie-stable-cats-cotorsion-pairs} implies $[h h_1] = [h_2 h']$ and $[f \psi_X] = [\psi_Y c]$. In particular, $[f] \circ [\psi_X] = [\psi_Y] \circ [c]$. That is,  $[f] \circ [\psi_X] = [\psi_Y] \circ \Sigma \Omega([f])$.
\end{proof}

\begin{example}\label{example-suspensions} 
Let $\cat{A}$ be any exact category. Let $\cha{A}$ denote the associated chain complex category along with the exact structure it inherits degreewise. Suppose $\mathfrak{M} = (\class{Q},\class{W},\class{R})$ is an hereditary exact model structure on $\cha{A}$. If $\class{W}$ contains all contractible complexes, then we can take $\Sigma$ to be the usual suspension functor (shift against arrows, and change differentials to $-d$), and $\Omega$ to be its inverse $\Sigma^{-1}$. (Reason.) It is an easy exercise to construct degreewise split short exact sequences 
$$ 0 \xrightarrow{} X \xrightarrow{} \bigoplus_{n \in \Z} D^{n+1}(X_n) \xrightarrow{} \Sigma X \xrightarrow{} 0$$ 
and
$$ 0 \xrightarrow{} \Sigma^{-1} X \xrightarrow{} \bigoplus_{n \in \Z} D^{n}(X_n) \xrightarrow{} X \xrightarrow{} 0$$ 
and $\bigoplus_{n \in \Z} D^{n+1}(X_n)$ and $\bigoplus_{n \in \Z} D^{n}(X_n)$ are contractible complexes. 
\end{example}


\providecommand{\bysame}{\leavevmode\hbox to3em{\hrulefill}\thinspace}
\providecommand{\MR}{\relax\ifhmode\unskip\space\fi MR }
\providecommand{\MRhref}[2]{%
  \href{http://www.ams.org/mathscinet-getitem?mr=#1}{#2}
}
\providecommand{\href}[2]{#2}

\end{document}